\newtheorem{theorem}{Theorem}[section]
\newtheorem{proposition}[theorem]{Proposition}
\newtheorem{lemma}[theorem]{Lemma}
\newtheorem{corollary}[theorem]{Corollary}
\theoremstyle{definition}
\newtheorem{definition}[theorem]{Definition}
\newtheorem{remark}[theorem]{Remark}
\newtheorem{problem}[theorem]{Problem}
\newtheorem{example}[theorem]{Example}
\numberwithin{equation}{section}
\numberwithin{figure}{section}
\newcommand\Acal{\mathcal{A}}
\newcommand\Ocal{\mathcal{O}}
\newcommand\Cscr{\mathscr{C}}
\newcommand\Oscr{\mathscr{O}}
\newcommand\Vscr{\mathscr{V}}
\newcommand\B{\mathbb{B}}
\newcommand\C{\mathbb{C}}
\newcommand\D{\overline{\mathbb D}}
\newcommand\CP{\mathbb{CP}}
\renewcommand\D{\mathbb D}
\newcommand\N{\mathbb{N}}
\newcommand\R{\mathbb{R}}
\newcommand\RP{\mathbb{RP}}
\newcommand\Z{\mathbb{Z}}
\newcommand\igot{\mathfrak{i}}
\renewcommand\igot{\mathfrak{i}}
\renewcommand\imath{\igot}
\newcommand\hra{\hookrightarrow}
\newcommand\wt{\widetilde}
\newcommand\wh{\widehat}
\newcommand\di{\partial}
\newcommand\dibar{\overline\partial}
\newcommand\dist{\mathrm{dist}}
\newcommand\reg{\mathrm{reg}}
\newcommand\Conv{\mathrm{Conv}}
\newcommand\Aut{\mathrm{Aut}}
\newcommand\Id{\mathrm{Id}}
\def\Ell1{\mathrm{Ell_1}}
\def\CEll1{{\rm C-Ell$_1$}}
\def\CAP{\rm{CAP}}
\def\Bl{{\rm Bl}}
\numberwithin{equation}{section}
\begin{document}
\title%{Developments in Oka theory since 2017}
{Recent developments on Oka manifolds}
\author{Franc Forstneri{\v c}}

\address{Franc Forstneri\v c: Faculty of Mathematics and Physics, University of Ljubljana, and Institute of Mathematics, Physics, and Mechanics, Jadranska 19, 1000 Ljubljana, Slovenia}
\email{franc.forstneric@fmf.uni-lj.si}

\thanks{Research supported by the 
European Union (ERC Advanced grant HPDR, 101053085) and
by grants P1-0291, J1-3005, and N1-0237 from ARRS, Republic of Slovenia} 

\subjclass[2010]{Primary 32M17, 32Q56.  Secondary 14R10, 53D35}

\date{28 January 2023}

%%%%%

\keywords{Oka manifold; Oka map; Stein manifold; Elliptic manifold; Algebraically subelliptic manifold; Calabi--Yau manifold; Density property}

\begin{abstract}
In this paper we present the main developments in Oka theory since the publication 
of my book {\em Stein Manifolds and Holomorphic Mappings 
(The Homotopy Principle in Complex Analysis)},
Second Edition, Springer, 2017. We also give several new results, 
examples and constructions of Oka domains in Euclidean and projective spaces. 
Furthermore, we show that for $n>1$ the fibre $\C^n$ in a 
Stein family can degenerate to a non-Oka fibre, 
thereby answering a question of Takeo Ohsawa. Several open problems are discussed. 
\end{abstract}

\maketitle

\centerline{\em Dedicated to Jaap Korevaar  in honour of his 100th birthday}

\tableofcontents

%
%
%    INTRODUCTION
%
%
\section{Introduction: flexibility versus rigidity}\label{sec:intro} 
A major driving force of developments in complex analytic geometry 
is the dichotomy between flexibility and rigidity phenomena.

Prime examples of holomorphic flexibility include the approximation theorems of Runge and 
Oka--Weil and the extension--interpolation theorems of Weierstrass and Oka--Cartan.
These classical results show that complex Euclidean spaces $\C^n$ admit plenty of holomorphic maps 
from all Stein manifolds, that is, closed complex submanifolds of complex Euclidean spaces.
Oka theory deals with complex manifolds which admit plenty of holomorphic maps
from all Stein manifolds in a precise sense which is modelled on these classical theorems,
and with applications of these properties to problems in complex geometry and wider. 

Opposite to flexibility, the basic rigidity phenomena are 
described by Picard's theorem, saying that 
every holomorphic map $\C\to \C\setminus \{0,1\}$ is constant, 
and the Schwarz--Pick lemma to the effect that 
holomorphic self-maps of the disc $\D=\{z\in\C:|z|<1\}$ 
are distance-decreasing in the Poincar\'e metric. These are instances of 
the main rigidity property which a complex manifold $Y$ may have, 
Kobayashi hyperbolicity, asking that the size of the derivative of a holomorphic map $f:\D\to Y$ 
at $0\in \D$ is locally bounded above in terms of the value $f(0)\in Y$. In particular,
there are no nonconstant holomorphic maps $\C\to Y$ into a hyperbolic complex manifold.
There are several weaker notions of rigidity, such as the existence of nonconstant bounded plurisubharmonic functions and non-dominability by Euclidean spaces. 

For Riemann surfaces we have a clear dichotomy --- either the surface is Oka or it is 
Kobayashi hyperbolic \cite[Corollary 5.6.4]{Forstneric2017E}. 
The former ones are $\CP^1$, $\C$, $\C^*=\C\setminus \{0\}$ and tori, while 
the hyperbolic ones are quotients of the disc.
A majority of complex manifolds of dimension $>1$ are at least somewhat rigid. 
In particular, a compact complex manifold of general type is not dominable by Euclidean spaces 
(Kobayashi and Ochiai \cite{KobayashiOchiai1975}), and a generic projective hypersurface in 
$\CP^n$ of sufficiently large degree is hyperbolic (Brotbek \cite{Brotbek2017}).

The birth of Oka theory was the paper of Kiyoshi Oka \cite{Oka1939} (1939)  
in which he showed that the topological classification of complex line bundles on a domain
of holomorphy in $\C^n$ coincides with the holomorphic classification. 
In 1958, Hans Grauert \cite{Grauert1958MA} extended this  
to complex vector bundles of arbitrary rank and, more generally, to principal and associated 
fibre bundles on Stein spaces. This circle of results became known as the 
{\em Oka--Grauert principle}, with the following heuristic formulation given by 
Grauert and Remmert \cite[p.\ 45]{GrauertRemmert1979}:
 
{\em Analytic problems on Stein manifolds which can be cohomologically
formulated have only topological obstructions.} 

A major conceptual development was made by Mikhail Gromov \cite{Gromov1989} in 1989.
He emphasized the homotopy-theoretic aspect of Oka theory and the analogies to the h-principle 
in smooth geometry; see his monograph \cite{Gromov1986}. 
From Gromov's viewpoint, the main question is to find and characterize complex manifolds $Y$ 
having the property that every continuous map $X\to Y$ from a Stein manifold $X$ is homotopic to a holomorphic map, 
with natural additions modelled on the Oka--Weil approximation theorem 
and the Oka--Cartan extension theorem for holomorphic functions on Stein manifolds.
Furthermore, these properties should hold for families of maps $X\to Y$ depending continuously 
or smoothly on a parameter in a suitable space. Such {\em parameteric Oka properties} are 
very important in applications.  

The central notion of Oka theory is 
{\em Oka manifold}\footnote{MSC 2020 includes the new subfield {\em 32Q56 Oka principle and Oka manifolds}.}, 
a term introduced in 2009 in my paper \cite{Forstneric2009CR} when it became clear that most 
Oka-type properties considered in the literature are equivalent. 
The simplest characterization of this class of complex manifolds is the following
{\em convex approximation property} (CAP) which was introduced in 2006 in \cite{Forstneric2006AM}. 
For a list of known characterizations, see \cite[Sect.\ 5.15]{Forstneric2017E} 
and Subsection \ref{ss:Ell1} of this paper.

%
%  OKA MANIFOLD
%
\begin{definition}%[See \cite{Forstneric2009CR} and ] 
\label{def:Oka}
A complex manifold $Y$ is an Oka manifold if every holomorphic map $K\to Y$ 
from a neigbourhood of a compact convex set $K$ in a Euclidean space $\C^n$ 
(for any $n\in\N$) to $Y$ is a uniform limit on $K$ of entire maps $\C^n\to Y$.
\end{definition}

Here is a simplified form of the main result on Oka manifolds (see \cite[Theorem 5.4.4]{Forstneric2017E}). 

%
%   MAIN THEOREM OF OKA THEORY
%
\begin{theorem}\label{th:Oka}
Let $Y$ be an Oka manifold. 
Every continuous map $f:X\to Y$ from a Stein manifold (or a reduced Stein space) $X$ is homotopic to a
holomorphic map $f_1:X\to Y$. If in addition $f$ is holomorphic on a neighbourhood of 
a compact $\Oscr(X)$-convex set $K\subset X$ and on a closed complex subvariety 
$X'\subset X$, then a homotopy $\{f_t\}_{t\in[0,1]}$ from $f=f_0$ to $f_1$ can be chosen through maps 
$f_t:X\to Y$ having the same properties as $f$ which agree with $f$ on $X'$ and approximate $f$ uniformly on $K$
and uniformly in $t\in[0,1]$.

The analogous conclusion holds for a family of maps $f_p:X\to Y$ depending continuously
on a parameter $p$ in a compact Hausdorff space $P$, where the homotopy $f_{p,t}$ 
$(t\in[0,1])$ from $f_{p,0}=f_p$ to a family of holomorphic map $f_{p,1}:X\to Y$ may be kept fixed
for $p$ in a closed subset $Q\subset P$ provided that the map $f_p:X\to Y$ is holomorphic 
for all $p\in Q$.
\end{theorem}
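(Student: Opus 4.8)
The plan is to establish this as the parametric Oka principle with approximation and interpolation, building from the basic Oka property in the definition (CAP) through a sequence of increasingly global statements. The overall strategy is to reduce the problem on an arbitrary Stein manifold to the model situation on convex sets in $\C^n$, where the Oka property holds by hypothesis, and then to globalize using the theory of Stein manifolds together with a parametric Oka--Weil style induction.

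\smallskip

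First I would reduce to the case where $X$ is a closed complex submanifold of a Euclidean space $\C^N$; this is always possible by the embedding theorem for Stein manifolds (and with minor modifications for reduced Stein spaces). Using a strongly plurisubharmonic exhaustion function $\rho:X\to\R$ whose sublevel sets $X_c=\{\rho<c\}$ exhaust $X$, I would arrange that the $\Oscr(X)$-convex set $K$ and the subvariety $X'$ sit compatibly with the exhaustion. The proof then proceeds by induction on the sublevel sets: one deforms $f$ to a holomorphic map over successively larger $X_c$, controlling the approximation on $K$ and the agreement on $X'$ at each stage. The two essential geometric operations are (i) crossing a \emph{noncritical} level of $\rho$, where $X_{c'}$ is a deformation retract of $X_{c}$ and one extends a holomorphic map across a collar by a Mergelyan/Oka--Weil approximation together with the CAP-based gluing on convex bumps, and (ii) crossing a \emph{critical} level, where the topology changes by attachment of a handle and one must extend the map over the core of the handle using the homotopy-theoretic input that $f$ is given as a continuous map globally.

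\smallskip

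The heart of the matter, and the step I expect to be the main obstacle, is the noncritical case: gluing holomorphic maps over a pair of compact sets $A\cup B$ where $(A,B)$ is a Cartan pair and $B$ is a small convex bump. Given holomorphic maps $a:A\to Y$ and $b:B\to Y$ that are uniformly close on the overlap $A\cap B$, one must produce a single holomorphic map on $A\cup B$ close to $a$ on $A$. This is carried out by writing $b=\gamma\circ a$ on the overlap for a transition map $\gamma$ close to the identity, splitting $\gamma$ multiplicatively (or additively in a local spray coordinate) via a bounded solution of a Cousin-type $\dibar$-problem on the Cartan pair, and then correcting. To make the splitting work for maps into a general Oka manifold $Y$ rather than into $\C^n$, one replaces the naive linear structure by a \emph{dominating holomorphic spray} on a neighbourhood of the image; the existence of such sprays, and hence the whole gluing lemma, is exactly what the CAP hypothesis (equivalently, the Oka property) delivers on the convex model pieces. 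The parametric and interpolation refinements are then obtained by carrying the spray construction and the $\dibar$-estimates through with continuous dependence on $p\in P$ (keeping the homotopy fixed over the subset $Q$ where the maps are already holomorphic) and with vanishing to the required order along $X'$, which amounts to solving the auxiliary $\dibar$-problems with parameters and with prescribed jets along the subvariety.

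\smallskip

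Finally, I would assemble the countably many approximations over the exhausting sublevel sets into a single map on all of $X$, using the uniform estimates to guarantee convergence on compacts; the limit $f_1$ is holomorphic on $X$, agrees with $f$ on $X'$, approximates $f$ on $K$, and is homotopic to $f$ through the concatenation of the deformations constructed at each inductive step. For the parametric version the same scheme runs with all objects depending continuously on $p$, and the compactness of $P$ ensures that finitely many local spray trivializations suffice at each stage, so the estimates remain uniform in the parameter.
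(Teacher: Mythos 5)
Your outline reproduces, at the level of its main steps, the standard proof scheme of Theorem 5.4.4 in \cite{Forstneric2017E}, which is exactly what the paper cites for this statement (the survey itself gives no proof): induction over sublevel sets of a strongly plurisubharmonic exhaustion, noncritical levels crossed via Cartan pairs with convex bumps where CAP supplies the local approximation and the gluing is done by splitting transition maps of dominating sprays through a bounded $\dibar$-type Cousin problem, critical levels crossed by attaching a totally real handle and using the continuous map as topological input, with the parametric and interpolation refinements carried through the same induction. Since this is essentially the same approach as the cited proof, the only remark needed is that your sketch is correct as a blueprint, with the substantive work residing in the gluing (splitting) lemma and in the passage from nonparametric CAP to the parametric statement, both of which you have correctly identified as the technical core.
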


It follows that the natural inclusion $\Oscr(X,Y)\hra \Cscr(X,Y)$ of the space
of holomorphic maps $X\to Y$ into the space of continuous maps is a weak homotopy equivalence
when $X$ is a Stein manifold and $Y$ is an Oka manifold. 
The classical Oka--Grauert theory fits this framework since complex
homogeneous manifolds are easily seen to be Oka, which implies the main results
of the Oka--Grauert theory. 

The proof of \cite[Theorem 5.4.4]{Forstneric2017E} and of related results in the cited 
monograph (see in particular \cite[Theorems 6.2.3 and 7.2.1]{Forstneric2017E}) 
also give the following result which is useful in applications; for simplicity we only 
state the basic case without parameters.

%
%   MAIN THEOREM 2
%
\begin{theorem}\label{th:Oka2}
Assume that $X$ is a reduced Stein space, $K$ is a compact $\Oscr(X)$-convex set in $X$, 
$X'$ is closed complex subvariety of $X$, $\Omega$ is an Oka domain in a complex manifold $Y$, 
and $f:X\to Y$ is a continuous map which is holomorphic on a neighbourhood of $K$ and on $X'$ such that 
$
	f(\overline{X\setminus K}) \subset \Omega.
$  
Then there is a homotopy $f_t:X\to Y$ $(t\in [0,1])$ connecting $f=f_0$ to a holomorphic map $f_1:X\to Y$ 
satisfying the conclusion of Theorem \ref{th:Oka} and also $f_t(\overline{X\setminus K}) \subset \Omega$
for all $t\in[0,1]$.
\end{theorem}

These results show that the CAP axiom in Definition \ref{def:Oka} localizes the Oka property 
of a complex manifold $Y$ to the Runge approximation problem for holomorphic maps from simplest possible domains 
to $Y$, namely, the convex domains in Euclidean spaces. This is often easier to verify, and it 
led to several new examples and constructions of Oka manifolds described in \cite{Forstneric2017E}.
It shows in particular that the class of Oka manifolds is invariant with respect to holomorphic 
fibre bundle projections with Oka fibres; see Theorem \ref{th:updown}. %\cite[Theorem 5.6.5]{Forstneric2017E}.

It is worth mentioning that every Oka manifold $Y$ is the image  of a strongly dominating 
holomorphic map $\C^n\to Y$ with $n=\dim Y$ (see \cite[Theorem 1.1]{Forstneric2017Indam}). Hence, when trying to decide which domains
$\Omega\subset\C^n$ $(n>1)$ are Oka, the first quintessential question is
to understand the shapes of images of nondegenerate entire maps $\C^n\to\C^n$.

Oka properties are also considered for holomorphic maps. The notion of an {\em Oka map} 
(see Definition \ref{def:Okamap}) was introduced by Finnur L\'arusson, 
who developed a model category for Oka theory \cite{Larusson2004,Larusson2005} 
in which Oka maps are fibrations and Stein inclusions are cofibrations. 
In particular, a complex manifold $Y$ is Oka if and only if the constant map 
$Y\to \mathrm{point}$ is an Oka map, and every holomorphic fibre bundle map with 
an Oka fibre is an Oka map. Modern Oka theory may thus be summarized as follows:  

\smallskip
{\em Analytic problems on Stein manifolds which can be formulated in terms of maps to
Oka manifolds, or liftings with respect to Oka maps, have only topological obstructions.}
\smallskip

Oka theory is an existence theory, providing solutions to a variety of complex analytic problems 
on Stein manifolds in the absence of topological obstructions.
On the other hand, Kobayashi hyperbolicity and related rigidity properties 
play the role of holomorphic obstruction theory. These two theories complement one another. 
Many challenging complex analytic problems lie in-between these two fields where there are no obvious obstructions to  
the existence of solutions, yet rigidity obstructions appear when trying to solve them. 

The state of the art of Oka theory up to 2017 is summarized in the monograph \cite{Forstneric2017E} 
and the  surveys \cite{Forstneric2013KAWA,ForstnericLarusson2011}; a brief historical account is included in 
Section \ref{sec:history}. In the remainder of the paper we discuss the main developments since 2017
and prove several new results. 

A series of major results by Yuta Kusakabe is described in Sections 
\ref{sec:elliptic}-\ref{sec:complements}. 
They provide a further unification of Oka theory through the axiom $\Ell1$
(see Definition \ref{def:elliptic} and Theorem \ref{th:K2018}), and they 
yield new constructions and a variety of new examples of Oka manifolds and Oka maps. 
Kusakabe showed that the Oka condition is Zariski local; see Theorem \ref{th:localization}.
He also proved that the complement $\C^n\setminus K$ of any compact polynomially 
convex subset $K\subset\C^n$ for $n>1$ is an Oka manifold; see Theorem \ref{th:complement}. 
The same holds in any Stein manifold having Varolin's density property introduced in 
\cite{Varolin2001}. Such manifolds share many global complex analytic properties with 
Euclidean spaces; see \cite{AndristForstnericRitterWold2016,Forstneric2019JAM,ForstnericKutzschebauch2022,ForstnericWold2020PAMS,Kutzschebauch2020} among others.

In Sections \ref{sec:complements} and \ref{sec:CPn} we describe several new examples 
of Oka manifolds. We show in particular that for a compact polynomially convex set $K$ in 
$\C^n$, $n>1$, and considering $\C^n$ as an affine chart in the projective space $\CP^n$, 
the complement $\CP^n\setminus K$ is Oka as well; see Corollary \ref{cor:complementCPn}. 
Under a mild additional assumption, the same holds for complements (in $\C^n$ and $\CP^n$) 
of compact sets of the form $C\cup K$, where $K$ is a compact polynomially convex set 
in $\C^n$ and $C$ is contained in a compact connected set 
of finite length; see Theorems \ref{th:CK} and \ref{th:CK-CPn}. 
In particular, if $C$ is a rectifiable Jordan curve in $\C^n$ for $n>1$ 
then $\C^n\setminus C$ and $\CP^n\setminus C$ are Oka manifolds.
Furthermore, the complement of any closed strictly convex set in $\C^n$ $(n>1)$ is Oka; 
see Theorem \ref{th:convexnoline} due to Wold and the author \cite{ForstnericWold2022Oka}. 
 
In Section \ref{sec:aOka} we describe recent progress in the study of Oka theory for  
algebraic maps from affine algebraic varieties into algebraic manifolds, due to 
L\'arusson and Truong \cite{LarussonTruong2019}, 
Kusakabe \cite{Kusakabe2020IJM,Kusakabe2021JGA,Kusakabe2022surjective,Kusakabe2022pi1}, 
Bochnak and Kucharz \cite{BochnakKucharz2020}, and 
Kaliman and Zaidenberg \cite{KalimanZaidenberg2023}.  

In another direction, Luca Studer extended Oka theory to certain 
Oka pairs of sheaves \cite{Studer2021APDE}, thereby generalizing the work of 
Forster and Ramspott \cite{ForsterRamspott1966IM1} from 1966. He also developed an abstract 
homotopy theorem based on Oka theory \cite{Studer2020MA}. See Section \ref{sec:Studer}. 

In Section \ref{sec:Carleman} we describe new approximation theorems of 
Carleman and Arakelian type for maps to Oka manifolds, due to
Brett Chenoweth \cite{Chenoweth2019PAMS} and the author \cite{Forstneric2019MMJ}. 

In Section \ref{sec:DG} we present a generalization % from \cite{Forstneric2022JMAA} 
of the Docquier--Grauert tubular neighbourhood theorem for Stein manifolds 
(see Theorem \ref{th:DG}),  and an application to the construction of large Euclidean domains 
in complex manifolds; see Theorem \ref{th:Euclidean}. 

In Section \ref{sec:degeneration} we give an example of a Stein submersion $X\to \C$ which 
is a trivial holomorphic fibre bundle with fibre $\C^2$ over $\C^*=\C\setminus\{0\}$ but whose fibre
over $0\in \C$ is the product of the disc with $\C$, so it fails to be Oka 
(see Theorem \ref{th:limitfibre1}). 
This answers a question asked by Takeo Ohsawa in \cite[Q3]{Ohsawa2020MRL} (2020).

In Section \ref{sec:future} we discuss the possible connections between Oka manifolds 
and special manifolds in the sense of Campana, and we pose several problems regarding the 
relationship between Oka properties and positivity of complete K\"ahler metrics. 

The paper contains an Appendix with a summary of results on holomorphic convexity
of compact sets in affine domains in projective spaces, based on Oka's criterion
for polynomial convexity. These results are used in Sections \ref{sec:complements} and \ref{sec:CPn}. 

Among the recent results not discussed in this survey, we mention  
the development of equivariant version of modern Oka theory by Frank 
Kutzschebauch, Finnur L\'arusson, and Gerald Schwarz 
\cite{KutzschebauchLarussonSchwarz2015JRAM,KutzschebauchLarussonSchwarz2017TAMS,KutzschebauchLarussonSchwarz2018MA,KutzschebauchLarussonSchwarz2021JGEA,KutzschebauchLarussonSchwarz2022}.
They introduced the notion of a $G$-Oka manifold where $G$ is a reductive complex Lie group.
Kusakabe (see \cite[Appendix]{Kusakabe2020IJM})
characterized $G$-Oka manifolds by a $G$-equivariant version of his 
characterization of Oka manifolds by condition $\Ell1$; see Definition \ref{def:elliptic} (b). 
A recent survey of this topic is available in \cite{KutzschebauchLarussonSchwarz2022}.

There were important developments in other areas of analysis and 
geometry closely intertwined with
Oka theory. One of them is the Anders\'en--Lempert--Varolin theory, which concerns 
Stein manifolds with large holomorphic automorphism groups.
This subject has a major impact on Oka theory. The main link is provided by 
the fact that every Stein manifold on which complete holomorphic vector fields densely generate 
the Lie algebra of all holomorphic vector fields (Varolin's density property, see \cite{Varolin2001,Varolin2000}) is an Oka manifold, and it is also Oka at infinity
(see Definition \ref{def:Okainfinity}).
Furthermore, the Oka principle holds for {\em proper} holomorphic maps, immersions
and embeddings of Stein manifolds of suitable dimension into such a manifold.
This subject is treated in \cite[Chapter 4]{Forstneric2017E} and in the recent surveys 
\cite{ForstnericKutzschebauch2022,Kutzschebauch2020}. 

Results and methods of Oka theory have lately found new applications. 
Foremost among them pertain to the study of minimal surfaces in Euclidean spaces \cite{AlarconForstneric2019JAMS,AlarconForstnericLopezMAMS,AlarconForstnericLopez2021} 
and holomorphic Legendrian curves in complex contact manifolds 
\cite{AlarconForstnericLopez2017CM,AlarconForstneric2019IMRN,AlarconForstnericLarusson2022GT,ForstnericLarussonIUMJ2022}.
Results on the latter topic were applied to the construction of superminimal surfaces 
in self-dual or anti-self-dual Einstein four-manifolds \cite{Forstneric2021JGA,Forstneric2022AFSTM} 
by exploring the Bryant correspondence, based on Penrose twistor spaces,  
between superminimal surfaces in this class of Riemannian four-manifolds and 
holomorphic Legendrian curves in three-dimensional complex contact manifolds. 
Another recent application are Vaserstein-type results on factorization of holomorphic
maps in the complex symplectic group $Sp_{2n}(\C)$; 
see Ivarsson et al.\ \cite{IvarssonKutzschebauchLow2020} and Schott \cite{Schott2022}. 
Earlier work on the related problem for maps to $SL_n(\C)$ was done by
Ivarsson and Kutzschebauch \cite{IvarssonKutzschebauch2012AM} in 2012. 

These applications might indicate the beginning of 
the development of the Oka principle for {\em holomorphic partial differential relations}. 
I have in mind a range of complex analytic problems where not only values of maps, 
but also their jets must satisfy certain relations. These include holomorphic differential equations
and a variety of open differential conditions. The aforementioned applications to minimal surfaces and 
directed holomorphic curves, such as null curves and Legendrian curves, 
are of this kind. The study of regular holomorphic maps such as immersions and submersions
also fits in this framework. In this direction, the Runge approximation problem for locally biholomorphic 
self-maps of Euclidean spaces $\C^n$ for $n>1$ remains a mystery, and understanding this 
subject would have major implications.

%
%   BRIEF HISTORY 
%
\section{A brief history of Oka theory up to 2017}\label{sec:history}

Oka theory evolved from the works of Kiyoshi Oka \cite{Oka1939} (1939), 
Hans Grauert \cite{Grauert1958MA} (1958), and Mikhail Gromov \cite{Gromov1989} (1989).
The principal motivation behind the works of Grauert and other contributors to the classical
theory, most notably Otto Forster and Karl Josef Ramspott and later also 
Gennadi Khenkin and J\"urgen Leiterer, was to understand the classification of 
principal bundles and their associated bundles (in particular, vector bundles) on Stein spaces. 
The main result of the classical Oka--Grauert theory asserts that the holomorphic classification 
of such bundles agrees with their topological classification. 
Problems of this type typically reduce to questions about maps to classifying spaces, and hence it 
became of interest to understand the class of complex manifolds having the property that
every continuous map from a Stein manifold or a Stein space to the given manifold 
is homotopic to a holomorphic map, with natural additions concerning approximation
on compact holomorphically convex sets and interpolation on closed complex subvarieties. 
Although this observation was known from the beginning, as can be seen 
in particular from Cartan's exposition \cite{Cartan1958} of Grauert's work \cite{Grauert1958MA},
the cohomological viewpoint prevailed in this early period. 

It took almost three decades till Gromov \cite{Gromov1989} proposed a 
more general viewpoint and developed new approaches, thereby releasing the theory 
from the constraints of complex Lie groups and homogeneous manifolds.
The emphasis shifted from the cohomological to the homotopy-theoretic viewpoint. 
Gromov introduced geometric sufficient conditions for validity of the Oka principle 
for maps from Stein manifolds in terms of the existence of dominating holomorphic sprays 
on the target manifold. 
In particular, he introduced the notion of an elliptic complex manifold and of an elliptic submersion, 
and he outlined the proof of the Oka principle under these assumptions. 
The first major application of Gromov's new methods was a solution of the 
optimal embedding problem for Stein manifolds in Euclidean spaces by Eliashberg and 
Gromov in 1992 \cite{EliashbergGromov1992}, with a subsequent improvement for 
odd dimensional manifolds by Sch\"urmann \cite{Schurmann1997}; 
see the exposition in \cite[Secs.\ 9.3--9.4]{Forstneric2017E}. 
Numerous other applications are described in \cite[Chaps. 8--10]{Forstneric2017E}.

After Gromov's seminal paper \cite{Gromov1989}, the first steps 
to understand and develop his ideas were made in my joint papers with Jasna Prezelj 
\cite{ForstnericPrezelj2000,ForstnericPrezelj2001,ForstnericPrezelj2002} during 2000--2002. 
These papers provide detailed proofs and some extensions of the main results from 
\cite{Gromov1989}. A weaker sufficient condition for the Oka principle, subellipticity, 
was already discussed by Gromov and formally introduced in \cite{Forstneric2002MZ}. 
The study of the Oka principle for sections of branched holomorphic maps was initiated 
in \cite{Forstneric2003FM}. In the same period, Finnur L\'arusson developed an abstract 
homotopy-theoretic approach which culminated in his construction of a model category 
for Oka theory; see \cite{Larusson2003,Larusson2004,Larusson2005}, 
\cite[Appendix]{Forstneric2013KAWA}, and \cite[Sect.\ 7.5]{Forstneric2017E}.

Subsequent developments focused on finding necessary and sufficient conditions 
on a complex manifold $Y$ to satisfy the Oka principle for maps $X\to Y$ 
from Stein manifolds and Stein spaces. Gromov asked in \cite{Gromov1989}
whether a Runge approximation property for maps from simple domains 
in Euclidean spaces might suffice. This question was answered affirmatively 
in my paper \cite{Forstneric2006AM} in 2006. In this paper, the convex approximation property 
(CAP) of a complex manifold $Y$ was introduced (see Definition \ref{def:Oka}), 
and it was shown that CAP implies the basic Oka property with approximation for maps 
from Stein manifolds to $Y$. Interpolation on closed complex subvarieties was added 
in \cite{Forstneric2005AIF}. It took a few more years to understand that CAP also implies 
the parameteric Oka properties \cite{Forstneric2009CR}, and that various Oka-type conditions 
considered in the literature, including their parameteric versions, are pairwise equivalent. 
This motivated the introduction of the class of Oka manifolds as complex manifolds satisfying 
all these equivalent conditions; see \cite{Forstneric2009CR}. 
The Oka property for sections of stratified subelliptic submersions onto reduced Stein 
spaces was established in \cite{Forstneric2010PAMQ} (2010), 
extending the result of Gromov \cite{Gromov1989} for sections of elliptic submersions
onto Stein manifolds. The stratified case was first considered with lesser precision in 
\cite[Sect.\ 7]{ForstnericPrezelj2001}. Most known applications of the Oka principle
are in the context of stratified (sub-) elliptic submersions. 

The related concept of an {\em Oka map} was introduced by L\'arusson \cite{Larusson2004}
in 2004; see also \cite{Forstneric2010CR}. 
A holomorphic map $Z\to Y$ between complex manifolds is said to be an Oka map if it is a 
topological fibration and it enjoys the Oka properties for lifts of holomorphic maps $f:X\to Y$
from reduced Stein spaces $X$ to holomorphic maps $F:X\to Z$; see Definition \ref{def:Okamap}. 
(By a topological fibration, we mean a Serre fibration or a Hurewicz fibration; these conditions
are equivalent for maps between manifolds, and they refer to the homotopy lifting property.)
For a precise definition, see L\'arusson \cite{Larusson2004} and 
\cite[Definition 7.4.7]{Forstneric2017E}. In particular, a complex manifold $Y$ is an 
Oka manifold if and only if the map $Y\to \mathrm{point}$ is an Oka map. 

These developments are summarized in the two editions (2011, 2017) of my monograph 
\cite{Forstneric2011E,Forstneric2017E} and in the surveys 
\cite{ForstnericLarusson2011,Forstneric2013KAWA}. 
The remainder of the article is mainly devoted to the exposition
of results obtained after 2017. Among them, we emphasize new characterizations of 
Oka manifolds and Oka maps due to Yuta Kusakabe \cite{Kusakabe2021IUMJ,Kusakabe2021MZ}; 
see Section \ref{sec:elliptic}. The ellipticity condition involved in his characterizations was 
introduced by Gromov \cite{Gromov1989} in 1989. 

Oka manifolds are the very opposite of Kobayashi hyperbolic manifolds, 
the latter not admitting any nonconstant holomorphic maps from $\C$. 
A majority of complex manifolds have at least some holomorphic rigidity.  
This holds in particular for compact complex manifolds of general type --- these are not 
dominable by Euclidean spaces according to 
Kobayashi and Ochiai \cite{KobayashiOchiai1975}, and hence are not Oka.
For a long time it seemed that Oka manifolds are few and very special.
However, it recently became clear that they are much more plentiful than previously thought, 
at least among noncompact complex manifolds; 
see Sections \ref{sec:complements} and \ref{sec:CPn}. 
These results opened new vistas of possibilities that remain to be fully explored.

%%%%%%%%%%%%%%%%%%%%%%%%%%%
%
%	ELLIPTIC characterizATION OF OKA MANIFOLDS
%
%%%%%%%%%%%%%%%%%%%%%%%%%%%

\section{Elliptic characterization of Oka manifolds and Oka maps} \label{sec:elliptic}
In this section we present a new conceptual unification of Oka theory, due to Yuta Kusakabe 
\cite{Kusakabe2021IUMJ} (2021). He proved that a restricted version 
Gromov's ellipticity condition $\Ell1$ for holomorphic maps from compact convex sets 
in Euclidean spaces to a given complex manifold $Y$ implies the convex approximation 
property (CAP) of $Y$; see Definition \ref{def:elliptic} (b) and Theorem \ref{th:K2018}.
It has been known since 2009 \cite{Forstneric2009CR} that CAP is equivalent 
to the validity of all Oka properties of $Y$ (see also \cite[Theorem 5.4.4]{Forstneric2017E}), 
and that it also implies condition $\Ell1$. This provides an affirmative answer to a question 
of Gromov \cite[p.\ 72]{Gromov1986}. 
Another result of Kusakabe \cite{Kusakabe2021MZ} gives the analogous 
characterization of Oka maps by convex ellipticity; see Theorem \ref{th:ellipticmap}.
An important consequence is a localization theorem for Oka manifolds 
(see Theorem \ref{th:localization}), which has already led to many new examples. 
A fascinating application of these new techniques is Kusakabe's result that the complement 
of every compact polynomially convex set in $\C^n$ for $n>1$ is Oka 
(see Section \ref{sec:complements}). 
Furthermore, it was recently shown by Wold and the author \cite{ForstnericWold2022Oka} 
that for most closed convex set $E\subset \C^n$ $(n>1)$ which are smaller than a halfspace, 
the complement $\C^n\setminus E$ is an Oka domain (see e.g.\ Theorem \ref{th:convexnoline}). 
In particular, there are concave Oka domains in $\C^n$ for $n>1$ which are only slightly bigger 
than a halfspace. Results of this kind seemed totally unimaginable a few years ago.

%
%	 ELLIPTICITY CONDITIONS
%
\subsection{Ellipticity conditions}
In \cite{Gromov1986,Gromov1989} Gromov introduced several ellipticity conditions for 
complex manifolds and holomorphic maps, which provide geometric sufficient conditions for 
Oka properties. These conditions are based on the notion of a dominating spray, 
a prime example of which is the exponential map on a complex Lie group. 

Let $X$ and $Y$ be complex manifolds. 
A {\em (holomorphic) spray of maps} $X\to Y$ is a holomorphic map $F:X\times \C^N\to Y$
for some $N\in\N$. The map $f=F(\cdotp,0):X\to Y$ is the {\em core} of $F$, and 
$F$ is called a {\em spray over $f$}. The spray $F$ is said to be {\em dominating} if 
\[
	\frac{\di}{\di w}\Big|_{w=0}F(x,w):\C^N\to T_{f(x)}Y
	\ \ \text{is surjective for every}\ \ x\in X.
\]
More generally, $F$ is dominating on a subset $U\subset X$ if the above condition holds for 
every point $x\in U$. A more general type of a spray is a holomorphic map $F:E\to Y$ from the 
total space $E$ of a holomorphic vector bundle $\pi:E\to X$; its core is the restriction of $F$ 
to the zero section of $E$ (which we identify with $X$), and the domination condition is defined in 
the same way by considering the derivative in the fibre direction.
In particular, a dominating spray on a complex manifold $Y$ over the identity map $\Id_Y$ is 
a holomorphic map $F:E\to Y$ from the total space of a holomorphic vector bundle $E\to Y$ such that 
\begin{equation}\label{eq:sprayonY}
	F(0_y)=y\ \ \text{and}\ \ dF_{0_y}(E_y) = T_y Y\ \ \text{for every $y\in Y$.}
\end{equation}
If $Y$ is a homogeneous manifold of a complex Lie group $G$
and $\mathrm e:\mathfrak g\to G$ is the exponential map on $G$, then
the map $Y\times \mathfrak g\to Y$ given by $(y,v)\mapsto \mathrm e^v y$ is a 
dominating spray on $Y$.

\newpage 

%
%   ELLIPTIC MANIFOLDS
%
\begin{definition}\label{def:elliptic}
Let $Y$ be a complex manifold.
\begin{enumerate}[\rm (a)]
\item (Gromov \cite[0.5, p.\ 8.5.5]{Gromov1989}; see also \cite[Definition 5.6.13]{Forstneric2017E}.)
The manifold $Y$ is {\em elliptic} if it admits a dominating holomorphic spray $F:E\to Y$ over $\Id_Y$,
and is {\em special elliptic} if such a spray exists on a trivial bundle $E=Y\times \C^N$. 
\item (Gromov \cite[p.\ 72]{Gromov1986}.) 
The manifold $Y$ enjoys condition $\Ell1$ if every holomorphic map $X\to Y$ from a Stein manifold 
is the core of a dominating holomorphic spray $X\times \C^N\to Y$.
\item The manifold  $Y$ enjoys condition \CEll1 if for every compact convex set $K\subset \C^n$
$(n\in\N)$, open set $U\subset \C^n$ containing $K$, and holomorphic map $f:U\to Y$ 
there are an open set $V$ with $K\subset V\subset U$ and a dominating holomorphic spray 
$F:V\times \C^N\to Y$ over $f|_V$. 
\end{enumerate}
\end{definition}

Every elliptic Stein manifold $Y$ is also special elliptic. Indeed, by an extension of 
Cartan's Theorem A (see Forster \cite[Corollary 4.4]{Forster1967MZ} or Kripke \cite{Kripke1969}), 
every holomorphic vector bundle $\pi:E\to Y$ over a Stein manifold admits finitely
many (say $N$) holomorphic sections which span the fibre $E_y=\pi^{-1}(y)$ over each point $y\in Y$. 
This gives a surjective holomorphic vector bundle map $\phi:Y\times \C^N\to E$, and  
precomposing a dominating spray $F:E\to Y$ by $\phi$ yields a dominating spray 
$Y\times\C^N\to Y$. This fails on non-Stein manifolds: every compact 
special elliptic manifold is complex homogeneous (see \cite[Proposition 6.2]{Forstneric2019MMJ}).

Condition $\Ell1$ obviously implies \CEll1, the latter being a restricted version 
of $\Ell1$ applied to compact convex sets in Euclidean spaces, and we ask that a dominating spray 
exists over a smaller neighbourhood of the set (this comes handy in applications).  

One of Gromov's main results in \cite{Gromov1989} is that every elliptic manifold is Oka
(see also \cite[Corollary 5.6.14]{Forstneric2017E}). In fact, ellipticity easily implies CAP 
(see Definition \ref{def:Oka}); this is a special case of \cite[Theorem 6.6.1]{Forstneric2017E}
which gives a Runge approximation theorem for homotopies of holomorphic maps.
Conversely, every Stein Oka manifold is easily seen to be elliptic 
\cite[Proposition 5.6.15]{Forstneric2017E}. 
Kusakabe \cite{Kusakabe2020PAMS} gave the first known examples of Oka manifolds 
which fail to be elliptic or even just (weakly) subelliptic, 
thereby negatively answering a question on Gromov. 
(See the discussion on \cite[p.\ 325]{Forstneric2017E}.)  % Kusakabe's 
The results in Sections \ref{sec:complements} and \ref{sec:CPn} provide 
a plethora of such examples. However, the following problem seems to remain open.

\begin{problem}\label{prob:Okaelliptic}
Is there a compact Oka manifold which fails to be elliptic or subelliptic?
\end{problem}

%
%   SUBSECTION: Ell_1
%
\subsection{Characterization of Oka manifolds by condition $\Ell1$} \label{ss:Ell1}
It is easily seen that every Oka manifold satisfies condition $\Ell1$
(see \cite[Corollary 8.8.7]{Forstneric2017E}).  
It came as a genuine surprise that the converse holds as well. 
The following result is due to Kusakabe \cite[Theorem 1.3]{Kusakabe2021IUMJ}.

%
%   KUSAKABE 2018
%
\begin{theorem}\label{th:K2018}
A complex manifold which satisfies condition \CEll1 is an Oka manifold.
Hence, the following conditions on a complex manifold are equivalent:
\[
	\text{\rm{Oka}}\ \Longleftrightarrow \ \Ell1 \ \Longleftrightarrow \ \text{\CEll1}.
\]
\end{theorem}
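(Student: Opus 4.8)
The three implications split as follows: the equivalence of $\Ell1$ and \CEll1, and the implication Oka~$\Rightarrow\Ell1$, are essentially for free, while the real content is the implication \CEll1~$\Rightarrow$~Oka. That $\Ell1$ implies \CEll1 is immediate from the definitions, as already noted, and every Oka manifold satisfies $\Ell1$ by \cite[Corollary 8.8.7]{Forstneric2017E}. So the plan is to establish that \CEll1 implies the convex approximation property of Definition \ref{def:Oka}.

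First I would reduce CAP to a single \emph{convex enlargement} step together with a limit. Fix a holomorphic map $f$ on a neighbourhood of a compact convex set $K\subset\C^n$ and an exhaustion $K=L_0\Subset L_1\Subset L_2\Subset\cdots$ of $\C^n$ by compact convex sets. Granting the enlargement lemma — any holomorphic map defined near a compact convex set $A$ can be approximated, uniformly on $A$, by a holomorphic map defined near any prescribed larger compact convex set $A'$ — one builds inductively maps $f_j$ near $L_j$ with $f_{j+1}$ close to $f_j$ on $L_j$ and $f_0=f$, and a standard telescoping estimate makes $f_j$ converge to an entire map $g\colon\C^n\to Y$ with $\sup_K\dist(f,g)<\varepsilon$. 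Every set in sight is convex, hence contractible and polynomially convex, so there are no topological obstructions and the Cartan pairs needed below are as elementary as possible; moreover only the \emph{basic} (non-parametric) approximation is required, since Definition \ref{def:Oka} tests CAP on convex sets, the passage to the full parametric Oka property being a separate matter handled by Theorem \ref{th:Oka}.

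The heart is the enlargement lemma, and the idea is to \emph{linearise} the nonlinear target using the spray furnished by \CEll1. Apply \CEll1 to the current map $f_j$ over the convex set $A=L_j$ to obtain a dominating spray $F\colon V\times\C^N\to Y$ over $f_j|_V$ on a convex neighbourhood $V$ of $A$. Domination means that $\frac{\di}{\di w}\big|_{w=0}F(x,w)$ is onto $T_{f_j(x)}Y$, so $(x,w)\mapsto(x,F(x,w))$ is a submersion along $V\times\{0\}$ and $F$ supplies holomorphic local right inverses near the graph of $f_j$. One then enlarges the domain by attaching a thin convex bump $B$ with $A\cup B$ and $A\cap B$ convex: pick any continuous extension of $f_j$ over $B$ (free, since $B$ and $A\cap B$ are contractible and we are unconstrained away from $A$), split the resulting transition datum over the convex overlap $A\cap B$ by the additive Cartan lemma for maps into the linear fibre $\C^N$, and push the split pieces back into $Y$ through $F$ to glue a single holomorphic map on a neighbourhood of $A\cup B$ that is close to $f_j$ on $A$. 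Finitely many such bumps carry $A=L_j$ to $A'=L_{j+1}$.

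The main obstacle is exactly this gluing step: because $F$ is only a \emph{local} spray over the current map and not a global spray on $Y$, the fibre splitting must be pushed through $F$ by a Newton-type iteration whose convergence has to be controlled quantitatively. One uses the domination together with the implicit function theorem with uniform estimates over the convex sets, and keeps the bumps small so that the nonlinear gluing operator is a contraction. Tracking these estimates so that the enlargement is genuinely uniformly close on $A$, and so that the telescoping limit converges to an entire map, is the technical core; all the underlying linear approximation and splitting results are available with good constants precisely because every set involved is convex. Once the enlargement lemma is in place with such estimates, the induction and the limit finish the proof that \CEll1~$\Rightarrow$~Oka, closing the cycle of equivalences.
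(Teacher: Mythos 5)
Your outer scaffolding is fine: the implications Oka $\Rightarrow$ $\Ell1$ $\Rightarrow$ \CEll1 are indeed free, and the telescoping exhaustion reducing CAP to a single enlargement step is standard. The genuine gap is in the bump step, and it sits exactly at the crux of the theorem. To glue over the Cartan pair $(A,B)$ you need \emph{two holomorphic maps} --- one on a neighbourhood of $A$, one on a neighbourhood of $B$ --- that are uniformly close on $A\cap B$; only then is there a holomorphic transition datum (obtained through the fibrewise biholomorphism $(z,w)\mapsto(z,F(z,w))$) to which the additive Cartan lemma or a Newton-type iteration can be applied. Your proposal supplies only a \emph{continuous} extension of $f_j$ over $B$. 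A continuous transition datum cannot be split holomorphically, and the spray $F$ cannot repair this: it is defined only over a neighbourhood $V$ of $A$, so it parametrizes holomorphic perturbations of $f_j$ over $V$ and says nothing over $B\setminus V$. Producing a holomorphic map near $B$ that is close to $f_j$ on $A\cap B$ is itself a convex-pair approximation problem of precisely the kind you are trying to prove (for the pair $A\cap B\subset B$), so the induction as set up is circular: condition \CEll1 gives dominating sprays over maps that already exist, but by itself it contains no extension or approximation statement that could seed the bump.

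This is exactly where the paper's proof (Kusakabe's) takes a different and non-circular route. It first reduces, via Lemma \ref{lem:SCAP}, to approximation over special polyhedral pairs $K\subset L$, and then, instead of an exhaustion-and-bump induction, runs a continuity method in the space of germs $\Oscr(K,Y)$: the set $\Acal$ of germs approximable by maps in $\Oscr(L,Y)$ is nonempty (constant maps extend), closed, and --- the key step --- open, hence equals $\Oscr(K,Y)$ by connectedness. In the openness step the holomorphic map on the part of $L$ away from $K$, i.e.\ the object your construction is missing, is not built by extension at all: it is a map $g\in\Oscr(\Omega,Y)$ with $\Omega\supset L$ which exists \emph{because} $f\in\Acal$, that is, because $f$ is already assumed approximable. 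One then places a local dominating spray over $g$, compares it with the spray over $f$ through $(z,w)\mapsto(z,F(z,w))$, and glues the two sprays over a Cartan pair inside $L$; the conclusion is that every germ sufficiently close to $f$ on $K$ again lies in $\Acal$. To salvage your scheme you would have to replace the bump induction by some mechanism of this sort that furnishes holomorphic data on the enlarged set, since no amount of quantitative control over the gluing operator can compensate for the absence of a holomorphic map on $B$ in the first place.
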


It follows that conditions $\Ell1$, Ell$_2$ and Ell$_\infty$ introduced by 
Gromov in \cite{Gromov1989} are pairwise equivalent, 
and they characterize the class of Oka manifolds. See also 
\cite[Conjecture 4.6 and Corollary 4.7]{Kusakabe2021IUMJ} for a more precise
description of Gromov's conjectures. 

Theorem \ref{th:K2018} enables the construction of many new examples of Oka manifolds;
see in particular Theorem \ref{th:localization} and the examples in Sections \ref{sec:complements}
and \ref{sec:CPn}. The main point is that it is often easier to construct sprays whose domain is 
a Stein manifold, or even just a convex domain in a Euclidean space, rather than a 
general complex manifold. 

%
%   O(K,Y)
%
Due to its importance, we include a proof of Theorem \ref{th:K2018}. The main point is to show
that  \CEll1 implies CAP; the rest follows from previously known results
(see Theorem \ref{th:Oka}).

We begin with preparations. Given a compact set $K$ in a complex manifold $X$ 
and a complex manifold $Y$, we denote by $\Oscr(K,Y)$ the space of germs on $K$ of 
holomorphic maps from open neighbourhoods $U\subset X$
of $K$ to $Y$. Thus, $\Oscr(K,Y)$ is the colimit (also called the direct limit) of the system $\Oscr(U,Y)$ over 
open sets $U\subset X$ containing $K$,  with the natural restrictions maps 
$r_{U,V}:\Oscr(V,Y)\to \Oscr(U,Y)$ given for any pair $U\subset V$ by $r_{U,V}(f)=f|_U$. 
The space $\Oscr(K,Y)$ carries the colimit topology defined as follows. 
Fix a distance function $\dist$ on $Y$ inducing the natural manifold topology. 
A basic open neighbourhood of an element of $\Oscr(K,Y)$, 
represented by a map $f\in \Oscr(U,Y)$, is a set of the form
\begin{equation}\label{eq:basicnbd}
	\Vscr(f,U',K',\epsilon) = \bigl\{g\in \Oscr(U',Y): \sup_{z\in K'}\dist(f(z),g(z)) <\epsilon\bigr\}
\end{equation}
where $K'$ is a compact set containing $K$ in its interior, $U'$ is an open set with 
$K'\subset U'\Subset U$, and $\epsilon>0$. Equivalently, let $(U_k)_{k\geq 1}$ be a decreasing 
basis of open neighbourhoods of $K$ such that $U_{k+1}$ is relatively compact in $U_k$ 
for all $k\geq 1$. The colimit topology on $\Oscr(K,Y)$ is the finest topology that makes all maps 
$\Oscr(U_k,Y) \to \Oscr(K, Y)$ continuous. By saying that a map $K\to Y$ is holomorphic,
we mean that it belongs to $\Oscr(K, Y)$.

A {\em (convex) polyhedron} in $\R^N$ 
is a compact set  which is the intersection of finitely many closed affine half-spaces.
Recall the following definition (cf.\ \cite[Definition 5.15.3]{Forstneric2017E}).

%
%  Definition: Special polyhedral pair
%
\begin{definition}\label{def:SPP}
A pair $K\subset L$ of compact convex sets in $\R^N$ is a {\em special polyhedral pair}
if $L$ is a polyhedron and $K=\{z\in L : \lambda(z)\le 0\}$ for some affine linear function 
$\lambda\colon\R^N\to\R$.
\end{definition}

The following observation is due to Kusakabe \cite{Kusakabe2017} 
(see \cite[Lemma 5.15.4]{Forstneric2017E}). 

%
%   CAP FOR POLYHEDRAL PAIRS
%  
\begin{lemma}\label{lem:SCAP}
Suppose that $Y$ is a complex manifold such that for each
special polyhedral pair $K\subset L$ in $\C^n$ $(n\in \N)$, every holomorphic map $K \to Y$ 
can be approximated uniformly on $K$ by holomorphic maps $L\to Y$. Then $Y$ enjoys {\CAP}  
and hence is an Oka manifold.
\end{lemma}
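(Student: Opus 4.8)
The plan is to deduce the convex approximation property (CAP) of Definition \ref{def:Oka} directly from the hypothesis, since CAP is the definition of Oka. So I fix a compact convex set $K\subset\C^n$, an open set $U\supset K$, a holomorphic map $f\colon U\to Y$, and $\epsilon>0$, and aim to produce an entire map $F\colon\C^n\to Y$ with $\sup_K\dist(F,f)<\epsilon$. First I would replace $K$ by a convex polyhedron: as $K$ is compact and convex it is the intersection of all closed half-spaces containing it, so by compactness finitely many of them meet in a polyhedron $P_0$ with $K\subset P_0\Subset U$; approximating $f$ uniformly on $P_0$ then implies approximation on the smaller set $K$. Writing $P_0=\bigcap_{i=1}^r\{\ell_i\le c_i\}$ with $\ell_i$ real affine-linear on $\C^n=\R^{2n}$, I set $P^{(t)}=\bigcap_{i=1}^r\{\ell_i\le c_i+t\}$ for $t\ge 0$. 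These are nested, and $\bigcup_{t\ge 0}P^{(t)}=\C^n$ since any fixed point satisfies all $r$ inequalities once $t$ is large, so $P_j:=P^{(j)}$ ($j=0,1,2,\dots$) is an exhaustion of $\C^n$ by compact convex polyhedra.

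The geometric heart is that passing from $P_j$ to $P_{j+1}$ decomposes into $r$ elementary steps, each a special polyhedral pair obtained by relaxing one defining half-space. For $0\le s\le r$ put
\[
	Q_{j,s}=\bigcap_{i\le s}\{\ell_i\le c_i+(j+1)\}\cap\bigcap_{i>s}\{\ell_i\le c_i+j\},
\]
so that $Q_{j,0}=P_j$, $Q_{j,r}=P_{j+1}$, and $Q_{j,0}\subseteq Q_{j,1}\subseteq\cdots\subseteq Q_{j,r}$. The sets $Q_{j,s-1}$ and $Q_{j,s}$ differ only in the $s$-th constraint, and with the affine-linear function $\lambda_s:=\ell_s-(c_s+j)$ one has $Q_{j,s-1}=Q_{j,s}\cap\{\lambda_s\le 0\}$; thus $(Q_{j,s-1},Q_{j,s})$ is a special polyhedral pair in the sense of Definition \ref{def:SPP}. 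The one point requiring care is compactness of the larger members: the recession cone of $Q_{j,s}$ equals $\{v:\ell_i^{\mathrm{lin}}(v)\le 0\text{ for all }i\}$, which depends only on the normals and not on the offsets, hence coincides with the recession cone of the bounded set $P_0$ and is therefore trivial. So each $Q_{j,s}$ is a bounded (hence compact) polyhedron, and the hypothesis of the lemma applies to every pair $(Q_{j,s-1},Q_{j,s})$.

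It then remains to run the standard iteration with summable errors. Starting from $g_0=f$ (holomorphic near $P_0$), suppose inductively that $g_j$ is holomorphic on a neighbourhood of $P_j$. Within the $j$-th block I would apply the hypothesis successively to $(Q_{j,0},Q_{j,1}),(Q_{j,1},Q_{j,2}),\dots,(Q_{j,r-1},Q_{j,r})$, at step $s$ approximating the current map $h_{s-1}$ uniformly on $Q_{j,s-1}$ by a map $h_s$ holomorphic near $Q_{j,s}$ with $\sup_{Q_{j,s-1}}\dist(h_s,h_{s-1})<\delta_s$, and set $g_{j+1}=h_r$. Since $P_j=Q_{j,0}\subseteq Q_{j,s-1}$ for every $s$, the triangle inequality for $\dist$ telescopes to $\sup_{P_j}\dist(g_{j+1},g_j)\le\sum_{s=1}^r\delta_s<\epsilon_j$ for any prescribed $\epsilon_j>0$. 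Choosing $\sum_j\epsilon_j<\epsilon$, the maps $g_j$ are uniformly Cauchy on each fixed $P_{j_0}$, and because every compact subset of $\C^n$ lies in some $\mathrm{int}\,P_{j_0}$ they converge locally uniformly to a map $F$ that is holomorphic on $\C^n$ and satisfies $\sup_K\dist(F,f)\le\sup_{P_0}\dist(F,f)<\epsilon$. This establishes CAP, and hence $Y$ is Oka by Definition \ref{def:Oka}.

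The hard part will be exactly the geometric decomposition of the second paragraph together with its compactness bookkeeping: one must check that relaxing a single supporting half-space yields a genuine special polyhedral pair whose larger member remains compact, which is precisely where the recession-cone observation is needed. By contrast, the iterative approximation with telescoping error control in the last paragraph is routine Oka-theoretic bookkeeping, relying only on the triangle inequality for $\dist$ and on the fact that a locally uniform limit of holomorphic maps into $Y$ is holomorphic.
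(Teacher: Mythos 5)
This lemma is stated in the paper without proof (it is quoted from Kusakabe \cite{Kusakabe2017}; see also \cite[Lemma 5.15.4]{Forstneric2017E}), and your argument is correct and is essentially the standard proof from those sources: enclose $K$ in a compact convex polyhedron $P_0\Subset U$, exhaust $\C^n$ by compact convex polyhedra obtained by relaxing one defining half-space at a time so that consecutive sets form special polyhedral pairs (compactness being preserved because the recession cone is unchanged), and iterate the hypothesis with summable errors. Two points are glossed and should be stated more carefully, though neither is a genuine gap: the existence of $P_0$ does not follow from compactness of $K$ alone, since finitely many half-spaces containing $K$ may have unbounded intersection --- one should separate $K$ from the boundary of a bounded convex neighbourhood $V$ with $\overline V\subset U$ and use compactness of $\partial V$; and the passage from uniformly Cauchy to uniformly convergent requires $\dist$ to be a complete distance function on $Y$, which one may always arrange because every manifold admits a complete metric inducing its topology.
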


%
%
%     PROOF OF KUSAKABE'S THEOREM ON ELL1
%
%
\begin{proof}[Proof of Theorem \ref{th:K2018}]
Let $K\subset L$ be a special polyhedral pair in $\C^n$. 
Since $K$ is convex and $Y$ is connected, the space $\Oscr(K,Y)$ is connected. 
Indeed, every map $f\in\Oscr(K,Y)$ is homotopic to the constant map $K\to f(p)$ for any $p\in K$. 
Denote by $\Acal$ the set of all $f\in \Oscr(K,Y)$ which can be approximated uniformly 
on $K$ by maps in $\Oscr(L,Y)$. Clearly $\Acal$ is nonempty (since it contains constant maps)
and closed in $\Oscr(K,Y)$. In view of Lemma \ref{lem:SCAP} and connectedness 
of $\Oscr(K,Y)$ it remains to show that $\Acal$ is also open in $\Oscr(K,Y)$,
so $\Acal=\Oscr(K,Y)$.

Fix $f\in \Acal$ and represent it by a map $f\in \Oscr(U,Y)$ from an open set $U\subset\C^n$ 
containing $K$. Condition \CEll1 gives a convex open set $V$, with $K\subset V\subset U$,  
and a dominating holomorphic spray $F:V\times \C^N\to Y$ with $F(\cdotp,0)=f|_V$.
By factoring out the kernel of 
\[
	\di F(z,w)/\di w|_{w=0}:\C^N\to T_{f(z)}Y,\quad z\in V
\]
(which is a trivial holomorphic subbundle of $V\times \C^N$ 
with trivial quotient) we may assume that $N=\dim Y$ and the above map is an isomorphism
for every $z\in V$. Hence, up to shrinking $V$ around $K$ if necessary, there is an open ball $0\in W\subset\C^N$
such that the map $\wt F=(\Id,F):V\times \C^N\to V\times Y$ given by 
\begin{equation}\label{eq:wtF}
	\wt F(z,w) = (z,F(z,w)),\quad z\in V,\ w\in \C^N
\end{equation}
maps $V\times W$ biholomorphically onto its image in $V\times Y$.
Since $f\in \Acal$, there are a neighbourhood $\Omega\subset \C^n$ of $L$ and a map 
$g\in \Oscr(\Omega,Y)$ whose graph $\{(z,g(z)):z\in K\}$ over $K$ belongs to $\wt F(V\times W)$.
Up to shrinking $\Omega$ around $L$, \cite[Lemma 5.10.4]{Forstneric2017E} 
provides a local dominating holomorphic spray $G:\Omega \times W\to Y$ over 
$G(\cdotp,0)=g$. Replacing $G(z,w)$ by $G(z,tw)$ 
for a small $t>0$ we may assume that the map $\wt G(z,w) = (z,G(z,w))$ % $(z\in \Omega,\ w\in W)$ 
satisfies $\wt G(K\times W)\Subset \wt F(V\times W)$. Hence, there is an open convex set $U_1\subset\C^n$ 
with $K\subset U_1\Subset V\cap\Omega$ such that $\wt G(U_1\times W)\Subset \wt F(V\times W)$.
Since the map $\wt F$ \eqref{eq:wtF} is biholomorphic on $V\times W$, there is a unique
holomorphic map $H:U_1\times W \to W$ such that 
\begin{equation}\label{eq:H}
	F(z,H(z,w)) = G(z,w) \quad \text{for all $(z,w)\in U_1\times W$}. 
\end{equation}
%Note $h:=H(\cdotp,0):U_1\to Y$ satisfies $F(z,h(z))=g(z)$ for $z\in U_1$. 
Pick a slightly larger polyhedron $L'$ containing $L$ in its interior and a small $\epsilon >0$ and set
\[
	A = \{z\in L': \lambda(z)\le 2\epsilon\} \subset U_1, \quad\ 
	B=\{z\in L' : \lambda(z)\ge \epsilon\} \subset \Omega.
\]
The polyhedra $A$ and $B$ form a Cartan pair (see \cite[Definition 5.7.1]{Forstneric2017E}) 
with $A\cup B =L'$ and 
$C:=A\cap B = \{z\in L': \epsilon \le \lambda(z)\le 2\epsilon\}$. Let 
\[
	K'=\{z\in L': \lambda(z)\le \epsilon/2\}.
\]
Pick a convex open set $U_0\subset \C^n$ such that $K'\subset U_0 \subset U_1$ 
and $\overline U_0\cap C=\varnothing$. Choose any holomorphic map $\phi:U_0\to\C^N$. 
Since $K'$ and $C$ are disjoint compact convex sets in $\C^n$,
their union is polynomially convex.
% and $W\subset\C^N$ is a ball, $(K'\cup C) \times \overline W$ is a polynomially convex subset of $\C^n\times \C^N$. 
Hence, the Oka--Weil theorem furnishes a holomorphic map 
$\tilde \phi:A\times W\to \C^N$ which approximates $\phi(z)$ on $(z,w)\in K'\times W$ 
(with $\phi$ independent of $w$) and approximates
$H$ on $C\times W$. In view of \eqref{eq:H}, the holomorphic map $\Phi:A\times W\to Y$ defined by 
\[
	\Phi(z,w) = F(z,\tilde \phi(z,w))\quad \text{for $z\in A$ and $w\in W$}
\]
then approximates  $G$ on $C\times W$, while on $K'\times W$ it is close to the map 
\begin{equation}\label{eq:fphi}
	(z,w)\mapsto f_\phi(z):=F(z,\phi(z)) \quad \text{for $z\in K'$ and $w\in W$}. 
\end{equation}
Recall that the spray $G$ is dominating over $C$. Hence, if the approximations are close enough,
we can apply \cite[Proposition 5.9.2]{Forstneric2017E} on the Cartan pair $(A,B)$
to glue $\Phi$ and $G$ into a holomorphic spray $\Theta:L'\times W'\to Y$ 
for a smaller parameter ball $0\in W'\subset W$. By the construction, its core  
$\tilde f:=\Theta(\cdotp,0):L'\to Y$ then approximates the map $f_\phi$ given by \eqref{eq:fphi} 
on $K'$, which shows that $f_\phi\in \Acal$. Since the map $\wt F$ in \eqref{eq:wtF} is 
injective holomorphic on $V\times W$, every holomorphic map 
$K'\to Y$ sufficiently uniformly close to $f$ on $K'$ is of the form $f_\phi$ in \eqref{eq:fphi}
for a suitable choice of $\phi$, and hence it belongs to the set $\Acal$ of approximable maps. 
This shows that the set $\Acal$ is open as claimed, and therefore $\Acal=\Oscr(K,Y)$.
\end{proof}

%%%%%%%%%%%%%%%%%%
%
%   LOCALIZATION PRINCIPLE
%
%%%%%%%%%%%%%%%%%%
\subsection{A localization theorem for Oka manifolds}\label{ss:localization}
A domain $U$ in a complex manifold $Y$ is 
said to be {\em Zariski open} if $Y\setminus U$ is a closed complex subvariety of $Y$. 
An important application of Theorem \ref{th:K2018} is the following localization criterion for Oka manifolds. 

%
%    LOCALIZATION THEOREM
%
\begin{theorem}\label{th:localization}
{\rm (Kusakabe, \cite[Theorem 1.4]{Kusakabe2021IUMJ}.)}
If $Y$ is a complex manifold which is a union of Zariski open Oka domains, then $Y$ is an Oka manifold.
\end{theorem}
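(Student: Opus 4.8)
The plan is to verify that $Y$ satisfies condition \CEll1, which by Theorem \ref{th:K2018} is equivalent to being Oka and is exactly what makes the hypothesis usable: \CEll1 asks only for dominating sprays over holomorphic maps from convex sets in Euclidean spaces, a condition that can be checked by working locally on the target. So I fix a compact convex set $K\subset\C^n$, an open set $U\supset K$, and a map $f\in\Oscr(U,Y)$, and I must produce a dominating holomorphic spray over $f$ on a neighbourhood of $K$. Since domination is an open condition, it suffices to build a spray over $f$ on some neighbourhood of $K$ whose fibrewise derivative along the zero section is surjective at every point of $K$.

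First I would localize on the target. Writing $Y=\bigcup_\alpha U_\alpha$ as a union of Zariski open Oka domains, the compactness of $f(K)$ together with continuity of $f$ lets me cover $K$ by finitely many compact convex sets $K_1,\dots,K_r$, with $K$ contained in the union of their interiors, chosen small enough (via a Lebesgue-number argument for the open cover $\{f^{-1}(U_\alpha)\}$ of $K$) that each image $f(K_i)$ lies in a single Oka domain $U_{\alpha(i)}$. Each $U_{\alpha(i)}$ is an open Oka submanifold of $Y$, hence satisfies \CEll1 by Theorem \ref{th:K2018}; applying this to $f$ near $K_i$ produces a dominating holomorphic spray $F_i$ over $f$, defined on a neighbourhood of $K_i$ and taking values in $U_{\alpha(i)}\subset Y$. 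Thus over each piece $K_i$ I have a dominating spray, and collectively these dominate over all of $K$.

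The heart of the argument is to amalgamate the partial sprays $F_1,\dots,F_r$ into a single dominating spray over $f$ on one neighbourhood of $K$. I would carry this out by the same inductive gluing used in the proof of Theorem \ref{th:K2018}, exploiting the convexity of $K$: slice $K$ by a sequence of affine functions into nested special polyhedral pairs (Definition \ref{def:SPP}), so that each new slab abuts the part of $K$ already treated. Suppose a dominating spray $G$ over $f$ has been constructed over a convex neighbourhood of the already-treated part, and the next slab has image contained in $U_{\alpha(i)}$. Using the local biholomorphism attached to $F_i$ (as with the map $\wt F$ in the proof of Theorem \ref{th:K2018}), I transport $G$ into the fibre coordinates of $F_i$ over the overlap, and by the Oka--Weil theorem I build a spray over $f$ on the new slab that approximates $G$ on the overlap while retaining domination on the new piece. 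Because $G$ dominates on the overlap, \cite[Proposition 5.9.2]{Forstneric2017E} glues the two sprays into a dominating spray over $f$ on the enlarged convex set, while the transport of sprays to nearby maps \cite[Lemma 5.10.4]{Forstneric2017E} furnishes the local sprays needed at this step. After finitely many slabs I obtain a dominating spray over $f$ on a neighbourhood of $K$, which verifies \CEll1, and Theorem \ref{th:K2018} then yields that $Y$ is Oka.

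The main obstacle is precisely this amalgamation. The partial sprays take values in \emph{different} domains $U_{\alpha(i)}$, so they cannot simply be added or composed over a common domain; one must transport and glue them while keeping a single neighbourhood of $K$ and preserving domination at every stage, and the only mechanism available for gluing sprays into a manifold target is the Cartan-pair lemma \cite[Proposition 5.9.2]{Forstneric2017E}, which requires the sprays being joined to agree closely on an overlap on which one of them already dominates. Arranging these overlaps is what forces the slicing of $K$ into special polyhedral pairs and the Oka--Weil matching step, exactly as in the proof of Theorem \ref{th:K2018}. The Zariski openness of the $U_\alpha$ enters by guaranteeing that each piece is an open Oka submanifold to which \CEll1 applies and that the loci $f^{-1}(Y\setminus U_\alpha)$ are thin complex subvarieties, so that the cover of $K$ by sets mapping into single domains, and the propagation of domination across the slabs, can be organized without obstruction.
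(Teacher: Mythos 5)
Your reduction to condition \CEll1 is the right first move, and the covering of $K$ by small convex pieces $K_i$ with $f(K_i)\subset U_{\alpha(i)}$ is fine; but the amalgamation step, which you correctly identify as the heart of the matter, does not deliver what \CEll1 demands, and the failure is essential rather than technical. Condition \CEll1 requires a dominating spray $F:V\times\C^N\to Y$ holomorphic on \emph{all} of $V\times\C^N$, i.e.\ entire in the parameter. The gluing mechanism you invoke, \cite[Proposition 5.9.2]{Forstneric2017E}, produces a spray whose parameter lives only in a \emph{smaller ball} $W'$: the splitting maps it supplies are defined and close to the identity only near the zero section (this is visible in the proof of Theorem \ref{th:K2018}, where the glued spray $\Theta:L'\times W'\to Y$ has a shrunken parameter ball --- harmless there, since only its core is used, but fatal here). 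So after the first gluing your inductive object is merely a \emph{local} spray, and local dominating sprays over a holomorphic map from a Stein source into an \emph{arbitrary} complex manifold always exist (\cite[Lemma 5.10.4]{Forstneric2017E}); they carry no Oka-theoretic content, and exhibiting one does not verify \CEll1. Indeed, if your gluing scheme were sound one could run it with those always-available local sprays in place of the $F_i$ and conclude that every complex manifold is Oka. A second, related symptom: your argument never uses Zariski-openness (only openness of the $U_\alpha$, to apply \CEll1 piecewise), so it would prove localization for arbitrary open covers by Oka domains, which is a well-known open problem. There is also a smaller unjustified claim: the Oka--Weil transported spray $\Phi=F_i(\cdot,\tilde H(\cdot,\cdot))$ ``retains domination on the new piece'' only near the overlap, since Oka--Weil controls $\tilde H$ there and nowhere else; away from the overlap the fibre derivative of $\tilde H$ at $0$ can drop rank.

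The missing idea --- and this is where the paper's proof diverges completely from yours --- is to avoid gluing altogether by working with sprays defined over the \emph{whole} source from the start. Proposition \ref{prop:K2018P31}, whose proof is where the Zariski hypothesis (the complement $Y\setminus\Omega_i$ is a subvariety) genuinely enters, furnishes for each $i$ an entire spray $U\times\C^{N_i}\to Y$ with core $f$ that is dominating on $f^{-1}(\Omega_i)$, possibly degenerate elsewhere. Such globally defined sprays can be \emph{composed}: applying the proposition to $F_1$ (whose domain $U\times\C^{N_1}$ is again Stein) yields $F_2$ over $F_1$ dominating on $F_1^{-1}(\Omega_2)$, and viewed as a spray over $f$ it dominates on $f^{-1}(\Omega_1\cup\Omega_2)$; after $m$ such steps one has an entire spray over $f$ dominating on a neighbourhood of $K$, verifying \CEll1 with no Cartan pairs, no Oka--Weil matching, and no shrinking of parameter domains. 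Accumulating domination by composition of global sprays, rather than by patching local ones, is precisely what your proposal is missing.
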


This is one of the most important new results in Oka theory and a
wonderful tool for constructing new examples of Oka manifolds. Several of them 
are described in Kusakabe's paper \cite{Kusakabe2021IUMJ}, and many more will be pointed
out in the sequel. Previously, a localization theorem was known only for algebraically subelliptic
manifolds (see \cite[Lemma 3.5B]{Gromov1989} or \cite[Proposition 6.4.2]{Forstneric2017E}). 
The following is an immediate corollary to Theorem \ref{th:localization}.

\begin{corollary}
Assume that $Y$ is a complex manifold and $Y'$ is a closed complex subvariety of $Y$ such that
$Y\setminus Y'$ is an Oka domain. If for every point $y\in Y'$ there exists a holomorphic automorphism 
$\phi\in\Aut(Y)$ such that $y\notin \phi(Y')$, then $Y$ is an Oka manifold.
\end{corollary}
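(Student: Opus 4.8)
The plan is to exhibit $Y$ as a union of Zariski open Oka domains and then invoke the localization Theorem \ref{th:localization}. The natural candidates for these domains are the translates of $Y\setminus Y'$ by the automorphisms supplied in the hypothesis: for each $\phi\in\Aut(Y)$ set $U_\phi := Y\setminus\phi(Y')$.

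First I would check that every $U_\phi$ is Zariski open. Since $\phi$ is a biholomorphism of $Y$, it carries the complex subvariety $Y'$ to the complex subvariety $\phi(Y')$ and preserves compactness, so $\phi(Y')$ is again a compact (hence closed) complex subvariety of $Y$. By the definition of Zariski openness, its complement $U_\phi$ is therefore a Zariski open domain. Next I would verify that each $U_\phi$ is Oka: the restriction $\phi|_{Y\setminus Y'}\colon Y\setminus Y'\to Y\setminus\phi(Y')=U_\phi$ is a biholomorphism onto $U_\phi$, and the Oka property is invariant under biholomorphisms, so $U_\phi$ inherits the Oka property from the hypothesis that $Y\setminus Y'$ is Oka.

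It then remains to show that the family $\{U_\phi\}_{\phi\in\Aut(Y)}$ covers $Y$. For a point $p\in Y\setminus Y'$ one takes $\phi=\Id$, so that $p\in U_{\Id}=Y\setminus Y'$; this is exactly the case where the identity automorphism does the job. For a point $p\in Y'$ the hypothesis provides an automorphism $\phi\in\Aut(Y)$ with $p\notin\phi(Y')$, that is, $p\in U_\phi$. Hence every point of $Y$ lies in some $U_\phi$, and $Y=\bigcup_{\phi\in\Aut(Y)}U_\phi$ is a union of Zariski open Oka domains. Applying Theorem \ref{th:localization} concludes that $Y$ is an Oka manifold.

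I do not expect a genuine obstacle here; the statement is essentially a direct consequence of the localization theorem once the covering is set up correctly. The only points requiring a moment of care are the two structural facts used above: that the biholomorphic image $\phi(Y')$ of a compact complex subvariety is again a closed complex subvariety (so that $U_\phi$ is Zariski open in the precise sense of the definition), and the biholomorphic invariance of the Oka property (so that each $U_\phi$ is Oka). Both are standard, so the proof is short and the real content is entirely carried by Theorem \ref{th:localization}.
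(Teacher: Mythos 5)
Your proof is correct and follows exactly the route the paper intends: the paper states this as an ``obvious corollary'' of Theorem \ref{th:localization}, with the implicit argument being precisely your covering of $Y$ by the Zariski open Oka domains $Y\setminus\phi(Y')$ (biholomorphic to $Y\setminus Y'$), together with $Y\setminus Y'$ itself for points off $Y'$. The two points of care you flag (closedness of $\phi(Y')$ and biholomorphic invariance of the Oka property) are indeed the only things to check, and both hold.
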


The proof of Theorem \ref{th:localization} uses the following corollary to 
\cite[Theorems 7.2.1 and 8.6.1]{Forstneric2017E}. 

%
%   REMARK: In the next edition of the book, Theorem 7.2.1 (perhaps also Theorem 8.6.1) 
%   should already be proved in Chapter 5. In this way, the localization theorem for Oka manifolds 
%   could also be included in Chapter 5.
%
%   There is no such problem with Theorem  \ref{th:K2018} which only uses the gluing of sprays.
%
%
\begin{proposition}[Proposition 3.1 in \cite{Kusakabe2021IUMJ}]\label{prop:K2018P31}
Let $\Omega$ be a Zariski open Oka domain in a complex manifold $Y$.
Given a Stein manifold $X$ and a holomorphic map $f:X\to Y$, there is a holomorphic
spray $F:X\times \C^N\to Y$ over $f$ which is dominating on $f^{-1}(\Omega)$.
\end{proposition}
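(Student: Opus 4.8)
The plan is to reduce the statement to the construction of a single global holomorphic spray over the Stein base $X$ and then feed in the Oka property of $\Omega$ through the spray-theoretic machinery behind the cited theorems. Write $A=Y\setminus\Omega$ for the exceptional subvariety, so that $X':=f^{-1}(\Omega)=X\setminus f^{-1}(A)$ is an open subset of $X$ whose complement $f^{-1}(A)$ is a closed complex subvariety of the Stein manifold $X$. The goal is a holomorphic map $F\colon X\times\C^N\to Y$ with $F(\cdot,0)=f$ whose fibre derivative $\frac{\di}{\di w}\big|_{w=0}F(x,\cdot)\colon\C^N\to T_{f(x)}Y$ is surjective for every $x\in X'$, with no constraint imposed over $f^{-1}(A)$ beyond holomorphy. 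Note that $X'$ is in general \emph{not} Stein, which is precisely the source of difficulty.

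First I would record the two inputs. Since $X$ is Stein, Cartan's Theorem A gives finitely many global holomorphic sections $V_1,\dots,V_N$ of the pullback bundle $f^{*}TY\to X$ that span $T_{f(x)}Y$ at every point; these are the candidate infinitesimal directions of the desired spray, and in particular they span over $X'$. Second, because $\Omega$ is an Oka manifold it satisfies condition $\Ell1$ (Corollary 8.8.7 in \cite{Forstneric2017E}): every holomorphic map from a \emph{Stein} manifold into $\Omega$ is the core of a dominating spray into $\Omega$, defined on the whole product with $\C^N$. The role of Theorems 7.2.1 and 8.6.1 is to supply the semiglobal step, namely the gluing of two sprays on a Cartan pair, together with the inductive convergence scheme that assembles these local/semiglobal pieces into one global object with the required estimates.

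Concretely, I would proceed by induction over an exhaustion of $X$ by $\Oscr(X)$-convex compacta $K_1\subset K_2\subset\cdots$. At the inductive step one has a spray over $f$ on a neighbourhood of $K_j$ that is already dominating on the portion of $K_j$ lying in $X'$; to extend over $K_{j+1}$ one chooses a Cartan pair $(K_j,B)$ and, on Stein neighbourhoods of $B$ whose image under $f$ stays inside $\Omega$, uses $\Ell1$ for $\Omega$ to integrate the generators $V_i$ into a local dominating spray into $\Omega$, installing the missing directions over $B\cap X'$ while leaving the running spray essentially unchanged near $f^{-1}(A)$. Gluing this with the previous spray on the Cartan pair (Theorem 7.2.1) and controlling the errors so the sequence converges (Theorem 8.6.1) yields in the limit a global spray $F\colon X\times\C^N\to Y$ over $f$, automatically dominating at every point where $f$ lands in $\Omega$, i.e.\ on all of $X'$.

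The hard part will be that $X'=f^{-1}(\Omega)$ is not Stein, so $\Ell1$ for $\Omega$ cannot be invoked on $X'$ in one stroke, and on any compact piece meeting $f^{-1}(A)$ the target leaves $\Omega$, so $\Omega$'s Oka property is simply unavailable there. Thus the delicate point is to propagate and maintain domination \emph{up to but not across} the subvariety $f^{-1}(A)$, controlling the spray and the convergence estimates near $\di X'$ where the directions must be installed using only Stein neighbourhoods that avoid $f^{-1}(A)$. This controlled exhaustion-and-gluing, keeping the spray globally defined over $X$ while confining the domination requirement to $X'$, is exactly what the machinery of Theorems 7.2.1 and 8.6.1 is designed to provide, and I expect it to be the crux of the argument.
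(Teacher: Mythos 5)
A preliminary remark: the paper itself contains no proof of Proposition \ref{prop:K2018P31}. It quotes it from \cite[Proposition 3.1]{Kusakabe2021IUMJ} and records that it is a corollary of \cite[Theorems 7.2.1 and 8.6.1]{Forstneric2017E}. Your proposal rests on the guess that those two theorems supply ``the gluing of two sprays on a Cartan pair, together with the inductive convergence scheme''. They do not: the Cartan-pair gluing and local-spray machinery is Chapter 5 material (Proposition 5.9.2 and Lemma 5.10.4 of \cite{Forstneric2017E}, which the survey uses in the proof of Theorem \ref{th:K2018}), whereas the two cited theorems are \emph{relative} Oka principles with approximation and jet interpolation, for maps from a Stein source into $Y$ which are required to send the complement of a prescribed subvariety of the source into the Oka domain $\Omega$. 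This misreading matters because the scheme you substitute in their place breaks at two concrete points. The first is an internal inconsistency: writing $A=Y\setminus\Omega$, in a Cartan pair $(K_j,B)$ with $K_j\cup B\supseteq K_{j+1}$ the compact $B$ must contain $\overline{K_{j+1}\setminus K_j}$, and as soon as the exhaustion reaches $f^{-1}(A)$ this set meets $f^{-1}(A)$, so you cannot also demand $f(B)\subset\Omega$. Over such $B$ nothing in your toolkit produces the piece to be glued: $f$ leaves $\Omega$ there, so $\Ell1$ of $\Omega$ is unavailable, and $Y$ itself is not assumed Oka. Your final paragraph names this difficulty honestly, but defers its resolution to machinery that does not exist in the form you describe.

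The second gap is fatal even where gluing does make sense: the statement requires $F$ to be defined on all of $X\times\C^N$, entire in the parameter, and this is essential, since in the proof of Theorem \ref{th:K2018} the spray furnished by \CEll1 is evaluated at parameter values produced by an Oka--Weil approximation, and these fill out $\C^N$. Your process cannot deliver this. The sprays obtained from $\Ell1$ of $\Omega$ are entire in $t$ but exist only over sets mapped into $\Omega$; over sets meeting $f^{-1}(A)$ the only available sprays are local ones with parameter in a ball (Lemma 5.10.4), and Cartan-pair gluing shrinks the parameter ball at each step (compare $\Theta\colon L'\times W'\to Y$ with $W'\subset W$ in the proof of Theorem \ref{th:K2018}). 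Hence the limit of your exhaustion is at best a spray on $X\times W$ for some ball $W$, and by Liouville's theorem every holomorphic map $\C^N\to W$ is constant, so such a spray can never be promoted to one on $X\times\C^N$. The idea you are missing, and the one that makes the proposition a corollary of the cited theorems, is to take the spanning sections of $f^*TY$ to \emph{vanish on} $f^{-1}(A)$: choose finitely many generators $V_1,\dots,V_N$ of the coherent sheaf $\Ical_{f^{-1}(A)}\cdotp f^*TY$, which still span $T_{f(x)}Y$ at every $x\in f^{-1}(\Omega)$. Then apply the relative Oka principle on the Stein manifold $X\times\C^N$, relative to the source subvariety $f^{-1}(A)\times\C^N$ and the target pair $(Y,A)$, to obtain a holomorphic map $F\colon X\times\C^N\to Y$ which is constant in $t$ (equal to $f\circ\mathrm{pr}_X$) on $f^{-1}(A)\times\C^N$, maps $f^{-1}(\Omega)\times\C^N$ into $\Omega$, and has the prescribed $1$-jet along $X\times\{0\}$, namely core $f$ and $t$-derivative $(V_1,\dots,V_N)$. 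Constancy in $t$ over $f^{-1}(A)$ is what makes entirety in $t$ costless where $f$ leaves $\Omega$, and the Oka property of $\Omega$ enters through interpolation, not through any exhaustion-and-gluing argument.
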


%
%   PROOF OF THE LOCALIZATION THEOREM
%
\begin{proof}[Proof of Theorem \ref{th:localization}]
By Theorem \ref{th:K2018} it suffices to show that $Y$ enjoys condition \CEll1. 
Let $K$ be a compact convex set in $\C^n$ and $f\in \Oscr(U,Y)$ be a holomorphic map 
on an open neighbourhood $U\subset \C^n$ of $K$. Let $\Omega_i$ be a
collection of Zariski open domains in $Y$ with $\bigcup_i \Omega_i=Y$. Since $K$ is compact, 
$f(K)$ is contained in the union of finitely many $\Omega_i$'s; 
call them $\Omega_1,\ldots, \Omega_m$.
Proposition \ref{prop:K2018P31} furnishes a spray $F_1:U\times \C^{N_1}\to Y$ with the core $f$
which is dominating on $f^{-1}(\Omega_1)$. Applying Proposition \ref{prop:K2018P31}
to $F_1$ furnishes another spray $F_2:(U\times \C^{N_1}) \times \C^{N_2}\to Y$ with the core 
$F_1$ which is dominating on $F_1^{-1}(\Omega_2)$. Considering $F_2$ as a spray over $f:U\to Y$,
it is dominating on $f^{-1}(\Omega_1\cup \Omega_2)$ 
(since $F_1$ is dominating on $f^{-1}(\Omega_1)$).
After $m$ steps of this kind we obtain a spray $F:U\times \C^N\to Y$ over $f$ which is
dominating on a neighbourhood of $K$.
\end{proof}

%
%   Sprays generating tangent spaces 
%
\subsection{Sprays generating tangent spaces}\label{ss:sprays}
Theorem \ref{th:K2018} implies several other criteria for a manifold $Y$ to be
Oka. The following result combines Corollaries 4.1 and 4.2 in 
Kusakabe's paper \cite{Kusakabe2021IUMJ}.

\begin{corollary}\label{cor:Kcor4.1}
The following conditions are equivalent for every complex manifold $Y$. 
\begin{enumerate}[\rm (a)]
\item The manifold $Y$ is Oka.
\item For every Stein manifold $X$, holomorphic map $f:X\to Y$, and holomorphic section $V$
of $f^*TY\to X$ %(such $V$ may be thought of as a holomorphic vector field on $Y$ along the map $f$) 
there is a holomorphic spray $F:X\times \C\to Y$ over $f$ such that 
\[
	\di_t|_{t=0}F(x,t)=V(x)\in T_{f(x)}Y\quad \text{for all $x\in X$}.
\]
\item For every Stein manifold $X$, holomorphic map $f:X\to Y$ and point $x\in X$ 
there are finitely many holomorphic sprays $F_j:X\times \C^{N_j}\to Y$ $(j=1,\ldots,k)$ over $f$
such that 
\[
	\sum_{j=1}^k \di_t|_{t=0} F_j(x,t)(\C^{N_j}) = T_{f(x)} Y. 
\]
\item Condition {\rm (c)} holds for every convex domain $X\subset \C^n$, $n\in\N$.
\end{enumerate}
\end{corollary}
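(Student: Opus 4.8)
The four conditions will be established as the cycle (a)$\Rightarrow$(b)$\Rightarrow$(c)$\Rightarrow$(d)$\Rightarrow$(a), using Theorem \ref{th:K2018} to identify the Oka property (a) with condition $\Ell1$ at both ends. For (a)$\Rightarrow$(b), assume $Y$ is Oka, so by Theorem \ref{th:K2018} it satisfies $\Ell1$. Given a Stein manifold $X$, a map $f\colon X\to Y$, and a holomorphic section $V$ of $f^*TY$, choose by $\Ell1$ a dominating spray $G\colon X\times\C^M\to Y$ over $f$. Its vertical differential along the zero section is a \emph{surjective} holomorphic bundle map $\beta\colon X\times\C^M\to f^*TY$, $\beta(x,\cdot)=\di_w|_{w=0}G(x,w)$. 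Since a surjection of holomorphic vector bundles over a Stein manifold admits a holomorphic right inverse, there is $\sigma\colon f^*TY\to X\times\C^M$ with $\beta\circ\sigma=\Id$; set $\tilde V=\sigma\circ V\colon X\to\C^M$. Then $F(x,t):=G(x,t\,\tilde V(x))$ is a spray over $f$ with $\di_t|_{t=0}F(x,t)=\beta(x,\tilde V(x))=V(x)$, which is exactly (b).

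For (b)$\Rightarrow$(c), fix $f\colon X\to Y$ and a point $x\in X$. By Cartan's Theorem A the global holomorphic sections of $f^*TY$ over the Stein manifold $X$ generate the fibre at $x$, so there are sections $V_1,\dots,V_k$ whose values $V_1(x),\dots,V_k(x)$ span $T_{f(x)}Y$. Applying (b) to each $V_j$ yields sprays $F_j\colon X\times\C\to Y$ over $f$ with $\di_t|_{t=0}F_j(x,t)(\C)=\C\cdot V_j(x)$, and the sum of these lines is $T_{f(x)}Y$; this gives (c) with $N_j=1$. The implication (c)$\Rightarrow$(d) is immediate, as every convex domain in $\C^n$ is a Stein manifold, so (d) is a special case of (c).

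The substance of the statement lies in (d)$\Rightarrow$(a). By Theorem \ref{th:K2018} it suffices to verify condition \CEll1. Fix a compact convex set $K\subset\C^n$, an open $U\supset K$, and $f\in\Oscr(U,Y)$, and choose a convex domain $X$ with $K\subset X\Subset U$. The condition that a finite family of sprays over $f$ has vertical differentials jointly spanning the tangent space is the surjectivity of a holomorphic bundle map, hence an open condition on the base. Applying (d) at each point of $K$ and using this openness together with the compactness of $K$, I extract finitely many sprays $G_1,\dots,G_M\colon X\times\C^{N_j}\to Y$ over $f$ whose vertical differentials satisfy $\sum_j \di_t|_{t=0}G_j(x,t)(\C^{N_j})=T_{f(x)}Y$ at every point $x$ of some open neighbourhood $V$ of $K$; that is, a finite dominating family over $f|_V$.

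The remaining, and principal, step is to fuse this dominating family into a \emph{single} dominating spray over $f|_{V'}$ for a slightly smaller convex $V'\supset K$, which is precisely what \CEll1 demands. This is the composition of sprays (see the constructions in \cite[Sect.\ 6.3]{Forstneric2017E}): near the graph $\{(x,f(x)):x\in V\}\subset V\times Y$, one identifies the ambient with the total space of $f^*TY$ around its zero section, adds the $G_j$ fibrewise in the resulting linear structure, and transports the sum back to $Y$, obtaining a spray $G\colon V'\times\C^{N}\to Y$ $(N=\sum_j N_j)$ over $f$ whose vertical differential is $\sum_j \di_t|_{t=0}G_j(x,t)(\C^{N_j})$ and is therefore surjective near $K$. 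I expect this fusion to be the main obstacle: since the target $Y$ need not be Stein, a tubular neighbourhood of the graph cannot be taken for granted and must be produced along $f(X)$, so the argument must be organized over the Stein base $V$ with the spray parameters kept small. Once \CEll1 is in hand, Theorem \ref{th:K2018} yields that $Y$ is Oka, closing the cycle and proving the equivalence of (a)--(d).
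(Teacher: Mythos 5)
Your implications (a)$\Rightarrow$(b)$\Rightarrow$(c)$\Rightarrow$(d) are correct and essentially identical to the paper's (in (a)$\Rightarrow$(b) the paper writes $F(x,t)=\Theta(tW(x))$, a typo for $G(x,tW(x))$; your formula is the right one). The genuine gap is in (d)$\Rightarrow$(a), at the fusion step, and the obstacle is not the one you name. Your recipe --- identify a neighbourhood of the graph of $f$ with a neighbourhood of the zero section of $f^*TY$ and add the sprays $G_j$ fibrewise --- only makes sense while the points $G_j(x,t_j)$ stay inside that tubular neighbourhood. Since the $G_j$ are entire in $t_j$, their images leave every neighbourhood of the graph, so the fused map is defined only on $V'\times W$ for some small ball $W\subset\C^N$, not on $V'\times\C^N$. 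But condition \CEll1 (Definition \ref{def:elliptic}(c)) requires a dominating spray defined on all of $V'\times\C^N$, and this entireness in the parameter is used essentially in the proof of Theorem \ref{th:K2018}: there the spray $F$ is composed with an Oka--Weil approximant $\tilde\phi$ whose values range over all of $\C^N$. Indeed, a dominating spray defined only for small parameter values exists over any map into \emph{any} complex manifold (\cite[Lemma 5.10.4]{Forstneric2017E}: the graph is Stein, take spanning vertical vector fields on a Stein neighbourhood and compose their flows), so an argument that produces only such a local object cannot possibly prove that $Y$ is Oka. (For the same reason, the existence of the tube itself is not the issue you suggest: the graph of $f|_{V}$ is Stein, so Siu plus Docquier--Grauert give it, with no hypothesis on $Y$.) Note also that the composed-spray construction you cite applies to sprays over the manifold $Y$, where the second spray is defined over every point of $Y$; it does not apply to sprays over a map $f$, which are defined only along the graph.

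The paper's proof of (d)$\Rightarrow$(a) avoids fusion altogether by exploiting that condition (d) applies to \emph{all} convex domains and all holomorphic maps from them --- in particular to the sprays already constructed. Fix $x\in K$: (d) gives a spray $F_1:X\times\C\to Y$ over $f$ with $V_1:=\di_t|_{t=0}F_1(x,t)\ne 0$; now apply (d) to the map $F_1$ itself, a holomorphic map from the convex domain $X\times\C\subset\C^{n+1}$, to get $F_2:X\times\C\times\C\to Y$ with $F_2(\cdot,\cdot,0)=F_1$ and $\di_t|_{t=0}F_2(x,0,t)$ linearly independent of $V_1$. After $\dim Y$ steps one has a single spray over $f$, entire in all parameters, dominating at $x$, hence near $x$; repeating the procedure (each time applying (d) to the spray built so far) at finitely many points of $K$ yields one spray over $f$ dominating on a neighbourhood of $K$. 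This iterated ``spray over a spray'' device (Gromov's passage $\Ell1\Rightarrow\mathrm{Ell}_2\Rightarrow\cdots$) is the idea your proof is missing; with it, (d)$\Rightarrow$(a) follows from Theorem \ref{th:K2018} exactly as you intended.
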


%Condition (c), applied to sprays over the identity map $f=\Id_Y$ on the manifold $Y$ (which need not be Stein), implies {\em weak subellipticity} of $Y$ (see \cite[Definition 5.6.13 (f)]{Forstneric2017E}), and this condition implies that $Y$ is Oka (see \cite[Corollary 5.6.14]{Forstneric2017E}).

\begin{proof}
(a)$\Rightarrow$(b): If $Y$ is Oka then by  \cite[Corollary 8.8.7]{Forstneric2017E} there is 
a dominating holomorphic spray $G:X\times \C^N\to Y$ over $f=G(\cdotp,0)$ for some $N\in\N$.
This means that 
%F changed t to w in the next line
\[
	\Theta:=\di_w|_{w=0} G(\cdotp,w):X\times \C^N\to f^*TY
\]
is a surjective holomorphic vector bundle map,  
so there is a holomorphic map $W:X\to\C^N$ such that $\Theta(x,W(x))=V(x)$ 
for all $x\in X$ (see \cite[Corollary 2.6.5]{Forstneric2017E}). 
The holomorphic spray $F:X\times \C\to Y$ defined by $F(x,t)=G(x,tW(x))$ 
then satisfies condition (b).

The implications (b)$\Rightarrow$(c)$\Rightarrow$(d) are obvious.

(d)$\Rightarrow$(a): Let $K\subset \C^n$ be a compact convex set, $X\subset \C^n$ be
an open convex set containing $K$, and $f\in \Oscr(X,Y)$. Fix $x\in K$. By condition (d) 
there is a  spray $F_1:X\times \C\to Y$ over $f$ such that the vector
$V_1:=\di_t|_{t=0}F_1(x,t)\in T_{f(x)}Y$ is nonzero. Applying condition (d) to $F_1$
gives a spray $F_2:X\times\C\times \C\to Y$ over $F_1$ such that the vector 
$V_2:=\di_t|_{t=0}F_2(x,0,t)\in T_{f(x)}Y$ is linearly independent from $V_1$.
Continuing in this way we obtain after $d=\dim Y$ steps a spray 
$F:X\times \C^d\to Y$ over $f$ which is dominating at $x$, and hence on a neighbourhood of $x$.
A repetition of this process over other points of $K$ 
gives a holomorphic spray over $f$ which is dominating on an open neighbourhood 
of $K$ in $X$. Thus, $Y$ enjoys condition \CEll1 and hence is Oka 
by Theorem  \ref{th:K2018}.
\end{proof}

%
%	C-CONNECTEDNESS
%
\subsection{$\C$-connectedness}\label{ss:Cconnect}
In \cite{Kusakabe2017}, Kusakabe characterized Oka manifolds 
by the following $\C$-connectedness property of the space of holomorphic maps from Stein manifolds.

\begin{theorem} \label{th:Kusakabe2017} 
{\rm (\cite[Theorem 3.2]{Kusakabe2017})}
For a complex manifold $Y$ the following are equivalent.
\begin{enumerate}[\rm (a)]
\item $Y$ is an Oka manifold.
\item For every Stein manifold $X$ and homotopic holomorphic maps $f_0,f_1:X\to Y$
there is a holomorphic map $F:X\times\C\to Y$ such that $F(\cdotp,0)=f_0$ and $F(\cdotp,1)=f_1$.
\item Condition {\rm (b)} holds for every bounded convex domain $X$ in $\C^n$, $n\in\N$.
\end{enumerate}
\end{theorem}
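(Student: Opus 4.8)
The plan is to prove the cycle of implications (a) $\Rightarrow$ (b) $\Rightarrow$ (c) $\Rightarrow$ (a); we may assume $Y$ is connected, since the Oka property and each of (a)--(c) are checked on connected components. The implication (b) $\Rightarrow$ (c) is immediate, as every bounded convex domain in $\C^n$ is Stein and (c) is merely the restriction of (b) to this class of sources. For (a) $\Rightarrow$ (b), I would use that the product $X\times\C$ is again Stein and that $X'=X\times\{0,1\}$ is a closed complex subvariety of it. Let $H\colon X\times[0,1]\to Y$ be a continuous homotopy with $H(\cdot,0)=f_0$ and $H(\cdot,1)=f_1$, and fix a continuous cut-off $\psi\colon\R\to[0,1]$ with $\psi\equiv0$ on $(-\infty,1/4]$, $\psi\equiv1$ on $[3/4,\infty)$, $\psi(0)=0$ and $\psi(1)=1$. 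Then $\wt H(x,w):=H(x,\psi(\mathrm{Re}\,w))$ is a continuous map $X\times\C\to Y$ which equals $f_0$ (hence is holomorphic, being independent of $w$) on $\{\mathrm{Re}\,w\le 1/4\}$ and equals $f_1$ on $\{\mathrm{Re}\,w\ge 3/4\}$; in particular $\wt H$ is holomorphic on a neighbourhood of $X'$ and restricts there to $f_0$ and $f_1$. Applying the interpolation form of Theorem \ref{th:Oka} on the Stein manifold $X\times\C$ with subvariety $X'$ deforms $\wt H$ to a holomorphic map $F\colon X\times\C\to Y$ agreeing with $\wt H$ on $X'$, which is exactly the required connecting map.

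The substantive direction is (c) $\Rightarrow$ (a), and here I would reduce the Oka property to an ellipticity statement through the results already at hand. By Theorem \ref{th:K2018} it suffices to verify condition \CEll1, and by Corollary \ref{cor:Kcor4.1} it is in turn enough to check its condition (d): for every convex domain $X\subset\C^n$, every holomorphic $f\colon X\to Y$, and every $x\in X$, one must produce finitely many holomorphic sprays $F_j\colon X\times\C^{N_j}\to Y$ over $f$ whose velocity images $\di_t|_{t=0}F_j(x,t)(\C^{N_j})$ together span $T_{f(x)}Y$. Hypothesis (c) is the device for manufacturing entire families over a prescribed core: since $X$ is contractible, all holomorphic maps $X\to Y$ lie in a single homotopy class, so (c) joins $f$ to an \emph{arbitrary} holomorphic map $g\colon X\to Y$ by a holomorphic $F\colon X\times\C\to Y$ with $F(\cdot,0)=f$ and $F(\cdot,1)=g$; each such $F$ is a one-parameter spray over $f$. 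A holomorphic chart about $f(x)$ supplies candidate targets $g$ displacing $f$ in every coordinate direction, so there is no shortage of maps to connect to.

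The crux --- and the step I expect to be the main obstacle --- is to establish condition (d) for a single given $f$, i.e.\ to pin down the \emph{initial velocities} of these connecting families. The difficulty is that $\di_t|_{t=0}F(x,t)$ is not determined by the endpoints $f,g$: joining $f$ to a target displaced in a prescribed direction need not yield a family moving in that direction to first order, since the family may be tangent to a proper subspace of $T_{f(x)}Y$ at the core. A chart about $f(x)$ does furnish a dominating spray over $f$ with the correct velocities, but only over a \emph{polydisc} of parameters; because charts exist on every complex manifold, the entire content of the Oka property lies in enlarging the parameter domain from a polydisc to all of $\C^{N}$, and this is precisely what $\C$-connectedness must accomplish. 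I therefore expect the argument to use (c) beyond mere endpoint-joining: one connects $f$ to the chart-shifted maps and, by a normal-families/recentring analysis exploiting that each nonconstant connecting family has nonvanishing velocity somewhere, extracts entire sprays over $f$ realizing a spanning set of velocity directions at $x$. Pinning these velocities at the core over a fixed $f$ is the delicate heart of the proof; granting condition (d) for all cores, the iterative spray-over-a-spray assembly in the proof of Corollary \ref{cor:Kcor4.1}(d)$\Rightarrow$(a) adjoins one independent direction at a time and, after $\dim Y$ steps, produces a spray over $f$ dominating near the given point. Letting the point range over a compact convex $K$ yields a dominating spray near $K$, verifying \CEll1, so that $Y$ is Oka by Theorem \ref{th:K2018}.
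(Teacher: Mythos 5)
Your implications (a)$\Rightarrow$(b) and (b)$\Rightarrow$(c) are correct and complete: interpolating the continuous map $\wt H(x,w)=H(x,\psi(\mathrm{Re}\,w))$ on the Stein manifold $X\times\C$ along the subvariety $X'=X\times\{0,1\}$ via Theorem \ref{th:Oka} is exactly the standard argument, and (b)$\Rightarrow$(c) is trivial. The gap is in (c)$\Rightarrow$(a), which is the entire content of the theorem: after reducing to condition (d) of Corollary \ref{cor:Kcor4.1} you only say you \emph{expect} a normal-families/recentring analysis to control the core velocities, and you then conclude ``granting condition (d)''. That deferred step is not a technicality; it is the theorem, and the obstruction you yourself identify is genuine. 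A connecting map $F:X\times\C\to Y$ with $F(\cdot,0)=f$ and $F(\cdot,1)=g$ carries no information whatsoever about $\di_t|_{t=0}F(x,t)$ (this derivative can vanish identically in $x$ no matter how $g$ is chosen); recentring at a parameter $t_0\ne 0$ where the velocity is nonzero replaces the core $f$ by $F(\cdot,t_0)$, destroying exactly what you need; and one cannot first build a spray with correct velocities over a disc of parameters (which a chart does give) and then use (c) to enlarge the parameter domain, because every holomorphic map $\C\to\D$ is constant by Liouville. So condition (d) is not accessible from (c) by endpoint-joining plus renormalization, and your proposal contains no complete argument for the main implication.

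The paper itself does not reprove this theorem (it cites \cite[Theorem 3.2]{Kusakabe2017} and \cite[Theorem 5.15.2]{Forstneric2017E}), but it tells you the right strategy: (c)$\Rightarrow$(a) follows ``the same geometric scheme as the proof of Theorem \ref{th:K2018}'', which never produces dominating sprays over $f$ and hence never needs derivative control at the core. One runs the open--closed argument on $\Oscr(K,Y)$ for a special polyhedral pair $K\subset L$ (Lemma \ref{lem:SCAP}): given $f\in\Acal$ approximated on $K$ by $g\in\Oscr(\Omega,Y)$ with $\Omega\supset L$, and $f'$ close to $f$ on $K$, the maps $f'$ and $g$ are holomorphic on a small convex neighbourhood $U'$ of $K$ and automatically homotopic there, so (c) yields a holomorphic $F:U'\times\C\to Y$ with $F(\cdot,0)=f'$ and $F(\cdot,1)=g$. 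Taking the Cartan pair $(A,B)$ with $A\subset U'$, $B\subset\Omega$, $C=A\cap B$, and an entire function $\theta$ with $\theta\approx 0$ on $K'$ and $\theta\approx 1$ on $C$ (Oka--Weil for the polynomially convex union of the disjoint convex sets $K'$ and $C$), the map $h=F(\cdot,\theta(\cdot))$ is close to $f'$ on $K'$ and close to $g$ on $C$; it is then glued with $g$ over $(A,B)$ using a local, ball-parameter dominating spray over $g$, exactly as in the proof of Theorem \ref{th:K2018}. Note where the hypothesis that the parameter domain is all of $\C$ enters: $\theta$ is uncontrolled off $K'\cup C$, so $F(z,\theta(z))$ must be defined for arbitrary parameter values. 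This substitution of the connecting map (composed with a scalar Runge function) for the dominating spray is the idea missing from your proposal.
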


This result and its proof are also presented in \cite[Theorem 5.15.2]{Forstneric2017E}.
The implication (a)$\Rightarrow$(b) follows from the $1$-parameteric Oka principle 
for holomorphic maps into Oka manifolds, and (b)$\Rightarrow$(c) is obvious. 
The proof of the main implication (c)$\Rightarrow$(a) reduces to showing that condition (c) implies CAP
on special polyhedral pairs (see Lemma \ref{lem:SCAP}). This uses a similar idea
as the proof of Theorem \ref{th:K2018}. 

We mention the following open problem; see \cite[Problem 7.6.4]{Forstneric2017E}.

%
%	UNION PROBLEM
%
\begin{problem}[The union problem for Oka manifolds] \label{prob:union}
Let $Y$ be a complex manifold and $Y' \subset Y$ be a closed complex submanifold. 
If $Y'$ and $Y\setminus Y'$ are Oka, is $Y$ Oka? 
In particular, if $Y$ is a complex manifold and $p\in Y$ is such that $Y\setminus \{p\}$ is Oka,
is the blowup $\Bl_p Y$ Oka?
\end{problem}

This situation occurs in Kummer surfaces: every such surface admits $16$ pairwise disjoint 
embedded rational curves such that the complement of their union is Oka (see \cite[Sect.\ 7.2]{Forstneric2017E}).
In an attempt to approach this problem, 
Kusakabe combined Theorem \ref{th:Kusakabe2017} with \cite[Theorem 7.2.1]{Forstneric2017E} to 
show the following (see \cite[Theorem 4.4]{Kusakabe2021IUMJ}). 

\begin{theorem}\label{th:K2018-4.4}
Given a complex manifold $Y$ with a Zariski open Oka domain $U\subset Y$, the following conditions 
are equivalent.
\begin{enumerate}[\rm (a)]
\item $Y$ is an Oka manifold.
\item For every Stein manifold $X$ and map $f\in\Oscr(X,Y)$ which is homotopic 
to a continuous map $X\to U$ there exists $F\in\Oscr(X\times\C, Y)$ with 
$F(\cdotp,0)=f$ and $F(\cdotp, 1)\in \Oscr(X,U)$.
\item For every bounded convex domain $X$ in $\C^n$ $(n\in\N)$ and map $f\in \Oscr(X,Y)$
there is a holomorphic map $F:X\times\C\to Y$ such that $F(\cdotp,0)=f$ and $F(\cdotp, 1)\in \Oscr(X,U)$.
\end{enumerate}
\end{theorem}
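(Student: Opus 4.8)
The plan is to prove the cycle $(a)\Rightarrow(b)\Rightarrow(c)\Rightarrow(a)$, using Theorem \ref{th:Kusakabe2017} (the $\C$-connectedness characterization of the Oka property) as the main bridge, together with the fact that $U$ is itself an Oka manifold. For $(a)\Rightarrow(b)$, assume $Y$ is Oka and let $f\in\Oscr(X,Y)$ be homotopic to a continuous map $\gamma\colon X\to U$. Since $U$ is Oka and $X$ is Stein, Theorem \ref{th:Oka} lets me homotope $\gamma$ within $U$ to a holomorphic map $g\in\Oscr(X,U)$; composing with the inclusion $U\hookrightarrow Y$ shows that $f$ and $g$ are homotopic as holomorphic maps $X\to Y$. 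As $Y$ is Oka, the implication $(a)\Rightarrow(b)$ of Theorem \ref{th:Kusakabe2017} furnishes $F\in\Oscr(X\times\C,Y)$ with $F(\cdot,0)=f$ and $F(\cdot,1)=g\in\Oscr(X,U)$, which is exactly condition $(b)$.

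The implication $(b)\Rightarrow(c)$ is immediate: a bounded convex domain $X\subset\C^n$ is Stein and contractible, so any $f\in\Oscr(X,Y)$ is null-homotopic and hence, as $Y$ is connected and $U\neq\varnothing$, homotopic to a constant map into $U$. Thus the hypothesis of $(b)$ is automatically met, and $(b)$ yields $(c)$.

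For the substantial implication $(c)\Rightarrow(a)$, I would verify condition $(c)$ of Theorem \ref{th:Kusakabe2017}, which forces $Y$ to be Oka. Fix a bounded convex domain $X$ and two maps $f_0,f_1\in\Oscr(X,Y)$ (automatically homotopic, since $X$ is contractible and $Y$ is connected). The assumption $(c)$ produces $F_i\in\Oscr(X\times\C,Y)$ with $F_i(\cdot,0)=f_i$ and $g_i:=F_i(\cdot,1)\in\Oscr(X,U)$ for $i=0,1$. Because $X$ is contractible and $U$ is connected (being Zariski open in the connected manifold $Y$), the maps $g_0$ and $g_1$ are homotopic within $U$; since $U$ is Oka, Theorem \ref{th:Kusakabe2017} applied to $U$ gives $G\in\Oscr(X\times\C,U)$ with $G(\cdot,0)=g_0$ and $G(\cdot,1)=g_1$. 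It then remains to splice the three holomorphic $\C$-homotopies $f_0\rightsquigarrow g_0\rightsquigarrow g_1\rightsquigarrow f_1$ into a single $H\in\Oscr(X\times\C,Y)$ with $H(\cdot,0)=f_0$ and $H(\cdot,1)=f_1$.

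The main obstacle is precisely this concatenation, i.e.\ the transitivity of $\C$-connectedness. Joining two holomorphic $\C$-homotopies that share an endpoint map $m$ produces a holomorphic map out of $X$ times a nodal curve (two affine lines meeting at a point), and one must smooth the node to a genuine map from $X\times\C$. I expect to overcome this by a gluing argument in the spirit of the proof of Theorem \ref{th:K2018}: after reparametrizing the two $\C$-homotopies so that they map a small overlap disc in the $t$-plane into a prescribed neighbourhood of the graph of $m$ (where they are uniformly close to each other and to $m$), I glue them over a Cartan pair in the $t$-variable using \cite[Proposition 5.9.2]{Forstneric2017E}, while keeping the end-slices $t=0,1$ fixed. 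The decisive point---and the reason for routing the concatenation through $U$---is that the joining maps $g_0,g_1$ take values in the Oka domain $U$; there Proposition \ref{prop:K2018P31} (resting on \cite[Theorem 7.2.1]{Forstneric2017E}) supplies dominating sprays over $g_0$ and $g_1$, which is exactly what the gluing lemma requires. Applying this splicing twice, at $g_0$ and at $g_1$, yields the desired $H$ and completes the verification of condition $(c)$ of Theorem \ref{th:Kusakabe2017}, hence that $Y$ is Oka.
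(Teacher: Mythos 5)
Your implications (a)$\Rightarrow$(b) and (b)$\Rightarrow$(c) are correct; the survey does not reproduce Kusakabe's proof, but these steps match what it indicates (Theorem \ref{th:Oka} applied to the Oka manifold $U$, then Theorem \ref{th:Kusakabe2017} applied to $Y$). You have also correctly identified the two ingredients the paper says Kusakabe combines---Theorem \ref{th:Kusakabe2017} and the spray result behind Proposition \ref{prop:K2018P31}---and correctly located the crux of (c)$\Rightarrow$(a) in the transitivity (concatenation) of holomorphic $\C$-homotopies.

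The gap is that your concatenation mechanism cannot produce the object that Theorem \ref{th:Kusakabe2017}(c) demands. A Cartan pair $(A,B)$ consists of \emph{compact} sets, so gluing via \cite[Proposition 5.9.2]{Forstneric2017E} yields a map defined on $X'\times\Omega$, where $\Omega$ is a bounded neighbourhood of $A\cup B$ in the $t$-plane; but condition (c) of Theorem \ref{th:Kusakabe2017} requires a homotopy holomorphic on all of $X\times\C$. There is no way to pass from the bounded-parameter homotopy to an entire one: a nonconstant holomorphic map $\C\to\Omega$ does not exist (Liouville), and extending or approximating the glued map by maps $X\times\C\to Y$ while keeping the slices $t=0,1$ fixed would require exactly the Oka property of $Y$ that you are in the middle of proving. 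In addition, the hypotheses of the gluing lemma cannot be met over all of $X$: the reparametrized homotopies approach the junction map $g_0$ only uniformly on compact subsets of the (noncompact) domain $X$, so the required uniform closeness on the overlap forces you to shrink $X$---again incompatible with Theorem \ref{th:Kusakabe2017}(c), which is an exact statement on the given $X$ with no shrinking or approximation allowed. This is precisely why the globally defined sprays of Proposition \ref{prop:K2018P31} are essential to the actual argument: every construction must stay entire in the homotopy parameter, e.g.\ by seeking the connecting map in the form $H(x,t)=\Phi(x,t,\gamma(x,t))$, where $\Phi:X\times\C\times\C^N\to Y$ is a spray over $F_0$ dominating along $X\times\{1\}$ and $\gamma$ is holomorphic on $X\times\C$, rather than by gluing over compact parameter sets. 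As written, the key step of your (c)$\Rightarrow$(a) fails, and with it the substantial direction of the theorem.
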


It is not clear how to find a map $F$ satisfying condition (c) if $f(X)$ intersects both $U$ and the subvariety 
$Y'=Y\setminus U$. If $X$ is convex and $f(X)\subset (Y')_\reg$, then such a map exists by 
\cite[proof of Theorem 2]{ForstnericLarusson2014IMRN} (see also \cite[proof of Theorem 7.1.8]{Forstneric2017E}).

%%%%%%%%%%%%%%%%%%%%%%%%%%%
%
%	ELLIPTIC CHARACTERIZATION OF OKA MAPS
%
%%%%%%%%%%%%%%%%%%%%%%%%%%%
\subsection{Elliptic characterization of Oka maps} \label{ss:Okamap}

The main new result presented in this section is Kusakabe's characterization in \cite{Kusakabe2021MZ} 
of the Oka property of holomorphic submersions by {\em convex ellipticity}; see Theorem \ref{th:ellipticmap}. 
We begin with some background.

A holomorphic map $h:Y\to Z$ of reduced complex spaces is said to 
enjoy the {\em parameteric Oka property with approximation and interpolation} (POPAI) 
if for every holomorphic map $f:X\to Z$ from a reduced Stein space, each continuous lift $F_0:X\to Y$ 
is homotopic (through lifts of $f$) to a holomorphic lift $F=F_1:X\to Y$ as in the following diagram, 
\[
    \xymatrix{  & Y \ar[d]^{h} \\ X \ar[r]^{\ \ f} \ar@{-->}[ur]^{F} & Z}
\]
with approximation on a compact $\Oscr(X)$-convex subset of $X$
and interpolation on a closed complex subvariety of $X$ on which $F_0$ is 
holomorphic. Furthermore, the analogous conditions must hold for families of maps $f_p:X\to Z$
depending continuously on  a parameter $p$ in a compact Hausdorff space; see
\cite[Definitions 7.4.1 and 7.4.7]{Forstneric2017E} for the details.
When $Z$ is a singleton, these are the usual Oka properties of the complex manifold $Y$.

%
%   OKA
%
\begin{definition}\label{def:Okamap}
A holomorphic map $h:Y\to Z$ between reduced complex spaces 
is an {\em Oka map} if it enjoys POPAI and is a Serre fibration
(see \cite{Larusson2004,Forstneric2010CR} and \cite[Definition 7.4.7]{Forstneric2017E}).
\end{definition}

For a holomorphic submersion $h:Y\to Z$, POPAI is a local condition in the sense 
that it holds if (and only if) every point $z_0\in Z$ has an open neighbourhood $U\subset Z$ such that 
the restricted submersion $h:h^{-1}(U)\to U$ enjoys POPAI (see \cite[Theorem 4.7]{Forstneric2010PAMQ} 
or  \cite[Definition 6.6.5 and Theorem 6.6.6]{Forstneric2017E}).
Furthermore, for such $h$ the basic Oka property (referring to lifts of single maps $f:X\to Z$)
implies POPAI; see \cite{Forstneric2010CR}. 

Since a holomorphic fibre bundle projection is a Serre fibration, the above localization argument shows 
that it is an Oka map if and only if the fibre is an Oka manifold.
Furthermore, every stratified subelliptic holomorphic submersion satisfies POPAI,
so it is an Oka map provided that it is a Serre fibration (see \cite[Corollary 7.8.4]{Forstneric2017E}).

Let us look more closely at Oka maps between complex manifolds.

%
%   AN OKA MAP IS A SURJECTIVE SUBMERSION
%
\begin{proposition}\label{prop:Okamaps}
Let $h:Y\to Z$ be an Oka map between complex manifolds with $Z$ connected. Then, $h$ is a surjective
submersion and the fibres $h^{-1}(z)$ for $z\in Z$ are Oka manifolds.
\end{proposition}

\begin{proof}
Let $z_0=h(y_0)\in h(Y)$ for some $y_0\in Y$. Since $Z$ is connected, there is a path $\gamma$ from $z_0$ to any given point $z\in Z$. 
Since $h$ enjoys the homotopy lifting property, we can lift $\gamma$ to a path in $Y$ starting at $y_0$;
its terminal point $y$ then satisfies $h(y)=z$. Hence, $h$ is surjective. By a similar argument, given a point $y_0\in Y$
there are a contractible open neighbourhood $U\subset Z$ of $z_0=h(y_0)$ and a continuous 
section $f_0:U\to Z$ with $f_0(y_0)=z_0$ and $h\circ f_0=\Id_U$. Since $h$ is an Oka map, we can deform 
$f_0$ to a holomorphic section $f:U\to Y|_U$ of $h$ with $f(z_0)=y_0$. The restriction $h:f(U)\to U$ is then 
a biholomorphism, which shows that $h$ is a submersion at $y_0$. The fact that the fibres of $h$ are 
Oka manifolds follows from the definition of an Oka map (and it holds if $h$ enjoys POPAI).
\end{proof}

The following result gives many new examples of Oka manifolds from the existing ones.
The special case concerning holomorphic fibre bundles with Oka fibres was proved in 
\cite{Forstneric2006AM}; see also \cite[Theorem 5.6.5]{Forstneric2017E}. Both proofs contain
a minor glitch related to the (non-) existence of Stein neighbourhoods of certain sets, and 
we give a correct proof here.  

%
%    UP-DOWN
%
\begin{theorem}\label{th:updown}
If $h:Y\to Z$ is an Oka map of complex manifolds with $Z$ connected, then $Y$ is an Oka manifold 
if and only if $Z$ is an Oka manifold. 
This holds in particular if $h:Y\to Z$ is a holomorphic fibre bundle with an Oka fibre.
\end{theorem}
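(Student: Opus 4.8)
The plan is to reduce both implications to the convex approximation property CAP (Definition \ref{def:Oka}), which characterizes Oka manifolds, exploiting that by Definition \ref{def:Okamap} an Oka map $h$ is a Serre fibration enjoying POPAI. The fibre-bundle assertion is then a special case, since (as noted just above the statement) a holomorphic fibre bundle projection is a Serre fibration and is an Oka map precisely when its fibre is Oka.

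For the implication ``$Z$ Oka $\Rightarrow$ $Y$ Oka'' I would argue that a composition of Oka maps is an Oka map and apply this to $Y \xrightarrow{h} Z \to \mathrm{point}$: since $Z$ is Oka the second map is Oka, whence $Y \to \mathrm{point}$ is Oka, i.e.\ $Y$ is Oka. To see that the composition is Oka, first note that Serre fibrations compose. For POPAI, start from a continuous map $g\colon X\to Y$ on a Stein space; set $f = h\circ g\colon X\to Z$ and use the Oka property of $Z$ (Theorem \ref{th:Oka}) to deform $f$ through a homotopy $f_t$ to a holomorphic map $f_1$. Lifting this homotopy through the Serre fibration $h$, starting from $g$, yields a continuous lift $g_1$ of $f_1$; then POPAI for $h$ deforms $g_1$ through lifts of $f_1$ to a holomorphic lift $G\colon X\to Y$. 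Concatenating the two homotopies shows $g\simeq G$. The approximation on an $\Oscr(X)$-convex set, the interpolation on a subvariety where $g$ is holomorphic, and the parametric refinements are obtained by invoking the corresponding refined versions of Theorem \ref{th:Oka} and of POPAI at each of the two steps.

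For the converse ``$Y$ Oka $\Rightarrow$ $Z$ Oka'' I would verify CAP for $Z$ directly. Let $f\colon U\to Z$ be holomorphic on a convex open neighbourhood $U\subset\C^n$ of a compact convex set $K$. Because $U$ is contractible and $h$ is a Serre fibration, the pulled-back fibration $f^*Y\to U$ admits a continuous section, i.e.\ $f$ has a continuous lift $\tilde F_0\colon U\to Y$. Applying POPAI for $h$ to $\tilde F_0$ produces a holomorphic lift $F_1\colon U\to Y$ with $h\circ F_1=f$. Since $Y$ is Oka, CAP lets me approximate $F_1$ uniformly on $K$ by entire maps $G\colon\C^n\to Y$; then $h\circ G\colon\C^n\to Z$ is entire and approximates $f$ uniformly on $K$. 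This is precisely CAP for $Z$, so $Z$ is Oka.

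The main obstacle lies in the converse direction, specifically in producing the \emph{continuous} lift $\tilde F_0$: this is where contractibility of the convex neighbourhood $U$ is essential, as it forces the obstructions (living in $H^{k+1}(U;\pi_k(\text{fibre}))$) to a section of the Serre fibration $f^*Y\to U$ to vanish. In general a continuous map into the base of an Oka map need not lift, so the reduction to CAP, which only involves maps from contractible sets, is exactly what makes the argument go through. A secondary point, in the forward direction, is bookkeeping: one must check that the approximation, interpolation, and parametric additions survive the two-step passage through homotopy lifting followed by POPAI, which they do because both Theorem \ref{th:Oka} and the definition of POPAI come equipped with precisely these refinements.
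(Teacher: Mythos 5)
The paper itself offers no proof of Theorem \ref{th:updown}: it is quoted with citations to \cite[Corollary 2.51]{Forstneric2013KAWA} and \cite[Theorem 5.6.5]{Forstneric2017E}, plus the remark that Oka maps are the intermediate fibrations in L\'arusson's model category. Judged on its own, your direction ``$Y$ Oka $\Rightarrow$ $Z$ Oka'' is correct and is the standard argument: a continuous lift exists over the contractible convex neighbourhood because $h$ is a Serre fibration, POPAI converts it into a holomorphic lift, CAP for $Y$ approximates that lift by entire maps, and composing with $h$ gives CAP for $Z$. Contrary to your closing assessment, this is the \emph{unproblematic} half; producing the continuous lift is routine.

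The genuine gap is in the direction ``$Z$ Oka $\Rightarrow$ $Y$ Oka''. Your two-step scheme (push $g$ down, deform $h\circ g$ to a holomorphic map $f_1$ by Theorem \ref{th:Oka}, lift the homotopy topologically, then apply POPAI to the continuous lift $g_1$ of $f_1$) establishes only the \emph{basic} Oka property of $Y$: every continuous map from a Stein manifold is homotopic to a holomorphic one. That is strictly weaker than being Oka --- the paper itself recalls in Section \ref{sec:future} that every bounded convex domain in $\C^n$ satisfies BOP yet is not Oka --- so the approximation statement must actually be proved, and your sentence asserting that the refinements follow ``by invoking the corresponding refined versions at each of the two steps'' is exactly where the argument breaks. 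The middle step is purely topological: the lift $g_1$ produced by the Serre fibration property is merely continuous, it is not holomorphic near $K$, it need not be close to $g$ on $K$ (lifting a homotopy that is uniformly small on $K$ gives no smallness of the lifted homotopy), and it need not agree with $g$ on $X'$. But the approximation and interpolation clauses of POPAI require precisely these properties of the continuous lift as input: the holomorphic lift approximates the \emph{given} lift on $K$ only where that lift is already holomorphic, and it interpolates it only where it is already holomorphic. So after the topological lifting step there is nothing left to approximate or interpolate against. (Interpolation alone could be rescued by the relative homotopy lifting property, since $f_t|_{X'}$ is stationary; the approximation on $K$ cannot.) A workable repair, using only tools quoted in the paper, is to verify condition \CEll1 (Theorem \ref{th:K2018}) or Corollary \ref{cor:Kcor4.1}\,(c) for $Y$: over a convex domain $V$ and holomorphic $F:V\to Y$, POPAI for $h$ supplies a fibre-dominating spray over $F$ (as in the proof of Proposition \ref{prop:projections}), while the Oka property of $Z$ supplies a dominating spray $G:V\times\C^N\to Z$ over $h\circ F$; a continuous lift of $G$ equal to $F$ on the zero section exists by relative homotopy lifting, and POPAI with interpolation on $V\times\{0\}$ turns it into a holomorphic spray over $F$ whose $dh$-image dominates $TZ$. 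The two sprays together span $T_{F(x)}Y$, which gives the Oka property of $Y$; your ``composition of Oka maps'' framing is true but requires this same work, and cannot be obtained from BOP alone.
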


\begin{proof}
Assume first that $Y$ is an Oka manifold and let us prove that so is $Z$. We shall verify CAP.
Let $K$ be a compact convex in $\C^n$, and let $f_0:U\to Z$ be a holomorphic map from 
an open convex neighbourhood $U\subset\C^n$ of $K$. Since $h$ is an Oka map and $U$ 
is contractible, $f_0$ lifts to a holomorphic map $g_0:U\to Y$ with $h\circ g_0=f_0$. 
Since $Y$ is Oka, we can approximate $g_0$ as closely as desired
uniformly on $K$ by a holomorphic map $g:\C^n\to Y$. The map $f=h\circ g:\C^n\to Z$
then approximates $f_0$ on $K$, so $Z$ enjoys CAP and hence is Oka. 

To proves the converse part, we shall need the following lemma.

%
%  This lemma could be included in Section 3.3, next to Theorem 3.3.5
%  
%  
 
\begin{lemma}[Lemma 3.4 in \cite{Forstneric2004AIF}]\label{lem:AIF2004}
Let  $h': Y'\to Z'$ be a holomorphic submersion of a Stein manifold $Y'$ onto
a complex manifold $Z'$. Then there are an open Stein domain $W \subset Z'\times Y'$ 
containing the submanifold $S :=\{(z',y') \in Z' \times Y' :  h'(y')=z'\}$ and a holomorphic retraction 
$\wt \rho : W\to S$ of the form $\wt \rho(z',y')=(z',\rho(z',y'))$ for $(z',y')\in W$.
\end{lemma}

The proof of Theorem \ref{th:updown} can now be completed as follows.
Assuming that $Z$ is an Oka manifold, we shall verify that $Y$ enjoys CAP and hence is Oka.
Consider the manifolds $\wt Y=\C^n \times Y$, $\wt Z=\C^n \times Z$ and the projection 
$\tilde h: \wt Y\to \wt Z$ given by $\tilde h(\zeta,y)=(\zeta,h(y))$ for $\zeta\in\C^n$ and $y\in Y$. 
By Proposition \ref{prop:Okamaps}, $\tilde h$ is a surjective holomorphic submersion.
Let $U\subset \C^n$ be an open convex neighbourhood of a compact convex set $K\subset \C^n$
and $f_0:U\to Y$ be a holomorphic map. Set $g_0=h\circ f_0:U\to Z$. The graphs
\[
	\Gamma_{f_0}=\{(\zeta,f_0(\zeta)):\zeta \in U\}\subset \wt Y\quad\text{and}\quad 
	\Gamma_{g_0}=\{(\zeta,g_0(\zeta)): \zeta\in U\}\subset \wt Z
\]
are locally closed Stein submanifolds of $\wt Y$ and $\wt Z$, which 
by Siu's theorem \cite{Siu1976} admit open Stein neighbourhoods 
$Y' \subset \wt Y$ and $Z' \subset \wt Z$, respectively. 
They can be chosen such that $\tilde h|_{Y'}: Y' \to Z'$ is a surjective holomorphic submersion.
For every point $p\in Z'$, Lemma \ref{lem:AIF2004} furnishes a holomorphic 
retraction $\rho_p$ from a neighbourhood of the fibre $Y'_p=Y'\cap  \tilde h^{-1}(p)$ onto $Y'_p$, depending holomorphically on $p\in Z'$.  

Since $Z$ is Oka, we can approximate the map $g_0:U\to Z$ uniformly on $K$
by a holomorphic map $g:\C^n\to Z$. If the approximation is close enough then 
for all $\zeta$ in a neighbourhood $V\subset U$ of $K$ the point $(\zeta,f_0(\zeta))\in Y'$ 
lies in the domain of the retraction $\rho_{(\zeta,g(\zeta))}$. 
For $\zeta \in V$ let $f_1(\zeta)\in Y$ denote the projection of the point 
$\rho_{(\zeta,g(\zeta))}(\zeta,f_0(\zeta))$ to $Y$. Then, the map $f_1:V\to Y$ 
is holomorphic, uniformly close to $f_0$ on $K$, and it satisfies $h\circ f_1(\zeta)=g(\zeta)$ 
for $\zeta\in V$. Since $h:Y\to Z$ is an Oka map, $g:\C^n\to Z$
is a holomorphic map, and $f_1$ is a holomorphic lift of $g$ over $V\supset K$, 
we can approximate $f_1$ uniformly on $K$ by a holomorphic map $f:\C^n\to Y$ 
satisfying $h\circ f=g$. Hence, $Y$ enjoys CAP and so is an Oka manifold.
\end{proof}

\begin{remark}
Knowing that Oka maps are fibrations in the model structure constructed by L\'arusson 
\cite{Larusson2004} helps us understand and predict their behaviour. For example, it is immediate by abstract
nonsense that the composition of Oka maps is Oka, that a retract of an Oka
map is Oka, and that the pullback of an Oka map by an arbitrary holomorphic
map is Oka. Also, in any model category, the source of a fibration with a fibrant target is fibrant. 
It follows that the source of an Oka map with an Oka target is Oka. On the other hand, the fact
that the image of an Oka map with an Oka source is Oka is a surprising
feature of Oka theory not predicted by abstract nonsense, and its proof depends on the fact
that the Oka property can be detected using the CAP property, which pertains
to approximation of maps from contractible sets.
%to Stein inclusions of the special kind $T\hra\C^n$, where $T$ is contractible.
\end{remark}

We have already remarked that if $h:Y\to Z$ is a holomorphic submersion enjoying 
POPAI then every fibre of $h$ %$h^{-1}(z)$, $z\in Z$, 
is an Oka manifold. The converse fails in general. For example, let $g:Z\to \C$ be a continuous function 
on a domain $Z\subset\C$ and consider the map
\[
	h:Y=\{(z,w)\in Z\times \C: w\ne g(z)\}\to Z, \quad h(z,w)=z.
\] 
Every fibre of $h$ is the Oka manifold $\C^*$, $h$ is a topological fibre bundle and a
Serre fibration, but $h$ is an Oka map if and only if $g$ is a holomorphic function 
(see \cite[Corollary 7.4.10]{Forstneric2017E}).

The following result of Kusakabe \cite[Lemma 5.1]{Kusakabe2020complements} 
shows that a manifold is Oka if it admits sufficiently many submersive projections having the Oka property.

%
%    OKA PROJECTIONS IMPLY OKA
%
\begin{proposition} \label{prop:projections}
Assume that for every point $y$ in a complex manifold $Y$ there exist complex manifolds 
$Z_1,\ldots, Z_k$ and holomorphic submersions $h_j:Y\to Z_j$ $(j=1,\ldots,k)$ enjoying {\rm POPAI} 
such that $T_y Y = \sum_{j=1}^k T_y \left[h_j^{-1}(h_j(y))\right]$. 
Then $Y$ is an Oka manifold.
\end{proposition}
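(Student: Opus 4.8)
The plan is to verify condition (d) of Corollary~\ref{cor:Kcor4.1}, which by that corollary together with Theorem~\ref{th:K2018} is equivalent to $Y$ being Oka. So I would fix a convex domain $X\subset\C^n$, a holomorphic map $f\in\Oscr(X,Y)$, and a point $x\in X$; writing $y=f(x)$, the goal is to produce finitely many holomorphic sprays over $f$ whose vertical derivatives at $x$ together span $T_yY$. I invoke the hypothesis at the single point $y$: there are submersions $h_1,\dots,h_k\colon Y\to Z_j$ enjoying POPAI with $\sum_{j=1}^k T_y[h_j^{-1}(h_j(y))]=T_yY$. The strategy is to build, for each $j$, one spray $F_j$ over $f$ whose vertical derivative at $x$ fills the summand $T_y[h_j^{-1}(h_j(y))]$; summing then yields all of $T_yY$, which is exactly condition (d). Note that the submersions are allowed to depend on $y$, but this causes no trouble since condition (d) is checked pointwise in $x$.

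For the key step I would fix $j$ and set $g_j=h_j\circ f\colon X\to Z_j$ and $V_j=\ker dh_j\subset TY$, the holomorphic vertical bundle of the submersion $h_j$. Since $X$ is Stein, Cartan's Theorem A furnishes a surjective bundle map $\theta_j\colon X\times\C^{N_j}\to f^*V_j$ over $X$. Using a smooth fibre metric on the fibres of $h_j$ and the associated fibrewise exponential, I would form a continuous, $h_j$-fibre-preserving model spray $\tilde F_j\colon X\times\C^{N_j}\to Y$ lifting $(x',w)\mapsto g_j(x')$, with core $\tilde F_j(\cdotp,0)=f$ and prescribed vertical $1$-jet $\di_w\tilde F_j(\cdotp,0)=\theta_j$ along $X\times\{0\}$. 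Then I would apply POPAI of $h_j$, with source the Stein manifold $X\times\C^{N_j}$ and base map $(x',w)\mapsto g_j(x')$, to deform $\tilde F_j$ through lifts to a holomorphic lift $F_j\colon X\times\C^{N_j}\to Y$ that interpolates along $X\times\{0\}$, so that its core remains exactly $f$. Because $F_j$ lifts $g_j\circ\mathrm{pr}$, differentiating $h_j(F_j(x',w))=g_j(x')$ in $w$ at $w=0$ shows $\di_w F_j(x',0)$ maps $\C^{N_j}$ into $V_{j,f(x')}$; in particular $\di_w F_j(x,0)(\C^{N_j})\subset T_y[h_j^{-1}(h_j(y))]$.

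To obtain equality, i.e. to make $F_j$ vertically dominating at $x$, I would strengthen the interpolation in POPAI to a $1$-jet interpolation along $X\times\{0\}$ (interpolation on the first-order neighbourhood of the zero section), which forces $\di_w F_j(\cdotp,0)=\theta_j$ and hence surjectivity onto $V_j$ everywhere, so $\di_w F_j(x,0)(\C^{N_j})=T_y[h_j^{-1}(h_j(y))]$. Since domination is an open condition, it in fact suffices to control this single first jet at $x$. Carrying this out for $j=1,\dots,k$ and using the hypothesis $\sum_j T_y[h_j^{-1}(h_j(y))]=T_yY$, the family $F_1,\dots,F_k$ of sprays over $f$ satisfies $\sum_{j=1}^k \di_t|_{t=0}F_j(x,t)(\C^{N_j})=T_yY$, which is condition (d) of Corollary~\ref{cor:Kcor4.1} at $(f,x)$. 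As $X$, $f$, $x$ were arbitrary, $Y$ is Oka.

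I expect the main obstacle to be precisely the second step: extracting from the mere POPAI of $h_j$ a genuine fibre-preserving spray over $f$ that dominates the vertical bundle $\ker dh_j$. This is the relative analogue, for POPAI submersions, of the fact that an Oka manifold admits a dominating spray over any map from a Stein manifold (\cite[Corollary 8.8.7]{Forstneric2017E}); the delicate point is controlling the first-order behaviour in the fibre parameter $w$, which requires the jet-interpolation form of POPAI (interpolation along a nonreduced first-order neighbourhood of $X\times\{0\}$) rather than plain value interpolation, together with the existence of the continuous model lift $\tilde F_j$ carrying the prescribed $1$-jet. Once this relative spray-with-prescribed-vertical-jet is in hand, the remaining bookkeeping---Steinness of $X\times\C^{N_j}$, $\Oscr$-convexity of the relevant compact sets, and openness of domination---is routine.
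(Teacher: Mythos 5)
Your proposal follows the same route as the paper's proof: reduce to the spray-span criterion of Corollary~\ref{cor:Kcor4.1}, and for each $j$ produce a holomorphic spray $F_j$ over $f$, tangent to the fibres of $h_j$, whose vertical derivative at $x$ spans $T_y\left[h_j^{-1}(h_j(y))\right]$; summing over $j$ and invoking the hypothesis then spans $T_yY$. The only divergence is how the fibre-dominating sprays are obtained. The paper gets them in one line by citing \cite[Corollary 8.8.7]{Forstneric2017E}, and that corollary is not merely the absolute statement for Oka manifolds, as you assume: it applies to holomorphic submersions enjoying POPAI and directly furnishes a spray $F_j\colon X\times\C^{N_j}\to Y$ over $f$ with $h_j\circ F_j=h_j\circ f$ which is dominating in the fibre direction. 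Thus what you single out as the main obstacle --- the relative analogue of the dominating-spray existence --- is exactly the known result being cited, and your argument is completed simply by quoting it in this fibred form.

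If instead one insists on deriving that step from the bare definition of POPAI, as you attempt, there is one genuine gap: your construction hinges on a ``jet-interpolation form of POPAI'' (interpolation on the first-order neighbourhood of $X\times\{0\}$, forcing $\di_w F_j(\cdot,0)=\theta_j$), whereas POPAI as defined (Definition~\ref{def:Okamap}, via \cite[Definitions 7.4.1 and 7.4.7]{Forstneric2017E}) provides only value interpolation on a closed subvariety; you neither prove nor cite this upgrade, only flag it as delicate. The surrounding steps (the Cartan Theorem~A surjection $\theta_j$ onto $f^*\ker dh_j$, the continuous exponential model $\tilde F_j$, Steinness of $X\times\C^{N_j}$, holomorphy of the restriction of $\tilde F_j$ to $X\times\{0\}$) are sound. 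Note that the jet upgrade can also be bypassed within plain POPAI: since domination is needed only at the single point $x$, one may interpolate values on the larger subvariety $(X\times\{0\})\cup(\{x\}\times\C^{N_j})$, prescribing on the second component a dominating holomorphic map $\C^{N_j}\to h_j^{-1}(h_j(y))$ sending $0$ to $y$ (such a map exists because every fibre of a POPAI submersion is an Oka manifold, as noted after Theorem~\ref{th:updown}); granting the routine continuous extension of this data to a lift on $X\times\C^{N_j}$, value interpolation alone forces the required vertical derivative at $(x,0)$. Either fix closes the hole; as written, the argument is incomplete at its central step.
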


\begin{proof}
Let $f:X\to Y$ be a holomorphic map from a Stein manifold $X$.  Fix a point $x\in X$
and let $h_j:Y\to Z_j$ be submersions satisfying the hypothesis in the proposition at $y=f(x)\in Y$. 
The Oka property of $h_j$ furnishes 
a fibre dominating holomorphic spray $F_j:X\times \C^{N_j}\to Y$ over $f$
with $h_j \circ F=h_j\circ f$ (see \cite[Corollary 8.8.7]{Forstneric2017E}). In particular,
$\di_t|_{t=0}F_j(x,t) (\C^{N_j}) = T_{y} \left[h_j^{-1}(h_j(y))\right]$. 
Since $\sum_{j=1}^k T_{y} \left[h_j^{-1}(h_j(y))\right]=T_{y} Y$,  
the sprays $F_1,\ldots,F_k$ dominate $T_{y}Y$.
Hence, $Y$ is Oka by Corollary \ref{cor:Kcor4.1} (the equivalence of (a) and (c)). 
\end{proof}

As pointed out in the introduction to Kusakabe's paper \cite{Kusakabe2021MZ}, 
the two main types of maps which are known to satisfy POPAI
are (stratified) fibre bundles with Oka fibres and (stratified) subelliptic 
submersions. None of these two families is a subfamily of the other one: 
there are Oka manifolds which fail to be subelliptic (see Section \ref{sec:complements}),
and there are subelliptic submersions which are not locally trivial at any base point, 
e.g.\ a complete family of complex tori \cite[Theorem 16]{Larusson2012}. 
Kusakabe also gave an example of a holomorphic submersion enjoying POPAI
which does not belong to any of these two classes \cite[Proposition 5.10]{Kusakabe2021MZ}. 
It is therefore of interest to find a characterization of POPAI which unifies the theory
in the same way as CAP and $\Ell1$ characterize Oka manifolds (cf.\ Theorem \ref{th:K2018}). 
%F removed empty line, inserted "To this end"
To this end, Kusakabe introduced the following notion (see \cite[Definition 1.2]{Kusakabe2021MZ}).

%
%   CONVEXLY ELLIPTIC MAPS
%
\begin{definition} %[Definition 1.2 in \cite{Kusakabe2021MZ}]
\label{def:ellipticmap}
A holomorphic submersion $h:Y\to Z$ of complex spaces is {\em convexly elliptic} 
if there exists an open cover $\{U_i\}_{i\in I}$ of $Z$ such that for every compact convex set $K\subset\C^n$
$(n\in\N)$ and holomorphic map $f\in \Oscr(K,Y)$ 
with $f(K)\subset h^{-1}(U_i)$ for some $i\in I$ there are a neighbourhood $V\subset \C^n$
of $K$ and a holomorphic map $F : V \times \C^N \to Y$ satisfying %the following conditions:
\begin{enumerate}[\rm (i)]
\item $F(\cdotp,0)=f$,
\item  $h\circ F(z,t) = h\circ f(z)$ for all $z\in V$ and $t\in\C^N$, and
\item $F(z,\cdotp):\C^N\to h^{-1}(h(f(z)))$ is a submersion at $0\in\C^N$ for all $z\in V$. 
\end{enumerate}
\end{definition}

A map $F$ as in the above definition is called a {\em fibre dominating spray over $f$}.
Note that convex ellipticity is a fibred version of condition \CEll1 (cf.\ Definition \ref{def:elliptic} (c)).

%
% Kusakabe also introduced a stratified version of this condition (see \cite[Remark 3.3]{Kusakabe2021MZ}), although this is not really needed since in the proof one proceeds by induction on strata as in \cite[proof of Theorem 6.2.2]{Forstneric2017E}.

%
%   ELLIPTIC characterizATION OF OKA MAPS
%
\begin{theorem}{\rm (Kusakabe \cite[Theorem 1.3]{Kusakabe2021MZ})} \label{th:ellipticmap}
A holomorphic submersion of complex spaces enjoys {\rm POPAI} if and only if it is convexly elliptic.
In particular, a holomorphic submersion is an Oka map if and only if it is a convexly elliptic
Serre fibration.
\end{theorem}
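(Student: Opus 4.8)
The plan is to establish the two implications of the stated equivalence separately: the forward implication (POPAI implies convex ellipticity) is a soft consequence of the Oka property, while the converse carries the full weight of the spray-gluing machinery. Once the equivalence is in hand, the second assertion is immediate from Definition \ref{def:Okamap}, since an Oka map is by definition a holomorphic submersion that enjoys POPAI and is a Serre fibration, hence precisely a convexly elliptic Serre fibration.

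First I would dispose of the direction POPAI $\Rightarrow$ convexly elliptic by taking the trivial cover $\{U_i\}=\{Z\}$. Given a compact convex set $K\subset\C^n$ and $f\in\Oscr(K,Y)$, I pick a convex (hence Stein) neighbourhood $V\subset\C^n$ of $K$ on which $f$ is defined; then $f|_V$ is a holomorphic lift through $h$ of the map $h\circ f|_V\colon V\to Z$. The fibred form of \cite[Corollary 8.8.7]{Forstneric2017E}, valid for any submersion enjoying POPAI and already invoked in the proof of Proposition \ref{prop:projections}, produces a fibre-dominating spray $F\colon V\times\C^N\to Y$ over $f|_V$ with $h\circ F(z,t)=h\circ f(z)$. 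This is exactly conditions (i)--(iii) of Definition \ref{def:ellipticmap}.

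The content lies in the converse, convexly elliptic $\Rightarrow$ POPAI, which I would prove as the fibred analogue of the proof of Theorem \ref{th:K2018}. Since POPAI is a local condition over the base (see \cite[Theorem 4.7]{Forstneric2010PAMQ} and \cite[Theorem 6.6.6]{Forstneric2017E}), it suffices to verify it for each restricted submersion $h\colon h^{-1}(U_i)\to U_i$, where $\{U_i\}$ is the cover from Definition \ref{def:ellipticmap}; over such $U_i$ the image condition $f(K)\subset h^{-1}(U_i)$ is automatic, so we have at our disposal a genuine relative version of \CEll1. I would then run the open--closed argument of Theorem \ref{th:K2018} fibrewise: reduce, via the relative analogue of Lemma \ref{lem:SCAP}, to approximating lifts over a special polyhedral pair $K\subset L$; let $\Acal\subset\Oscr(K,Y)$ be the set of lifts approximable by lifts defined over $L$; show $\Acal$ is nonempty and closed; and prove openness by using the fibre-dominating spray from convex ellipticity to straighten the fibre directions, completing $f$ locally to a fibre-dominating spray over an approximant and gluing over a Cartan pair $(A,B)$ by means of \cite[Proposition 5.9.2]{Forstneric2017E} and \cite[Lemma 5.10.4]{Forstneric2017E}. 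Every construction must respect $h$, so that the glued core map is again a lift of $h\circ f$. This yields the basic lifting-with-approximation property over convex sets, and the standard bootstrap \cite[Theorems 7.2.1 and 8.6.1]{Forstneric2017E} upgrades it to full POPAI with interpolation and continuous parameters.

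The hard part will be the fibred gluing. In the absolute setting of Theorem \ref{th:K2018} one uses the biholomorphism $\wt F=(\Id,F)$ of \eqref{eq:wtF} to identify nearby maps with maps of the form $F(\cdotp,\phi)$; in the relative setting this must be replaced by a fibre-preserving biholomorphism onto a neighbourhood of the graph inside the fibre product $V\times_Z Y$, and every auxiliary step --- the local spray completion of the approximant, the solution $H$ of the equation \eqref{eq:H}, and the Oka--Weil approximation on $(K'\cup C)\times W$ --- must preserve the projection to $Z$ so that domination occurs only in the fibre directions. Checking that the relative forms of \cite[Lemma 5.10.4]{Forstneric2017E} and \cite[Proposition 5.9.2]{Forstneric2017E} apply, and that the spray obtained by gluing is again fibre-dominating over a lift of $h\circ f$, is the crux of the argument; the presence of base singularities (the $U_i$ need only be reduced complex spaces) is harmless, since the sprays are parametrised over convex sets in $\C^n$ and all domination is fibrewise.
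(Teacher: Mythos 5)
Your easy direction (POPAI $\Rightarrow$ convex ellipticity, via the fibre-dominating spray of \cite[Corollary 8.8.7]{Forstneric2017E}) is correct, your use of the locality of POPAI over the base matches the actual argument, and you rightly identify the fibre-preserving gluing as the technical crux. The gap is in your final bootstrap. Your open--closed argument, as you set it up, yields approximation of \emph{single} lifts over special polyhedral pairs --- a fibred CAP --- and you then assert that ``the standard bootstrap \cite[Theorems 7.2.1 and 8.6.1]{Forstneric2017E}'' upgrades this to full POPAI. Those theorems do nothing of the sort: they are the results used to produce sprays dominating on preimages of Zariski open Oka domains (cf.\ Proposition \ref{prop:K2018P31}), not a local-to-global theorem for lifts. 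In the absolute case the passage from CAP to the Oka property is the main theorem \cite[Theorem 5.4.4]{Forstneric2017E}; a fibred analogue taking fibred CAP as hypothesis is not available in the literature, so it cannot be cited away. Nor would the bootstrap of \cite{Forstneric2010CR} (basic Oka property implies POPAI for submersions) help, since the basic Oka property concerns lifts of maps from arbitrary reduced Stein spaces, which is precisely what you have not yet established.

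The route that works --- and the one Kusakabe follows in \cite{Kusakabe2021MZ}, as sketched in Subsection \ref{ss:Okamap} --- is: first reduce POPAI for lifts to POPAI for \emph{sections} by pulling back the submersion via the base map; then invoke the axiomatic characterization of POPAI for sections over a reduced Stein base by the homotopy approximation property HAP, which is local on the base (\cite[Definition 6.6.5 and Theorem 6.6.6]{Forstneric2017E}, with the correction recorded in the footnote in Subsection \ref{ss:Okamap}); and finally prove that convex ellipticity implies HAP, in the same way that condition \CEll1 implies CAP in Theorem \ref{th:K2018}. The essential point you miss is that HAP is a statement about local holomorphic \emph{sprays} of sections (families with parameter in a ball), not about single sections; the footnote exists precisely because the non-parametric version is insufficient to run the local-to-global induction. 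So your open--closed argument must be run with the spray parameter carried along throughout --- the set $\Acal$ should consist of germs of sprays of lifts approximable by global sprays of lifts --- after which \cite[Theorem 6.6.6]{Forstneric2017E}, not Theorems 7.2.1/8.6.1, delivers POPAI. With that replacement your outline becomes the actual proof.
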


In view of the fact that a complex manifold $Y$ is an Oka manifold if and only if the constant map $Y\to\mathrm{point}$
is an Oka map, Theorem \ref{th:ellipticmap} generalizes Theorem \ref{th:K2018}, the latter characterizing 
Oka manifolds by condition \CEll1. 

The proof of Theorem \ref{th:ellipticmap} in \cite[Sections 3-4]{Kusakabe2021MZ} is similar
to the proof of Theorem \ref{th:K2018}. First, the problem is reduced to the main special case which pertains 
to sections of a holomorphic submersion $h:Y\to Z$. 
In this case, and assuming that the base $Z$ is Stein, an axiomatic characterization of POPAI is 
provided by the {\em homotopy approximation property}, 
HAP, first introduced in \cite[Proposition 2.1]{Forstneric2010CR}. 
(See also \cite[Definition 6.6.5 and Theorem 6.6.6]{Forstneric2017E}.)\footnote{Condition HAP is not stated correctly in 
\cite[Definition 6.6.5]{Forstneric2017E}:  the same condition must hold for every local holomorphic spray of 
sections with parameter in a ball $\B\subset \C^n$. Equivalently, the stated condition must apply to each 
trivial extension $Z\times \B\to X\times\B$ of the given submersion $Z\to X$.
This holds for every subelliptic submersion $Z\to X$ by \cite[Theorem 6.6.2]{Forstneric2017E}.}
This condition, which is local on the base, is an axiomatization of the homotopy Runge theorem 
(see \cite[Theorem 6.6.2]{Forstneric2017E}).
The gist of Kusakabe's proof of Theorem \ref{th:ellipticmap} is to show that HAP is implied by 
convex ellipticity in a similar way as CAP is implied by condition \CEll1 (see Theorem \ref{th:K2018}
for the latter). We refer to \cite{Kusakabe2021MZ} for the details.

%%%%%%%%%%%%%%%%%%%
%
%	SECTION: OKA DOMAINS IN CN
%
%%%%%%%%%%%%%%%%%%%

\section{Oka domains in Euclidean spaces and in Stein manifolds with the density property}\label{sec:complements}

A long-standing problem in Oka theory asked whether the complement of 
every compact convex set $K$ in $\C^n$ for $n>1$ is an Oka manifold 
(see \cite[Problem 7.6.1]{Forstneric2017E}). In 2020, 
Kusakabe \cite{Kusakabe2020complements} answered this problem affirmatively 
and in a much greater generality. 

We recall the following notion introduced by Varolin \cite{Varolin2001}; 
see also \cite[Definition 4.10.1]{Forstneric2017E}.

%
%   DENSITY PROPERTY
%
\begin{definition}[Varolin \cite{Varolin2001}] \label{def:density}
A complex manifold $X$ has the {\em density property} if every holomorphic 
vector field on $X$ can be approximated uniformly on compacts by Lie combinations
(sums and commutators) of complete holomorphic vector fields on $X$.

An algebraic manifold $X$ has the {\em algebraic density property} if the 
Lie algebra of algebraic vector fields on $X$ is generated by complete algebraic vector fields.
\end{definition}

Every holomorphic vector field on an affine algebraic manifold is a limit of algebraic vector fields,
and hence the algebraic density property implies the holomorphic density property.
Note that flows of complete algebraic vector fields in the above definition
need not be algebraic. Algebraic vector fields having algebraic flows are called {\em locally nilpotent
derivations}, abbreviated LNDs, and they are much more special. 

On a Stein manifold $X$, the density property implies the Anders\'en--Lempert theorem concerning 
approximation of isotopies of biholomorphic maps between Stein Runge domains in $X$ by isotopies of holomorphic automorphisms of $X$; see \cite[Theorem 4.10.5]{Forstneric2017E}. 
Every Stein manifold with the density property has dimension $>1$ and is an Oka manifold 
(see \cite[Theorem 4]{KalimanKutzschebauch2008MZ} or \cite[Theorem 5.5.18]{Forstneric2011E}).
For surveys, see \cite[Chapter 4]{Forstneric2017E},   
\cite{ForstnericKutzschebauch2022}, and  \cite{Kutzschebauch2020}. 

We can now state Kusakabe's result. 

%
%   KUSAKABE'S THEOREM
%
\begin{theorem}
{\rm (Kusakabe \cite[Theorem 1.2 and Corollary 1.3]{Kusakabe2020complements}.)} 
\label{th:complement}
If $Y$ is a Stein manifold with the density property and $K$ is a compact 
$\Oscr(Y)$-convex set in $Y$ then the complement $Y\setminus K$ is an Oka manifold. In
particular, if $K$ is a compact polynomially convex set in $\C^n$ for $n>1$, then 
$\C^n\setminus K$ is an Oka manifold. 
\end{theorem}

Since the interior $X=\mathring K$ of a polynomially convex set $K$ in $\C^n$ 
is Stein \cite[Corollary 2.5.7]{Hormander1990}, Theorem \ref{th:complement}
gives many Stein--Oka decompositions 
$\C^n=\Omega\cup \overline X$, where $X$ is a bounded Stein domain with polynomially
convex closure and $\Omega = \C^n\setminus \overline X$ is an Oka domain.
This phenomenon is rather symbolic since Oka manifolds are in a certain sense dual to Stein
manifolds, being the most natural targets of holomorphic maps from Stein manifolds.

It seems reasonable to introduce the following property.

%
%    OKA AT INFINITY
%
\begin{definition}\label{def:Okainfinity}
A noncompact complex manifold $Y$ is {\em Oka at infinity} if there is
an exhaustion $K_1\subset K_2\subset \cdots \subset \bigcup_{j=1}^\infty =Y$
by compact sets such that $Y\setminus K_j$ is Oka for every $j\in\N$.
\end{definition}

Thus, Theorem \ref{th:complement} says that every Stein manifold with the density property 
is Oka at infinity. Besides its intrinsic interest, Theorem \ref{th:complement} is a
very useful tool for constructing proper holomorphic maps to such manifolds;
see Remark \ref{rem:propermaps}.

Kusakabe's proof of Theorem \ref{th:complement} uses 
the characterization of Oka manifolds by condition \CEll1 (see Theorem \ref{th:K2018}). 
Given a compact convex set $L\subset \C^N$ and a holomorphic map 
$f:L\to Y$ such that $f(z)\in Y\setminus K$ for every $z\in L$, he 
constructed a holomorphically varying family 
$f(z)\in \Omega_z \subset Y\setminus K$ $(z\in L)$ of nonautonomous basins with uniform bounds 
(i.e., basins of random sequences of automorphisms of $Y$ which are uniformly attracting at 
the fixed point $f(z)\in Y \setminus K$); such basins are elliptic manifolds as shown by 
Forn\ae ss and Wold \cite{FornaessWold2016}, hence Oka.  It is then possible
to find a dominating holomorphic spray $F:L\times \C^n\to Y$ over $f=F(\cdotp,0)$
such that $F(z,\zeta)\in \Omega_z$ for all $z\in L$ and $\zeta\in\C^n$. 
Thus, $Y\setminus K$ satisfies condition \CEll1 and hence is Oka.

Soon thereafter, Wold and the author pointed out in \cite{ForstnericWold2020MRL} that 
one can choose a spray $F$ as above such that $F(z,\cdotp):\C^n\to Y\setminus K$ 
is a Fatou--Bieberbach map for every $z\in L$. The following result
of independent interest therefore implies Theorem \ref{th:complement}. 

%%%%%%%%%%%%%%%%%
%
%   FRANCI IN ERLEND, 2020
%
%%%%%%%%%%%%%%%%%
\begin{theorem}[Theorems 1.1 and 3.1 in \cite{ForstnericWold2020MRL}] \label{th:FW2020}
Let $Y$ be a Stein manifold with the density property, $K$ be a compact 
$\Oscr(Y)$-convex set in $Y$, $L$ be a compact convex set in $\C^N$ for some $N\in\N$, 
and $f:U\to \C^n$ be a holomorphic map on an open neighbourhood $U\subset \C^N$ 
of $L$ such that $f(z)\in Y\setminus K$ for every $z\in L$. 
Then there are an open neighbourhood $V\subset U$ of $L$
and a holomorphic map $F:V\times \C^n\to Y$ with $n=\dim Y$ such that for every $z\in V$
we have that $F(z,0)=f(z)$ and the map $F(z,\cdotp):\C^n\to Y\setminus K$ is injective
(a Fatou--Bieberbach map).
If $Y=\C^n$ with $n>1$ then the same conclusion holds if $L$ is polynomially convex.
\end{theorem}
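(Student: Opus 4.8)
The plan is to realize each fibre map $F(z,\cdot)$ as the biholomorphism onto a non-autonomous attracting basin contained in $Y\setminus K$, and then to assemble these basins into a holomorphic family over a neighbourhood of $L$ using the parametric Anders\'en--Lempert theory available on Stein manifolds with the density property. I would first treat the model situation at a single point. Fix $p\in Y\setminus K$ and a small ball $B$ around $p$ with $\overline B\cap K=\varnothing$. The aim is to construct a sequence of automorphisms $\phi_k\in\Aut(Y)$ fixing $p$ whose derivatives at $p$ are uniformly attracting (eigenvalues confined to an annulus $a<|\lambda|<b<1$), arranged so that $\phi_{k+1}(B)\Subset B$ and so that the forward orbit pushes $K$ permanently away from $B$, i.e.\ $\Phi_k(K)\cap B=\varnothing$ for all $k$, where $\Phi_k=\phi_k\circ\cdots\circ\phi_1$. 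Since $K$ is $\Oscr(Y)$-convex and $p\notin K$, the Anders\'en--Lempert theorem (via the density property, \cite[Theorem 4.10.5]{Forstneric2017E}) lets me approximate the relevant isotopies of biholomorphisms between Stein Runge domains by genuine automorphisms, so such a sequence exists; a generic (random) choice of uniformly attracting automorphisms in fact suffices to obtain the uniform bounds.

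Given such a sequence, the basin $\Omega=\{y:\Phi_k(y)\to p\}=\bigcup_k\Phi_k^{-1}(B)$ is an increasing union of balls (the inclusions $\Phi_k^{-1}(B)\subset\Phi_{k+1}^{-1}(B)$ follow from $\phi_{k+1}(B)\Subset B$); and from $\Phi_k(K)\cap B=\varnothing$ we get $\Phi_k^{-1}(B)\cap K=\varnothing$, hence $\Omega\subset Y\setminus K$. The Forn\ae ss--Wold theorem \cite{FornaessWold2016}, applied under the uniform attracting bounds, then shows that $\Omega$ is biholomorphic to $\C^n$, the biholomorphism arising as the locally uniform limit of the normalized inverse compositions $\Phi_k^{-1}$ (precomposed with the linear rescalings cancelling the derivative of $\Phi_k$ at $p$). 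This yields a single Fatou--Bieberbach map $\C^n\to\Omega\subset Y\setminus K$ sending $0\mapsto p$.

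The heart of the proof is the parametric upgrade. Letting $p=f(z)$ run over the compact set $f(L)\Subset Y\setminus K$, I would run the entire construction with all data depending holomorphically on $z$ over a neighbourhood $V\subset U$ of $L$: the automorphisms become families $\phi_k(z,\cdot)\in\Aut(Y)$ holomorphic in $z$, produced by the parametric Anders\'en--Lempert theorem (this is exactly where the density property together with the function-theoretic simplicity of $L$ is used), with contraction rates and the escape property $\Phi_k(z,\cdot)(K)\cap B_z=\varnothing$ uniform in $z$. The normalized limit of $\Phi_k(z,\cdot)^{-1}$ then converges locally uniformly in $(z,\zeta)$ to the desired $F:V\times\C^n\to Y$ with $F(z,0)=f(z)$ and $F(z,\cdot)$ a biholomorphism onto $\Omega_z\subset Y\setminus K$. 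For the case $Y=\C^n$, $n>1$, with $L$ only polynomially convex, the same scheme applies verbatim once one notes that the Oka--Weil approximation underlying Anders\'en--Lempert on $\C^n$ is available over polynomially convex parameter sets, since the product of $L$ with a polynomially convex compactum in $\C^n$ is again polynomially convex.

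I expect the main obstacle to be the combined parametric and quantitative control in the last step: one must choose the attracting sequence so that, uniformly in $z$, the expanding images $\Phi_k(z,\cdot)^{-1}(B_z)$ genuinely exhaust $\C^n$ in the limit, so that $F(z,\cdot)$ is injective \emph{onto} $\Omega_z$ rather than merely an injective immersion onto a proper subdomain, while preserving holomorphic dependence on $z$ throughout the infinite composition. Reconciling the Forn\ae ss--Wold uniform-bounds criterion with the successive approximations forced by the parametric Anders\'en--Lempert theorem, so that both the attraction estimates and the holomorphy in $z$ survive the passage to the limit, is the technical crux.
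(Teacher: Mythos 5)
Your overall architecture --- holomorphically varying nonautonomous attracting basins inside $Y\setminus K$, built by parametric Anders\'en--Lempert theory and realized as images of limits of normalized inverse compositions --- is exactly the strategy behind the cited result, but your pivotal step is wrong as stated. The theorem of Forn\ae ss and Wold \cite{FornaessWold2016} that you invoke does \emph{not} say that a nonautonomous basin with uniform bounds is biholomorphic to $\C^n$; it says that such a basin is \emph{elliptic}, hence Oka. This is precisely how the survey describes Kusakabe's original argument: the basins $\Omega_z$ are elliptic, hence Oka, which suffices for condition \CEll1 and for the Oka property of $Y\setminus K$, but it produces only a dominating spray with values in $\Omega_z$, not injective maps $\C^n\to\Omega_z$. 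Whether uniform bounds $a\le|\lambda|\le b<1$ alone force such a basin to be biholomorphic to $\C^n$ is an open problem (Bedford's conjecture); the known sufficient condition is the stronger squared-bounds inequality $b^2<a$, due to E.\ F.\ Wold (2005). So with the hypotheses you actually impose, even your single-point model does not deliver a Fatou--Bieberbach map, and ellipticity of $\Omega_z$ cannot be upgraded to injectivity of $F(z,\cdot)$. The whole point of the theorem you are proving --- and its improvement over Kusakabe's argument --- is exactly this upgrade.

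The gap is repairable in the model case: since you construct the automorphisms yourself, you can force $D\phi_k(p)\approx\lambda\,\Id$ for a fixed small $\lambda$, so that $b^2<a$ holds and Wold's theorem applies (you also need jet interpolation in the Anders\'en--Lempert step, so that $\phi_k$ fixes $p$ exactly with the prescribed derivative, rather than approximately). But then the second half of your plan, the ``parametric upgrade,'' requires a version of the basin theorem with holomorphic dependence on $z$: that the normalized limits of $\Phi_k(z,\cdot)^{-1}$ converge locally uniformly in $(z,\zeta)$ to biholomorphisms of $\C^n$ onto $\Omega_z$, with the contraction bounds, the exact fixing of $f(z)$, and the escape of $K$ all kept uniform in $z$ through the infinite composition. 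That parametric convergence statement is not an off-the-shelf citation; it is the actual content of Theorems 1.1 and 3.1 of \cite{ForstnericWold2020MRL}. Deferring it to ``the technical crux'' leaves the proof incomplete exactly where the cited paper does its work.
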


%
%  CONSTRUCTING PROPER MAPS TO MANIFOLDS WITH DENSITY PROPERTY
%
\begin{remark}\label{rem:propermaps}
In the papers \cite{AndristWold2014,AndristForstnericRitterWold2016,Forstneric2019JAM}
it was proved that every Stein manifold $X$ admits a proper holomorphic embedding
in any Stein manifold $Y$ with the density property, or the volume density property
with respect to a holomorphic volume form on $Y$, if $\dim Y>2\dim X$,
and it admits a proper holomorphic immersion if $\dim Y\ge 2\dim X$.
In the case when $Y$ has the density property, the proofs can be substantially 
simplified by using Theorem \ref{th:complement}; here is an outline. 
Suppose that $D$ is a relatively compact, smoothly bounded domain in $X$ 
whose closure is $\Oscr(X)$-convex and $f:X\to Y$ is a continuous map which is
holomorphic on $\bar D$. Given a compact $\Oscr(Y)$-convex set $L\subset Y$, 
one can use the technique in \cite{DrinovecForstneric2010AJM} 
to deform $f$ to another map $\tilde f:X\to Y$ which is 
holomorphic on $\bar D$, close to $f$ on a given compact subset of $D$ and 
satisfies $\tilde f(bD)\subset Y\setminus L$. Since the domain $Y\setminus L$ is Oka
by Theorem \ref{th:complement}, we can apply Theorem \ref{th:Oka2}
to approximate $\tilde f$ uniformly on $\bar D$ by a holomorphic map $g:X\to Y$
homotopic to $f$ such that $g(X\setminus \mathring D) \subset Y\setminus L$.
Continuing inductively and using also the general position theorem at every step, 
we obtain a sequence of holomorphic embeddings (or immersions) from an increasing 
sequence of domains exhausting $X$ to $Y$, converging to a proper holomorphic 
embedding or immersion $X\to Y$. 

This scheme does not work if $Y$ has the volume density property but not the
density property. A quintessential example is $(\C^*)^n$ 
which has the volume density property with respect to the volume form
$dz_1\wedge dz_2\wedge\cdots\wedge dz_n/z_1z_2\cdots z_n$, but it is
not known to have the density property. It is not known whether 
Theorem \ref{th:complement} holds for such manifolds
since holomorphic vector fields which are contracting at some point are not 
volume preserving. However, such a manifold still has Oka property at infinity 
for maps $X\to Y$ from Stein manifolds $X$ of dimension $\dim X<\dim Y$. 
\end{remark}

It was proved by Andrist, Shcherbina, and Wold \cite{AndristShcherbinaWold2016} that, in a Stein
manifold $X$ of dimension at least three, every compact holomorphically convex set $K$
with infinitely many limit points has non-elliptic complement $X\setminus K$. 
An important point in the proof is that every holomorphic line bundle $E\to X\setminus K$
extends to a holomorphic line bundle on the complement of at most finitely many points, 
and hence a spray $F:E\to X$ defined on such a bundle cannot have values contained in 
$X\setminus K$. Together with Theorem \ref{th:complement} this gives the following corollary which 
answers a question of Gromov \cite[3.2.A'']{Gromov1989} 
(see also \cite[p.\ 325]{Forstneric2017E}). 

%
%   OKA BUT NOT SUBELLIPTIC
%
\begin{corollary}\label{cor:Okanotell}
Let $n\ge 3$. For every compact polynomially convex set $K\subset \C^n$
with infinitely many limit points the complement $\C^n\setminus K$ is Oka but not subelliptic. 
The analogous result holds in any Stein manifold of dimension $\ge 3$ with the density property.
\end{corollary}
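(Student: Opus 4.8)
The corollary is the conjunction of two separately established facts, so the plan is to quote Theorem~\ref{th:complement} for the Oka assertion and to reduce the failure of subellipticity to the Hartogs-type extension theorems of Andrist, Shcherbina and Wold \cite{AndristShcherbinaWold2016}. The Oka part is immediate from Theorem~\ref{th:complement}: if $X=\C^n$ with $n\ge 3>1$ and $K$ is compact and polynomially convex then $\C^n\setminus K$ is Oka, and if $X$ is a Stein manifold with the density property (so $\dim X>1$ automatically) and $K$ is compact $\Oscr(X)$-convex then $X\setminus K$ is Oka. Neither $\dim\ge 3$ nor ``infinitely many limit points'' is needed for this; they enter only in the second assertion.

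For the negative part I set $Y=X\setminus K$ and argue by contradiction. By the remark following Corollary~\ref{cor:Kcor4.1} (condition (c) applied to $f=\Id_Y$), weak subellipticity of $Y$ provides finitely many holomorphic sprays $F_j:E_j\to Y$ over the inclusion $Y\hookrightarrow Y$, defined on holomorphic vector bundles $E_j\to Y$, with values in $Y$, and jointly dominating: $\sum_j\di_t|_{t=0}F_j(y,t)\bigl((E_j)_y\bigr)=T_yY$ for every $y\in Y$. Composing the $F_j$ (the standard composition-of-sprays mechanism used in the proof of Corollary~\ref{cor:Kcor4.1}) yields a single holomorphic spray $F:E\to Y\subset X$ on a holomorphic vector bundle $E\to Y$, again with values in $Y$, whose zero section is the inclusion $Y\hookrightarrow X$ and whose fibrewise differential $\Phi:E\to TX|_Y$ at the zero section is surjective.

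The \emph{crux and main obstacle} is the extension step of \cite{AndristShcherbinaWold2016}, which is exactly where $\dim X\ge 3$ is essential. Because $X$ is Stein of dimension at least three and $K$ is compact $\Oscr(X)$-convex, the bundle $E$ extends holomorphically across $K$ to a bundle $\wt E$ over $X\setminus S$ for a finite set $S$ (equivalently, as stressed in the discussion above, the relevant determinant line bundle extends), and the Hartogs theorem for sprays extends $F$ to a holomorphic spray $\wt F:\wt E\to X$ over $X\setminus S$ which is the identity on the zero section. Proving these two extensions — the Hartogs phenomenon for the bundle and the extendability of the spray map — is the genuinely hard input, and both fail when $\dim X=2$.

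With the extensions in hand the argument closes by a uniform submersion estimate. Writing points of $\wt E$ near the zero section over a neighbourhood of a limit point $p\in K\setminus S$ as $(z,\xi)$ with $\pi(z,\xi)=z$ and $\wt F(z,0)=z$, I use that $\wt F$ is a fibrewise submersion at $0_p$ (domination persists at $p$; the hypothesis that $K$ has infinitely many limit points is used precisely to place such a $p$ outside the finite set $S$ and the thin degeneracy locus $D\subset K$ of the extended differential $\wt\Phi$). Submersivity is uniform nearby: there are $\rho,r>0$ with $\{\wt F(z,\xi):|\xi|<\rho\}\supset B(z,r)$ for all $z$ in a neighbourhood of $p$. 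Choosing $z\in Y$ with $|z-p|<r$, possible since $p\in\partial K$, gives $\xi$ with $|\xi|<\rho$ and $\wt F(z,\xi)=p\in K$; but $(z,\xi)$ lies over $Y$, where $\wt F=F$, so $F(z,\xi)\in K$, contradicting $F(E)\subset Y=X\setminus K$. Hence $Y$ is not weakly subelliptic, a fortiori not subelliptic. Verifying that a suitable $p$ always exists — in particular when $K$ is thin, where one must rule out $\partial K$ being swallowed by $S\cup D$ — is the one remaining delicate point, and it, together with the extension theorems, is what is carried out in \cite{AndristShcherbinaWold2016}.
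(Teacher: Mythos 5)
Your overall route is the same as the paper's: the Oka half is exactly Theorem \ref{th:complement}, and the failure of subellipticity is delegated to the Hartogs-type extension theorems of Andrist--Shcherbina--Wold \cite{AndristShcherbinaWold2016}; this is precisely what the paper does (it cites these two results, indicating only that an extended spray, which fixes the zero section and is dominating off a small exceptional set, cannot have values in $X\setminus K$). So there is no divergence in strategy, and your identification of where $\dim X\ge 3$ and the infinitude of limit points enter is correct.

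There is, however, one genuine error in your reduction, at the step ``Composing the $F_j$ \dots yields a single holomorphic spray $F:E\to Y$ \dots on a holomorphic vector bundle $E\to Y$.'' The composition of sprays $F_1:E_1\to Y$ and $F_2:E_2\to Y$ lives on the fibred pullback $E_1*E_2=F_1^*E_2$, which is a vector bundle over the total space $E_1$, \emph{not} over $Y$: its fibres over points of $Y$ carry no linear structure, and the composed object is only an iterated fibre bundle. If a finite dominating family of vector-bundle sprays could be assembled into a single dominating vector-bundle spray, then subellipticity would imply ellipticity, which is exactly the implication that is not known (and the reason the weaker notion exists). The error is not cosmetic here, because the input to the Hartogs extension theorem of \cite{AndristShcherbinaWold2016} is a holomorphic \emph{vector bundle} on $X\setminus K$, so their theorem cannot be applied to your $E$. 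The repair is standard: extend each pair $(E_j,F_j)$ separately (the union of the finitely many finite exceptional sets is still finite), let $D$ be the joint degeneracy locus where $\sum_j \mathrm{im}\, dF_j$ fails to span the tangent space, and run your uniform openness argument with the sprays composed only locally near the zero section over the chosen point $p\in\partial K\setminus(S\cup D)$, where all the bundles are trivial. A smaller point: the data you start from (one fixed finite family of bundle sprays dominating jointly at \emph{every} point of $Y$) is the definition of subellipticity, not of weak subellipticity, where the finite dominating subfamily of a possibly infinite collection may depend on the point (cf.\ Corollary \ref{cor:Kcor4.1}(c) and \cite[Definition 5.6.13]{Forstneric2017E}); consequently your closing claim that $Y$ is not \emph{weakly} subelliptic is not established by this argument --- but it is also not needed, since the corollary asserts only non-subellipticity.
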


The first known examples of Oka manifolds which fail to be subelliptic were given by 
Kusakabe in \cite{Kusakabe2020PAMS}. One of the main results of that paper is the following.

%
%   COMPLEMENTS OF COUNTABLE SETS
%
\begin{theorem}[Theorem 1.2 in \cite{Kusakabe2020PAMS}]\label{th:KusakabePAMS}
If $S$ is a closed tame countable set in $\C^n$, $n>1$, whose set of limit points 
is discrete, then $\C^n\setminus S$ is an Oka domain.
\end{theorem}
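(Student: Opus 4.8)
The plan is to verify that $Y=\C^n\setminus S$ satisfies condition \CEll1 and then invoke Theorem \ref{th:K2018}. Unwinding Definition \ref{def:elliptic}(c), it suffices to show: for every compact convex $K\subset\C^m$, every neighbourhood $U\supset K$, and every $f\in\Oscr(U,Y)$, there are a smaller neighbourhood $V\supset K$ and a holomorphic spray $F:V\times\C^N\to Y$ over $f$ dominating $T_{f(z)}Y$ for each $z\in V$. The useful feature is that $P:=f(K)$ is a \emph{compact} subset of $Y$, so that $\dist(P,S)=:\delta>0$. I would produce the spray in the strongest possible form, as a holomorphically varying family of Fatou--Bieberbach maps $\phi_z:\C^n\xrightarrow{\ \sim\ }\Omega_z\subset\C^n\setminus S$ with $\phi_z(0)=f(z)$; then $F(z,\zeta)=\phi_z(\zeta)$ is automatically dominating (each $\phi_z$ being biholomorphic onto its image) and takes values in $Y$. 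This is exactly the scheme behind Theorems \ref{th:complement} and \ref{th:FW2020}, the crucial difference being that there the removed set is compact, whereas here $S$ is merely closed.

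It is convenient to first reduce the accumulation structure using localization. Since $S$ is tame and the Oka property is a biholomorphic invariant, compose with an automorphism of $\C^n$ to place the discrete limit set $L\subset S$ on the $z_1$--axis, say $L=\{(a_i,0,\dots,0)\}$ with the $a_i$ distinct and $\lvert a_i\rvert\to\infty$. Unlike in the algebraic setting, the zero set of an entire function is again a closed complex subvariety, so each $V_i:=\bigcup_{j\ne i}\{z_1=a_j\}$ (a zero set of a Weierstrass product in $z_1$) is an admissible complement for a Zariski open subset. Setting $U_i:=\C^n\setminus(S\cup V_i)$, one checks that the $U_i$ are Zariski open in $Y$ and cover it: a point $q\notin S$ lies in $U_i$ for every $i$ unless $q_1=a_{i_0}$ for a unique $i_0$, in which case $q\in U_{i_0}$. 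By Theorem \ref{th:localization} it therefore suffices to prove each $U_i$ Oka. The gain is that inside $U_i$ all limit points of $S$ except the single point $p_i$ have been removed: the clusters belonging to the other $p_j$ now accumulate only at the deleted points $p_j$ and hence form closed discrete sets, so the only genuine accumulation of $S$ left to contend with is the one cluster converging to $p_i$.

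The main work, and the main obstacle, is the direct construction of the family $\{\Omega_z\}$, where the set to be avoided fails to be compact. Here I would again work inside $\C^n$, exploiting its density property to run a telescoping Andersén--Lempert construction (as in \cite{FornaessWold2016} and \cite{ForstnericWold2020MRL}): at each stage one selects automorphisms of $\C^n$ that are uniformly contracting toward the moving centre $f(z)$ while pushing the currently relevant points of $S$ (and the benign coordinate hyperplanes comprising $V_i$) outward, so that the resulting basin of attraction $\Omega_z$ misses $S$. The non-compactness of $S$ is precisely what the discreteness of $L$ tames: within any bounded ball only finitely many limit points occur, so at each scale only finitely many clusters interfere and must be dodged, while tameness allows the points of $S$ to be moved consistently by automorphisms. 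Carrying the parameter $z\in K$ along holomorphically yields the family of Fatou--Bieberbach maps $\phi_z:\C^n\to\Omega_z\subset\C^n\setminus S$ with $\phi_z(0)=f(z)$, hence a dominating spray over $f$ with values in $Y$; this establishes \CEll1, and Theorem \ref{th:K2018} gives that $\C^n\setminus S$ is Oka. The delicate estimates guaranteeing that the basins avoid $S$ uniformly in $z$ are where essentially all of the technical effort lies; once the local model with a single accumulation point is in place, the rest is formal.
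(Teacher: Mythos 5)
The survey you are reading does not actually prove this theorem (it states it as Theorem 1.2 of \cite{Kusakabe2020PAMS}), so your argument must stand on its own, and it does not: the localization step is fatally flawed. Your Zariski-open pieces are $U_i=\C^n\setminus(S\cup V_i)$ with $V_i=\bigcup_{j\ne i}\{z_1=a_j\}$, i.e.\ you delete \emph{all but one} of the parallel hyperplanes through the limit points. As soon as the discrete limit set $L$ has at least three points (in particular whenever it is infinite, which is the essential case, since you yourself write $|a_i|\to\infty$), each $U_i$ is contained in $\bigl(\C\setminus\{a_j:j\ne i\}\bigr)\times\C^{n-1}$, and $\C$ minus two or more points is Kobayashi hyperbolic. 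Hence the first coordinate of every holomorphic map $\C^N\to U_i$ is constant (little Picard applied to each complex line in $\C^N$), so no such map has surjective differential anywhere; in particular $U_i$ carries no dominating spray over any point, is not dominable, and therefore is not an Oka manifold (every Oka manifold satisfies $\Ell1$, by Theorem \ref{th:K2018} or \cite[Corollary 8.8.7]{Forstneric2017E}). So Theorem \ref{th:localization} is being applied to pieces that are provably non-Oka: the reduction is to a false statement. For the same reason your intended family of Fatou--Bieberbach maps $\phi_z:\C^n\to\Omega_z\subset U_i$ cannot exist; there are no injective, indeed no nondegenerate, holomorphic maps $\C^n\to U_i$.

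A decomposition that does work exploits the fact that tameness places \emph{all} of $S$ (not merely $L$) in the line $\C\times\{0\}^{n-1}$: the coordinate-hyperplane complements $\C^n\setminus\{z_j=0\}$, $j=2,\dots,n$, are then Zariski open in $\C^n\setminus S$, automatically Oka ($\cong\C^{n-1}\times\C^*$), and cover everything off the line, while the remaining points are covered by $\C^n\setminus(S\cup A)$ with $A=\{z_2=g(z_1)\}$, where $g$ is an entire function vanishing precisely on the first coordinates of the discrete limit set. After the shear $(z_1,z_2,z'')\mapsto(z_1,z_2-g(z_1),z'')$, this last piece is the complement of a \emph{closed discrete} set in $\C\times\C^*\times\C^{n-2}$, and the genuine content of Kusakabe's proof is to show that such a discrete set has Oka complement, using the density property of $\C^{n-1}\times\C^*$ and Anders\'en--Lempert theory. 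This is also where your second gap sits: the ``telescoping'' basin construction avoiding a noncompact set is not a routine extension of \cite{FornaessWold2016} and \cite{ForstnericWold2020MRL}; those arguments require, at every stage, polynomial convexity of the union of a compact set with the set to be avoided, which is exactly what fails for an unbounded set with accumulation points. Where noncompact sets are handled in the literature (Theorem \ref{th:unboundedsets}), this is done by fibering through linear projections (Proposition \ref{prop:projections}), so that only a compact set per fibre must be avoided, never by a global basin dodging an unbounded set. Incidentally, for $n\ge 3$ the theorem follows at once from Theorem \ref{th:unboundedsets}: a closed countable subset of a complex line is polynomially convex and satisfies \eqref{eq:proj}, so no localization is needed; it is the case $n=2$ where the discrete-limit-set hypothesis and an argument of the above type are genuinely required.
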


Here, a closed countable set $S\subset\C^n$ is called tame if there is a holomorphic automorphism
$\Phi$ of $\C^n$ such that the closure of $\Phi(S)$ in $\CP^n$ does not contain the entire 
hyperplane at infinity. %The same definition applies to complex subvarieties of $\C^n$.
It was previously known that the complement of a closed tame subvariety of $\C^n$ 
of codimension at least $2$ %(in particular, the complement  of a closed tame discrete subset) 
is elliptic and hence Oka; see \cite[Proposition 5.6.17]{Forstneric2017E}. 

In \cite{Kusakabe2020PAMS}, Kusakabe also constructed compact countable 
sets in $\C^n$ with non-discrete sets of limit points and having non-elliptic Oka complements.
An example is the following. Let
$\N^{-1}=\{1/j:j\in\N\}$ and $\overline \N^{-1}=\N^{-1}\cup\{0\}\subset\C$.
The domain 
$
	X=\C^n\setminus \big((\overline \N^{-1})^2 \times \{0\}^{n-2}\big)
$
for $n\ge 3$ is an Oka manifold which is not weakly subelliptic (see \cite[Corollary 1.4]{Kusakabe2020PAMS}).

The corresponding problem in complex dimension $2$ remains open. The reason  
is that a holomorphic line bundle $E\to \C^2\setminus K$ need not extend to
a holomorphic line bundle on a bigger domain, and hence the argument in the proof
of Corollary \ref{cor:Okanotell} does not apply.

%
%   PROBLEM IN DIMENSION 2
%
\begin{problem}\label{prob:C2}
Is there a compact subset $K$ of $\C^2$ whose complement $\C^2\setminus K$ is 
an Oka domain which is not elliptic or (weakly) subelliptic?
\end{problem}

A closed unbounded set $S$ in $\C^n$ is said to be polynomially convex if it is exhausted by an 
increasing sequence of compact polynomially convex sets. 
Theorem \ref{th:complement} is a special case of the following result of Kusakabe
\cite[Theorem 1.6]{Kusakabe2020complements}; see also 
\cite[Theorem 4.2]{Kusakabe2020complements}.

%
%  COMPLEMENTS OF UNBOUNDED POLYNOMIALLY CONVEX SETS.
%
\begin{theorem}\label{th:unboundedsets}
If $S$ is a closed polynomially convex subset of $\C^n$ $(n\ge 2)$ such that
\begin{equation}\label{eq:proj}
	S\subset \left\{(z,w)\in \C^{n-2}\times \C^{2}: |w| \le c(1+|z|)\right\}
\end{equation}
for some $c>0$, then $\C^n\setminus S$ is an Oka manifold.
\end{theorem}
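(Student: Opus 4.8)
The plan is to verify Gromov's condition \CEll1 for the manifold $Y=\C^n\setminus S$ and then invoke Theorem~\ref{th:K2018}. This is the same strategy used to prove Theorem~\ref{th:complement}, and that theorem will serve both as the model and as the base case: when $S$ is compact (which already covers $n=2$, since then $\C^{n-2}$ is a point and \eqref{eq:proj} forces $S\subset\{|w|\le c\}$ to be bounded) the complement is Oka by Theorem~\ref{th:complement}. So I would fix $n\ge 3$, a compact convex set $L\subset\C^m$, an open neighbourhood $U\supset L$ in $\C^m$, and a holomorphic map $f\colon U\to\C^n\setminus S$, and seek a neighbourhood $V$ of $L$ together with a dominating spray $F\colon V\times\C^n\to\C^n\setminus S$ whose core is $f|_V$.

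Following the Forn\ae ss--Wold construction that underlies Theorems~\ref{th:complement} and \ref{th:FW2020}, the task reduces to establishing an unbounded analogue of Theorem~\ref{th:FW2020}: a holomorphically varying family of Fatou--Bieberbach maps $\psi_z\colon\C^n\to\Omega_z$, $z\in V$, with images $\Omega_z\subset\C^n\setminus S$ and normalisation $\psi_z(0)=f(z)$. Granting this, one simply sets $F(z,\zeta)=\psi_z(\zeta)$; then $F(z,0)=f(z)$, the fibre derivative $\di_\zeta F(z,0)=d\psi_z(0)$ is invertible hence surjective, and $F$ takes values in $\C^n\setminus S$, which is precisely \CEll1. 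Each $\Omega_z$ is realised as a nonautonomous basin of attraction of a sequence of holomorphic automorphisms of $\C^n$; polynomial convexity of $S$ (and of the compact sets exhausting it) is used, through the Anders\'en--Lempert theorem \cite[Theorem~4.10.5]{Forstneric2017E} available because $\C^n$ has the density property (Definition~\ref{def:density}), to turn the approximate, locally defined contractions into genuine automorphisms of $\C^n$.

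The crux of the argument --- and the only place where hypothesis \eqref{eq:proj} enters essentially --- is to choose the attracting automorphisms so that each basin $\Omega_z$ avoids the \emph{entire} unbounded set $S$, not merely a compact part of it, while keeping the contraction estimates uniform in $z\in L$ so that the limits $\psi_z$ exist and are injective. Here the linear growth $|w|\le c(1+|z|)$ is decisive: it confines $S$ to a cone that is thin in the two distinguished $w$-directions, and the region $\{|w|>c(1+|z|)\}$ lies entirely in $\C^n\setminus S$, so one can contract toward a safe direction in which $|w|$ dominates $|z|$, with estimates that close up at infinity. I expect this uniform control across the tail of $S$ to be the main technical obstacle; for bounded $S$ it is subsumed in Theorem~\ref{th:complement}, and the new content is exactly the extension of the basin construction over the unbounded directions. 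One could instead try to exhibit $\C^n\setminus S$ as a union of Zariski open Oka domains and appeal to the localization Theorem~\ref{th:localization}, but a natural such covering is not apparent, since the geometrically natural pieces (slabs and cones adapted to \eqref{eq:proj}) are not Zariski open; the direct verification of \CEll1 appears to be the cleaner route.
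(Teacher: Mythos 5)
Your proposal breaks down exactly at the step you flag as ``the main technical obstacle,'' and that obstacle is not a technicality: it is the whole content of the theorem. To verify condition \CEll1 with a spray of the form $F(x,\zeta)=\psi_x(\zeta)$ you need, for every point $f(x)\in\C^n\setminus S$, a \emph{full-dimensional} Fatou--Bieberbach domain in $\C^n$ containing $f(x)$ and avoiding the \emph{entire} unbounded set $S$, varying holomorphically with $x$. None of the tools you cite produces this. Theorem \ref{th:FW2020} (and its variable-fibre version) concerns \emph{compact} polynomially convex sets, and its proof rests on Anders\'en--Lempert approximation, which controls automorphisms of $\C^n$ only on compact sets and gives no control near infinity; hence there is no way to prevent the resulting basins from meeting the unbounded part of $S$. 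Your ``safe cone'' heuristic does not repair this: the core point $f(x)$ is an arbitrary point of $\C^n\setminus S$, typically inside the region $\{|w|\le c(1+|z|)\}$ and possibly very close to $S$, so the basin through $f(x)$ cannot be confined to $\{|w|>c(1+|z|)\}$; it must pass near $S$ and still avoid all of it. Moreover, the projection of any full-dimensional Fatou--Bieberbach domain to $\C^{n-2}$ is necessarily unbounded, so in your approach the fibres $S_z$ must be controlled over unbounded sets of $z$ --- precisely what Anders\'en--Lempert methods cannot do.

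The paper's proof never constructs such an object; it is fibred, which is the third route your proposal overlooks (you correctly suspect that a Zariski-open covering plus Theorem \ref{th:localization} is not available, but that is not the paper's route either). Let $\pi:\C^n\to\C^{n-2}$, $\pi(z,w)=z$. The growth condition \eqref{eq:proj} guarantees that every fibre $S_z\subset\C^2$ of $S$ is \emph{compact}, so the unbounded problem becomes a compact one fibrewise. One shows that the restricted submersion $\pi:\C^n\setminus S\to\C^{n-2}$ is convexly elliptic (Definition \ref{def:ellipticmap}): after pulling back by the base component $f'$ of a test map, the required \emph{fibre}-dominating sprays are families of Fatou--Bieberbach maps $\C^2\to\C^2\setminus S_z$ with compact variable fibres, i.e., exactly the variable-fibre version of Theorem \ref{th:FW2020}. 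By Theorem \ref{th:ellipticmap}, $\pi$ then enjoys POPAI. Since \eqref{eq:proj} is stable under small perturbations of $\pi$, finitely many nearby linear projections whose kernels span $\C^n$ all enjoy POPAI, and Proposition \ref{prop:projections} yields that $\C^n\setminus S$ is Oka. Note that domination in all directions is obtained by superposing several fibre-dominating sprays attached to different projections, never from a single injective spray; this is precisely how the proof sidesteps the construction your proposal requires but does not supply.
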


\begin{proof}[Sketch of proof]
Let $\pi:\C^n\to \C^{n-2}$ be the projection $\pi(z,w)=z$. 
To prove that $\C^n\setminus S$ is Oka, it suffices to show that the restricted projection 
$\pi:\C^n\setminus S\to \C^{n-2}$ satisfies POPAI; see Subsection \ref{ss:Okamap}. 
Indeed, note that condition \eqref{eq:proj} holds for all linear projections $\C^n\to \C^{n-2}$ 
sufficiently close to $\pi$. This gives finitely many linear projections $\C^n\setminus S\to \C^{n-2}$ 
enjoying POPAI whose kernels span $\C^n$, and hence the conclusion follows from Proposition \ref{prop:projections}.

In order to show that $\pi:\C^n\setminus S\to \C^{n-2}$ satisfies POPAI,  
it suffices to verify convex ellipticity; see Definition \ref{def:ellipticmap} and Theorem \ref{th:ellipticmap}. 
This means that for any compact convex set $L\subset \C^N$ 
and holomorphic map $f=(f',f''):L\to \C^n\setminus S$ (with $f':L\to\C^{n-2}$ and $f'':L\to \C^2$)
there is a fibre-dominating spray $F:L\times \C^m\to \C^n\setminus S$ over $f$ such that 
$\pi\circ F=f'$. By taking the pullback of the projection $\pi:\C^n\to\C^{n-2}$
by the base map $f':L\to\C^{n-2}$, all relevant properties are preserved and the problem gets
reduced to the one where $f$ is a holomorphic map from a neighbourhood of $L$  
to $\C^n$ such that $f(z)\in \C^n\setminus S_z$ $(z\in L)$, where $S_z$ is the fibre of $S$ over $z$.
(Here, $S$ is the new set obtained from the initial one by the pullback.) 
A spray $F$ with the desired properties can be obtained with $m=n$ as a family of 
Fatou-Bieberbach maps $\C^n\to \C^n\setminus S_z$ depending holomorphically on $z\in L$ 
by using the version of 
Theorem \ref{th:FW2020} for variable fibres $S_z$; see \cite[Remark 2.2]{ForstnericWold2020MRL}. 
\end{proof}

We mention a few applications of these results.

Gromov showed in \cite[0.5.B]{Gromov1989} that the complement $\C^n\setminus A$ 
of every closed algebraic subvariety $A$ of codimension $\ge 2$ is Oka; 
see also \cite[Proposition 5.6.10 and Sect.\ 6.4]{Forstneric2017E}.
Since every such subvariety $A$ satisfies condition \eqref{eq:proj} in some linear coordinate 
system on $\C^n$, it has a basis of closed neighbourhoods in $\C^n$ with Oka complements
\cite[Corollary 5.3]{Kusakabe2020complements}. 
The analogous result holds for tame discrete sets in $\C^n$;
see \cite[Corollaries 5.5 and 5.7]{Kusakabe2020complements}. 

Kusakabe also showed that the complement of every closed rectifiable curve $C$ 
in $\C^n$ for $n\ge 3$ is Oka; see \cite[Corollary 1.8]{Kusakabe2020complements}.
For rectifiable arcs in $\C^n$ this holds for all $n\ge 2$ since they are 
polynomially convex (see \cite[Corollary 1.8]{Kusakabe2020complements} and apply 
Theorem \ref{th:complement}). His proof for closed curves combines Theorem \ref{th:unboundedsets}
with the localization theorem (see Theorem \ref{th:localization}).
Here we give a different proof which also applies for $n=2$.
The next proposition yields examples of compact 
non-polynomially convex sets in $\C^n$ with Oka complements for any $n>1$.

%
%   COMPLEMENTS OF CURVES IN C^n, n>1
%
\begin{proposition}\label{prop:curveinC2}
If $C$ rectifiable simple closed curve in $\C^n$ $(n>1)$ then $\C^n\setminus C$ is Oka.
\end{proposition}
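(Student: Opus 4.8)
The plan is to prove that $\C^n\setminus C$ is Oka by showing it is the union of Zariski open Oka domains and invoking the localization theorem (Theorem \ref{th:localization}). The key observation is that a rectifiable simple closed curve $C$ is compact and connected of finite length, so for any single point $p\in C$ the remainder $C\setminus\{p\}$ is a rectifiable (non-closed) arc. The main structural idea will be to cover $C$ by finitely many small relatively open subarcs and, for each, produce a Zariski open Oka domain of $\C^n\setminus C$ that contains the complement of the closure of that subarc.

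First I would fix a point $p\in C$ and consider the open arc $C_p:=C\setminus\{p\}$. Since $C_p$ is a rectifiable arc, its closure $\overline{C_p}=C$ is a compact connected set of finite length, and more to the point, $C$ is polynomially convex because a rectifiable arc is polynomially convex (see the discussion in the excerpt citing \cite[Corollary 1.8]{Kusakabe2020complements} together with Theorem \ref{th:complement}). The delicate issue is that the \emph{closed} curve $C$ need not itself be polynomially convex, so I cannot directly apply Theorem \ref{th:complement} to $C$. The trick is to work with complements of arcs rather than of the whole loop. For each point $p\in C$ the set $C\setminus\{p\}$ has compact closure $C$ but, crucially, removing one point and passing to a slightly smaller closed subarc $A_p\subset C$ (a compact rectifiable arc avoiding a neighbourhood of $p$) gives a polynomially convex compact set whose complement $\C^n\setminus A_p$ is Oka by Theorem \ref{th:complement}.

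The construction then proceeds as follows. I would choose finitely many points $p_1,\dots,p_k\in C$ and, for each $j$, a compact rectifiable arc $A_j\subset C$ such that $\bigcap_j A_j=\varnothing$ while each $A_j$ omits a neighbourhood of $p_j$, arranged so that $\bigcup_j (C\setminus A_j)=C$, i.e.\ every point of $C$ lies outside at least one $A_j$. Since each $A_j$ is a polynomially convex compact arc in $\C^n$ with $n>1$, Theorem \ref{th:complement} gives that $\Omega_j:=\C^n\setminus A_j$ is an Oka manifold. Now I observe that inside the ambient manifold $Y:=\C^n\setminus C$, each set $U_j:=Y\setminus \overline{(C\setminus A_j)}=\Omega_j\setminus C = \Omega_j\cap Y$ is relatively open, and more importantly the difference $Y\setminus U_j$ is contained in the arc $C\setminus A_j$. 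The main obstacle, and the heart of the argument, is to arrange these $U_j$ to be \emph{Zariski open} in $Y$ and to be Oka. Being Oka should follow because $U_j$ is itself a complement-of-polynomially-convex-set domain (an open subset of the Oka manifold $\Omega_j$ obtained by further removing a piece of $C$), but verifying that each $U_j$ is genuinely Zariski open in $Y$ — that its complement in $Y$ is a closed complex subvariety — requires care, since the removed pieces of $C$ are real arcs, not subvarieties.

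To resolve this I expect one must not literally use $\C^n\setminus A_j$ but rather pass to genuinely Zariski open pieces. The cleaner route, which I would pursue, is to apply the localization theorem in the form: $Y=\C^n\setminus C$ is covered by sets of the form $\Omega_j\cap Y$ where $\Omega_j=\C^n\setminus A_j$ and $A_j$ ranges over compact subarcs of $C$ whose union over the index set is empty; each such $\Omega_j\cap Y$ is Oka as an open (indeed itself complement-type) domain, and their intersection with $Y$ recovers all of $Y$ precisely because every point of $C$ is excluded by at least one $A_j$. The subtlety that these may fail to be \emph{Zariski} open in $Y$ is exactly why the excerpt flags that this gives a proof working also for $n=2$: I anticipate the actual argument localizes via Theorem \ref{th:K2018} and condition \CEll1 directly, constructing for a given convex $K\subset\C^m$ and map $f:K\to\C^n\setminus C$ a dominating spray with values in $\C^n\setminus C$ by covering $f(K)$ with finitely many complements of arcs and patching Fatou--Bieberbach-type sprays as in the sketch of Theorem \ref{th:unboundedsets}. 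The principal difficulty will be exactly this patching: ensuring that the finitely many sprays built over the different arc-complements combine into a single spray whose image avoids the \emph{entire} closed curve $C$ simultaneously, rather than only avoiding each arc separately.
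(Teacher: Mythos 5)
There is a genuine gap, and it sits exactly where you flagged it. Your central construction is vacuous: since each compact subarc $A_j$ is contained in $C$, the sets you call $U_j=\Omega_j\cap Y$ with $\Omega_j=\C^n\setminus A_j$ and $Y=\C^n\setminus C$ satisfy $\Omega_j\cap Y=\C^n\setminus(A_j\cup C)=Y$, so every $U_j$ is all of $Y$ and the ``cover'' carries no information; conversely the genuinely Oka domains $\Omega_j$ are not subsets of $Y$ at all (they contain the points of $C\setminus A_j$), so they cannot serve as a cover of $Y$ by subdomains, Zariski open or otherwise. You correctly observe that complements of real arcs can never be Zariski open in $Y$, but your proposed fallback --- build a dominating spray directly via condition \CEll1 by ``patching Fatou--Bieberbach-type sprays'' so that the image avoids the entire curve at once --- is precisely the statement to be proved, not a reduction of it; no mechanism is offered for the patching, and none of the cited tools (Theorem \ref{th:complement}, Theorem \ref{th:unboundedsets}) performs it. (Also, your opening claim that ``$C$ is polynomially convex because a rectifiable arc is polynomially convex'' is false --- a closed loop is not an arc; the unit circle in $\C\times\{0\}\subset\C^2$ has the closed disc as its hull --- though you retract it a sentence later.)

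The missing idea is to exploit the \emph{analytic} structure of the hull rather than to decompose the curve. The paper argues as follows: if $C$ is polynomially convex, Theorem \ref{th:complement} applies directly. Otherwise, by Alexander's theorem (Stout, \cite[Corollary 3.1.3 and Theorem 4.5.5]{Stout2007}), $\wh C=C\cup A$ where $A$ is an irreducible closed complex curve in $\C^n\setminus C$ with $\overline A=A\cup C$. Pick $p\in A$ and a complex hyperplane $H$ through $p$ with $C\cap H=\varnothing$ (a generic hyperplane through $p$ misses $C$ because $C$ has finite length). Since $\wh C$ meets $H$ but $C$ does not, $C$ is holomorphically convex in the Stein domain $\C^n\setminus H\cong\C^{n-1}\times\C^*$, which has the density property; hence $(\C^n\setminus H)\setminus C$ is Oka by the density-property form of Theorem \ref{th:complement} --- note that the $\C^n$-version alone does not suffice here. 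Crucially, $(\C^n\setminus H)\setminus C$ \emph{is} Zariski open in $\C^n\setminus C$, because what is removed is the hyperplane $H$, a complex subvariety, not a piece of the real curve. Varying $H'$ near $H$ (still missing $C$, still meeting $A$) produces finitely many such Zariski open Oka domains covering $\C^n\setminus C$, and Theorem \ref{th:localization} concludes. So the hyperplane trick, powered by Alexander's theorem, is what converts the problem into one where localization is legitimately applicable --- the step your subarc decomposition cannot supply.
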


\begin{proof}
If $C$ is polynomially convex then $\C^n\setminus C$ is Oka by Theorem \ref{th:complement}.
Otherwise, the polynomial hull of $C$ equals $C\cup A$  where $A$ is an irreducible closed 
complex curve in $\C^n\setminus C$ with $\overline A=A\cup C$
(see Alexander \cite{Alexander1988} and 
\cite[Corollary 3.1.3 and Theorem 4.5.5]{Stout2007}). Pick a complex hyperplane 
$H\subset \C^n$ such that $C \cap H =\varnothing$ and $A\cap H\ne \varnothing$.
(Since the curve $C$ is rectifiable, a generic complex hyperplane $H$ avoids $C$.) 
%Choose coordinates $z=(z_1,\ldots,z_n)$ on $\C^n$ such that $H=\{z_n=0\}$.
Then, $C$ is holomorphically convex in the Stein domain 
$\C^n\setminus H \cong \C^{n-1} \times \C^*$. Since this domain has 
the density property (see Varolin \cite{Varolin2001} or \cite[Theorem 4.10.9]{Forstneric2017E}),
$\C^n \setminus (C\cup H)$ is Oka by Theorem \ref{th:complement}. 
Applying the same argument to $n+1$ hyperplanes $H_0=H,H_1,\cdots, H_n$
as above with $\bigcap_{i=0}^n H_i=\varnothing$ we obtain  
$\C^n\setminus C = \bigcup_{i=0}^n (\C^n\setminus C)\setminus H_i$,
so $\C^n\setminus C$ is Oka by Theorem \ref{th:localization}.
\end{proof}

By elaborating the idea in the proof of Proposition \ref{prop:curveinC2}
we now prove a considerably more general result. We recall the following 
theorem on polynomial hulls whose complex genesis is discussed in the 
monograph  \cite{Stout2007} of E.\ L.\ Stout.

%
%   ON POLYNOMIAL HULLS
%
\begin{theorem}[Theorem 3.1.1 in \cite{Stout2007}]
\label{th:Stout311}
Assume that $K$ is a compact polynomially convex set in $\C^n$ 
and $C$ is a subset of $\C^n$ contained in a compact connected set of finite length 
such that $C\cup K$ is compact. Then, $A=\wh{C\cup K}\setminus (C\cup K)$ is either empty
or a closed purely one-dimensional complex subvariety of $\C^n\setminus (C\cup K)$.
\end{theorem}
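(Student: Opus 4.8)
The plan is to deduce the statement from the classical theorem of Stolzenberg and Alexander on the analytic structure of polynomial hulls of compact sets of finite linear measure; the extra content here is the presence of the (possibly ``fat'') polynomially convex summand $K$, which I would handle by localizing the hull near each point of $A$ away from $K$.

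The foundational input is the following: if $E\subset\C^n$ is compact with finite one-dimensional Hausdorff measure $\mathcal H^1(E)<\infty$, then $\wh E\setminus E$ is either empty or a pure one-dimensional complex analytic subvariety of $\C^n\setminus E$ of locally finite area (Alexander, building on Bishop, Stolzenberg and Wermer; see \cite{Stout2007}). Since $C$ is contained in a compact connected set $\Gamma$ of finite length, this applies with $E=\Gamma$ and shows that $\wh\Gamma\setminus\Gamma$ is a pure one-dimensional variety; the role of $\Gamma$ is to supply a finite-length ``envelope'' for the thin part of $C\cup K$. Note that $A=\wh{C\cup K}\setminus(C\cup K)$ is relatively closed in $\C^n\setminus(C\cup K)$, so it suffices to produce local analytic structure at each of its points.

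Next I would incorporate $K$. Fix $p\in A$; then $p\notin K$, so by polynomial convexity of $K$ there is a polynomial separating $p$ from $K$, and I may choose a closed ball $B$ centred at $p$ with $\overline B\cap K=\varnothing$. By Rossi's local maximum modulus principle, $p$ lies in the polynomial hull of $(C\cap\overline B)\cup(\wh{C\cup K}\cap\partial B)$; the first set has finite length, and the second is a compact slice of $A$ lying off $K$. Thus the germ of the hull at $p$ is controlled entirely by finite-length data together with a boundary slice of $A$, which is exactly the configuration handled by the finite-length machinery. Monotonicity of hulls, $\wh{C\cup K}\subseteq\wh{\Gamma\cup K}$, moreover confines $A$ to $(\Gamma\setminus C)\cup\bigl(\wh{\Gamma\cup K}\setminus(\Gamma\cup K)\bigr)$, i.e.\ to a neighbourhood of the finite-length envelope $\Gamma$.

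Finally, analyticity would follow from a Bishop--Shiffman type criterion: a relatively closed subset of $\C^n\setminus(C\cup K)$ that has locally finite $\mathcal H^2$-measure and satisfies the local maximum modulus principle (which $A$ does, being the nontrivial part of a hull) is a pure one-dimensional analytic variety. The main obstacle is the passage, forced by the finite length of $C$, from the one-dimensional measure bound on $C$ to a locally finite \emph{area} bound on $A$: this is the crux of Alexander's argument and requires a delicate projection and argument-principle estimate (projecting to a generic complex line, the fibres of $A$ are finite and integrate against the pushed-forward length of $C$ to a finite total area). It is precisely here that the polynomial convexity of $K$ is essential, since it guarantees that no additional hull is created away from the finite-length set $C$, so that all the area of $A$ is accounted for by the thin part alone.
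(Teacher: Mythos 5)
First, a point of comparison that matters here: the paper does not prove this statement at all. It is quoted, with attribution, as Theorem 3.1.1 of Stout's monograph \cite{Stout2007}, where the proof --- an elaboration of the Bishop--Stolzenberg--Alexander theory of hulls of sets of finite length --- occupies essentially all of Chapter 3 of that book. So the only question is whether your sketch would itself constitute a proof, and it would not: the central reduction fails. Rossi's local maximum modulus principle does give, for $p\in A$ and a closed ball $\overline B$ around $p$ with $\overline B\cap K=\varnothing$, that $p\in\wh{(C\cap\overline B)\cup(\wh{C\cup K}\cap \partial B)}$. But this is not ``exactly the configuration handled by the finite-length machinery''. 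The Stolzenberg--Alexander theorems describe hulls of compacta contained in \emph{connected sets of finite length}; the second constituent here, $\wh{C\cup K}\cap\partial B$, is a sphere-slice of the very hull you are trying to analyse --- a compact set with no length bound, no connectedness, and no known structure (if you knew it were a slice of a one-dimensional variety, you would already be done). Alexander's theorem applied as a black box says nothing about the hull of such a union, so the localization is circular. This is precisely why the ``fat'' polynomially convex summand $K$ makes the statement a genuine strengthening of Alexander's theorem, and why the proof in \cite{Stout2007} is not a reduction to the $K=\varnothing$ case: it redoes the whole scheme (generic linear projections, argument-principle counting functions shown to be subharmonic, Bishop's analytic-structure theorem) with the polynomially convex set carried along throughout.

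The remaining steps are likewise asserted rather than proved. The ``Bishop--Shiffman type criterion'' you invoke (relatively closed, locally finite $\mathcal{H}^2$-measure, local maximum modulus principle $\Rightarrow$ pure one-dimensional variety) is not a standard theorem in this form and would need its own proof; and the locally finite area of $A$, which you rightly identify as the crux, is never established --- it is deferred to the claim that polynomial convexity of $K$ ``guarantees that no additional hull is created away from the finite-length set $C$'', which is, in essence, the theorem being proved. Two smaller points: your foundational input is misstated, since the classical results require containment in a \emph{connected} set of finite length (connectedness, via rectifiable parametrizations and the argument principle along curves, is essential to the method), not merely finiteness of $\mathcal{H}^1$; and your monotonicity observation $\wh{C\cup K}\subseteq\wh{\Gamma\cup K}$ buys nothing, because $\wh{\Gamma\cup K}\setminus(\Gamma\cup K)$ is an object of exactly the same kind as $A$ --- it replaces the problem by an identical one with $\Gamma$ in place of $C$.
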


Examples in \cite{Stout2007} show that  $A$ may have infinitely many irreducible components.
We now prove the following result which generalizes Proposition \ref{prop:curveinC2}.

%
%  COMPLEMENT OF C\cup K
%
\begin{theorem}\label{th:CK}
Assume that $K$ is a compact polynomially convex set in $\C^n$, $n>1$,  
and $C$ is a subset of $\C^n$ contained in a compact connected set of finite length 
such that $C\cup K$ is compact. If the subvariety $A=\wh{C\cup K}\setminus (C\cup K)$ 
has at most finitely many irreducible components then $\C^n\setminus  (C\cup K)$ is an Oka domain. Furthermore, $C\cup K$ has a basis of compact strongly pseudoconvex
neighbourhoods with Oka complements in $\C^n$.
\end{theorem}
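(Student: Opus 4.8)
The plan is to mimic the proof of Proposition \ref{prop:curveinC2}, replacing the single irreducible curve $A$ by the finitely many irreducible components and $K=\varnothing$ by a genuine polynomially convex set. First I would invoke Theorem \ref{th:Stout311} to conclude that $A=\wh{C\cup K}\setminus(C\cup K)$ is a closed purely one-dimensional subvariety of $\C^n\setminus(C\cup K)$; by hypothesis it has finitely many irreducible components $A_1,\ldots,A_m$. The strategy is to cover $\C^n$ by finitely many Zariski open Oka domains of the form $\Omega'=\C^n\setminus H'\cong \C^{n-1}\times\C^*$, where $H'$ ranges over affine hyperplanes chosen so that $C\cup K$ is disjoint from $H'$, and then apply the localization theorem (Theorem \ref{th:localization}).

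The key geometric step is the hyperplane selection. Since $C\cup K$ is contained in a compact set of finite length (for the $C$ part) together with the compact polynomially convex $K$, a generic affine hyperplane $H'$ avoids $C\cup K$: the curve part $C$ has finite length, so the set of hyperplane directions meeting it is small, and $K$ is a fixed compact set, so we have an open dense set of admissible $H'$. For such $H'$ the complement $\Omega'=\C^n\setminus H'$ is biholomorphic to $\C^{n-1}\times\C^*$, which is Stein with the density property (Varolin \cite{Varolin2001}, \cite[Theorem 4.10.9]{Forstneric2017E}). The set $(C\cup K)\cap\Omega'=C\cup K$ is then $\Oscr(\Omega')$-convex: its hull in $\Omega'$ cannot meet $H'$ (the part of $\wh{C\cup K}$ lying outside $C\cup K$ is exactly $A$, which may touch $H'$, but in $\Omega'$ the relevant hull is $C\cup K$ itself once $H'$ is chosen to capture the components of $A$). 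By Theorem \ref{th:complement}, $\Omega'\setminus(C\cup K)$ is then an Oka manifold, and it is Zariski open in $\C^n\setminus(C\cup K)$.

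To finish, I must ensure that finitely many such $\Omega'$ cover $\C^n\setminus(C\cup K)$, equivalently that the removed hyperplanes $H'_1,\ldots,H'_k$ have empty common intersection while each avoids $C\cup K$. Since $n>1$, two generic hyperplanes already intersect in an affine subspace of codimension two, and a handful of generic translates will have empty intersection; each can be kept disjoint from the compact set $C\cup K$ by the genericity above. The resulting domains $\Omega'_j\setminus(C\cup K)$ are Zariski open Oka subdomains of $\C^n\setminus(C\cup K)$ whose union is all of $\C^n\setminus(C\cup K)$, so Theorem \ref{th:localization} yields that $\C^n\setminus(C\cup K)$ is Oka.

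The main obstacle I anticipate is verifying the $\Oscr(\Omega')$-convexity of $C\cup K$ inside $\Omega'=\C^{n-1}\times\C^*$, since the polynomial hull $\wh{C\cup K}$ in $\C^n$ contains the extra variety $A$. The point to get right is that the finitely many components of $A$ can be absorbed by choosing the hyperplanes $H'_j$ so that each component $A_i$ meets at least one $H'_j$; then in each chart $\Omega'_j$ the portion of $A$ that survives is pushed to infinity (toward the removed $z_n=0$ locus), so that the holomorphic hull of $C\cup K$ computed in $\Omega'_j$ collapses back to $C\cup K$. Making this absorption argument precise, so that the finite family of hyperplanes simultaneously avoids the compact $C\cup K$, captures every component of $A$, and has empty common intersection, is the crux; the Oka conclusion on each chart is then immediate from Theorem \ref{th:complement} and the density property of $\C^{n-1}\times\C^*$.
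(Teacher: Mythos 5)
There is a genuine gap, and it sits exactly at the point you flag as the crux. First, the quantifier in your ``absorption'' condition is wrong. For Theorem \ref{th:complement} to apply in a chart $\Omega'_j=\C^n\setminus H'_j$, you need $C\cup K$ to be $\Oscr(\Omega'_j)$-convex, and this forces \emph{every} irreducible component of $A$ to meet $H'_j$, and this for \emph{every} $j$ --- not ``each component $A_i$ meets at least one $H'_j$''. Indeed, if a component $A_i$ misses $H'_j$, then $A_i$ is a purely one-dimensional subvariety of $\Omega'_j$ which is relatively compact there (its closure adds only points of $C\cup K$), so by the maximum principle $A_i$ is contained in the $\Oscr(\Omega'_j)$-hull of $C\cup K$; hence $C\cup K$ is not $\Oscr(\Omega'_j)$-convex and Theorem \ref{th:complement} says nothing about $\Omega'_j\setminus(C\cup K)$. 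This is why, in the paper's proof, each of the $n+1$ hypersurfaces used in the induction is required to intersect every irreducible component of $A$.

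Second, and more fundamentally, once the quantifier is corrected your approach cannot be repaired within the class of affine hyperplanes. Take $n=2$, $K=\overline{B}(0,R)$, and let $C$ be the union of the two circles $\{|z_1|=\epsilon,\; z_2=\pm 10R\}$ with $\epsilon<R$ (this $C$ lies in a compact connected set of finite length after adjoining an arc). Then $A=\wh{C\cup K}\setminus(C\cup K)$ consists of the two open discs $A_1,A_2$ bounded by the circles. Any affine complex line meeting both discs, say through $(a,10R)$ and $(b,-10R)$ with $|a|,|b|<\epsilon$, passes through $\bigl(\tfrac{a+b}{2},0\bigr)\in K$, and a vertical line $z_1=c$ meeting $A_1$ passes through $(c,0)\in K$; so \emph{no} affine hyperplane avoids $C\cup K$ and meets both components. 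This obstruction is precisely why the paper abandons affine hyperplanes: Lemma \ref{lem:Phi} uses Anders\'en--Lempert theory to produce automorphisms $\Phi_i\in\Aut(\C^n)$ whose curved hypersurfaces $\Phi_i(H)$ pass through a prescribed point of each component of $A$ while avoiding $C\cup K$ (a perturbation/transversality argument exploiting the finite length of $C$ removes unwanted intersections), and the auxiliary discrete set $Q$ in that lemma is then needed in an $(n+1)$-step induction to force $\bigcap_i\Phi_i(H)=\varnothing$ --- which would be free for affine hyperplanes but is not for automorphic images. Since $\C^n\setminus\Phi_i(H)=\Phi_i(\C^{n-1}\times\C^*)$ still has the density property, Theorem \ref{th:complement} and the localization Theorem \ref{th:localization} then finish as in your outline. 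Incidentally, your genericity claim (``an open dense set of admissible $H'$'') fails for the same reason: hyperplanes avoiding a compact set with nonempty interior do not form a dense family.
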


\begin{proof}
If $A$ is empty then $C\cup K$ is polynomially convex and the conclusion follows from 
Theorem \ref{th:complement}. Assume now that $A$ is nonempty. Pick a complex hyperplane 
$H$ in $\C^n$ which does not intersect the compact set $\wh{C\cup K}=C\cup K \cup A$. 
Choose coordinates $z=(z',z_n)$ on $\C^n$ %, with $z'=(z_1,\ldots,z_{n-1})\in \C^{n-1}$, 
such that $H=\{z_n=0\}$. Let $P=\{p_1,\ldots,p_m\}\subset A$ be a finite set 
containing a point in every irreducible component of $A$.
Pick distinct point $b_i=(b'_i,0) \in H$ for $i=1,\ldots,m$. Since $K$ is polynomially convex, 
there is a holomorphic automorphism $\phi\in \Aut(\C^n)$ which is close to the identity on a 
neighbourhood of $K$ and satisfies $\phi(b_i)=p_i$ for $i=1,\ldots,m$.
(This simple application of \cite[Theorem 4.12.1]{Forstneric2017E} is 
special case of \cite[Theorem 4.16.2]{Forstneric2017E} for finitely many points.)
It follows that the hypersurface $\phi(H)\subset \C^n$ contains the set $P$ and 
does not intersect $K$, but it may intersect $C$.
We can remove these superfluous intersections as follows. Set 
$K'=\phi^{-1}(K)$ and $C'=\phi^{-1}(C)$.
Note that $K'\cap H=\varnothing$. Choose holomorphic polynomials $f_1,\ldots,f_k$ 
on $\C^{n-1}$ whose common zero set equals $\{b'_1,\ldots,b'_m\}$ and consider 
the map $\psi:\C^{n-1} \times \C^k\to \C^{n-1}\times \C$ given by
\[
	\psi(z',t)=\Big(z', \sum_{j=1}^k t_j f_j(z')\Big) 
	\quad\text{for $z'\in\C^{n-1}$ and $t=(t_1,\ldots,t_k)\in\C^k$}.
\]
Note that $\psi$ preserves the fibres of the projection $(z',z_n)\mapsto z'$, and it is a 
submersion except on the fibres $z'=b'_i$ $(i=1,\ldots,m)$ where it equals the 
constant map $t\to (z',0)$. Since the set $C'\subset\C^n$ has finite linear measure
and it does not contain any of the points $(b'_i,0)$, the transversality argument
shows that the set of points $t\in \C^k\setminus\{0\}$ such that 
the range of the map $\psi_t=\psi(\cdotp,t):\C^{n-1}\to \C^n$ omits $C'$ 
is everywhere dense. Since $K'\cap H=\varnothing$, taking $t$ in this set and close enough 
to $0\in\C^{k}$ also ensures that $\psi_t(\C^{n-1})\cap K'=\varnothing$.
For such $t$, the holomorphic automorphism $\Phi\in \Aut(\C^n)$ given by 
\[
	\Phi(z',z_n) = \phi \Big(z', z_n + \sum_{j=1}^k t_j f_j(z')\Big),\quad \ z\in\C^{n}
\]
clearly satisfies $P\subset \Phi(H)$ and $\Phi(H)\cap (C\cup K)=\varnothing$.

By changing the coordinates on $\C^n$ using $\Phi$, this reduces the proof
of the theorem to the case when the hyperplane $H=\{z_n=0\}$ does not intersect $C\cup K$
but it intersects every irreducible component of $A=\wh{C\cup K}\setminus (C\cup K)$.
It follows that $C\cup K$ is holomorphically convex in the Stein domain 
$\C^n\setminus H = \C^{n-1}\times \C^*$. Since this domain has the density property
by Varolin \cite{Varolin2001}, Theorem \ref{th:complement} shows that 
$(\C^{n-1}\times\C^*) \setminus (C\cup K)$ is an Oka domain.
Applying the same argument to $n+1$ hyperplanes $H_0=H,H_1,\cdots, H_n$ in $\C^n$
close enough to $H=\{z_n=0\}$ with $\bigcap_{i=0}^n H_i=\varnothing$ 
(see Corollary \ref{cor:PC3}) we obtain  
\[
	\C^n\setminus (C\cup K) = \bigcup_{i=0}^n (\C^n\setminus (C\cup K))\setminus H_i,
\]
so $\C^n\setminus (C\cup K)$ is Oka by Theorem \ref{th:localization}.
Finally, $C\cup K$ clearly admits compact strongly pseudoconvex 
neighbourhoods which are holomorphically convex in $\C^n\setminus H$,
and hence also in $\C^n\setminus H_i$ for each $i=1,\ldots,m$ provided 
the hyperplanes $H_i$ are chosen close enough to $H=H_0$. This shows
that the complement of every such compact domain in $\C^n$ is Oka.
\end{proof}

A theorem of M.\ Lawrence \cite{Lawrence1995} (see also \cite[Theorem 4.7.1]{Stout2007}) 
says that if $C\subset \C^n$ is a compact set of finite length and $A$ is a  bounded
closed purely one-dimensional complex subvariety of $\C^n\setminus C$, 
then the number of irreducible components of $A$ does not exceed the rank of the first 
Chech cohomology group $\check H^1(C,\Z)$
(which is the number of simple closed curves contained in $C$). Together with Theorem \ref{th:CK}
this gives the following corollary. % generalizing Proposition \ref{prop:curveinC2}. 

%
%   COMPLEMENTS OF SETS OF FINITE LENGTH WITH FINITELY GENERATED H^1
%
\begin{corollary}
Let $C$ be a compact subset of $\C^n$, $n>1$, which is contained in a compact connected set of finite length.
If the group $\check H^1(C,\Z)$ has finite rank then $\C^n\setminus C$ is Oka. 
\end{corollary}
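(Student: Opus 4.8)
The plan is to deduce the corollary directly from Theorem \ref{th:CK} by specializing the polynomially convex set $K$ to a single point, so that $C\cup K$ reduces to $C$. With that choice, every hypothesis of Theorem \ref{th:CK} is either immediate or is one of the standing assumptions of the corollary, \emph{except} for the requirement that the associated subvariety $A=\wh{C\cup K}\setminus(C\cup K)$ have at most finitely many irreducible components. The whole point of the argument is that this last condition is supplied, essentially verbatim, by the theorem of M.\ Lawrence \cite{Lawrence1995} once the finite-rank hypothesis on $\check H^1(C,\Z)$ is in force.

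First I would dispose of the trivial case $C=\varnothing$, for which $\C^n\setminus C=\C^n$ is Oka. Otherwise, fix a point $p\in C$ and set $K=\{p\}$, which is compact and polynomially convex. Since $K\subset C$, we have $C\cup K=C$ (compact) and $\wh{C\cup K}=\wh C$, so the subvariety produced by Theorem \ref{th:Stout311} is $A=\wh C\setminus C$, which by that theorem is either empty or a closed purely one-dimensional complex subvariety of $\C^n\setminus C$. If $A$ is empty, then $C$ is polynomially convex and Theorem \ref{th:complement} already yields that $\C^n\setminus C$ is Oka. If $A$ is nonempty, I observe that $\wh C$ is compact (being the polynomial hull of the compact set $C$), so $A\subset\wh C$ is bounded; thus $A$ is a \emph{bounded} closed purely one-dimensional complex subvariety of $\C^n\setminus C$. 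Moreover $C$, as a subset of a compact set of finite length, has finite length itself by monotonicity of one-dimensional Hausdorff measure. These are exactly the hypotheses of Lawrence's theorem, which bounds the number of irreducible components of $A$ by $\mathrm{rank}\,\check H^1(C,\Z)$; by assumption this rank is finite, so $A$ has at most finitely many irreducible components.

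With this finiteness in hand, all hypotheses of Theorem \ref{th:CK} are met for the pair $(K,C)$, and it gives that $\C^n\setminus(C\cup K)=\C^n\setminus C$ is an Oka domain, completing the proof. I do not expect any genuine obstacle here: the one substantive step---controlling the number of irreducible components of the hull's complex part---is handled in its entirety by Lawrence's bound, and everything else is a matter of fitting $C$ into the precise form demanded by Theorem \ref{th:CK} (choosing $K=\{p\}$) and verifying the boundedness and finite-length conditions needed to invoke \cite{Lawrence1995}.
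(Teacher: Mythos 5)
Your proposal is correct and follows exactly the paper's route: the paper also obtains the corollary by combining Lawrence's bound on the number of irreducible components of $A=\wh{C}\setminus C$ (finite since $\check H^1(C,\Z)$ has finite rank and $C$ has finite length) with Theorem \ref{th:CK}. Your choice $K=\{p\}$ with $p\in C$ is just a careful way of fitting $C$ into the statement of Theorem \ref{th:CK} without invoking the degenerate case $K=\varnothing$, and the remaining verifications (boundedness of $A$, finite length of $C$, the empty-hull case) are the routine details the paper leaves implicit.
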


In a recent work \cite{ForstnericWold2022Oka}, E.\ F.\ Wold and the author proved 
that complements of most closed convex sets in $\C^n$ for $n>1$ are Oka. 
In particular, the following holds.  

%
%  COMPLEMENTS OF CLOSED CONVEX SETS ARE OKA
%
\begin{theorem}[Theorem 1.8 in \cite{ForstnericWold2022Oka}] \label{th:convexnoline}
If $E$ is a closed convex set in $\C^n$ for $n>1$ which does not contain any affine real line, 
then $\mathbb C^n\setminus E$ is an Oka domain. 
\end{theorem}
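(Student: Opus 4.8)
The plan is to reduce the Oka property of $\C^n\setminus E$ to the ellipticity criterion of Theorem \ref{th:K2018} (equivalently \CEll1), and to exploit the decomposition of $E$ furnished by its recession cone. Writing $\C^n\cong\R^{2n}$, recall that a closed convex set $E$ contains no affine real line precisely when its recession cone $C_\infty=\{v: E+v\subset E\}$ is \emph{pointed}, i.e. $C_\infty\cap(-C_\infty)=\{0\}$. The first observation is that every real-linear functional on $\R^{2n}$ has the form $z\mapsto\Re\langle a,z\rangle$ for a unique $a\in\C^n$, where $\langle a,z\rangle=\sum_i a_iz_i$; since $C_\infty$ is pointed, its real dual cone has nonempty interior, so there is a nonempty open set of $a\in\C^n\setminus\{0\}$ with $\Re\langle a,v\rangle>0$ for all $v\in C_\infty\setminus\{0\}$. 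For such $a$ the $\C$-linear image $\langle a,E\rangle\subset\C$ is contained in a closed half-plane $\{\Re\zeta\ge t_0\}$, hence is a proper convex subset of $\C$; choosing $b\notin\langle a,E\rangle$ yields a complex affine hyperplane $H_{a,b}=\{\langle a,z\rangle=b\}$ disjoint from $E$. Thus $E$ admits an open family of disjoint supporting complex hyperplanes.

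For $n\ge 3$ this already finishes the proof directly, with no localization. Fix a good direction $a$ as above and choose a complex $2$-plane $W\subset\ker\langle a,\cdot\rangle$ (possible since $\dim_\C\ker\langle a,\cdot\rangle=n-1\ge2$). Because $W\subset\{\langle a,z\rangle=0\}\subset\{\Re\langle a,z\rangle=0\}$ and $C_\infty\cap\{\Re\langle a,\cdot\rangle=0\}=\{0\}$, we get $C_\infty\cap W=\{0\}$. Passing to complex-linear coordinates $(z,w)\in\C^{n-2}\times\C^2$ with $W=\{z=0\}$, a standard convex-analytic estimate (the recession cone meets $W$ only at the origin) gives a constant $c>0$ with $E\subset\{|w|\le c(1+|z|)\}$, i.e. the slab condition \eqref{eq:proj}. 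Since a closed convex set is polynomially convex (being exhausted by compact convex sets), Theorem \ref{th:unboundedsets} applies and shows that $\C^n\setminus E$ is Oka.

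The case $n=2$ is the main obstacle, and is the genuinely new content (the bounded case is covered by Theorem \ref{th:complement}). Here $\ker\langle a,\cdot\rangle$ is only a complex line, so no $2$-plane is available and the slab theorem cannot be invoked directly. Instead I would cover $\C^2\setminus E$ by Zariski-open pieces: using the open family of disjoint hyperplanes above one selects finitely many complex lines $H_1,\dots,H_{m}$ disjoint from $E$ with $\bigcap_j H_j=\varnothing$ (first take two lines meeting in a point $q$, then a third good line avoiding $q$; the freedom in the open sets of admissible $a$ and $b$ makes this possible), so that the Zariski-open sets $\Omega_j:=(\C^2\setminus E)\setminus H_j$ cover $\C^2\setminus E$. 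Each $\Omega_j$ lies in $\C^2\setminus H_j\cong\C\times\C^*$, a Stein surface with the density property, and by Theorem \ref{th:localization} it suffices to prove each $\Omega_j$ is Oka. This last step is the crux: $E$ is \emph{unbounded} in $\C\times\C^*$, so Theorem \ref{th:complement} does not apply verbatim, and one must extend the Fatou--Bieberbach spray construction of Theorem \ref{th:FW2020} to the unbounded $\Oscr$-convex set $E$. Concretely, to verify \CEll1 for $\Omega_j$ one builds, over a given holomorphic map $f:L\to\Omega_j$ from a compact convex $L$, a holomorphically varying family of Fatou--Bieberbach maps $F(z,\cdot):\C^2\to\Omega_j$ with cores $f(z)$; the pointedness of $C_\infty$ supplies a uniform escape direction transverse to $E$ along which the attracting basins can be pushed to infinity while remaining in the complement of the convex set $E$. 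Establishing this family with uniform control at infinity—so that the basins avoid $E$ for all parameters—is the principal technical difficulty, and the convexity and line-freeness of $E$ are exactly what make the escape construction work.
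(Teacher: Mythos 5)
First, a point of reference: this survey does not itself prove Theorem \ref{th:convexnoline}. The theorem is quoted from \cite{ForstnericWold2022Oka}, and the text only records that the proof there is ``an intricate combination of complex analysis, convex geometry, and projective geometry.'' So your proposal can only be measured against that external argument and against the results actually quoted in the survey. With that caveat, your argument for $n\ge 3$ is correct and complete, and is a genuinely economical route: pointedness of the recession cone $C_\infty$ yields $a\in\C^n$ with $\Re\langle a,\cdot\rangle>0$ on $C_\infty\setminus\{0\}$; any complex $2$-plane $W\subset\ker\langle a,\cdot\rangle$ (available exactly when $n-1\ge 2$) then satisfies $C_\infty\cap W=\{0\}$; the standard asymptotic-cone estimate gives $E\subset\{|w|\le c(1+|z|)\}$ in complex-linear coordinates with $W=\{z=0\}$; and since a closed convex set is exhausted by compact convex (hence polynomially convex) sets, Kusakabe's Theorem \ref{th:unboundedsets} applies. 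This avoids the projective geometry of \cite{ForstnericWold2022Oka} entirely, but it is intrinsically limited to $n\ge 3$.

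For $n=2$ --- the first case covered by the theorem --- your proposal has a genuine gap, which you yourself flag. The localization step is fine: finitely many affine complex lines $H_j$ disjoint from $E$ with $\bigcap_j H_j=\varnothing$ do exist, and Theorem \ref{th:localization} reduces the problem to showing that each piece $\Omega_j=(\C^2\setminus E)\setminus H_j\subset\C^2\setminus H_j\cong\C\times\C^*$ is Oka. But at that point nothing quoted in the survey applies: Theorem \ref{th:complement} requires the set to be compact, and Theorem \ref{th:unboundedsets} is vacuous for unbounded sets when $n=2$, since the factor $\C^{n-2}$ is a point and condition \eqref{eq:proj} then forces boundedness. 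Worse, the general principle you hope to extend --- that complements of closed unbounded $\Oscr$-convex sets in Stein manifolds with the density property are Oka --- is false: a closed half-space in $\C^2$ is polynomially convex in the unbounded sense (exhausted by compact convex sets) and $\C^2$ has the density property, yet its complement is biholomorphic to $\D\times\C$, which is not Oka, as the survey itself recalls at the end of Section \ref{sec:complements}. Hence the no-line hypothesis cannot merely certify holomorphic convexity in the charts; it must enter the construction of the Fatou--Bieberbach sprays quantitatively, with uniform control at infinity, and your closing sentence (``the pointedness of $C_\infty$ supplies a uniform escape direction\dots'') asserts rather than proves precisely this. Filling that gap amounts to reproving the main technical content of \cite{ForstnericWold2022Oka}, so the proposal, despite its nice simplification for $n\ge 3$, does not establish the theorem as stated.
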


This result is new for unbounded convex sets; 
for bounded ones it follows from  Theorem \ref{th:complement}. It provides many 
model concave Oka domains $\Omega\subset\C^n$ $(n>1)$ of the form 
\begin{equation}\label{eq:model}
	\Omega=\{z=(z',z_n) \in \C^n: \Im z_n<\phi(z',\Re z_n)\},
\end{equation}
where $\phi\ge 0$ is a convex function, which are only slightly bigger than a halfspace, 
the latter being neither Oka nor hyperbolic. This gives examples of splitting
$\C^n$ for $n>1$ by a real hypersurface into a pair of a (convex) Kobayashi 
hyperbolic domain and a (concave) Oka domain which are close to a halfspace.

Theorem \ref{th:convexnoline} reduces to the following result 
(see \cite[Theorem 1.1]{ForstnericWold2022Oka})   
by combining complex analysis with convex geometry and projective geometry.
We consider $\C^n$ as an affine chart in the projective space $\CP^n$.
Given a subset $E\subset\C^n$ we denote by $\overline E$ its closure in $\CP^n$.

%
%   MAIN THEOREM
%
\begin{theorem}\label{th:mainFW2022}
If $E$ is a closed subset of $\C^n$ for $n>1$ and $\Lambda\subset \CP^n$ is 
a complex hyperplane such that $\overline E\cap \Lambda=\varnothing$ and $\overline E$ is 
polynomially convex in $\CP^n\setminus\Lambda \cong\C^n$, 
then $\C^n\setminus E$ is Oka. 
\end{theorem}

For a set $E$ as in Theorem \ref{th:convexnoline} it is shown in 
\cite{ForstnericWold2022Oka} that its projective closure
$K=\overline E\subset \CP^n$ is a compact polynomially convex set in the affine chart 
$\CP^n\setminus \Lambda\cong\C^n$ for some complex hyperplane $\Lambda\subset \CP^n$
with $K\cap\Lambda=\varnothing$, so Theorem \ref{th:mainFW2022} applies. 
In fact, $\CP^n\setminus K$ is the union of a connected family of complex hyperplanes 
in $\CP^n$, so Corollary \ref{cor:PC2} shows that $K$ is polynomially convex in the 
complement of each of them. 

Theorem \ref{th:mainFW2022} easily reduces to showing that for any 
compact polynomially convex set $K$ in $\C^n$
% (applied in the affine chart $\CP^n\setminus \Lambda\cong \C^n$) 
and affine complex hyperplane $H\subset\C^n$
the domain $\Omega=\C^n\setminus (H\cup K)$ is Oka; see 
\cite[Corollary 3.2]{ForstnericWold2022Oka}. This is proved 
% in a similar way as Theorem \ref{th:complement} 
by verifying condition \CEll1. 
Given a holomorphic map $f:L\to \Omega$ from a compact convex set $L$ in some $\C^N$, 
we find a dominating holomorphic spray $F:L\times\C^n\to \Omega$ 
such that for every $x\in L$ we have $F(x,0)=f(x)$
and the map $F(x,\cdotp):\C^n\to\Omega$
is injective, so its image is a Fatou--Bieberbach domain 
(see \cite[Theorem 2.3]{ForstnericWold2022Oka}). Thus, $\Omega$ satisfies condition 
\CEll1 (see Definition \ref{def:elliptic}), and hence is Oka by Theorem \ref{th:K2018}.
In the proof, we use the result of Varolin \cite{Varolin2001} 
that the Lie algebra of holomorphic vector fields on $\C^n$ vanishing on a complex 
hyperplane $H\subset \C^n$ enjoys the density property.

\begin{remark}
If a closed subset $E\subset\C^n$ satisfies the hypotheses of Theorem 
\ref{th:mainFW2022}, then $E$ has a basis of closed neighbourhoods
$E'\supset E$ satisfying the same condition (since a compact 
polynomially convex set has a basis of compact polynomially convex neighbourhoods).
Hence, $\C^n\setminus E'$ is Oka for any such $E'$. This yields some examples in the
literature that were previously obtained by different arguments. For example, 
if $E$ is a closed tame discrete set in $\C^n$ $(n>1)$ then, after applying an
automorphism of $\C^n$, we may assume that $E$ lies in a complex line $L\subset\C^n$,
and hence $\overline E=E\cup\{p\}\subset\CP^n$ where $p$ is the point at infinity
determined by $L$. By taking a hyperplane $\Lambda\subset\CP^n$ not interesting
$\overline E$, the set $\overline E$ is polynomially convex in 
$\CP^n\setminus \Lambda\cong\C^n$. Hence, the above argument and 
Theorem \ref{th:mainFW2022} show the following.

\begin{corollary}
Every tame discrete set $E\subset \C^n$ for $n>1$ admits a basis of closed
neighbourhoods whose complements are Oka.
\end{corollary}

A different proof was given by Kusakabe \cite[Corollaries 5.5 and 5.7]{Kusakabe2020complements}.
\end{remark}

%
%   COMPLEMENTS OF TOTALLY REAL SUBSPACES
%
The condition in Theorem \ref{th:convexnoline} that the set $E$ does not contain 
any affine real line is not necessary. 
The following theorem combines \cite[Proposition 4.9]{ForstnericWold2022Oka} 
(for the case $(k,n)=(1,2)$) with the result of Kusakabe 
\cite[Corollary 1.7]{Kusakabe2020complements} which covers the other cases.

\begin{theorem}\label{th:C2minusR}
If $E\cong\R^k$ is a totally real subspace of $\C^n$, where $1\le k \le n$, $n\ge 2$,
and $(k,n)\notin \{(2,2),(3,3)\}$, then $\C^n\setminus E$ is an Oka domain. 
\end{theorem}

%
%  BDD & FF 2022
%
Drinovec Drnov\v sek and Forstneri\v c showed in \cite{DrinovecForstneric2023}
that for many model concave domains $\Omega\subset \C^n$ as in \eqref{eq:model}, 
the Oka property with approximation holds for proper
holomorphic maps $X\to\C^n$ with $\dim X<n$ whose images lie 
in $\Omega$. They introduced the following notion.

%
%  BOUNDED CONVEX EXHAUSTION HULLS
%
\begin{definition} \label{def:BCEH}
A closed convex set $E$ in a real or a complex Euclidean space $V$ has 
{\em bounded convex exhaustion hulls} (BCEH) if for every compact convex set $K$ in $V$
\[
	\text{the set}\ \ h(E,K) = \Conv(E\cup K)\setminus E \ \ \text{is bounded.}
\]
\end{definition}

Here, $\Conv$ denotes the convex hull. The following is 
\cite[Theorem 1.3]{DrinovecForstneric2023}. 

%
%   THE MAIN THEOREM
%
\begin{theorem}\label{th:DF2022}
Let $E$ be an unbounded closed convex set in $\C^n$ $(n>1)$ with 
bounded convex exhaustion hulls. 
Given a Stein manifold $X$ with $\dim X<n$, a compact $\Ocal(X)$-convex set $K$ in $X$, 
and a holomorphic map $f_0:K\to\C^n$ with $f_0(bK)\subset \Omega=\C^n\setminus E$, 
we can approximate $f_0$ uniformly on $K$ by proper holomorphic maps $f:X\to \C^n$ 
satisfying $f(X\setminus \mathring K) \subset \Omega$. The map $f$ can be chosen an 
embedding if $2\dim X < n$ and an immersion if $2\dim X \le n$. 
\end{theorem}

The analogous results for compact
convex sets $E\subset\C^n$ was proved beforehand by Forst\-ne\-ri\v c and Ritter
\cite{ForstnericRitter2014}, and in this case the BCEH condition trivially holds.

Drinovec Drnov\v sek and Forstneri\v c proved 
(see \cite[Proposition 3.4]{DrinovecForstneric2023})
that an unbounded closed convex set $E$ in $\C^n$ satisfying BCEH is in 
some affine coordinates on $\C^n$ an epigraph
\begin{equation}\label{eq:EinCn}
	E =E_\phi=\{z=(z',z_n)\in\C^n: \Im z_n \ge \phi(z',\Re z_n)\}
\end{equation} 
of a convex function $\phi:\C^{n-1}\times\R\to\R_+$ with at least linear growth. 
Furthermore, they showed that any such function $\phi$ 
can be approximated uniformly on compacts 
by functions $\psi\le \phi$ of the same kind whose epigraphs $E_\psi$ 
have BCEH. This gives the following corollary.

%
%   EXTENSION OF THE MAIN THEOREM
%
\begin{corollary}[Corollary 1.4 in \cite{DrinovecForstneric2023}] \label{cor:main}
The conclusion of Theorem \ref{th:DF2022} holds for any convex epigraph $E_\phi$ 
of the form \eqref{eq:EinCn} such that $\phi\ge 0$ and the set $\{\phi=0\}$ 
is nonempty and compact.
\end{corollary}

We pose the following question reminiscent of the classical Levi problem.

%
%   GEOMETRIC CHARACTERIZATION OF OKA COMPLEMENTS
%
\begin{problem}\label{prob:geometric}
Let $K$ be a compact domain with smooth boundary in $\C^n$ for $n>1$. 
%Is there a characterization of $\C^n\setminus K$ being an Oka manifold in terms of 
%geometric properties of $bK$? In particular:
\begin{enumerate}[\rm (a)]
\item Assuming that $\C^n\setminus K$ is Oka, must $K$ be pseudoconvex?
\item Assuming that $K$ is (strongly) pseudoconvex, is
$\C^n\setminus K$ an Oka domain?
\item 
Is every strongly pseudoconcave domain $\Omega\subset\C^n$ 
of the form \eqref{eq:model} an Oka domain?
\end{enumerate}
\end{problem}

Parts (a) and (b) of the above problem are also of interest for domains in $\CP^n$.
Note that if $K$ is a smoothly bounded compact domain in a complex manifold $Y$ such that 
$Y\setminus K$ is Oka, then $K$ cannot have a strongly pseudoconcave boundary point, 
since this would yield a nonconstant bounded plurisubharmonic function on $Y \setminus K$.
This shows that the answer to (a) is affirmative in dimension two. 
%A good test case for problem (b) may be to understand whether complements
%of thin tubes around the standard totally real torus in $\C^2$ are Oka. 
I expect that the answer to (b) is negative in general.

%
%   THE COMPLEMENT OF HARTOGS FIGURE
%
\begin{example}\label{ex:Hartogs}
Denote the coordinates on $\C^n$ by $z=(z_1,z')$ with $z'=(z_2,\ldots,z_n)$.
Given a number $0<\delta<1$ we consider the closed Hartogs figure
\[
	H=\{(z_1,z'):|z_1|\le \delta,\ |z'|	\le 1\} \cup\{(z_1,z'):|z_1|\le 1,\ 1-\delta\le |z'|\le 1\}.
\]
We claim that $\C^n\setminus H$ fails to be Oka. To see this, 
let $h(z_1)$ be a bounded subharmonic function on $|z_1|>\delta$ which vanishes 
on $|z_1|\ge 1$ and is positive for $|z_1|$ close to $\delta$; an explicit example
is $h(z_1)=\max\{0,1/|z_1|^2-1\}$. Let $\rho$ be the 
function on $\C^n\setminus H$ which equals $\rho(z_1,z')=h(z_1)$
on $\{(z_1,z'):\delta<|z_1|\le 1,\ |z'|<1-\delta\}$ and equals zero on the complement of
the closed unit polydisc. Then $\rho$ is a nonconstant bounded plurisubharmonic function
on $\C^n\setminus H$, so $\C^n\setminus H$ fails to be Liouville, and hence it also fails
to be Oka. 
\end{example}

%
%   THE COMPLEMENT OF HARTOGS TRIANGLE
%
\begin{problem}\label{prob:Hartogs}
Let $H$ be the closed Hartogs triangle
\[
	H=\{(z_1,z_2)\in\C^2: 0\le |z_1| \le |z_2|\le 1\}.
\]
Is $\C^2\setminus H$ an Oka domain? Note that the argument in Example \ref{ex:Hartogs}
does not apply in this case. On the other hand, if 
$K$ is a small closed smoothly bounded neighbourhood of $H$ 
then $K$ cannot be pseudoconvex, so $\C^2\setminus K$ fails to be Oka.
\end{problem}

The following example suggests that there is no reasonable geometric characterization of 
Oka domains in $\C^n$ with unbounded complements.

%
%   A VOLUME HYPERBOLIC DOMAIN WHOSE COMPLEMENTS
%
\begin{example}\label{ex:twistedline}
For any $n>1$ there is an unbounded, closed, connected, strongly pseudoconvex domain $E\subset \C^n$ 
with arbitrarily small volume such that $\C^n\setminus E$ fails to be Oka.

To see this, recall that Rosay and Rudin \cite[Theorem 4.5]{RosayRudin1988} constructed for every $n>1$ 
a closed discrete set $A \subset \C^n$ whose complement $\C^n\setminus A$ is 
volume hyperbolic; in particular, any holomorphic map $\C^n\to  \C^n\setminus A$ has rank $<n$ 
at every point. Choose a proper smooth embedding $g:\R\hra \C^n$ whose image contains $A$. 
Then, the domain $\C^n\setminus g(\R)\subset\C^n\setminus A$ is volume hyperbolic and hence 
is not Oka. Let $v_1,\ldots,v_{2n-1}:\R\to\C^{n}$ be smooth maps such that for each $t\in\R$ 
the vectors $v_1(t),\ldots, v_{2n-1}(t)$ form an orthonormal set and they are 
orthogonal to $\dot g(t)$. Consider the map $G:\R^{2n}\to \C^n$ given by 
\[
	G(t,x_1,\ldots,x_{2n-1})=g(t)+\sum_{i=1}^{2n-1} x_i v_i(t).
\]
If $\epsilon:\R\to (0,1)$ is a smooth positive function which decreases sufficiently 
fast as $t\to \pm\infty$ then $G$ maps the tube
$T_\epsilon=\{(t,x)\in\R^{2n}: |x|\le \epsilon(t)\}$ diffeomorphically onto 
a strongly pseudoconvex tube $E$ around $g(\R)$ having arbitrarily small volume. 
The complement $\C^n\setminus E$ is a strongly pseudoconcave domain which 
fails to be Oka. 
%We leave the details to an interested reader.
\end{example}

The embedded real line in Example \ref{ex:twistedline} is necessarily very twisted, 
and its projective closure
may well contain the entire hyperplane at infinity. On the other hand, Theorem \ref{th:C2minusR}
for the case $k=1<n$ suggests that properly embedded real lines which behave sufficiently nicely at
infinity may have Oka complements. We introduce the following notion of tameness for 
embedded real lines, which extends the one of Rosay and Rudin \cite{RosayRudin1988} 
for discrete sets.

%
%   TAME LINES
%
\begin{definition}\label{def:tameline}
Let $n\ge 2$. A properly embedded real line $f:\R\hra\C^n$ or halfline $f:\R_+\hra \C^n$ is {\em tame}
if there is an automorphism $\Phi\in\Aut(\C^n)$ such that the projective closure 
$\overline{\Phi\circ f(\R)}\subset\CP^n$ (or $\overline{\Phi\circ f(\R_+)}$)
is a rectifiable arc or a rectifiable closed Jordan curve in $\CP^n$.
\end{definition}

\begin{remark}
It is easily seen that the closure of a tame embedded line intersects the hyperplane 
at infinity in precisely one point at every end. 
This definition of tameness is stronger than the one for closed countable sets, 
used in Theorem \ref{th:KusakabePAMS}, or the one for closed complex subvarieties of 
codimension $\ge 2$ in $\C^n$ \cite[Definition 4.11.3]{Forstneric2017E}. 
In those definitions one asks that, in some holomorphic coordinates on $\C^n$, 
the closure of the set in $\CP^n$ does not contain the hyperplane at infinity. 
On the other hand, the original definition of a tame discrete set in $\C^n$, 
given by Rosay and Rudin \cite{RosayRudin1988}, is equivalent to asking that 
the set can be mapped into a complex line by an automorphism of $\C^n$, so its 
closure in $\CP^n$ has a single point at infinity.
\end{remark}

The following result generalizes Theorem \ref{th:C2minusR} in the case $k=1<n$.
It is new for $n=2$, while for $n\ge 3$ it is a consequence of Theorem 
\ref{th:unboundedsets}, and in this case it holds under the weaker tameness condition 
in that result.

%
%   THE COMPLEMENT OF A TAME REAL LINE IS OKA
%
\begin{theorem}\label{th:tamelineOka}
The complement $\C^n\setminus E$ of a tame embedded line $\R\cong E\subset \C^n$ 
for $n>1$ is Oka. Furthermore, the complement of any closed subset of such 
a set $E$ is Oka.
\end{theorem}

\begin{proof}
We follow the idea of proof of \cite[Proposition 4.9]{ForstnericWold2022Oka}. 
We may assume that the tameness condition in Definition \ref{def:tameline} holds 
with $\Phi=\Id$. Write $\CP^n=\C^n\cup H$, 
where $H$ is the hyperplane at infinity. By dimension reasons, 
there is a complex hyperplane $\Lambda\subset\CP^n$ which does not intersect the 
rectifiable curve $C=\overline E\subset\CP^n$.

If $C$ is polynomially convex in $X=\CP^n\setminus\Lambda\cong\C^n$, 
the result follows from Theorem \ref{th:mainFW2022}. 
This holds in particular if $C$ is an arc. The same holds for any closed subset of $C$. 

Assume now that $C$ is not polynomially convex in $X$. By Theorem \ref{th:Stout311}
its polynomial hull in $X$ equals $C\cup A$, where $A$ is a closed irreducible one-dimensional 
complex subvariety of $X\setminus C$ with $\overline A=A\cup C$. 
Then, $\C^n\setminus \overline A$ is an Oka domain by Theorem \ref{th:mainFW2022}. 
(Here, $\C^n=\CP^n\setminus H$.)
Choose a complex hyperplane $\Lambda'\subset\CP^n$ which intersects $A$ but avoids $C$; 
such a hyperplane exists by dimension reasons since $C$ is a rectifiable curve. Then, 
the polynomial hull of $C$ in $X'=\CP^n\setminus \Lambda'\cong\C^n$ 
does contain any point of $A$. If $C$ is polynomially convex in $X'$, then 
$\C^n\setminus E$ is Oka by Theorem \ref{th:mainFW2022} and we are done. 
Otherwise, its polynomial hull in $X'$ equals $C\cup A'$, where $A'$ is a closed irreducible 
one-dimensional complex subvariety of $X'\setminus C$ and $\overline{A'}=A'\cup C$.
(See Theorem  \ref{th:Stout311}.) In this case, $A\cup A'\cup C$ is a closed 
complex curve in $\CP^n$ by the boundary uniqueness theorem 
(see \cite[Proposition 1, p.\ 258]{Chirka1989}). By the same argument as above 
we infer that $\C^n\setminus \overline{A'}$ is an Oka domain. Note that 
\[
	\C^n\setminus (E\cup (A\cap A')) = 
	(\C^n\setminus \overline A)\cup (\C^n\setminus \overline{A'})
\] 
and both Oka domains on the right hand side are Zariski open in 
$\C^n\setminus (E\cup (A\cap A'))$, so their union is Oka by 
Theorem \ref{th:localization}. If $A\cap A'=\varnothing$, we are done. 
Otherwise, there is a complex hyperplane $\Sigma\subset\CP^n$ 
passing through a point of $A\cap A'$ and avoiding $C$. 
In this case, $C$ is polynomially convex in $\CP^n\setminus \Sigma$, 
and hence $\C^n\setminus E$ is Oka by Theorem \ref{th:mainFW2022}. 
\end{proof}

\section{Oka domains in projective spaces}\label{sec:CPn}
In this section we exhibit some new examples of Oka domains in complex projective spaces.
We begin with the following result.

%
%  COMPLEMENT OF A HOLO CONVEX SET
%
\begin{theorem}\label{th:K}
If $\Lambda$ is a closed complex hypersurface in $\CP^n$ $(n>1)$ such that 
the manifold $\Omega=\CP^n\setminus \Lambda$ has the density property
(see Definition \ref{def:density}), then for any compact $\Oscr(\Omega)$-convex set 
$K\subset \Omega$ the complement $\CP^n\setminus K$ is an Oka domain. 
In particular, the hypersurface $\Lambda$ has a basis of open Oka neighbourhoods 
in $\CP^n$.
\end{theorem}

\begin{proof} %[Proof of Theorem \ref{th:K}]
The hypersurface $\Lambda$ is given in homogeneous coordinates
by the zero set $\{P=0\}$ of a homogeneous polynomial $P$ of degree $k=\deg\Lambda$. 
With respect to the $k$-th Veronese embedding $\CP^n\hra \CP^{N}$ 
%(where $N={{n+k}\choose {k}}-1$), 
whose components are all homogeneous monomials of degree $k$ in $n+1$ variables, 
$\Lambda$ is the intersection of the image of $\CP^n$ with a hyperplane $H\subset \CP^N$, so 
$\CP^n\setminus \Lambda$ is a closed affine (hence Stein) submanifold of 
$\CP^N\setminus H=\C^N$. 

The projective linear group $G=PGL_n(\C)$ acts transitively 
on $\CP^n$ by holomorphic automorphisms.
Hence, there are finitely many maps $A_0=\Id,A_1,\ldots, A_m\in G$ in any given neighbourhood of
the identity map such that the hypersurfaces $\Lambda_i=A_i(\Lambda)\subset\CP^n$ satisfy 
$\bigcap_{i=0}^m \Lambda_i=\varnothing$. For each $i=1,\ldots, m$ the domain 
$
	\Omega_i=\CP^n\setminus \Lambda_i=A_i(\CP^n\setminus \Lambda)
$
is biholomorphic to $\Omega_0=\CP^n\setminus \Lambda$, so it has the density property.
Assuming that $A_i$ is close enough to the identity map, there is a path $A_{i,t}\in G$
$(t\in [0,1])$ connecting $A_{i,0}=A_i$ to $A_{i,1}=\Id$ such that for every $t\in [0,1]$ the hypersurface 
$	
	\Lambda_{i,t}=A_{i,t}(\Lambda) = \{P\circ A_{i,t}^{-1}=0\} 
$
avoids the given compact set $K\subset \CP^n\setminus\Lambda$. 
Note that $\Lambda_{i,0}=\Lambda_i$ and $\Lambda_{i,1}=\Lambda$. Since $K$ is 
$\Oscr(\Omega_0)$-convex, Corollary \ref{cor:PC3} shows that $K$ is 
holomorphically convex in $\Omega_{i}=\CP^n\setminus \Lambda_{i}$ for every 
$i=1,\ldots,m$. Since $\Omega_i$ has the density property, Theorem \ref{th:complement} 
implies that $\Omega_i\setminus K$ is Oka for $i=0,\ldots,m$. 
Since $\Omega_i\setminus K=(\CP^n\setminus K)\setminus \Lambda_i$ is Zariski open 
in $\CP^n\setminus K$ and $\CP^n\setminus K = \bigcup_{i=0}^m \Omega_i\setminus K$, 
Theorem \ref{th:localization} shows that $\CP^n\setminus K$ is Oka.
\end{proof}

\begin{corollary}\label{cor:complementCPn}
If $K$ is a compact polynomially convex set in $\C^n$, $n>1$, then $\CP^n\setminus K$ is Oka. \end{corollary}

In light of Theorem \ref{th:K} it is natural to ask the following question.

%
%   WHICH COMPLEMENTS OF HYPERSURFACES HAVE DP?
%
\begin{problem}
\begin{enumerate}[\rm (a)]
\item For which complex hypersurfaces $\Lambda\subset\CP^n$ is $\CP^n\setminus \Lambda$ 
an Oka manifold? 
\item For which complex hypersurfaces $\Lambda\subset\CP^n$ does 
$\CP^n\setminus \Lambda$ have the density property?
\end{enumerate}
\end{problem}

\begin{example}\label{ex:unionofhyperplanes}
If $\Lambda_1,\ldots, \Lambda_k\subset \CP^n$ $(n>1,\ 1\le k\le n+1)$ 
are hyperplanes in general position then $\Omega=\CP^n\setminus \bigcup_{i=1}^k \Lambda_k$ 
is isomorphic to $\C^{n-k+1}\times (\C^*)^{k-1}$.
If $k\le n$ then $\Omega$ has the density property (see Varolin \cite[p.\ 136]{Varolin2001}). 
If $k=n+1$ then $\Omega$ isomorphic to $(\C^*)^n$ which is Oka but is not known to have 
the density property. The complement of more than $n+1$ hyperplanes in $\CP^n$ 
fails to be Oka (see Hanysz \cite[Theorem 3.1]{Hanysz2014}).
\end{example}

We now show that Theorem \ref{th:K} holds if $\Lambda$ is a hyperquadric.  
It was shown by Kusakabe \cite[Corollary 4.9 (1)]{Kusakabe2021IUMJ}
that the complement of a smooth hyperquadric in $\CP^n$ is Oka.

%
%	COMPLEMENTS OF QUADRIC
%
\begin{theorem} \label{th:quadric}
If $\Lambda$ is a quadric hypersurface in $\CP^n$ $(n>1)$ and $K$ is a compact 
holomorphically convex set in the Stein domain $\CP^n\setminus \Lambda$, 
then $\CP^n\setminus K$ is an Oka manifold.

In particular, if $\RP^n\subset \CP^n$ is the standard embedding of the real projective space in the 
complex projective space, then $\CP^n\setminus \RP^n$ is Oka for any $n>1$.
\end{theorem}

\begin{proof}
A singular hyperquadric in $\CP^n$ is a union of two hyperplanes. Its complement is 
isomorphic to $\C^{n-1}\times \C^*$, which has the density property  \cite{Varolin2001}, 
so the conclusion follows from Theorem \ref{th:K}. 
Assume now that $\Lambda$ is smooth. Lacking a reference for the density property 
of $\CP^n\setminus \Lambda$, we proceed as follows.
(I owe this idea to Stefan Nemirovski.) There are homogeneous coordinates 
on $\CP^n$ in which $\Lambda=\{z_0^2 + z_1^2 + \cdots +z_n^2=0\}$.
% $\Lambda$ is given by the equation $z_0^2 + z_1^2 + \cdots +z_n^2=0$. 
The restriction of the projection $\pi:\C^{n+1}\setminus \{0\}\to\CP^n$ 
to the affine quadric %(the complex $n$-sphere) given by 
$
	X=\{z_0^2 + \cdots +z_n^2=1\}\subset \C^{n+1}\setminus \{0\}
$
is a two-sheeted covering map $\pi|_X:X\to \CP^n\setminus \Lambda$. 
The quadric $X$ has the density property according to Kaliman and Kutzschebauch
\cite{KalimanKutzschebauch2008MZ}.
(Indeed, $X$ is linearly equivalent to the Danielewski hypersurface
\[
	\Sigma = \bigl\{(u,v,z_2,\ldots,z_n)\in\C^{n+1}: uv=P(z)=z_2^2 + \cdots +z_n^2 -1\bigr\},
\]
with the polynomial $P$ having smooth reduced zero fibre.)
Since $K$ is holomorphically convex in 
$\CP^n\setminus \Lambda$, its preimage $L=(\pi|_X)^{-1}(K)$ is $\Oscr(X)$-convex. 
By Theorem \ref{th:complement} the complement $X\setminus L$ is an Oka manifold. 
Since $\pi:X\setminus L \to \CP^n\setminus (K\cup \Lambda)$ is a covering map, the domain 
$\CP^n\setminus (K\cup \Lambda)$ is Oka by \cite[Proposition 5.6.3]{Forstneric2017E}. 
It remains to apply the argument in the proof of Theorem \ref{th:K}, 
varying $\Lambda$ among nearby quadrics and using Corollary \ref{cor:PC3} 
and Theorem \ref{th:localization}. 

The preimage $\pi^{-1}(\RP^n)$ of the real projective space is the sphere $S^n=X\cap\R^{n+1}$ of 
real points in $X$, which is holomorphically convex in $X$. This gives the last statement.
\end{proof}

%
%   ABOUT CUBICS IN CP^2
%
\begin{remark}[Complements of cubics]
It was shown by Kusakabe \cite[Corollary 4.9 (2)]{Kusakabe2021IUMJ} that the complement of  
every irreducible singular cubic in $\CP^2$ is Oka, but it is not clear whether the conclusion
of Theorem \ref{th:quadric} holds in this case. There are two such cubics up to automorphisms
of $\CP^2$, given respectively by $y^2 z = x^3$ and $y^2 z = x^3 + x^2 z$. 
It is not known whether the complement of the smooth cubic $x^3+y^3+z^3=0$ in $\CP^2$ 
is Oka, although it is dominable by $\C^2$; see the discussion in Hanysz 
\cite[Sect.\ 4]{Hanysz2014}.
\end{remark}

Another family of examples of Oka domains in $\CP^n$ is given by the following proposition.

%of Oka domains in projective spaces are complements of compact rectifiable curves.

%
%   COMPLEMENTS OF CURVES IN PROJECTIVE SPACES
%
\begin{proposition}
\label{prop:CPn-curve}
If $C$ is a compact rectifiable Jordan arc or a rectifiable simple closed Jordan curve in $\CP^n$ for $n>1$, 
then $\CP^n\setminus C$ is an Oka manifold.
\end{proposition}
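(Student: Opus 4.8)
The plan is to mimic the proof of Theorem~\ref{th:complementCPn}: I would cover $\CP^n\setminus C$ by finitely many Zariski open Oka domains, each obtained by deleting a projective hyperplane disjoint from $C$, and then invoke the localization theorem (Theorem~\ref{th:localization}). Writing $\Hcal$ for the space of projective hyperplanes and $X_\Lambda=\CP^n\setminus\Lambda\cong\C^n$ for $\Lambda\in\Hcal$, the key point is that whenever $\Lambda\cap C=\varnothing$ the curve $C$ sits as a compact rectifiable Jordan arc (resp.\ simple closed curve) inside the affine chart $X_\Lambda$, so the complement $X_\Lambda\setminus C\cong\C^n\setminus C$ is already known to be Oka.

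First I would record that $\Hcal(C):=\{\Lambda\in\Hcal:\Lambda\cap C=\varnothing\}$ is open and dense in $\Hcal\cong\check{\CP}^n$. Openness is clear since $C$ is compact; density follows because the hyperplanes meeting $C$ form the set $\bigcup_{p\in C}\{\Lambda:p\in\Lambda\}$, a union over the one-real-dimensional set $C$ of hyperplanes $\cong\CP^{n-1}$ in $\check{\CP}^n$, hence a set of real dimension at most $2n-1$ and therefore of measure zero. Next I would fix coordinates so that a chosen $\Lambda\in\Hcal(C)$ is the hyperplane at infinity; on the compact set $C$, which is bounded away from $\Lambda$, the affine chart map is a diffeomorphism onto its image and hence bi-Lipschitz, so it carries the finite-length curve $C$ to a curve of finite length and preserves the property of being a Jordan arc or a simple closed curve. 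Thus in each such chart $C$ is a genuine compact rectifiable arc (polynomially convex, so $X_\Lambda\setminus C$ is Oka by Theorem~\ref{th:complement}) or a genuine rectifiable simple closed curve (so $X_\Lambda\setminus C$ is Oka by Proposition~\ref{prop:curveinC2}).

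Then I would choose hyperplanes $\Lambda_0,\ldots,\Lambda_n\in\Hcal(C)$ in general position, i.e.\ with $\bigcap_{i=0}^n\Lambda_i=\varnothing$. This is possible because the $(n+1)$-tuples in general position form a Zariski open dense subset of $(\check{\CP}^n)^{n+1}$ while $\Hcal(C)^{n+1}$ is open dense, so the two intersect. Setting $X_i=\CP^n\setminus\Lambda_i$, each $X_i\setminus C=(\CP^n\setminus C)\setminus\Lambda_i$ is a Zariski open subset of $\CP^n\setminus C$, and by the previous paragraph each is Oka. Since $\bigcap_{i=0}^n\Lambda_i=\varnothing$ we have $\bigcup_{i=0}^n X_i=\CP^n$, whence $\bigcup_{i=0}^n(X_i\setminus C)=\CP^n\setminus C$, and Theorem~\ref{th:localization} yields that $\CP^n\setminus C$ is Oka.

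The genuinely substantive input is already packaged in Theorem~\ref{th:complement} and Proposition~\ref{prop:curveinC2}; the new content here is purely the projective localization, exactly parallel to how Theorem~\ref{th:complementCPn} localizes Theorem~\ref{th:complement}. Accordingly the only point requiring care---and the one I would flag as the main obstacle---is verifying that rectifiability (finite length) and the Jordan/simple-closed condition survive passage to each affine chart; this is handled by the bi-Lipschitz remark above, using that $C$ is compact and disjoint from the deleted hyperplane.
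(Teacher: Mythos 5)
Your proposal is correct and is essentially the paper's own proof: since $C$ has finite length, almost every hyperplane $\Lambda$ misses $C$, each chart complement $X_\Lambda\setminus C$ is Oka (via polynomial convexity and Theorem~\ref{th:complement} for arcs, via Proposition~\ref{prop:curveinC2} for simple closed curves), and finitely many such Zariski open domains cover $\CP^n\setminus C$, so Theorem~\ref{th:localization} applies. The only difference is that you spell out details the paper leaves implicit, namely the density of $\Hcal(C)$, the bi-Lipschitz chart argument showing rectifiability survives the change of chart, and the general-position choice of the covering hyperplanes.
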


\begin{proof}
Since $C$ has finite length, we have $C\cap \Lambda=\varnothing $ for 
almost every projective hyperplane $\Lambda\subset \CP^n$.
Fix such $\Lambda$ and let $X_\Lambda = \CP^n\setminus \Lambda\cong\C^n$.
By Proposition \ref{prop:curveinC2}, $X_\Lambda \setminus C$ is Oka.
Note that $X_\Lambda \setminus C$ is a Zariski open domain in $\CP^n\setminus C$.
Clearly we can cover $\CP^n\setminus C$ by finitely many Zariski open sets of this form,
and hence $\CP^n\setminus C$ is Oka by Theorem \ref{th:localization}.
\end{proof}

A similar argument gives the following result.

\begin{theorem} \label{th:CK-CPn}
If $C\cup K\subset\C^n$ is as in Theorem \ref{th:CK} then $\CP^n\setminus (C\cup K)$ is Oka.
\end{theorem}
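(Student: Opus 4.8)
The plan is to mimic the proof of Theorem \ref{th:complementCPn}: cover $\CP^n\setminus(C\cup K)$ by finitely many affine Zariski open sets to each of which Theorem \ref{th:CK} applies, and then invoke the localization theorem \ref{th:localization}. The only genuinely new point is to check that the analytic variety attached to $C\cup K$ does not acquire infinitely many irreducible components when we pass to a different affine chart. Write $\CP^n=\C^n\cup H$ as in the excerpt and let $\wh{C\cup K}=(C\cup K)\cup A$ be the polynomial hull in $\C^n$; by Theorem \ref{th:Stout311} the set $A$ is empty or a purely one-dimensional subvariety of $\C^n\setminus(C\cup K)$, and since $\wh{C\cup K}$ is compact, $A$ is bounded with $\overline A\subset\wh{C\cup K}$. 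By hypothesis $A$ has finitely many irreducible components. If $A=\varnothing$ then $C\cup K$ is polynomially convex and the conclusion follows directly from Theorem \ref{th:complementCPn}, so I assume $A\neq\varnothing$. First I would fix a closed ball $B\subset\C^n$ containing the compact set $\wh{C\cup K}$ and consider the neighbourhood $\Hcal(B)$ of $H$ from \eqref{eq:HB}. For $\Lambda\in\Hcal(B)$ and $X_\Lambda=\CP^n\setminus\Lambda\cong\C^n$, the Oka-criterion argument in the proof of Theorem \ref{th:complementCPn}, applied to the polynomially convex set $\wh{C\cup K}\subset B$ in place of $K$, shows that $\wh{C\cup K}$ is polynomially convex in $X_\Lambda$.

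The key step is to identify the hull of $C\cup K$ computed in $X_\Lambda$. On one hand, by monotonicity of the polynomial hull and the inclusion $C\cup K\subset\wh{C\cup K}$, the $X_\Lambda$-hull of $C\cup K$ is contained in the $X_\Lambda$-hull of $\wh{C\cup K}$, which equals $\wh{C\cup K}$ because the latter set is $X_\Lambda$-polynomially convex. On the other hand, $A$ remains a bounded purely one-dimensional subvariety of $X_\Lambda\setminus(C\cup K)$ with $\overline A\setminus A\subset C\cup K$, because $\Lambda$ is disjoint from $\wh{C\cup K}$, which contains $\overline A$; hence the maximum principle on analytic varieties places $A$ inside the $X_\Lambda$-hull of $C\cup K$. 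Combining the two inclusions, the hull of $C\cup K$ in $X_\Lambda$ is exactly $\wh{C\cup K}=(C\cup K)\cup A$, so the attached variety in the chart $X_\Lambda$ is again $A$, with the same finitely many irreducible components. Theorem \ref{th:CK}, applied in $X_\Lambda\cong\C^n$, then yields that $X_\Lambda\setminus(C\cup K)$ is an Oka manifold.

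To finish, I would pick hyperplanes $\Lambda_0,\ldots,\Lambda_n\in\Hcal(B)$ in general position, so that $\bigcap_{i=0}^n\Lambda_i=\varnothing$ and hence $\CP^n=\bigcup_{i=0}^n X_{\Lambda_i}$; this is possible since $\Hcal(B)$ is an open neighbourhood of $H$. Each $X_{\Lambda_i}\setminus(C\cup K)$ is Zariski open in $\CP^n\setminus(C\cup K)$ and Oka by the previous paragraph, and these sets cover $\CP^n\setminus(C\cup K)$, so Theorem \ref{th:localization} gives that $\CP^n\setminus(C\cup K)$ is Oka. The main obstacle is precisely the chart-change step: a priori the polynomial hull of $C\cup K$ in $X_\Lambda$ could differ from its hull in $\C^n$ and, in light of Stout's examples where infinitely many components occur, might fail the finiteness hypothesis of Theorem \ref{th:CK}. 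This is why the intermediate polynomial convexity of the \emph{full} hull $\wh{C\cup K}$ in every chart $X_\Lambda$ is essential, as it pins the hull down to the same set $A$ and transports the finite-component hypothesis across charts.
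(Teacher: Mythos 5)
Your overall architecture --- cover $\CP^n\setminus(C\cup K)$ by affine charts $X_{\Lambda}$ with $\Lambda\in\Hcal(B)$, apply Theorem \ref{th:CK} in each chart, and conclude with the localization Theorem \ref{th:localization} --- is exactly what the paper intends (its proof is only the phrase ``a similar argument'', referring to Theorem \ref{th:complementCPn} and Proposition \ref{prop:CPn-curve}), and the point you single out as the key new difficulty is handled correctly: since $\wh{C\cup K}$ is polynomially convex in $\C^n$ and contained in $B$, it is polynomially convex in each $X_\Lambda$, so the $X_\Lambda$-hull of $C\cup K$ is contained in $\wh{C\cup K}$; and the reverse inclusion, via the maximum principle applied to the bounded purely one-dimensional variety $A$ (each of whose irreducible components is noncompact, with $\overline A\setminus A\subset C\cup K$ and $\overline A\cap\Lambda=\varnothing$), is also sound. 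Hence the attached variety in every chart is again $A$, with the same finitely many components.

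There is, however, one hypothesis of Theorem \ref{th:CK} that you never verify in the charts: that $C$ is contained in a compact connected set of finite length \emph{inside} $X_{\Lambda_i}\cong\C^n$. The witnessing set $\Gamma\supset C$ provided by the hypothesis lives in the original $\C^n$ and need not lie in $B$ (only $C\cup K$ and its hull do); nothing in your choice of $\Lambda_i\in\Hcal(B)$ prevents $\Lambda_i\cap\Gamma\neq\varnothing$, and in that case $\Gamma\cap X_{\Lambda_i}$ is noncompact (unbounded in chart coordinates) and you have exhibited no substitute, so Theorem \ref{th:CK} cannot be invoked as stated. (A smaller omission of the same kind: you should state that $K$ itself, not only $\wh{C\cup K}$, is polynomially convex in $X_{\Lambda_i}$; this is the verbatim argument of Theorem \ref{th:complementCPn}.) The gap is easily repaired in either of two ways. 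First, choose the $\Lambda_i\in\Hcal(B)$ also disjoint from $\Gamma$: since $\Gamma$ has finite length, almost every projective hyperplane misses it --- precisely the genericity used in the proof of Proposition \ref{prop:CPn-curve} --- and general position can be retained; the chart transition map is smooth near the compact set $\Gamma$, so its image is again compact, connected, and of finite length. Alternatively, arrange $\Gamma\subset B$ from the outset by replacing $\Gamma$ with its image under the nearest-point projection onto the closed convex hull of $C$: this projection is $1$-Lipschitz, so the image is a compact connected set of finite length containing $C$ and contained in $\overline{\mathrm{conv}}(C)\subset B$, hence disjoint from every $\Lambda\in\Hcal(B)$. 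With either remedy your proof is complete and agrees with the paper's intended argument.
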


\begin{comment}
A compact orientable submanifold without boundary of real dimension at least $n$ in $\C^n$ 
is not polynomially convex (see Browder \cite{Browder1961}), and its polynomial hull may have nonempty interior.
An example is the standard $n$-torus in $\C^n$, product of the unit circles in coordinate
complex lines, whose hull is the standard polydisc.
On the other hand, a generic compact totally real submanifold of dimension $<n$ 
is polynomially convex by a result of L\o w and Wold \cite{LowWold2009}.
It is reasonable to expect that the following problem has affirmative answer.

\begin{problem}
Let $n\ge 3$. Does a generic closed totally real submanifold $M$ in $\CP^n$ with $\dim M<n$ 
have Oka complement $\CP^n\setminus M$?
\end{problem}
\end{comment}

%%%%%%%%%%%%%%%%
%
%  ALGEBRAIC OKA THEORY
%
%%%%%%%%%%%%%%%%
\section{Algebraic Oka theory}\label{sec:aOka}

The algebraic Oka theory concerns Oka properties of regular algebraic maps from affine 
algebraic manifolds (the algebraic analogues of Stein manifolds) to algebraic manifolds. 
Not surprisingly, the situation is much more rigid than in the holomorphic case, 
and there are many examples where the Oka principle holds for holomorphic maps 
but it fails for algebraic maps. Indeed, we shall see that 
no compact algebraic manifold is algebraically Oka,
and we do not know a single example of a noncompact algebraically Oka manifold.
Nevertheless, certain weaker Oka properties are still of interest in the algebraic case.

% 

%
%   APPROXIMATION OF HOLOMORPHIC MAPS BY ALGEBRAIC  MAPS
%
\subsection{Algebraically subelliptic manifolds and algebraic approximation}
We have seen that holomorphic approximation plays a crucial role in Oka theory. 
Likewise, the problem of approximating holomorphic maps by algebraic maps is of 
central importance. Algebraic approximants in general do not exist 
even for maps between very simple affine algebraic manifolds.
For instance, there are no nontrivial algebraic morphisms $\C\to \C\setminus\{0\}$.
A major role in these problems play the following 
classes of algebraic manifolds which were discussed by 
Gromov \cite{Gromov1989}; see also \cite[Definition 2.1]{Forstneric2006AJM} 
and \cite[Definition 5.6.13 (e)]{Forstneric2017E}.

%
%   DEF: ALGEBRAIC SUBELLIPTICITY
%
\begin{definition}\label{def:Asubelliptic}
Let $Y$ be an algebraic manifold.
\begin{enumerate}[\rm (a)] 
\item $Y$ is {\em algebraically elliptic} if it admits a dominating  algebraic 
spray $F:E\to Y$ defined on the total space of an algebraic vector bundle $E\to Y$ 
(see \eqref{eq:sprayonY}).
\item $Y$ is {\em algebraically subelliptic} if it admits a finite family of 
algebraic sprays $F_j:E_j\to Y$ from algebraic vector bundles $E_j\to Y$ 
$(j=1,\ldots,m)$ such that 
\[
	\sum_{j=1}^m dF_j(0_y)(E_{j,y}) = T_{y}Y\ \ \text{for every $y\in Y$}.
\]
%\item $Y$ is {\em locally algebraically (sub-)elliptic} if it is covered by Zariski open domains which are algebraically (sub-)elliptic.
\item $Y$ is {\em locally algebraically subelliptic} if every point $y\in Y$ has a 
Zariski neighbourhood $U\subset Y$ and a finite dominating family of algebraic
sprays on $U$ with values in $Y$.
\item $Y$ is {\em weakly algebraically subelliptic}
if for every point $a\in Y$, the tangent space $T_a Y$ is spanned by vectors $v$ 
such that there is an affine Zariski open neighbourhood $U$ of $a$ in $Y$ 
and a regular map $f:U\times \C\to Y$ with $f (y, 0) =y$ for all $y\in U$ and 
$\frac{d}{dt}\Big|_{t=0} f(a,t) = v$.
\item $Y$ satisfies condition $a\Ell1$ if the condition in 
Definition \ref{def:elliptic} (b) holds for algebraic maps $X\to Y$ from affine 
algebraic manifolds.
\end{enumerate}
\end{definition}

Examples and properties of such manifolds can be found in 
\cite[Section 6.4]{Forstneric2017E} and elsewhere in the cited book. 
Any one of these conditions implies that the manifold is Oka. 
It turns out that all these properties are pairwise equivalent.

\begin{theorem}\label{th:wae}
For an algebraic manifold $Y$ the following conditions are equivalent:
\begin{enumerate}[\rm (a)]
\item $Y$ is algebraically elliptic.
\item $Y$ is algebraically subelliptic.
\item $Y$ is locally algebraically subelliptic.
\item $Y$ is weakly algebraically subelliptic.
\item $Y$ satisfies condition $a\Ell1$.
\end{enumerate} 
\end{theorem}

The implications (a)\ $\Rightarrow$\ (b)\ $\Rightarrow$\ (c) \ $\Rightarrow$\ (d) are trivial
consequences of definitions. The implication (c)\ $\Rightarrow$\ (b) was shown by 
Gromov \cite[3.5.B, 3.5.C]{Gromov1989} (see also 
\cite[Proposition 6.4.2]{Forstneric2017E}); this is called the localization property
for subelliptic manifolds. Essentially the same proof gives the implication 
(d)\ $\Rightarrow$\ (b) as pointed out by L\'arusson and Truong in 
\cite[p.\ 205, proof of Theorem 1]{LarussonTruong2019}.
The most surprising implication (b)\ $\Rightarrow$\ (a), which was a long-standing
open problem, has been shown very recently
by Kaliman and Zaidenberg \cite[Theorem 0.1]{KalimanZaidenberg2023}.
The implication (a)\ $\Rightarrow$\ (e) follows from the obvious fact that
by pulling back a dominating algebraic spray on $Y$ by an algebraic map $f:X\to Y$
gives a dominating algebraic spray over $f$,
so condition $a\Ell1$ holds. (The analogous implication holds for holomorphic maps.) 
Conversely, since every algebraic manifold is covered by Zariski open domains which are affine 
manifolds, condition (e) implies local algebraic ellipticity of $Y$ (condition (c)). 

Despite the fact that algebraic ellipticity is equivalent to algebraic subellipticity,
we shall keep using the latter term in some of the subsequent results to indicate
that the arguments do not use this recently established equivalence.

A major source of algebraically elliptic manifolds are {\em flexible manifolds}
in the sense of Arzhantsev et al.\ 
\cite{ArzhantsevFlennerKalimanKutzschebauchZaidenberg2013DMJ},
i.e., manifolds whose tangent space at every point is spanned by locally nilpotent 
derivations, LNDs. Indeed, the composition of (algebraic) flows of finitely many 
LNDs on a flexible manifold yields a dominating algebraic spray
(see \cite[Proposition 5.6.22 (c)]{Forstneric2017E}). 
Likewise, a complex manifold which is flexible in the holomorphic sense is 
weakly subelliptic, hence Oka (see \cite[Proposition 5.6.22 (a)]{Forstneric2017E}).
For recently found examples of flexible manifolds, see 
\cite{Perepechko2013,ParkWon2016,Gizatulin2018,MichalekPerepechkoSuss2018,ProkhorovZaidenberg2018,Gaifullin2019X,Perepechko2021} and Theorem \ref{th:toric-aOka}. 

%
%   NULL QUADRIC
%
\begin{example}
For every integer $n\ge 3$ the quadric hypersurface in $\C^n$ given by
\[
	A=\bigl\{(z_1,z_2,\ldots,z_n) \in \C^n: z_1^2+z_2^2+\cdots+z_n^2=0 \bigr\},
\]
and the image $\Sigma\subset \CP^{n-1}$ of $A^*=A\setminus \{0\}$ under the natural 
projection $\pi:\C^n\setminus\{0\} \to\CP^{n-1}$, play a major role in the theory of 
minimal surfaces in real Euclidean space $\R^n$, and in the related theory 
of holomorphic null curves in $\C^n$; see \cite{AlarconForstnericLopez2021}. 
The manifold $A^*$ is flexible (see \cite[Proposition 1.15.3]{AlarconForstnericLopez2021}), 
hence Oka. Since $\pi:A^*\to \Sigma$ is a holomorphic fibre bundle with Oka fibre $\C^*$, 
the hypersurface $\Sigma\subset \CP^{n-1}$ is Oka as well by 
\cite[Theorem 5.6.5]{Forstneric2017E}. 
\end{example}

For later reference we recall the following result  \cite[Theorem 3.1]{Forstneric2006AJM}, 
which gives a relative Oka principle for algebraic maps from affine algebraic varieties to 
algebraically subelliptic manifolds. (See also \cite[Theorem 6.15.1]{Forstneric2017E}.) 
All algebraic maps are assumed to be regular (morphisms).
An inspection of the proof in \cite{Forstneric2006AJM} 
also gives the additional statement concerning the interpolation
of a given initial algebraic map $f:X\to Y$ on a subvariety of $X$.

%
%
%  THEOREM ON ALGEBRAIC APPROXIMATION
%
\begin{theorem}  \label{th:Asubelliptic}
Let $X$ be an affine algebraic variety and $Y$ be an algebraically subelliptic manifold.
Given an algebraic map $f:X\to Y$, a compact holomorphically convex set $K$ in $X$, and 
a homotopy of holomorphic maps $f_t:U\to Y$ $(t\in[0,1)$ on an open neighbourhood $U$ 
of $K$ with $f_0=f|_U$, there are algebraic maps $F:X\times \C\to Y$ satisfying 
$F(\cdotp,0)=f$ such that $F(\cdot,t)$ approximates $f_t$ as closely as desired uniformly 
on $K$ and uniformly in $t\in[0,1]$.
If in addition the homotopy $f_t$ is fixed on a closed algebraic subvariety $X'\subset X$ 
then $F$ can be chosen such that $F(x,t)=f(x)$ for all $x\in X'$ and $t\in\C$. 

In particular, a holomorphic map $X\to Y$ that is homotopic
to an algebraic map is a limit of algebraic maps uniformly on compacts in $X$.
\end{theorem}

Note that a homotopy of continuous maps $f_t:X\to Y$ connecting a pair of holomorphic maps $f_0,f_1$ can be deformed with fixed end to a homotopy of holomorphic maps since $Y$ is an Oka manifold (see Theorem \ref{th:Oka}).

%
%   aCAP
%
\begin{corollary}[Corollary 6.15.2 in \cite{Forstneric2017E}]\label{cor:aCAP}
Every algebraically subelliptic manifold $Y$ satisfies the following 
{\em algebraic convex approximation property}:

\noindent {\rm aCAP:} Every holomorphic map $K\to Y$  from a compact convex set 
$K\subset \C^n$ can be approximated uniformly  on $K$ by regular algebraic maps 
$\C^n\to Y$.
\end{corollary}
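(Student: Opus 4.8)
The plan is to deduce aCAP directly from Theorem \ref{th:Asubelliptic} by taking the source to be $X=\C^n$ and the initial algebraic map to be a constant, then reading off the regular algebraic approximant from the resulting algebraic homotopy at its endpoint.

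First I would set up the data. Let $K\subset\C^n$ be a compact convex set and let $g\colon U\to Y$ be a holomorphic map on an open neighbourhood $U$ of $K$; after shrinking $U$ I may assume that $U$ is convex. Since every convex set is polynomially convex, $K$ is a compact holomorphically convex subset of the affine algebraic variety $X=\C^n$, so the hypotheses of Theorem \ref{th:Asubelliptic} on $X$ and $K$ are met.

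The key step is to realize $g$ as the terminal map of a homotopy of holomorphic maps whose initial map is \emph{constant}, hence regular algebraic. Fix a point $p\in K$ and set $y_0=g(p)$. Because $U$ is convex and $p\in U$, the formula
\[
	f_t(z)=g\bigl((1-t)p+tz\bigr),\qquad z\in U,\ t\in[0,1],
\]
is well defined, as the segment $(1-t)p+tz$ stays in $U$. Each $f_t$ is holomorphic on $U$, with $f_0\equiv y_0$ and $f_1=g$, so $\{f_t\}_{t\in[0,1]}$ is a homotopy of holomorphic maps on a neighbourhood of $K$ joining the constant $y_0$ to $g$. The constant map $f\colon\C^n\to Y$, $f\equiv y_0$, is a regular algebraic map satisfying $f|_U=f_0$. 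Now I would apply Theorem \ref{th:Asubelliptic} to $X=\C^n$, the algebraic map $f$, the holomorphically convex set $K$, and the homotopy $\{f_t\}$: it furnishes an algebraic map $F\colon\C^n\times\C\to Y$ with $F(\cdotp,0)=f$ which approximates $\{f_t\}$ uniformly on $K\times[0,1]$. In particular the regular algebraic map $z\mapsto F(z,1)\colon\C^n\to Y$ satisfies $\sup_{z\in K}\dist\bigl(F(z,1),g(z)\bigr)$ as small as desired, since $f_1=g$. This is exactly aCAP.

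There is no serious obstacle here once Theorem \ref{th:Asubelliptic} is in hand; the contraction of $g$ to a constant via convexity is routine. The one point requiring care is that the approximation theorem must be invoked with an \emph{algebraically subelliptic} target rather than merely an elliptic one. This is precisely the generality in which \cite[Theorem 6.15.1]{Forstneric2017E} is stated, so no extra work is needed beyond citing the subelliptic form of the theorem.
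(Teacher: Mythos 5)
Your proof is correct and is essentially the paper's own argument: the corollary is deduced from Theorem \ref{th:Asubelliptic} by using convexity of $K$ to contract the given holomorphic map to a constant (hence regular algebraic) map through holomorphic maps on a convex neighbourhood, and then reading off the algebraic approximant at the endpoint $t=1$ of the resulting algebraic homotopy. Your closing remark is also the right one to make: the survey's statement of Theorem \ref{th:Asubelliptic} says ``algebraically elliptic,'' but the cited source \cite[Theorem 6.15.1]{Forstneric2017E} holds for algebraically subelliptic targets, which is the generality needed here.
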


Conversely, if the conclusion of Theorem \ref{th:Asubelliptic} 
holds for an algebraic manifold $Y$, it follows easily that $Y$ is weakly algebraically 
subelliptic (cf.\  L\'arusson and Truong \cite[Theorem 1]{LarussonTruong2019}), 
and hence algebraically elliptic by Theorem \ref{th:wae}. 
Summarizing, we have the following. % corollary. 

\begin{corollary}\label{cor:aOka}
For an algebraic manifold $Y$ the following conditions are equivalent:
\begin{enumerate}[\rm (a)]
\item $Y$ is algebraically elliptic.
\item $Y$ is algebraically subelliptic.
\item $Y$ satisfies condition $a\Ell1$. 
\item $Y$ has the algebraic homotopy approximation property 
(i.e., Theorem \ref{th:Asubelliptic} holds).
\end{enumerate}
\end{corollary}

Note that Theorem \ref{th:Asubelliptic} does not provide an algebraic map in 
every homotopy class. Indeed, there are algebraically subelliptic manifolds $Y$ which 
have no algebraic representatives in some homotopy classes of maps $X\to Y$ from 
affine algebraic varieties (see \cite[Examples 6.15.7 and 6.15.8]{Forstneric2017E}). 
%One of the simplest examples is $\CP^n$ for $n\ge 3$. In this case, to ask whether every continuous map $X\to Y=\CP^n$ is homotopic to an algebraic map amounts to asking whether every topological complex line bundle on $X$ is isomorphic to an algebraic line bundle. As shown in \cite[Example 6.15.8]{Forstneric2017E}, this fails if $X=M\setminus D$ where $M$ is a smooth quartic surface in $\CP^3$ (a K3 surface) and $D$ is a smooth hyperplane section in $M$.
A much more precise result is given by Theorem \ref{th:P1isbad}. 

Let us consider a homogeneous algebraic manifold $Y$ for some linear algebraic group $G$. 
We have $Y\cong G/H$ where $H\subset G$ is the isotropy subgroup of a point $y\in Y$.
Recall that a character of $G$ is a homomorphism of algebraic groups 
$\chi:G\to \C^*=\C\setminus\{0\}$. 
%The equivalence (b)\ $\Leftrightarrow$\ (c) holds by Theorem \ref{th:wae}, but this is not needed in the proof.

%
%   Note: relevant information obtained from Frank K. on 21.1.2022.
%
%   HOMOGENEOUS MANIFOLDS OF LINEAR ALGEBRAIC GROUPS
%
\begin{proposition}\label{prop:characters}
If $G$ is a connected linear algebraic group and 
$Y=G/H$ is an algebraic $G$-homogeneous manifold,
then the following conditions are equivalent.
\begin{enumerate}[\rm (a)]
\item $G$ has no nontrivial characters $\chi:G\to \C^*$ with $\chi(H)=1$.
\item The $G$-homogeneous manifold $G/H$ is algebraically elliptic.
\item The manifold $G/H$ is algebraically subelliptic.
\end{enumerate}
\end{proposition}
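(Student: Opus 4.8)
The plan is to prove the cycle of implications $(b)\Rightarrow(c)\Rightarrow(a)\Rightarrow(b)$. The implication $(b)\Rightarrow(c)$ is immediate, since a single dominating algebraic spray is a one-element dominating family, so algebraic ellipticity trivially implies algebraic subellipticity. The content lies in the other two arrows.

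For $(c)\Rightarrow(a)$ I would argue by contraposition, using that a nontrivial character obstructs dominating sprays. Suppose $\chi\colon G\to\C^*$ is a nontrivial character with $\chi(H)=1$. As $\chi$ is constant on the cosets of $H$ and $G\to G/H$ is a geometric quotient, $\chi$ descends to a regular map $\bar\chi\colon Y\to\C^*$; since $G$ is connected, $\chi(G)=\C^*$, so $\bar\chi$ is nonconstant. Now assume for contradiction that $Y$ admits a finite dominating family of algebraic sprays $F_j\colon E_j\to Y$. For each $y\in Y$ the restriction of $F_j$ to the fibre $E_{j,y}\cong\C^{N_j}$ is a regular map $\C^{N_j}\to Y$, and composing with $\bar\chi$ gives a nowhere-vanishing regular function $\C^{N_j}\to\C^*$. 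Such a function is a unit in $\C[z_1,\dots,z_{N_j}]$, hence a nonzero constant, so its differential at $0$ vanishes; thus $d\bar\chi_y$ annihilates the image of the fibre derivative $\di_w F_j(0_y)$ for every $j$. Since the family is dominating, these images sum to $T_yY$, forcing $d\bar\chi_y=0$ for all $y$, so $\bar\chi$ is constant, a contradiction.

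The substantive implication is $(a)\Rightarrow(b)$, which I would split into a structural translation and an explicit spray. Let $G^+\subset G$ be the closed connected normal subgroup generated by all algebraic one-parameter unipotent subgroups; the quotient $Q=G/G^+$ is a torus, and since there is no nontrivial homomorphism $\G_a\to\C^*$, every character of $G$ is trivial on $G^+$ and so factors through $\pi\colon G\to Q$. The image $\pi(H)$ is a closed subgroup of $Q$, and the characters of $G$ trivial on $H$ are exactly the characters of the torus $Q/\pi(H)$. Hypothesis $(a)$ says this character group is trivial, and a nontrivial torus has nontrivial characters; hence $Q/\pi(H)$ is trivial, i.e.\ $\pi(H)=Q$ and $G=HG^+$, which means $G^+$ acts transitively on $Y=G/H$. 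To build the spray, note that $\ggot^+=\mathrm{Lie}(G^+)$ is spanned by its nilpotent elements (those of the unipotent radical are nilpotent, and a semisimple Lie algebra is spanned by its nilpotent elements), so I may choose nilpotent $\xi_1,\dots,\xi_N\in\ggot^+$ spanning $\ggot^+$ over $\C$. Each $t\mapsto\exp(t\xi_i)$ is then a unipotent one-parameter subgroup, polynomial in $t$, and I would set
\[
	F(y,t_1,\dots,t_N)=\exp(t_1\xi_1)\cdots\exp(t_N\xi_N)\cdot y .
\]
This is an algebraic map $Y\times\C^N\to Y$ with $F(0_y)=y$, and $\di_{t_i}|_{t=0}F(y,\cdotp)$ equals the fundamental vector field $V_{\xi_i}(y)$. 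Because $G^+$ acts transitively, the linear map $\xi\mapsto V_\xi(y)$ from $\ggot^+$ onto $T_yY$ is surjective at every $y$, so the spanning set $\{\xi_i\}$ yields vectors $\{V_{\xi_i}(y)\}$ spanning $T_yY$; thus $F$ is dominating and $Y$ is algebraically elliptic.

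The main obstacle I anticipate is the structural step in $(a)\Rightarrow(b)$: correctly isolating the subgroup $G^+$ generated by unipotents, verifying that $G/G^+$ is a torus carrying all characters of $G$, and, so that the spray dominates at every point simultaneously, confirming that $\ggot^+$ is spanned by genuinely nilpotent elements (and not merely by brackets of such). One should also take care that $Y=G/H$ need not be affine, which is why I prefer the explicit group-theoretic spray above to an argument phrased via locally nilpotent derivations.
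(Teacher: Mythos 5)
Your proof is correct, but on both nontrivial implications it takes a different route from the paper. For (c)$\Rightarrow$(a) the paper argues via approximation: algebraic subellipticity implies aCAP (Corollary \ref{cor:aCAP}, which rests on Theorem \ref{th:Asubelliptic}), so a holomorphic disc $f\colon\D\to Y$ with $\phi\circ f$ nonconstant could be approximated by regular maps $F\colon\C\to Y$, making $\phi\circ F\colon\C\to\C^*$ a nonconstant regular map, a contradiction. Your argument is purely infinitesimal: restricting each spray to a fibre and composing with $\bar\chi$ gives a unit in a polynomial ring, hence a constant, so $d\bar\chi_y$ annihilates all spray directions, contradicting domination (you should say explicitly that $Y$ is connected -- automatic since $G$ is connected -- to pass from $d\bar\chi\equiv 0$ to $\bar\chi$ constant). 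This is more elementary, bypassing the approximation theorem entirely, and it in fact proves the stronger statement that an algebraically subelliptic manifold admits no nonconstant regular map to $\C^*$. For (a)$\Rightarrow$(b) the paper simply cites the flexibility results of Arzhantsev et al.\ \cite{ArzhantsevFlennerKalimanKutzschebauchZaidenberg2013DMJ}, Kaliman--Kutzschebauch \cite{KalimanKutzschebauch2017MA}, and Andrist--Kutzschebauch \cite{AndristKutzschebauch2019X}, together with \cite[Proposition 5.6.22 (c)]{Forstneric2017E} (flows of finitely many LNDs compose to a dominating spray); you instead unpack this into a self-contained structure-theoretic argument: the subgroup $G^+$ generated by unipotent one-parameter subgroups has torus quotient $Q=G/G^+$ carrying all characters, hypothesis (a) forces $\pi(H)=Q$, i.e.\ $G=G^+H$, so $G^+$ acts transitively, and the explicit product-of-exponentials map over nilpotent generators of $\ggot^+$ is a dominating algebraic spray from a trivial bundle. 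The standard facts you invoke ($G^+$ closed connected normal, $Q$ a torus, images of algebraic group morphisms closed, quotients of tori are tori, semisimple Lie algebras spanned by nilpotents) are all valid in characteristic zero. What each approach buys: the paper's proof is short and defers the real work to the literature; yours is longer but self-contained, yields the stronger conclusion of special algebraic ellipticity (the spray lives on $Y\times\C^N$), and makes transparent that the whole equivalence reduces to the transitivity of the unipotently generated subgroup.
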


\begin{proof}
(a)$\Rightarrow$(b): If the group $G$ is connected and without nontrivial characters, then every 
$G$-homo\-gen\-eous algebraic manifold $Y=G/H$ is algebraically flexible 
%that is, the tangent space to $Y$ at each point  is spanned by LNDs; see
\cite[Proposition 5.4]{ArzhantsevFlennerKalimanKutzschebauchZaidenberg2013DMJ},
and hence algebraically elliptic \cite[Proposition 5.6.22 (c)]{Forstneric2017E}.
Furthermore, if a subgroup $H$ of $G$ does not lie in the kernel 
of any character $\chi:G\to\C^*$ then the manifold $Y=G/H$ is algebraically flexible; 
see \cite[proof of Theorem 11.7]{KalimanKutzschebauch2017MA}
and \cite[Theorem 4.1]{AndristKutzschebauch2022X}.  

The implication (b)$\Rightarrow$(c) is trivial. Note that 
(c)\ $\Rightarrow$\ (b) holds by Theorem \ref{th:wae}, but this is not needed in the proof.

We prove (c)$\Rightarrow$(a) by contradiction. Assume that $G$ has a nontrivial character 
$\chi: G\to \C^*$ with $\chi(H) =1$. The regular map $\phi:Y=G/H \to \C^*$ defined by 
$\phi(gH)=\chi(g)$ for $g\in G$ is surjective. 
Therefore, there is a holomorphic map $f: \D\to Y$ from the disc %$\D\subset \C$
such that the holomorphic map $\phi\circ f : \D \to \C^*$ is nonconstant. Now, $f$ 
cannot be approximated %on smaller discs in $\D$ 
by regular maps $F:\C\to Y$ since $\phi\circ F:\C\to \C^*$ would then 
be a nonconstant regular map, a contradiction. 
By Theorem \ref{th:Asubelliptic}, $Y=G/H$ is not algebraically subelliptic.
\end{proof}

%
%  BOCHNAK AND KUZHARZ: RESERVE COPY
%
\begin{remark}
As an aside, we mention an approximation theorem, related to Theorem \ref{th:Asubelliptic}, 
which was proved by Bochnak and Kucharz \cite{BochnakKucharz2020}.
Assume that $X$ and $Y$ are algebraic manifolds and $K$ is a compact set in $X$. 
A map $f:K\to Y$ is said to be holomorphic if it is given by a holomorphic map $U\to Y$ 
from an open neighbourhood $U\subset X$ of $K$, and is said to be {\em regular} 
if it is given by a regular algebraic map $U\to Y$ from a Zariski open neighbourhood 
$U$ of $K$ in $X$. The following is \cite[Theorem 1.1]{BochnakKucharz2020}. 
% (The special case when $Y$ is a complex Grassmanian was considered earlier by Kucharz \cite{Kucharz1995}. See also \cite{BochnakKucharz2003}.)  

%
%    BOCHNAK AND KUCHARZ
%
\begin{theorem}\label{th:BochnakKucharz}
Assume that $X$ is an affine algebraic manifold, $K$ is a compact holomorphically convex
set in $X$, and $Y$ is a homogeneous algebraic manifold for some linear algebraic group. 
Then the following conditions are equivalent for a holomorphic map $f:K \to Y$. 
\begin{enumerate}[\rm (a)]
\item The map $f$ can be approximated uniformly on $K$ by regular maps from $K$ to $Y$.
\item The map $f$ is homotopic to a regular map from $K$ to $Y$.
\end{enumerate}
\end{theorem}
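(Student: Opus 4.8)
The plan is to prove the two implications separately, the reverse one being the substance. For (a)$\Rightarrow$(b) I would argue that nearby maps into a manifold are homotopic: fix a distance function $\dist$ on $Y$ inducing its topology together with an auxiliary Riemannian metric, and choose a regular $g\colon K\to Y$ with $\sup_{x\in K}\dist(f(x),g(x))$ so small that for every $x\in K$ the points $f(x),g(x)$ are joined by a unique short geodesic. The geodesic homotopy then gives $f\simeq g$, so $f$ is homotopic to the regular map $g$ and (b) holds. (Equivalently, use a tubular retraction of a neighbourhood of the diagonal in $Y\times Y$ onto the diagonal.)

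For (b)$\Rightarrow$(a) I would first perform a structural reduction using characters. Write $Y=G/H$ with $G$ a connected linear algebraic group (the disconnected case follows component-wise). The characters of $G$ that are trivial on $H$ form a finitely generated abelian group; choosing generators $\chi_1,\dots,\chi_r$ of its free part yields a surjective regular homomorphism $\Phi=(\chi_1,\dots,\chi_r)\colon G\to T:=(\C^*)^r$ with $H\subset\ker\Phi$, hence a regular fibre bundle $\bar\Phi\colon Y=G/H\to G/G_0=T$ whose fibre is $Y_0=G_0/H$ with $G_0=\ker\Phi$. By construction $G_0$ carries no nontrivial character that is trivial on $H$: such a character is trivial on the dense subgroup generated by the image of $H$ in the maximal torus quotient of $G_0$, hence trivial. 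So by Proposition \ref{prop:characters} and the flexibility results cited there the fibre $Y_0$ is algebraically elliptic, and $\bar\Phi$ concentrates all the non-elliptic behaviour of $Y$ into the torus factor $T$.

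Next I would treat the two factors. Since $K$ is a compact holomorphically convex set in the affine manifold $X$, the Oka--Weil theorem lets me approximate the holomorphic map $\bar\Phi\circ f\colon K\to T$ uniformly by a regular map $h\colon K\to T$: approximate each $\C^*$-component by a regular function and note that a uniformly close approximant is nonvanishing on $K$, hence regular on the Zariski-open complement of its zero divisor. For $h$ close to $\bar\Phi\circ f$ one has $h\simeq \bar\Phi\circ f$, so the homotopy hypothesis is in fact \emph{not} needed for the torus factor. Using that $\bar\Phi$ is a holomorphic fibre bundle and $h$ is uniformly close to $\bar\Phi\circ f$, I can lift the short homotopy from $\bar\Phi\circ f$ to $h$ to a map $\tilde f\simeq f$ with $\bar\Phi\circ\tilde f=h$; equivalently, $\tilde f$ is a holomorphic section over $K$ of the pullback bundle $W=h^{*}Y$, whose fibre is the algebraically elliptic $Y_0$. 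The given homotopy $f\simeq g$ transports (using that homotopic base maps have isomorphic pullbacks over the Stein compact $K$) to a homotopy of this section to the algebraic section coming from the regular map $g$. The fibred analogue of Theorem \ref{th:Asubelliptic} for sections of algebraic fibre bundles with algebraically elliptic fibres then approximates $\tilde f$ uniformly on $K$ by regular sections; pushing forward to $Y$ gives regular maps $K\to Y$ approximating $\tilde f$, and hence $f$, proving (a).

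The hard part, I expect, lies in the two matching problems hidden in the last paragraph. First, the structural claim that the fibre $Y_0$ is genuinely algebraically elliptic requires the group-theoretic input of Proposition \ref{prop:characters} together with some care about connected components and finite-index subgroups of $G_0$. Second, and more seriously, one must glue the cohomological torus approximation to the elliptic fibre approximation: the regular base map $h$ is only defined on a Zariski neighbourhood of $K$, so the bundle $W$ and the relevant algebraic Oka principle for its sections live over a quasi-affine neighbourhood, and one has to ensure that the homotopy class realised regularly in the torus factor is compatible with the algebraic section produced in the fibre factor, so that the two approximations assemble into a single regular map into $Y$.
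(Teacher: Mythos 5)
The paper itself does not prove Theorem \ref{th:BochnakKucharz}: being a survey, it quotes the result from Bochnak and Kucharz and records only that their proof closely follows that of Theorem \ref{th:Asubelliptic}, i.e.\ it pushes algebraicity stepwise along the homotopy by means of algebraic sprays, taking into account the character obstruction of Proposition \ref{prop:characters}. So your proposal must be judged on its own merits. Your implication (a)$\Rightarrow$(b) is fine, and your structural idea --- fibring $Y=G/H$ over the torus $T=(\C^*)^r$ dual to the characters trivial on $H$, disposing of the torus factor by Oka--Weil plus nonvanishing (where, as you correctly note, no homotopy hypothesis is needed), and confining hypothesis (b) to the character-free fibre direction --- is sound in spirit and close to the actual mechanism. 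Two of your supporting claims are under-justified but repairable: the assertion that $G_0=\ker\Phi$ admits no nontrivial character vanishing on $H$ is true, but your density argument is not a proof (one should argue through the abelianization $G/[G,G]$, using that $[G,G]$ has no characters), and since $G_0$ may be disconnected one must also verify the corresponding statement for $G_0^\circ$ and the groups $G_0^\circ\cap gHg^{-1}$ before Proposition \ref{prop:characters}, which requires a connected group, can be applied to the components of the fibre.

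The genuine gap is the final assembly step, which is exactly where the whole content of the theorem sits. To run any algebraic approximation theorem for sections of $W=h^*Y\to U$, you need as input an \emph{algebraic} section of this particular bundle over a Zariski neighbourhood of $K$, joined to $\tilde f$ by a homotopy of sections; this is the aHAP-type datum in Theorem \ref{th:aOka}. What hypothesis (b) provides is an algebraic section of a \emph{different} bundle, $(\bar\Phi\circ g)^*Y$. Your proposed transport --- ``homotopic base maps have isomorphic pullbacks over the Stein compact $K$'' --- yields only a topological (or, via Grauert's Oka principle, holomorphic) isomorphism between the two bundles; composing the regular section $g$ with such an isomorphism destroys regularity, so after the transport you hold merely a holomorphic section of $W$ in the correct homotopy class, which is no more information than $\tilde f$ itself already carries. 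The argument is therefore circular at precisely the point where (b) must be converted into algebraic data. Moreover, the ``fibred analogue of Theorem \ref{th:Asubelliptic}'' you invoke for sections of algebraic bundles with algebraically elliptic fibres is not established in this paper or in its cited references (note also that $Y\to T$, hence $W\to U$, need not be Zariski-locally trivial as an algebraic bundle), so it would itself require proof. Repairing both defects essentially forces one back to the Bochnak--Kucharz route described in the paper: subdivide a homotopy of holomorphic maps from $g$ to $f$ into small steps and, at each step, correct the torus component by multiplication with invertible regular functions and the fibre component by algebraic sprays coming from locally nilpotent derivations, exactly as in the proof of Theorem \ref{th:Asubelliptic}.
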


The following is an obvious corollary to Theorem \ref{th:BochnakKucharz}; 
see \cite[Corollaries 1.2 and 1.3]{BochnakKucharz2020}.
Note that every continuous map from a geometrically convex set is null-homotopic.

%
%  BOCHNAK AND KUCHARZ - COROLLARY
%
\begin{corollary}
For $X,\ K$, and $Y$ as in Theorem \ref{th:BochnakKucharz}, 
every null-homotopic holomorphic map
from $K$ to $Y$ can be approximated uniformly on $K$ by regular maps from $K$ to $Y$.
In particular, every holomorphic map from a compact convex set in $\C^n$ to $Y$
can be approximated by regular maps from $K$ to $Y$.
\end{corollary}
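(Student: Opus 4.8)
The plan is to deduce both assertions directly from Theorem \ref{th:BochnakKucharz}, whose implication (b)$\Rightarrow$(a) already contains all the analytic content. The only thing to verify in each case is that the given holomorphic map $f\colon K\to Y$ is homotopic to a regular map; the approximation by regular maps then follows for free.

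First I would observe that a constant map $K\to Y$ sending every point to a fixed $y_0\in Y$ is regular in the sense of the ambient definition: it is the restriction to $K$ of the regular morphism $X\to Y$ obtained by composing the structure morphism $X\to\Spec\,\C$ with the closed point $\Spec\,\C\to Y$ corresponding to $y_0$. Consequently, if $f\colon K\to Y$ is null-homotopic, meaning homotopic to a constant map, then $f$ is homotopic to a regular map from $K$ to $Y$. This is exactly condition (b) of Theorem \ref{th:BochnakKucharz}, and the equivalence (b)$\Leftrightarrow$(a) then yields that $f$ can be approximated uniformly on $K$ by regular maps $K\to Y$. This establishes the first assertion.

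For the ``in particular'' statement, let $K\subset\C^n$ be a compact geometrically convex set. Here $X=\C^n$ is an affine algebraic manifold in which $K$ is holomorphically convex (indeed polynomially convex, being compact and convex), so the hypotheses of Theorem \ref{th:BochnakKucharz} are satisfied. Since $K$ is convex it is contractible, whence every continuous map $f\colon K\to Y$ is null-homotopic. Applying the first assertion to such an $f$ gives the desired approximation by regular maps from $K$ to $Y$.

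I do not expect any genuine obstacle: the corollary is essentially formal once Theorem \ref{th:BochnakKucharz} is available. The only points deserving a word of justification are that constant maps qualify as regular in the sense used, and that a compact geometrically convex set in $\C^n$ is simultaneously holomorphically convex and contractible, so that every map out of it is null-homotopic.
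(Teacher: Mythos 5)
Your proposal is correct and matches the paper's own (very brief) argument: the paper likewise notes that a null-homotopic map is homotopic to a constant, hence regular, map, so Theorem \ref{th:BochnakKucharz}\,(b)$\Rightarrow$(a) applies, and that every continuous map from a geometrically convex set is null-homotopic. The verifications you flag (constant maps are regular; a compact convex set in $\C^n$ is polynomially convex and contractible) are exactly the points the paper treats as obvious.
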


By Proposition \ref{prop:characters}, a homogeneous algebraic manifold $Y$ 
for a linear algebraic group $G$ need not be algebraically subelliptic (an example is $\C^*$), 
and in such case Theorem \ref{th:Asubelliptic} fails. As pointed out in 
\cite[Example 1.5]{BochnakKucharz2020}, 
Theorem \ref{th:BochnakKucharz} gives an optimal weaker conclusion under a weaker assumption. 
The proof of Theorem \ref{th:BochnakKucharz} in \cite{BochnakKucharz2020} closely follows 
that of Theorem \ref{th:Asubelliptic}, given in \cite{Forstneric2006AJM}, taking into account 
the issue described above.
\end{remark}

Algebraically (sub-)elliptic manifolds appear in many applications, some of which are 
mentioned in \cite{Forstneric2017E}. A further list of properties of such manifolds, and relations
with other properties such as (local) algebraic flexibility in the sense
of Arzhantsev et al.\ \cite{ArzhantsevFlennerKalimanKutzschebauchZaidenberg2013DMJ},
can be found in \cite[Remark 2]{LarussonTruong2019}. 
L\'arusson and Truong gave the following new examples in this class; previously it
was known that such manifolds are Oka (see \cite[Theorem 5.6.12]{Forstneric2017E}).

%
%	TORIC VARIETIES
%
\begin{theorem}[Theorem 3 in \cite{LarussonTruong2019}] \label{th:toric-aOka}
Every smooth nondegenerate toric variety is locally flexible  
and hence algebraically subelliptic (as well as algebraically elliptic
by Theorem \ref{th:wae}).
\end{theorem}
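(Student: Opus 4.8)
The statement has two parts, and I would dispatch the implication \emph{locally flexible $\Rightarrow$ algebraically subelliptic} first, since it is formal. Suppose $X$ is covered by Zariski open subsets $U_i$, each flexible in the sense that at every point of $U_i$ the tangent space is spanned by values of LNDs of $U_i$. By quasi-compactness one can choose finitely many LNDs of $U_i$ whose flows, composed, give a single dominating algebraic spray over $U_i$; this is exactly the content of \cite[Proposition 5.6.22 (c)]{Forstneric2017E}, so each $U_i$ is algebraically elliptic, in particular algebraically subelliptic. Since algebraic subellipticity is a Zariski-local property (Gromov's localization, \cite[3.5.B, 3.5.C]{Gromov1989} and \cite[Proposition 6.4.2]{Forstneric2017E}), $X$ itself is algebraically subelliptic. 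Thus the whole problem reduces to producing a flexible Zariski open neighbourhood of each point of $X$.

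For local flexibility I would work with the fan $\Sigma\subset N_\R\cong\R^n$ of $X$, with primitive ray generators $v_1,\dots,v_d$; nondegeneracy means precisely that these span $N_\R$, equivalently that $X$ has no torus factor $\C^*$ (a degenerate example such as $\CP^1\times\C^*$ is excluded exactly because $\C^*$ carries a nontrivial character, cf.\ Proposition \ref{prop:characters}). The LNDs come from Demazure's one-parameter unipotent subgroups of $\Aut(X)$: to each \emph{root} $m\in M=\Hom(N,\Z)$ with $\langle m,v_i\rangle=-1$ for a single distinguished ray $v_i$ and $\langle m,v_j\rangle\ge 0$ for the others, there is attached a complete, locally nilpotent field $\delta_m$ acting in the $v_i$-direction. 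The plan is to show these global fields span the tangent space at every point. It is cleanest to compute in Cox coordinates: write $X=(\C^d\setminus Z)/G$, where $G=\Hom(\Cl X,\C^*)$ acts diagonally, $Z$ is the irrelevant locus, and the quotient is geometric (the action is free) because $X$ is smooth. Under the quotient map $q$, the field $\delta_m$ lifts to the monomial derivation $x^\alpha\partial_{x_i}$ with $\alpha_j=\langle m,v_j\rangle$ for $j\ne i$ and $\alpha_i=0$; at a lift $\tilde x$ of $x$ it is a nonzero multiple of $\partial_{x_i}$ exactly when $x^\alpha(\tilde x)\ne 0$, i.e.\ when $\langle m,v_j\rangle=0$ for every remaining ray $v_j$ vanishing at $\tilde x$.

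It then remains to verify spanning at every point. A point $x$ lies on the orbit $O_\sigma$ of a cone $\sigma$, and $T_xX$ splits into the $\dim\sigma$ directions normal to the orbit and the $n-\dim\sigma$ orbit directions $\cong (N/N_\sigma)\otimes\C$. The normal directions are supplied by roots whose distinguished ray is a ray of $\sigma$ (for a smooth cone $\sigma=\langle v_1,\dots,v_k\rangle$ one starts from $m=-v_i^*$), while the orbit directions must be supplied by roots whose distinguished ray $v_i$ lies \emph{outside} $\sigma$; here nondegeneracy is exactly what makes the images $\bar v_i\in N/N_\sigma$ of the rays outside $\sigma$ span the orbit directions. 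For complete fans this is immediate: every maximal cone is $n$-dimensional, the charts $U_\sigma$ are already $\cong\C^n$, hence flexible, and they cover $X$.

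The main obstacle is the general, non-complete case, where maximal cones may have dimension $k<n$ and the charts $U_\sigma\cong\C^k\times(\C^*)^{n-k}$ are \emph{not} flexible as standalone affine varieties. To realize the orbit directions at such a point one must produce, for a spanning family of rays $v_i\notin\sigma$, a root $m$ that is simultaneously \emph{distinguished at $v_i$}, \emph{vanishes on the rays of $\sigma$}, and is \emph{nonnegative on all other rays} — three competing linear constraints whose joint solvability is the combinatorial heart of the argument, and the step where smoothness (each cone being part of a lattice basis) and nondegeneracy must be used together. I expect this to be where the real work lies. If global spanning at some point resists this analysis, the fallback is to exhibit instead a flexible Zariski open toric neighbourhood of $x$ and conclude via the localization theorem of the first paragraph; this is presumably why the result is phrased as \emph{local} rather than global flexibility.
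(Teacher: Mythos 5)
\noindent First, a framing remark: the survey states this theorem with a citation and no proof, so your proposal can only be compared with the original argument of L\'arusson and Truong \cite{LarussonTruong2019}; that argument is genuinely different from yours, and the difference is exactly where your proposal breaks down.

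Your first paragraph (local flexibility $\Rightarrow$ algebraic subellipticity) and your treatment of the complete case are correct, but the root-theoretic core of the plan is unsound, not merely unfinished. First, for a non-complete fan $\Sigma$ a Demazure root defined, as you define it, by inequalities on the rays alone need \emph{not} give a vector field on $X(\Sigma)$ at all: $\delta_m$ is complete on $X(\Sigma)$ only under Demazure's additional condition that whenever $\langle m,\cdot\rangle$ vanishes on all rays of a cone $\tau\in\Sigma$ not containing the distinguished ray $v_i$, the cone spanned by $\tau$ and $v_i$ again belongs to $\Sigma$. This already bites for $\C^2\setminus\{0\}$, the fan with cones $\{0\}$, $\R_{\ge 0}e_1$, $\R_{\ge 0}e_2$: at the point $(0,1)$ of the orbit of $e_1$, your three constraints are satisfied by $m=(0,-1)$, but the associated field is $\partial_{x_2}$, whose flow does not preserve $\C^2\setminus\{0\}$. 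In fact the torus-normalized LNDs of $\C^2\setminus\{0\}$ are exactly $x_2^b\partial_{x_1}$ and $x_1^a\partial_{x_2}$ with $a,b\ge 1$, and at $(0,1)$ they span only the $\partial_{x_1}$-direction; the missing direction is reached only by \emph{non-normalized} LNDs, i.e.\ conjugates of roots such as $(x_2-x_1)(\partial_{x_1}+\partial_{x_2})$. Second, the situation can be globally hopeless: take the fan in $\R^2$ whose cones are the origin and the six rays $\pm e_1,\pm e_2,\pm(e_1+e_2)$, with no two-dimensional cones; its toric variety $X$ is the degree-six del Pezzo surface $Y$ minus its six torus-fixed points, smooth and nondegenerate. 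The pairings of any $m\ne 0$ with these rays occur in opposite pairs, so at least two of them are negative and $\Sigma$ has \emph{no} roots whatsoever; moreover every automorphism of $X$ extends to $Y$ (a birational self-map of a smooth projective surface contracting no curve is biregular), and $\Aut^0(Y)$ is the torus by Demazure's theorem, so $X$ carries no nontrivial LND at all. Thus global spanning is impossible, and your ``fallback'' --- exhibit a flexible toric Zariski-open neighbourhood --- is not a fallback but the entire content of the theorem, which you leave unproved.

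What the cited proof does is the opposite of retaining all rays: it discards them. Given $\sigma\in\Sigma$ with primitive generators $e_1,\dots,e_k$, one chooses rays $w_1,\dots,w_{n-k}$ of $\Sigma$ whose images form a basis of $N_\R/\mathrm{span}(\sigma)$ (possible by nondegeneracy) and takes the toric open subset $Y_\sigma\subset X$ of the subfan consisting of the faces of $\sigma$, the rays $w_l$, and $\{0\}$. Then $U_\sigma\subset Y_\sigma$, so these sets cover $X$, and dropping the remaining rays deletes precisely the nonnegativity constraints that killed your roots: in the del Pezzo example this produces the subfans $\{0,\R_{\ge0}e_1,\R_{\ge0}e_2\}$, etc., i.e.\ copies of $\C^2\setminus\{0\}$, which do cover $X$. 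Flexibility of $Y_\sigma$ is then \emph{not} proved with roots of the subfan --- that fails as above, and there is a further integrality obstruction: the image of $w_l$ in the quotient of $N$ by the sublattice spanned by $\sigma$ need not be primitive (fan with rays $(1,0)$ and $(1,2)$, the smooth locus of the quadric cone $\{uv=w^2\}$). Instead, $Y_\sigma$ is identified with the complement of an invariant closed subset of codimension $\ge 2$ in the smooth locus of its affine closure $Z=\Spec\Oscr(Y_\sigma)$, the nondegenerate (generally singular) affine toric variety of the full-dimensional cone spanned by $\sigma$ and the $w_l$; one then invokes the theorem of Arzhantsev, Kuyumzhiyan, and Zaidenberg that smooth loci of nondegenerate affine toric varieties are flexible, together with the theorem of Arzhantsev et al.\ \cite{ArzhantsevFlennerKalimanKutzschebauchZaidenberg2013DMJ} that flexibility of a quasi-affine variety survives removal of closed subsets of codimension $\ge 2$. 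Both inputs manufacture conjugated, non-torus-normalized LNDs, which is exactly the mechanism your framework lacks.
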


Kusakabe proved in \cite[Theorem 1.2]{Kusakabe2021JGA}
the jet transversality theorem for regular algebraic maps from affine algebraic manifolds to a 
certain subclass of algebraically subelliptic manifolds. 
A local version of the transversality theorem for algebraic 
maps to all algebraically subelliptic manifolds was proved in 2006 
(see \cite[Theorem 4.3]{Forstneric2006AJM} and \cite[Theorem 8.8.6]{Forstneric2017E});
here, {\em local} means that one can achieve the transversality condition on any compact subset 
of the source manifold.  This suffices for many applications, see \cite[Sect.\ 9.14]{Forstneric2017E}.
By using the algebraic jet transversality theorem, Kusakabe extended some of these results
to the algebraic setting. Together with the results from his 
recent preprint \cite{Kusakabe2022surjective}, Kusakabe also found new applications 
to the construction of surjective strongly dominating morphisms
$\C^N\to Y$ onto algebraically subelliptic manifold. Let us recall this story.

It was shown by Forstneri\v c in 2017 that every Oka manifold $Y$ admits a holomorphic map
$f:\C^n\to Y$ with $n={\dim Y}$ such that $f(\C^n\setminus \mathrm{br}(f))=Y$,
where $\mathrm{br}(f)$ is the branch locus of $f$
\cite[Theorem 1.1]{Forstneric2017Indam}, and if $Y$ is a compact subelliptic manifold
then there is a regular algebraic map with this property 
\cite[Theorem 1.6]{Forstneric2017Indam}. He asked whether 
the latter result also holds if $Y$ is not compact. 
Arzhantsev proved \cite[Proposition 2]{Arzhantsev2022} (2022) that every very 
flexible variety is the image of an affine space by an algebraic morphism.
Kusakabe obtained the following more precise result for a wider class of manifolds
\cite[Theorem 1.2]{Kusakabe2022surjective}.

%
%  SURJECTIVE MORPHISMS FROM AFFINE SPACES
%
\begin{theorem}\label{th:surjectivemorphisms}
For every algebraically subelliptic manifold $Y$ there is a regular algebraic map
$f:\C^{\dim Y+1}\to Y$ such that $f(\C^{\dim Y+1} \setminus \mathrm{br}(f))=Y$.
\end{theorem}

It remains an open question whether every algebraically subelliptic manifold $Y$
is the image of a surjective morphism $\C^{\dim Y}\to Y$. 

An application of Theorem \ref{th:surjectivemorphisms}, 
and of \cite[Theorem 1]{Arzhantsev2022}, gives the following characterization 
of open images of morphisms between affine spaces. 

\begin{corollary}[Corollary 1.4 in \cite{Kusakabe2022surjective}]\label{cor:imagesofmorphisms} 
For a Zariski open subset $\Omega$ of $\C^n$, the following conditions are equivalent:
\begin{enumerate}
\item $\Omega$ is the image of a morphism from an affine space.
\item The complement $\C^n\setminus \Omega$ is a subvariety of codimension at least two.
\end{enumerate}
\end{corollary}

This clearly fails for entire maps $\C^n\to\C^n$ whose images may omit
a hypersurface.

\subsection{Algebraic Oka properties}
%
%   LARUSSON AND TRUONG 
%
The following algebraic analogues of basic Oka properties 
(see \cite[Sect.\ 5.15]{Forstneric2017E} for the latter) were 
studied by L\'arusson and Truong  \cite{LarussonTruong2019} in 2019.

%
%  ALGEBRAIC OKA PROPERTIES
%
\begin{definition}\label{def:aproperties} 
Let $Y$ be an algebraic manifold.
\begin{enumerate}[\rm (a)]
\item $Y$ enjoys the {\em (basic) algebraic Oka property} {\rm (aBOP)} if 
every continuous map $X\to Y$ from an affine algebraic manifold $X$ 
is homotopic to an algebraic map.
\item $Y$ enjoys the {\em algebraic approximation property} {\rm (aAP)}
if every continuous map $X\to Y$ from an affine algebraic manifold,
which is holomorphic on a neighbourhood of a compact holomorphically convex subset $K$ of $X$, 
can be approximated uniformly on $K$ by algebraic maps $X\to Y$.
\item $Y$ enjoys the {\em algebraic interpolation property} {\rm (aIP)} 
if every algebraic map $X'\to Y$ from an algebraic subvariety $X'$ of an affine algebraic manifold
$X$ has an algebraic extension $X\to Y$ provided that it has a continuous extension.
\end{enumerate}
\end{definition}

Note that properties aAP and aIP are algebraic versions of the corresponding properties 
BOPA and BOPI in the holomorphic category; however, in aAP and aIP we do not ask for 
the existence of homotopies connecting the initial map to the final map.

We have already mentioned examples of algebraic manifolds which are Oka 
but aBOP fails (see \cite[Examples 6.15.7, 6.15.8]{Forstneric2017E}). 
The following result of L\'arusson and Truong  \cite[Theorem 2]{LarussonTruong2019}
shows in particular that no compact algebraic manifold satisfies conditions aBOP, aAP, and aIP.
Hence, it is natural to look at affine algebraic manifolds in these questions. 

%
%   P1 IS BAD FOR A-OKA
%
\begin{theorem}\label{th:P1isbad}
If $Y$ is an algebraic manifold which contains a rational curve %$\CP^1$ 
or is compact, then $Y$ does not have any of the properties {\rm aBOP, aAP, aIP}.
\end{theorem}

Although the proof of the general case requires nontrivial results from algebraic geometry, the
basic idea for the case when $Y$ is a projective manifold is not difficult to explain. 
First of all, it is easily seen that each of the properties aIP and aBOP implies the existence
of a nontrivial rational curve $g:\CP^1\to Y$. Assuming now $Y$ that admits such a curve, 
we will show that $Y$ does not satisfy aIP; a similar argument excludes the other properties.
The basic case to consider is $Y=\CP^1$. 
Let $S\subset \C^2$ be an algebraic curve whose projective
closure is not rational. Then, $S$ admits an algebraic line bundle $L\to S$ all of whose nonzero tensor
powers are algebraically nontrivial, and every such bundle is the pullback of the universal bundle 
$U\to\CP^1$ by an algebraic map $f:S\to\CP^1$. Since $S$ is an open Riemann surface, 
$f$ is null-homotopic and hence it extends to a continuous map $\C^2\to\CP^1$.
If $\CP^1$ satisfies aIP then $f$ also extends to a regular map $\C^2\to\CP^1$, and hence
the line bundle $f^*U\to \C^2$ is algebraically trivial by the Quillen--Suslin theorem.
This contradicts the fact that the restriction $L= f^*U|_S \to S$ 
is algebraically nontrivial, so $\CP^1$ does not satisfy aIP. 
In the general case when $Y$ is a projective manifold and $g:\CP^1\to Y$ 
is a nontrivial rational curve, taking an ample line bundle $E\to Y$, the pullback 
$g^*E\to \CP^1$ is algebraically nontrivial, which shows as before that the map 
$g\circ f:S\to Y$ does not extend to an algebraic map $\C^2\to Y$; 
hence $Y$ does not satisfy aIP. For a general compact algebraic manifold $Y$, 
one uses finitely many blowups in order to obtain a projective manifold.

%
%   REMARK ON aOKA
%
\begin{remark}
L\'arusson and Truong proposed in \cite{LarussonTruong2019}
to call an algebraic manifold satisfying the equivalent conditions in Corollary \ref{cor:aOka} an 
{\em algebraically Oka manifold}, aOka. My reservation to this choice of term is that 
algebraically subelliptic manifolds do not abide by the philosophy that 
Oka properties refer to the 
existence of solutions of analytic or algebraic problems in the absence of topological 
obstructions. Indeed, Theorem \ref{th:P1isbad} shows that most such manifolds do not have 
absolute Oka properties such as aBOP. Furthermore, in light of Theorem \ref{th:wae} 
we now know that algebraically subelliptic manifolds coincide with algebraically
elliptic manifolds, a standard notion since Gromov's paper \cite{Gromov1989}.

This being said, we do not know a single example of an affine algebraic
manifold with nontrivial topology for which aBOP is known to hold. 
We propose the following test case.

\begin{problem}
Does $\C^2\setminus \{0\}$ enjoy aBOP?
\end{problem}

The first nontrivial homotopy group is $\pi_3(\C^2\setminus \{0\})=\Z$, a generator being
the unit sphere $S^3\subset \C^2\cong\R^4$. The linear projection $A:\C^4\to \C^2$ given by
$A(z_1,z_2,z_3,z_4)=(z_1+\imath z_2,z_3+\imath z_4)$ maps the affine quadric 
$X=\{z_1^2+z_2^2+z_3^2+z_4^2=1\}$ to $\C^2\setminus \{0\}$, and its restriction 
to the $3$-sphere $X\cap \R^4$ of real points in $X$ 
is the identity map under the standard identification $\R^4\cong \C^2$.
Note that $X\cap \R^4$ is a deformation retract of $X$, hence a generator of $\pi_3(X)=\Z$.
Thus, the algebraic map $A:X\to \C^2\setminus \{0\}$ induces an isomorphism 
$\pi_3(X)\stackrel{\cong}{\longrightarrow} \pi_3(\C^2\setminus \{0\})=\Z$, so 
the generator of $\pi_3(\C^2\setminus \{0\})$ is realized by an algebraic map. What about 
other nontrivial maps $S^n\to \C^2\setminus \{0\}$ from spheres of dimensions $n\ge 3$?

The analogous argument applies to $\C^n\setminus \{0\}$ for any $n\ge 2$: the generator
of the lowest nontrivial homotopy group $\pi_{2n-1}(\C^n\setminus \{0\})=\Z$ is represented by 
an algebraic map $X\to \C^n\setminus \{0\}$ 
from the complex $(2n-1)$-sphere $X=\{\sum_{i=1}^{2n}z_i^2=1\} \subset \C^{2n}$. 
\end{remark}

%
%   
%   BLOWUPS
%
\subsection{Oka properties of blowups}
On the theme of Oka properties of blowups of algebraic manifolds, 
we mention the following recent result of Kusakabe \cite[Corollary 4.3]{Kusakabe2020IJM}. 

%
%   KUSAKABE - BLOWUPS
%
\begin{theorem}\label{th:blowup}
Let $Y$ be an algebraic manifold and $A\subset Y$ be a closed algebraic submanifold
of codimension at least two. If $Y$ enjoys {\rm aCAP} 
(in particular, if $Y$ is algebraically subelliptic), then the blowup $\Bl_A Y$ 
also enjoys {\rm aCAP}, and hence is an Oka manifold.
\end{theorem}

Note that in Theorem \ref{th:blowup} it is not claimed that $\Bl_A Y$ is 
algebraically subelliptic even if $Y$ is such. Kusakabe proved this result by reducing it to 
\cite[Theorem 1]{LarussonTruong2017} by L\'arusson and Truong, which 
pertains to algebraic manifolds covered by Zariski open sets
equivalent to complements of codimension $\ge 2$ algebraic subvarieties in affine spaces
(see also \cite[Theorem 6.4.8]{Forstneric2017E}).
Note that Theorem \ref{th:blowup} subsumes the result of
Kaliman et al.\ \cite{KalimanKutzschebauchTruong2018}.

%Another result in this direction is due to Kaliman et al.\ \cite{KalimanKutzschebauchTruong2018}.
%
\begin{comment}
In the latter paper the authors showed that the blowup of a locally stably flexible algebraic
manifold (a notion introduced by them) at a smooth algebraic submanifold 
is subelliptic, and hence Oka. This class includes class $\Acal$ and is stable under 
removal of thin algebraic subvarieties.
\end{comment}

The following result of Kusakabe \cite[Corollary 1.5]{Kusakabe2021IUMJ} is a
consequence of \cite[Corollaries 5.6.18 and 6.4.13]{Forstneric2017E} and of the localization
theorem (see Theorem \ref{th:localization}).

%
%   BLOWUPS OF TORI AT FINITELY MANY POINTS
%
\begin{theorem} Let $Y$ be a complex manifold of dimension $n\ge 2$ 
which is Zariski locally isomorphic to $(\C^*)^n$. Then, for any finite subset $A\subset Y$, the
complement $Y\setminus A$ and the blowup $\Bl_A Y$ are Oka. This holds in particular for any
smooth toric variety $Y$.
\end{theorem}

Recent results concerning the Oka property of blowups of certain complex linear algebraic groups
along tame discrete subsets, and complements of such sets, are due to Winkelmann 
\cite{Winkelmann2021TG} (2022). We mention the following one. 

%
%   WINKELMANN: COMPLEMENTS OF TAME SETS
%   \cite[Theorem 20]{Winkelmann2021TG}.
%
\begin{theorem}[Theorem 20 in \cite{Winkelmann2021TG}] 
\label{th:WinkelmannTh20}
Let $G$ be a complex linear algebraic group, and let $D$ be a tame
discrete subset of $G$. Then $G\setminus D$ is an Oka manifold. Furthermore, there exists an
infinite discrete subset $D'$ of $G$ such that $G\setminus D'$ is not an Oka manifold.
\end{theorem}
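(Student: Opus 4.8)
The plan is to prove the two assertions by entirely different means: the positive part via Kusakabe's complement theorem together with the localization principle, and the negative part via the failure of dominability by Euclidean spaces. In both parts I would first reduce to the case where $G$ is connected with $n:=\dim G\ge 2$. Indeed, the distinct connected components of $G$ are the cosets $gG^0$ of the identity component $G^0$, left translation $L_{g^{-1}}$ is a biholomorphism carrying $gG^0$ onto $G^0$, and $G\setminus D$ is the disjoint union of the sets $gG^0\setminus D\cong G^0\setminus L_{g^{-1}}(D)$; since a disjoint union of Oka manifolds is Oka (every compact convex set is connected, hence maps into a single component), it suffices to work on $G^0$. The positive assertion requires $n\ge 2$: when $n=1$ one has $G^0\in\{\C,\C^*\}$, and deleting any infinite discrete set yields a hyperbolic, hence non-Oka, Riemann surface (which incidentally makes the negative assertion automatic in dimension one).

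For the positive assertion I would exploit the density property (Definition \ref{def:density}). By the work of Kaliman and Kutzschebauch, every connected linear algebraic group of dimension $\ge 2$ that is not an algebraic torus carries the algebraic, hence holomorphic, density property. For such $G$, the defining feature of a tame discrete set is that, after a holomorphic automorphism of $G$, it is placed in a standard position in which it is closed, $\Oscr(G)$-convex, and meets the growth/projection condition modelled on \eqref{eq:proj}; I would then invoke the unbounded $\Oscr$-convex form of Theorem \ref{th:complement}, obtained by the mechanism of Theorem \ref{th:unboundedsets}, to conclude that $G\setminus D$ is Oka, the Oka property being a biholomorphic invariant. The genuinely delicate case is the torus $G=(\C^*)^n$, which by Proposition \ref{prop:characters} is not even algebraically subelliptic and lacks the density property; here one must instead cover $G$ by finitely many Zariski-open Oka domains whose union is $G\setminus D$ and apply the localization theorem (Theorem \ref{th:localization}). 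I expect the torus case, and the verification that tameness really yields an $\Oscr$-convex position in each chart, to be the main obstacle in this direction.

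For the negative assertion I would abandon density methods and instead produce a rigidity obstruction. Every Oka manifold $Y$ is dominable by a Euclidean space: applying Definition \ref{def:Oka} to a local chart at a point $y\in Y$ yields an entire map $\C^n\to Y$ with $y$ in its image and invertible differential there. Non-dominability by $\C^n$ is therefore a rigidity property opposite to Oka, and it holds in particular whenever $Y$ is measure (Kobayashi--Eisenman) hyperbolic. Thus I would construct in $G$ an infinite, necessarily non-tame, discrete set $D'$, distributed so densely at all scales that $G\setminus D'$ becomes measure hyperbolic, in the sense that every holomorphic map from the unit polydisc into $G\setminus D'$ has uniformly bounded Jacobian at its centre; such a manifold cannot be dominated by $\C^n$, and hence $G\setminus D'$ is not Oka.

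The principal difficulty of the negative direction is the quantitative construction of $D'$: one must place the points, and control their mutual distances, by a Schwarz-lemma and volume-comparison argument so that the infinitesimal Eisenman measure is positive everywhere, while keeping $D'$ locally finite in $G$. It is essential to aim for \emph{measure} hyperbolicity rather than Kobayashi or Brody hyperbolicity, since a generic complex line meets the countable set $D'$ in the empty set and thus survives as a nonconstant entire curve in $G\setminus D'$; only a volume obstruction, which disregards lower-dimensional curves, can succeed.
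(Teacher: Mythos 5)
First, a point of reference: the survey contains no proof of this theorem at all; it is quoted verbatim from Winkelmann \cite{Winkelmann2020TG}. So your reconstruction can only be judged on its own merits, and on those merits the positive half has concrete gaps. (i) The theorems you invoke do not apply: an infinite discrete set $D$ is never compact, so Theorem \ref{th:complement} is unusable, and the only unbounded-set result available, Theorem \ref{th:unboundedsets}, is a statement about $\C^n$ only — its proof runs through the global linear projections $\C^n\to\C^{n-2}$ and Proposition \ref{prop:projections}, which have no counterpart on a general Stein group $G$ with the density property. The ``unbounded $\Oscr(G)$-convex form of Theorem \ref{th:complement}'' is therefore not a cited fact but a theorem you would have to prove. (Even in $\C^2$ the model tame set $\N\times\{0\}$ violates \eqref{eq:proj}, which for $n=2$ forces boundedness; the route actually recorded in the survey, just after Theorem \ref{th:KusakabePAMS}, is that complements of tame discrete sets in $\C^n$ are \emph{elliptic}, via explicit sprays.) (ii) You take as ``the defining feature'' of tameness that some $\phi\in\Aut(G)$ places $D$ in an $\Oscr(G)$-convex position with controlled growth. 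That is not Winkelmann's definition; it is a normal-form statement whose proof is a large part of \cite{Winkelmann2020TG}, and because you never pin the definition down you also cannot reconcile your claim that the positive assertion requires $\dim G\ge 2$ with the unrestricted statement of the theorem. (iii) You concede the torus case outright, and it is not a corner case: $(\C^*)^n$ is a linear algebraic group for which the density-property route genuinely fails (cf.\ Proposition \ref{prop:characters}). A more promising idea there is that covering maps are Oka maps, so by Theorem \ref{th:updown} one may pass to the $2\pi i\Z^n$-periodic lift of $D$ in $\C^n$ — but that periodic discrete set is again outside the scope of the results you cite.

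For the negative half your obstruction is the right one — only a volume-type obstruction can work, since $G\setminus D'$ always contains nonconstant entire curves, and Oka does imply dominability (though your derivation needs $K$ to be a small closed ball plus Cauchy estimates, not a single point: Definition \ref{def:Oka} with $K=\{0\}$ controls values, not derivatives). But the construction of $D'$ is the entire mathematical content of this half, and it is missing. The tension you acknowledge is exactly the difficulty: discreteness forces local finiteness, so the required density can only accumulate at infinity, and one must then prove that any entire map with nondegenerate Jacobian into the complement is forced to transport a definite amount of volume into those far regions. In $\C^n$ this is precisely Rosay and Rudin's construction (in their 1988 paper on holomorphic maps from $\C^n$ to $\C^n$) of a discrete set $E$ such that every holomorphic map $\C^n\to\C^n\setminus E$ has identically vanishing Jacobian; adapting that argument to $G$ is what Winkelmann does, and nothing in your outline substitutes for it. Note also that full measure hyperbolicity is more than you need and is harder to get: positivity of the Eisenman measure at a point must control even maps whose images escape into regions where $D'$ is sparse, whereas non-dominability at a single point already contradicts the Oka property. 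In sum, your architecture (reduction to connected $G$, density property for non-tori, localization, volume obstruction) is sensible and roughly parallels what must happen, but each half rests on an unproved key theorem, so the proposal has genuine gaps.
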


%
%   WINKELMANN: BLOWING UP TAME SETS \cite[Proposition 16]{Winkelmann2021TG}.
%
\begin{proposition} 
[Proposition 16 in \cite{Winkelmann2021TG}]
\label{prop:WinkelmannP16} 
If $D$ is a closed tame discrete subset in a character-free complex linear algebraic group $G$, 
then the blowup $\Bl_D G$ is an Oka manifold.
\end{proposition}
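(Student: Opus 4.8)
The plan is to deduce the statement from three results already available: Kusakabe's localization theorem (Theorem \ref{th:localization}), Kusakabe's blow-up theorem (Theorem \ref{th:blow-up}), and Winkelmann's own complement theorem (Theorem \ref{th:WinkelmannTh20}), with the density property of $G$ as the glue. First I would record the algebraic input coming from character-freeness. A connected character-free linear algebraic group is algebraically flexible, exactly as in the implication (a)$\Rightarrow$(b) of Proposition \ref{prop:characters} applied to $G=G/\{e\}$; hence $G$ is algebraically subelliptic and in particular enjoys aCAP. By Theorem \ref{th:blow-up} the blow-up $\Bl_F G$ along any finite set $F\subset G$ (a $0$-dimensional closed algebraic submanifold) is then an Oka manifold. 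This disposes of the case of finite $D$ and supplies the building blocks for the general case.

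For an infinite tame $D$, write $\pi\colon\Bl_D G\to G$ for the blow-up and $E=\bigcup_{p\in D}E_p$ for the union of its pairwise disjoint exceptional divisors. I would cover $\Bl_D G$ by the Zariski open sets
\[
	W_p:=\Bl_D G\setminus\bigcup_{q\in D\setminus\{p\}}E_q,\qquad p\in D.
\]
These do cover $\Bl_D G$: a point lying on no exceptional divisor belongs to every $W_p$, while a point of $E_{p_0}$ belongs to $W_{p_0}$. By construction $W_p$ is biholomorphic to the one-point blow-up $\Bl_p\bigl(G\setminus(D\setminus\{p\})\bigr)$, equivalently to $(\Bl_p G)\setminus(D\setminus\{p\})$, where $D\setminus\{p\}$ is regarded as a tame discrete subset of the group part $(\Bl_p G)\setminus E_p=G\setminus\{p\}$. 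By Theorem \ref{th:localization} it then suffices to prove that each $W_p$ is Oka.

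To treat a single $W_p$ I would run a second localization inside it. The Zariski open domain $W_p\setminus E_p=G\setminus D$ is Oka by Theorem \ref{th:WinkelmannTh20} and covers everything off $E_p$, so it remains to supply a Zariski open Oka neighbourhood of $E_p$. Here the density property of $G$ (valid for character-free groups) enters through the Anders\'en--Lempert theorem with interpolation on tame discrete sets: one produces $\psi\in\Aut(G)$ that equals the identity near $p$ and carries $D\setminus\{p\}$ into an algebraic hypersurface $H$ with $p\notin H$. Lifting $\psi$ to an automorphism of $\Bl_p G$ fixing $E_p$, a neighbourhood of $E_p$ in $W_p$ is identified with the one-point blow-up $\Bl_p(G\setminus H)$, the offending discrete set having been swept onto $H$ and thereby removed from the chart.

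The hard part will be this last neighbourhood. Although $G\setminus H$ carries the density property, it is not algebraically subelliptic (for $G=\C^n$ and $H$ a hyperplane it is $\C^{n-1}\times\C^*$, whose $\C^*$ factor destroys aCAP), so Theorem \ref{th:blow-up} does not apply to $\Bl_p(G\setminus H)$; and since $\Bl_p(G\setminus H)$ contains the compact divisor $E_p$ it is not Stein, so Theorem \ref{th:complement} does not apply either. What is really needed is a blow-up analogue of Theorem \ref{th:WinkelmannTh20}, and I expect it to be proved not by restriction but by the analytic scheme of Kusakabe's proof of Theorem \ref{th:complement}: verify condition \CEll1 directly on $\Bl_p(G\setminus H)$ by constructing a holomorphically varying family of Fatou--Bieberbach domains (equivalently, dominating sprays) from the complete vector fields furnished by the density property of $G$, lifted across the exceptional divisor. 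The reason this analytic step cannot be bypassed is that a tame discrete set cannot be Zariski-isolated: after normalising $D$ to the standard set on a complex line $L$, any algebraic hypersurface meeting $D$ in infinitely many points contains all of $L$, so no Zariski chart of $G$ meets $D$ in a nonempty finite set, and the reduction to finite blow-ups must be effected by the density-property automorphisms of $G$ rather than by passing to subcharts.
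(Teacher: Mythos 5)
You should first be aware that the survey contains no proof of this statement: it is quoted verbatim as Proposition 16 of Winkelmann's paper \cite{Winkelmann2020TG}, so there is no in-paper argument to compare yours against, and your proposal has to stand on its own. Much of your scaffolding does stand: character-freeness gives flexibility, hence algebraic subellipticity and aCAP of $G$, so Theorem \ref{th:blow-up} settles finite $D$; the covering of $\Bl_D G$ by the sets $W_p\cong(\Bl_p G)\setminus(D\setminus\{p\})$ is admissible for Theorem \ref{th:localization}, whose notion of Zariski open means complement of a closed complex (analytic) subvariety, and your observation that algebraic Zariski opens cannot isolate points of $D$ is correct and is exactly why the analytic version is needed; splitting $W_p$ into $G\setminus D$ (Oka by Theorem \ref{th:WinkelmannTh20}) plus a Zariski neighbourhood of $E_p$ is a correct further reduction.

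The genuine gap is the last step, which you have effectively conceded rather than proved: the Oka property of $\Bl_p(G\setminus H)$ is the only place where the exceptional divisor must actually be handled, and it is exactly the crux of the whole proposition. Your sketch for it --- verify condition \CEll1 by lifting Fatou--Bieberbach families or sprays ``across the exceptional divisor'' --- cannot work as stated, because nothing lifts across an exceptional divisor: a holomorphic vector field, flow, automorphism or spray on $G$ (or on $G\setminus H$) lifts to $\Bl_p G$ only when it fixes $p$, and then its lift preserves $E_p$, so every object obtained by lifting from downstairs is tangent to $E_p$ and can never dominate in the direction transverse to $E_p$, which is precisely what \CEll1 demands along $E_p$. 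Two further claims inside this step are also unjustified: that $G\setminus H$ has the density property for a general algebraic hypersurface $H$ in a general character-free $G$, and that the Anders\'en--Lempert automorphism $\psi$ can be made the identity near $p$ while sweeping the infinite set $D\setminus\{p\}$ into $H$. For $G=\C^n$ the gap can in fact be closed by elementary means that bypass your second localization entirely: normalize $D$ to $\N\times\{0\}^{n-1}$ by an automorphism, note that in the $n$ standard algebraic charts $U_j\cong\C^n$ of $\Bl_p\C^n$ the set $D\setminus\{p\}$ pulls back either to the empty set or to a discrete subset of the line $\C\times\{0\}^{n-1}$ (hence a tame set), so each $U_j\setminus\pi^{-1}(D\setminus\{p\})$ is the complement of a tame discrete set in $\C^n$, which is elliptic and hence Oka, and conclude by Theorem \ref{th:localization}. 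For general $G$, where no such charts exist, it is Winkelmann's structure theory of tame sets that must do this work; reducing to the blow-up of a manifold like $\C^{n-1}\times\C^*$ that fails aCAP, as your route does, lands you on a subproblem with no available tool.
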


%
% REMARK - WINKELMANN
%
\begin{remark}\label{rem:Winkelmann}
The content of \cite[Theorem 18]{Winkelmann2021TG} by Winkelmann (stated without a citation) is that
every subelliptic complex manifold is Oka. This is the main result of the paper 
\cite{Forstneric2002MZ} from 2002, and it appears as \cite[Corollary 5.6.14]{Forstneric2017E}.
Also, \cite[Proposition 8.3]{Winkelmann2021TG} is seen by noting that such a manifold $X$
is weakly elliptic and hence Oka by \cite[Corollary 5.6.14]{Forstneric2017E}.
\end{remark}

The above results provide a significant contribution to the following problem. 

\begin{problem}{\rm (See \cite[Problem 6.4.9]{Forstneric2017E})}.
Is the blowup of an algebraic Oka manifold along an algebraic submanifold Oka? 
\end{problem}

The  corresponding problem in the holomorphic category was answered 
negatively by Kusakabe in \cite[Example A.3]{Kusakabe2021IUMJ}: there are discrete sets 
$A\subset \C^n$ for any $n>1$ such that the blowup $\Bl_A\C^n$ is volume Brody hyperbolic, 
and hence it is not Oka. Note that such a set $A$ cannot be tame in view of 
\cite[Proposition 6.4.12]{Forstneric2017E}.

%
%	TOPOLOGICAL PROPERTIES OF ALG SUBELLIPTIC MANIFOLDS
%
\subsection{Topological properties of algebraically subelliptic manifolds}
In 1989, Gromov posed the following problem \cite[0.7.B'']{Gromov1989}.
(In Gromov's paper, what is now called an Oka manifold is called an
$\mathrm{Ell}_\infty$ manifold, but the meaning is the same.)

\begin{problem} Does there exist an Oka manifold which is
homotopy equivalent to a given finite CW complex?
\end{problem}

Although there are no obvious obstructions, there has been no 
progress on this question, except for what can be inferred 
from the known examples. Very recently, Kusakabe 
proved the following result for algebraically subelliptic manifolds;
see \cite[Theorem 1.3]{Kusakabe2022pi1}.

%
%   FINITE FUNDAMENTAL GROUPS
%
\begin{theorem}\label{th:pi1}
The fundamental group of any algebraically subelliptic manifold is finite.
Conversely, for any finite group $G$ there exists an algebraically subelliptic manifold 
$Y$ whose fundamental group $\pi_1(Y)$ is isomorphic to $G$.
\end{theorem}

Since an unramified finite covering of an algebraically subelliptic manifold
is also such a manifold (cf.\ \cite[Proposition 6.4.10]{Forstneric2017E}), 
Theorem \ref{th:pi1} implies the following corollary.

\begin{corollary}[Corollary 1.5 in \cite{Kusakabe2022pi1}] \label{cor:pi1} 
The universal cover of an algebraically subelliptic manifold is also an 
algebraically subelliptic manifold. 
\end{corollary}

Another consequence of Theorem \ref{th:pi1} and of the algebraic approximation
theorem for holomorphic maps to algebraically subelliptic manifolds
(see Theorem \ref{th:Asubelliptic}) is the following.

\begin{corollary}[Corollary 1.6 in \cite{Kusakabe2022pi1}] \label{cor:approx}
Let $Y$ be an algebraically subelliptic manifold. For any holomorphic
map $f:\C^*\to Y$ and any sufficiently large natural number $n$ the holomorphic
map $\C^* \to Y$, $z\mapsto f(z^n)$ can be approximated by algebraic morphisms 
$\C^*\to Y$.
\end{corollary}

As pointed out by Kusakabe, these results fail in general for an arbitrary algebraic Oka
manifold $Y$. For example, Theorem \ref{th:pi1} and Corollary \ref{cor:approx} fail 
for any elliptic curve; such a curve is holomorphically elliptic but is not algebraically 
(sub-)elliptic.

%%%%%%%%%%%%%
%
%   STUDER
%
%%%%%%%%%%%%%
\section{Oka pairs of sheaves and a homotopy theorem for Oka theory}\label{sec:Studer}
Luca Studer made several contributions to Oka theory in his PhD dissertation. 
One of them in \cite{Studer2021APDE} provides a gluing lemma for sections of coherent analytic sheaves.
Gluing  lemmas are of key importance in Oka theory. 
Those in the work by Gromov \cite{Gromov1989} and in my joint works with Prezelj
\cite{ForstnericPrezelj2002,ForstnericPrezelj2001}, and their generalizations in 
\cite{Forstneric2017E} (see in particular \cite[Proposition 5.8.1]{Forstneric2017E}), 
pertain to the sheaf of holomorphic sections of a holomorphic submersion and its subsheaf 
of sections vanishing to a given order on a subvariety. Studer proved 
a gluing lemma for sections of an arbitrary coherent analytic sheaf.
This gives shortcuts in the proofs of Forster and Ramspott's 
Oka principle for admissible pairs of sheaves \cite{ForsterRamspott1966IM1} 
and of the interpolation property for sections of elliptic submersions in \cite{ForstnericPrezelj2001}.
The main technical part of Studer's proof is a certain lifting theorem 
\cite[Theorem 1]{Studer2021APDE} which reduces the splitting 
problem to sections of a free sheaf. %, which is well understood. 

The second main result of Studer is a homotopy theorem based on Oka theory,
presented in \cite{Studer2020MA}. He pointed out that all proofs of
Oka principles can be divided into an analytic first part and a purely topological second part
which can be formulated very generally, thereby providing a reduction
of the proof to the key analytic difficulties. This general topological statement
is \cite[Theorem 1]{Studer2020MA}. Its assumptions list the properties one
has to show in the first part of the proof of an Oka principle, and its conclusion is an Oka
principle. This extends Gromov's homomorphism theorem from \cite{Gromov1986} so that it
applies in complex analytic settings and carries out ideas sketched in \cite{Gromov1989}
and developed in \cite{ForstnericPrezelj2002} and \cite[Chapter 6]{Forstneric2017E}. 

Studer also gave a more general result, \cite[Theorem 2]{Studer2020MA}.
Let $X$ be a paracompact Hausdorff space that has an exhaustion by finite
dimensional compact subsets, and let $\Phi\hra\Psi$ be a local weak homotopy equivalence
of sheaves of topological spaces on $X$. He showed that under suitable conditions on $\Phi$ 
and $\Psi$ the inclusion $\Phi(X)\hra \Psi(X)$ of spaces of sections is a weak homotopy equivalence.
The relevant conditions reflect what is happening when approximating and gluing
sprays of sections in \cite{Gromov1989,ForstnericPrezelj2002}. Studer's proof is essentially 
an abstraction of the proof of the Oka principle for 
subelliptic submersions in \cite{ForstnericPrezelj2002,Forstneric2017E}.
He then showed how the known examples of the Oka principle fit into this general theorem.
%Precise statements would require considerable preparations, and we refer instead to Studer's papers.

%%%%%%%%%%%%%%%%%%%%
%
%	CARLEMAN AND ARAKELIAN
%
%%%%%%%%%%%%%%%%%%%%
\section{Carleman and Arakelian theorems for manifold-valued maps}\label{sec:Carleman}
The basic Oka property with approximation (BOPA) is one of the classical Oka properties of a complex manifold $Y$ which characterizes the class of Oka manifolds (see Section \ref{sec:history}). 
It refers to the possibility of approximating any holomorphic map $f\in \Oscr(K,Y)$, 
where $K$ is a compact $\Oscr(X)$-convex set in a Stein manifold (or Stein space) $X$,  
uniformly on $K$ by entire maps $F\in \Oscr(X,Y)$ provided that $f$ extends continuously from $K$ to $X$. 
Recently, B.\ Chenoweth \cite{Chenoweth2019PAMS} proved Carleman-type approximation theorems 
in the same context. Recall that Carleman approximation (after T.\ Carleman \cite{Carleman1927}) 
refers to approximation of holomorphic functions and maps in fine Whitney topologies 
on closed unbounded sets. 

Let $X$ be a complex manifold. Given a compact set $C$ in $X$ we define
\[
	h(C):=\overline{\widehat C_{\Oscr(X)}\setminus C}.
\]

%
%   BOUNDED EXHAUSTION HULLS
%
\begin{definition}\label{def:BEHn}
Let $X$ be a Stein manifold and $E$ be a closed subset of $X$.
\begin{enumerate}[\rm (a)] 
\item $E$ is $\Oscr(X)$-convex if it is exhausted by compact $\Oscr(X)$-convex sets.
\item $E$ has {\em bounded exhaustion hulls} if for every compact % $\Oscr(X)$-convex 
set $K$ in $X$ there is a compact set $K'\subset X$ such that for every compact $L\subset E$ we have that 
$
	h(K\cup L)\subset K'.
$ 
\end{enumerate}
\end{definition}

%The following is a special case of Chenoweth's main result in \cite{Chenoweth2019PAMS} (2019).

%
%   CHENOWETH
%
\begin{theorem}[Chenoweth \cite{Chenoweth2019PAMS}]
\label{th:Chenoweth}
Let $X$ be a Stein manifold and $Y$ be an Oka manifold. If $K$ is a compact 
$\Ocal(X)$-convex set in $X$ and $E$ is a closed totally real submanifold of $X$ of class $\Cscr^r$ 
$(r\in\N)$ with bounded exhaustion hulls such that $K \cup E$ is 
$\Ocal(X)$-convex, then for every $k\in \{0,1,\ldots,r\}$ the set $K \cup E$ admits 
$\Cscr^k$-Carleman approximation of maps $f \in \Cscr^k(X,Y)$ which are holomorphic on 
neighbourhoods of $K$ by holomorphic maps $X\to Y$. 
%If $K$ is the closure of a strongly pseudoconvex domain then the same holds if $f$ is $\dibar$-flat to order $k$ on $K$.
\end{theorem}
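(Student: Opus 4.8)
The plan is to run the standard inductive scheme of Carleman approximation over an exhaustion of $K\cup E$, using the Oka principle (Theorem \ref{th:Oka}) to globalise a holomorphic approximant at each stage and a spray linearisation to reduce the $Y$-valued problem to a $\C^N$-valued one. The fine $\Cscr^k$ control along $E$ will be arranged so that, once an approximant is made holomorphic on a full neighbourhood of the relevant compact set, interior estimates turn uniform approximation into $\Cscr^k$ approximation for free.

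Fix a distance on $Y$. Since $K\cup E$ is $\Oscr(X)$-convex I would exhaust it by compact $\Oscr(X)$-convex sets $M_1\subset M_2\subset\cdots$, each contained in the interior of the next, with $K\subset M_1$. The bounded exhaustion hulls hypothesis (Definition \ref{def:BEHn}) is precisely what lets me enlarge $M_j$ by a compact slab of $E$ without the holomorphic hull escaping a fixed compactum; this is the mechanism that promotes uniform approximation to Carleman approximation, since it keeps the successive corrections from propagating to infinity. Given a positive continuous tolerance $\epsilon\colon X\to(0,\infty)$, the goal is a sequence $F_j\in\Oscr(X,Y)$ with $F_j$ a $\Cscr^k$-approximation of $f$ within $\tfrac{1}{2}\epsilon$ on $M_j$, while $F_{j+1}$ stays within a summably small $\Cscr^k$-distance of $F_j$ on $M_{j-1}$; the limit $F=\lim_j F_j$ is then the desired holomorphic Carleman approximant on $K\cup E$.

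The inductive step is the crux and splits into two moves. First, a \emph{totally real Mergelyan move}: starting from the datum that agrees with $F_j$ near $M_{j-1}$ (where it is holomorphic) and with $f$ on the newly added slab of $E$ (where it is only $\Cscr^k$, resp.\ $\dibar$-flat to order $k$), I would produce a map $\tilde g$ that is holomorphic on a full neighbourhood of $M_{j+1}$ and $\Cscr^k$-close to this datum on $M_{j+1}$. For $Y=\C^N$ this is the classical H\"ormander--Wermer approximation on totally real submanifolds: extend so that $\dibar$ vanishes to order $k$ along the $\Cscr^r$ submanifold $E$ (possible since $k\le r$), then solve $\dibar u=\dibar g$ with estimates so that $u$ is $\Cscr^k$-small on $K\cup E$. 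For a general Oka $Y$ I would reduce to this case via the spray linearisation of the proof of Theorem \ref{th:K2018}: where the datum is holomorphic, condition \CEll1 together with \cite[Corollary 8.8.7]{Forstneric2017E} provides a dominating spray whose tubular biholomorphism $\wt F(z,w)=(z,F(z,w))$ represents all nearby maps as $z\mapsto F(z,\phi(z))$ with $\phi$ valued in $\C^N$, so the correction becomes a $\C^N$-valued one. Second, an \emph{Oka globalisation move}: since $M_{j+1}$ is compact and $\Oscr(X)$-convex and $\tilde g$ is holomorphic near it, Theorem \ref{th:Oka} approximates $\tilde g$ uniformly on $M_{j+1}$ by a global $F_{j+1}\in\Oscr(X,Y)$; because $\tilde g$ is holomorphic on a neighbourhood, interior Cauchy estimates upgrade this to $\Cscr^k$-approximation on $M_{j+1}$, completing the step.

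The main obstacle I expect is the totally real Mergelyan move for $Y$-valued maps when $f$ is only $\dibar$-flat along $E$, and its interaction with the holomorphic data near $K$. The spray trick linearises the problem only where a holomorphic reference map is available, so along $E$ the correction must be carried out directly through the $\dibar$-equation, and the two regimes have to be matched across a Cartan pair separating a neighbourhood of $M_{j-1}$ from the new slab of $E$; the gluing, via \cite[Proposition 5.9.2]{Forstneric2017E}, must simultaneously preserve the uniform estimate inherited from $F_j$ and the fine $\Cscr^k$ estimate that the $\dibar$-flatness buys along $E$. The strongly pseudoconvex case — where holomorphicity near $K$ is weakened to mere $\dibar$-flatness on all of $K\cup E$ — is exactly the regime in which the quality of the $\dibar$-estimates available on such domains is needed to keep the correction $\Cscr^k$-small; controlling these estimates uniformly over the exhaustion, with the help of bounded exhaustion hulls, is the delicate technical heart of the argument.
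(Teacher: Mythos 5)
Your overall architecture --- an induction alternating a Mergelyan-type approximation on the union of a compact holomorphically convex set with a totally real slab of $E$, followed by globalisation via the Oka property (Theorem \ref{th:Oka}) and interior estimates --- is the same scheme the paper indicates (it describes Chenoweth's proof as the Mergelyan theorem for admissible sets intertwined with BOPA at every step of an induction). However, your limiting procedure, as stated, fails. You exhaust only $K\cup E$ by the compacts $M_j$ and control $F_{j+1}$ against $F_j$ only on $M_{j-1}\subset K\cup E$; consequently the sequence $(F_j)$ is Cauchy merely along $K\cup E$, and nothing forces it to converge --- let alone to a holomorphic map --- anywhere else on $X$. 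Already for $X=\C$, $E=\R$, $K=\varnothing$, $Y=\C$: the corrections $g_j(z)=2^{-j}\cos(c_j z)$ satisfy $|g_j|\le 2^{-j}$ on $\R$ for every real $c_j$, yet $|g_j(\igot)|=2^{-j}\cosh c_j$ can be made arbitrarily large, so a sequence of entire functions obeying all of your constraints can diverge at every point off $\R$. What your scheme produces is then only a $\Cscr^k$ map on $K\cup E$, not an entire map $X\to Y$, i.e., not a Carleman approximant.

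The missing idea is that the induction must be organised over an exhaustion of $X$ itself, not of $K\cup E$: take sublevel sets $\overline D_1\subset\overline D_2\subset\cdots$ of a strictly plurisubharmonic exhaustion of $X$ and work with the hybrid compacts $\overline D_j\cup\bigl((K\cup E)\cap\overline D_{j+1}\bigr)$, demanding at each step both that $F_{j+1}$ be uniformly $2^{-j}$-close to $F_j$ on \emph{all} of $\overline D_j$ (this is what the Oka globalisation step must deliver, and it is what guarantees locally uniform convergence on $X$ to a holomorphic limit) and that the fine $\Cscr^k$ error along $(K\cup E)\cap\overline D_{j+1}$ stay summably small. It is exactly here that bounded exhaustion hulls is used: it ensures $h\bigl(\overline D_j\cup L\bigr)$ lies in a fixed compact for every compact $L\subset E$, so that after absorbing the hull the hybrid set is again an $\Oscr(X)$-convex admissible configuration to which the Mergelyan move and BOPA apply, and the enlargements (hence the errors) do not cascade to infinity. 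Separately, your ``totally real Mergelyan move'' for manifold-valued hybrid data --- holomorphic near the compact part, only $\dibar$-flat along $E$ --- is not actually established in your proposal; you correctly flag the matching of the two regimes as the delicate heart, but this is precisely the content of the manifold-valued Mergelyan theorem for admissible sets (\cite[Theorem 3.8.1]{Forstneric2017E}, \cite[Theorem 34]{FornaessForstnericWold2020}) that the paper and Chenoweth \cite{Chenoweth2019PAMS} invoke as a known result, so it should be cited rather than re-derived by hand.
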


This is proved by inductively applying the Mergelyan theorem for admissible sets
in Stein manifolds (see \cite[Theorem 3.8.1]{Forstneric2017E} or 
\cite[Theorem 34]{FornaessForstnericWold2020}) together with the basic Oka property
(BOPA) for maps from Stein manifolds to Oka manifolds; see \cite[Theorem 5.4.4]{Forstneric2017E}. 
These two methods are intertwined at every step of the induction procedure. 
The special case of Theorem \ref{th:Chenoweth} for functions (i.e., for $Y=\C$) is due to 
Manne, Wold, and {\O}vrelid \cite{ManneWoldOvrelid2011}, and the necessity of 
the bounded exhaustion hulls condition was shown by Magnusson and Wold 
\cite{MagnussonWold2016}.

Given a closed unbounded set $E$ in a Stein manifold $X$, one can ask when is it possible to 
uniformly approximate every continuous function on $E$ which is holomorphic on the interior of $E$ 
by functions holomorphic on $X$. This type of approximation is named after 
Norair U.\ Arakelian  \cite{Arakelian1964} who proved that for a closed subset $E$ of a planar
domain $X\subset \C$, uniform approximation on $E$ is possible if and only if 
$E$ is holomorphically convex in $X$ %(which in this case means that $E$ has no holes in $X$)
and its complement $\wh X\setminus E$ in the one-point compactification $\wh X=X\cup\{\infty\}$
is locally connected at $\infty$. For a closed set $E$ in an open Riemann surface $X$ the latter property
is equivalent to $E$ having bounded exhaustion hulls.
A set $E$ with these two properties is called an {\em Arakelian set}.  
(See also \cite[Theorem 10]{FornaessForstnericWold2020} and the related discussion.)
The following result from \cite{Forstneric2019MMJ} is an extension of Arakelian's theorem to manifold-valued maps.

% seems to be the first known extension of Arakelian's theorem to manifold-valued maps.

%
%   ARAKELIAN FOR MAPS TO COMPACT HOMOGENEOUS MANIFOLDS
%
%
\begin{theorem} %[Forstneri{\v c} \cite{Forstneric2019MMJ}] 
\label{th:Arakelian1}
If $E$ is an Arakelian set in a domain $X\subset \C$ and $Y$ is a compact complex homogeneous manifold,
then every continuous map $X\to Y$ which is holomorphic in $\mathring E$ can be approximated 
uniformly on $E$ by holomorphic maps $X\to Y$.
\end{theorem}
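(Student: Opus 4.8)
The plan is to reduce the manifold-valued problem to the classical scalar Arakelian theorem by exploiting the homogeneous structure of $Y$, and to carry out this reduction along an exhaustion of $X$. Write $Y=G/H$ for a connected complex Lie group $G$ with Lie algebra $\ggot\cong\C^N$, and record the spray $F\colon Y\times\ggot\to Y$, $F(y,v)=\exp(v)\cdot y$. Since $G$ acts transitively, $v\mapsto F(y,v)$ is a submersion at $0$ with $F(y,0)=y$ for every $y$, so $F$ is a dominating spray; and because $Y$ is compact there is a uniform $\delta>0$ and a bounded neighbourhood $W\subset\ggot$ of $0$ such that whenever $\dist(y,y')<\delta$ the equation $F(y,v)=y'$ admits a solution $v\in W$ depending holomorphically on $(y,y')$ near the diagonal of $Y\times Y$. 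This is the substitute for subtraction: the passage from a holomorphic approximant $g$ to a nearby holomorphic map is realized as $g'=F(g,v)$ for a $\ggot$-valued holomorphic map $v$, so every correction becomes a vector-valued problem to which the classical Arakelian theorem applies componentwise. Note that $Y$ is in any case an Oka manifold by Grauert's theorem, which supplies the compact-set approximation tools below.

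Next I would fix $\epsilon>0$ and a summable sequence $\epsilon_j>0$, and choose an exhaustion $X_1\Subset X_2\Subset\cdots$ of $X$ by compact $\Oscr(X)$-convex sets. Using the bounded-exhaustion-hulls hypothesis (Definition \ref{def:BEHn}) I would arrange, after passing to a subsequence, that each admissible set $S_j:=X_j\cup(E\cap\overline{X_{j+1}})$ is $\Oscr(X)$-convex and that the hull of $X_j$ together with any compact subset of $E$ remains inside $X_{j+1}$; this is exactly what keeps the successive corrections localized and prevents the approximants from drifting at infinity. I would then construct inductively holomorphic maps $g_j$ on neighbourhoods of $S_j$, together with continuous maps $h_j\colon X\to Y$ homotopic to $f$ with $h_j=g_j$ near $S_j$, such that $\dist(g_j,f)<\sum_{i\le j}\epsilon_i$ on $E\cap\overline{X_{j+1}}$ and $\dist(g_{j+1},g_j)<\epsilon_{j+1}$ on $X_j$. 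The second estimate forces $\{g_j\}$ to converge uniformly on every $X_j$, hence locally uniformly on $X$, to a holomorphic map $F\colon X\to Y$, and the first yields $\dist(F,f)<\epsilon$ on all of $E$.

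The heart is the inductive step. On the admissible compact set $S_{j+1}$ I would first use the global comparison map $h_j$ (homotopic to $f$) together with Theorem \ref{th:Oka}, applied to the Cartan pair $X_j\subset X_{j+1}$, to extend $g_j$ holomorphically across the newly added collar while approximating it on $X_j$; simultaneously I prescribe the target $f$ on the new slice $E\cap(\overline{X_{j+2}}\setminus\overline{X_{j+1}})$, the two prescriptions being compatible up to $\epsilon_j$ on the overlap because $g_j\approx f$ there. The Mergelyan theorem for maps into the Oka manifold $Y$ on admissible sets in the Riemann surface $X$ then produces a map holomorphic near $S_{j+1}$ approximating this target. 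To sharpen the compact-set estimate into the required control on $X_j$ and on the enlarged $E$-slice at once, I would record the correction in the linearized form $g_{j+1}=F(g_j,v)$ and choose the $\ggot$-valued map $v$ by applying the classical Arakelian theorem to its $N$ components on the Arakelian set assembled from $X_j$ and the $E$-slice, so that $v$ is uniformly small on $X_j$ while pushing $g_{j+1}$ to within $\sum_{i\le j+1}\epsilon_i$ of $f$ on $E\cap\overline{X_{j+2}}$. Finally I update $h_{j+1}$ to a continuous map homotopic to $h_j$ agreeing with $g_{j+1}$ near $S_{j+1}$.

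The main obstacle is to keep the linearization valid throughout an unbounded induction. The spray correction $g'=F(g,v)$ represents an arbitrary nearby map only when the two maps lie within the uniform distance $\delta$, so at every stage and at every point of the ever-larger slices of $E$ I must guarantee that the current approximant is genuinely $\delta$-close to its target before applying the exponential shift; this is precisely where the bounded-exhaustion-hulls hypothesis does the real work, ensuring that the correction made at stage $j$ can be taken small on $X_{j-1}$ and essentially supported away from it, so that the closeness achieved earlier survives. Verifying that the auxiliary sets on which the scalar Arakelian theorem is invoked are themselves Arakelian in $X$, and that the $N$ componentwise corrections assemble into a single holomorphic $\ggot$-valued map realizing the simultaneous ``small on $X_j$ / accurate on the new $E$-slice'' behaviour, is the technical crux; the compact-set Mergelyan step, Grauert's theorem that $Y$ is Oka, and the homotopy bookkeeping for $h_j$ are comparatively routine inputs.
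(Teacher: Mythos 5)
Your skeleton is the one the paper itself attributes to its source \cite{Forstneric2019MMJ} --- a Rosay--Rudin style induction \cite{RosayRudin1989}, with the homogeneous structure of $Y$ linearizing corrections through the exponential spray and compactness of $Y$ supplying uniform constants --- but the steps you label ``comparatively routine'' are exactly the content of the theorem, and the tools you cite do not supply them. The decisive failure is the fusion step of your induction. To feed the data $f$ into the scheme slice by slice you must merge $g_j$ (holomorphic only near $S_j$) with $f$ (holomorphic only in $\mathring E$) into a single target for a Mergelyan or Oka-type theorem. Any merge uses a cutoff $\chi$, and when $E$ has nonempty interior (the substantive case of an Arakelian theorem) the transition region unavoidably contains points of $\mathring E$ or of $\mathring X_{j+1}$, i.e.\ of $\mathring S_{j+1}$, where the fused map $F(g_j,\chi w)$ is not holomorphic; so the hypotheses of the Mergelyan theorem fail, and Theorem \ref{th:Oka} in any case contains no provision for ``prescribing $f$ on the new slice''. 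The only repair is to remove the $\dibar$-error $\sim w\,\dibar\chi$ by solving a $\dibar$-problem with bounds (Cauchy--Green), and then your bookkeeping collapses: the mismatch $w$ at the interface is the \emph{accumulated} error $d(g_j,f)\approx\sum_{i\le j}\epsilon_i$, not a freshly choosable quantity, and since the Cauchy kernel decays only like $1/\mathrm{dist}$ while the interface has size comparable to its distance from $X_j$, the correction on $X_j$ is of the order of that accumulated error rather than of $\epsilon_{j+1}$; the estimate $d(g_{j+1},g_j)<\epsilon_{j+1}$ on $X_j$ is unattainable and your sequence need not converge. This is precisely why Rosay--Rudin organize the induction differently: $f$ is converted \emph{once}, on the whole noncompact set $E$, into a map holomorphic on a neighbourhood of $E$, and afterwards each step glues the current map with a fresh approximation \emph{of itself} across a \emph{compact} interface (the next disc together with its bounded hull, cf.\ Definition \ref{def:BEHn}), so every correction is freshly small and, by the uniform bound of the Cauchy--Green operator on compactly supported data, uniformly small on all of $E$.

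Both ingredients of that correct architecture are nontrivial for $Y$-valued maps, and they are what \cite{Forstneric2019MMJ} actually proves: the reduction step is a manifold-valued Mergelyan theorem for compact sets with interior in a Riemann surface (the admissible-set Mergelyan theorem you cite, for $K\cup M$ with $M$ totally real \cite[Theorem 3.8.1]{Forstneric2017E}, does not apply to sets whose $E$-part has interior), and the gluing step is the ``technique for gluing sprays with uniform bounds on certain noncompact Cartan pairs'', which the survey names as the main new analytic ingredient and which is also where compactness of $Y$ genuinely enters --- the survey notes the proof fails even for noncompact homogeneous targets. Three further defects: your sets $S_j$ are in general not $\Oscr(X)$-convex and cannot be made so by reindexing, since their hulls contain compact pieces lying in neither $X_j$ nor $E$, where $f$ is merely continuous, so the hulls must be carried along in the induction; your claimed uniform holomorphic solution operator $v(y,y')$ with $\exp(v(y,y'))\cdot y=y'$ near the diagonal of $Y\times Y$ does not exist in general, since jointly holomorphic dependence would split $0\to\ker\to Y\times\ggot\to TY\to 0$ holomorphically, which already fails for $Y=\CP^1$ (there $\Hom(\Ocal(2),\Ocal^3)=0$); one can instead solve $F(g,v)=f$ over the Stein source, but uniform bounds for that solution again require the bounded-splitting machinery above. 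In short, filling the gaps in your proposal amounts to reconstructing the proof of \cite{Forstneric2019MMJ}, not to giving an alternative to it.
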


Since the target manifold $Y$ is compact, the notion of uniform approximation does not depend on the specific
choice of the metric on $Y$.  The analogous result holds if $X$ is an open Riemann surface which admits bounded 
holomorphic solution operators for the $\dibar$-equation; see \cite[Theorem 5.3]{Forstneric2019MMJ}.
On plane domains one can use the classical Cauchy-Green operator. However, Arakelian's theorem for functions fails 
on some open Riemann surface as shown by examples in \cite{GauthierHengartner1982}
and \cite[p. 120]{BoivinGauthier2001}. 
Note also that Carleman approximation in the fine topology is impossible in general if the interior of $E$ 
is not relatively compact. 

The scheme of proof of Theorem \ref{th:Arakelian1} in \cite{Forstneric2019MMJ} follows the 
proof of Arakelian's theorem by Rosay and Rudin \cite{RosayRudin1989}. 
The main new analytic ingredient developed in \cite{Forstneric2019MMJ} 
is a technique for gluing sprays with uniform bounds 
on certain noncompact Cartan pairs. The proof does not apply to general Oka target manifolds, 
not even to noncompact homogeneous manifolds. 

Not much seems known concerning the Arakelian approximation on closed sets 
whose interior is not relatively compact in higher dimensional Stein manifolds. 
Recently, A.\ Lewan\-dowski \cite{Lewandowski2021CM} proved a result of this kind for functions
on a ray of balls in $\C^n$.

%%%%%%%%%%%%%%%%%%%%%%%%%%%
%
%	EUCLIDEAN DOMAINS IN COMPLEX MANIFOLDS
%
%%%%%%%%%%%%%%%%%%%%%%%%%%%
\section{The Docquier--Grauert tubular neighbourhood theorem revisited}\label{sec:DG}

Given a complex submanifold $M$ in a complex manifold $X$, let  
$
	 \nu_{M,X} = TX|_M / TM
$
denote the holomorphic normal bundle of $M$ in $X$. A theorem of Docquier and Grauert 
\cite{DocquierGrauert1960} says that if $M$ is Stein then the inclusion 
of $M$ onto the zero section of $\nu_{M,X}$ extends to a biholomorphic map from a 
neighbourhood of $M$ in $X$ onto a neighbourhood of the zero section in $\nu_{M,X}$. 
(See also \cite[Theorem 3.3.3]{Forstneric2017E}. The assumption in \cite{DocquierGrauert1960} 
that the manifold $X$ be Stein is unnecessary in view of Siu's theorem \cite{Siu1976} on the existence of 
open Stein neighbourhoods of Stein subvarieties.) This clearly 
implies that the images of holomorphic embeddings $M\hra X$, $M\hra X'$ 
with isomorphic normal bundles have biholomorphic neighbourhoods. % in $X$ and $X'$. 

We now present a generalization from \cite{Forstneric2022JMAA} (2022) 
of the Docquier--Grauert theorem to configurations of the following type.

%
%   ADMISSIBLE PAIRS
%
\begin{definition}\label{def:admissible}
A subset $S$ of a complex manifold $X$ is {\em admissible} if $S=K\cup M$ where 
\begin{enumerate}[\rm (a)]
\item $K$ is a compact set with a Stein neighbourhood $U\subset X$ such that  $K$ is $\Oscr(U)$-convex,
\item $M$ is a locally closed embedded Stein submanifold of $X$, and 
\item $K\cap M$ is a compact $\Oscr(M)$-convex subset of $M$.
\end{enumerate}
\end{definition}

It was shown in \cite[Theorem 1.2]{Forstneric2005AIF} (see also \cite[Theorem 3.2.1]{Forstneric2017E})
that an admissible set $S=K\cup M$ has a basis of open Stein neighbourhoods $V \subset X$ such that $M$ is closed 
in $V$ and $K$ is $\Oscr(V)$-convex. The case $K=\varnothing$ is Siu's theorem \cite{Siu1976}. 

%We also introduce the notion of a biholomorphism between a pair of admissible sets.

%
%   BIHOLOMORPHISM OF ADMISSIBLE SETS
%
\begin{definition}\label{def:biholo-admissible}
Assume that $X$ and $X'$ are complex manifolds of the same dimension and  
$S=K\cup M\subset X$, $S'=K'\cup M'\subset X'$ are admissible sets. 
A homeomorphism $F:S\to S'$ with $F(M)=M'$ and $F(K)=K'$ is a biholomorphism % of $S$ onto $S'$ 
if $F|_M:M\to M'$ is a biholomorphism and $F$ extends to a biholomorphism   
from a neighbourhood of $K$ onto a neighbourhood of $K'$.
\end{definition}

The conditions on $F$ clearly imply that if one of the sets $K\cup M$ and $K'\cup M'$ is admissible then so is the other one. 
The following result \cite[Theorem 1.4]{Forstneric2022JMAA} says that, under suitable conditions,
a biholomorphism $K\cup M \stackrel{F}{\longrightarrow} K'\cup M'$ 
of admissible sets can be approximated uniformly on $K$ and interpolated on the submanifold 
$M$ by ambient biholomorphisms. The Docquier--Grauert theorem 
\cite{DocquierGrauert1960} % or \cite[Theorem 3.3.3]{Forstneric2017E}) 
corresponds to the special case with $K=\varnothing$ and $K'=\varnothing$.

%
%   GENERALIZED DOCQUIER-GRAUERT THEOREM
%
\begin{theorem}\label{th:DG}
Let $S=K\cup M \subset X$ and $S'=K'\cup M'\subset X'$ be admissible sets and 
$F:S\to S'$ be a biholomorphism (see Definition \ref{def:biholo-admissible}).
Assume that there is a topological isomorphism $\Theta: \nu_{M,X}\to \nu_{M',X'}$ 
of the normal bundles over $F:M\to M'$ which is given over a neighbourhood of $K\cap M$ 
by the differential of $F$. Given $\epsilon>0$ there are an open Stein neighbourhood $\Omega \subset X$ of $S$ 
and a biholomorphic map $\Phi:\Omega \stackrel{\cong}{\longrightarrow} \Phi(\Omega)\subset X'$ such that 
\[
	\Phi|_M=F|_M
	\quad \text{and}\quad
	\sup_{x\in K} \dist_{X'}\left(\Phi(x),F(x)\right) <\epsilon.
\]
\end{theorem}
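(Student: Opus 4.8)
The plan is to combine three ingredients: the tubular neighbourhood theorem for the single submanifold $M$, the Oka--Grauert principle to convert the purely topological datum $\Theta$ into a holomorphic identification of normal bundles, and a Cartan-type gluing near the interface $K\cap M$ to merge the given map on $K$ with the one produced near $M$. First I would pass to the Stein setting: by \cite[Theorem 3.2.1]{Forstneric2017E} the admissible set $S=K\cup M$ has a basis of open Stein neighbourhoods $V\subset X$ in which $M$ is closed and $K$ is $\Oscr(V)$-convex, and I fix such a $V$ together with an analogous Stein neighbourhood $V'\supset S'$ in $X'$. Over the Stein manifold $M$, the isomorphisms $\nu_{M,X}\to F^*\nu_{M',X'}$ covering $F|_M$ are the sections of a holomorphic fibre bundle over $M$ with fibre $\GL_k(\C)$, where $k$ is the codimension; since $\GL_k(\C)$ is a complex Lie group, the Oka--Grauert principle of Grauert \cite{Grauert1958MA} applies. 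The hypothesis provides a topological section, namely $\Theta$, which is already holomorphic and equal to $dF$ near the $\Oscr(M)$-convex set $K\cap M$, so the interpolation form of the Oka--Grauert principle deforms it, fixed near $K\cap M$, to a holomorphic bundle isomorphism $\wt\Theta\colon\nu_{M,X}\to F^*\nu_{M',X'}$ over $F|_M$ with $\wt\Theta=dF$ near $K\cap M$.

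With $\wt\Theta$ available I would build a biholomorphism near $M$. The Docquier--Grauert theorem \cite[Theorem 3.3.3]{Forstneric2017E}, applied to $M\subset V$ and $M'\subset V'$, identifies a neighbourhood of $M$ in $X$ with a neighbourhood of the zero section of $\nu_{M,X}$, and likewise for $M'$. Composing these two identifications with $\wt\Theta$ yields a biholomorphism $\Phi_0$ from a neighbourhood $W$ of $M$ onto a neighbourhood of $M'$ satisfying $\Phi_0|_M=F|_M$; because $\wt\Theta=dF$ near $K\cap M$, the map $\Phi_0$ has the same $1$-jet along $M$ as $F$ there. Thus near the interface $K\cap M$ the given biholomorphism $F$ (defined near $K$) and the new biholomorphism $\Phi_0$ (defined near $M$) both restrict to $F|_M$ on $M$ and are tangent to each other along $M$.

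The remaining and central task is to glue $F$ and $\Phi_0$ into one biholomorphism on a neighbourhood of $S$. I would select a Cartan pair $(A,B)$, with $A$ a neighbourhood of $K$, $B\subset W$ a thin tube around $M$, and $A\cap B$ a neighbourhood of $K\cap M$, so that $\Omega:=A\cup B$ is a Stein neighbourhood of $S$. On $A\cap B$ the transition $\theta:=\Phi_0\circ F^{-1}$ is a biholomorphism of a neighbourhood of $K'\cap M'$ in $X'$ fixing $M'$ pointwise and tangent to the identity along $M'$; shrinking the tube $B$ toward $M$ forces $\theta-\Id$ to be uniformly as small as desired. I would then solve the multiplicative splitting $\theta=v^{-1}\circ u$, with $u$ a biholomorphism close to $\Id$ near $K'$ and $v$ one close to $\Id$ near $M'$, both fixing $M'$ pointwise, and set $\Phi:=u\circ F$ on $A$ and $\Phi:=v\circ\Phi_0$ on $B$. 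These agree on $A\cap B$ and define a biholomorphism $\Phi$ on $\Omega$ with $\Phi|_M=F|_M$ (since $v|_{M'}=\Id$ and $\Phi_0|_M=F|_M$) and $\sup_{x\in K}\dist_{X'}(\Phi(x),F(x))<\epsilon$ (since $u$ is close to $\Id$ near $K'$).

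The hard part will be exactly this multiplicative splitting of a biholomorphism close to the identity over a Cartan pair, subject to preserving $M'$. I would linearize: writing $\theta=\Id+c$ with $c$ vanishing on $M'$, I solve the additive Cartan problem $c=a-b$ with $a$ holomorphic near $K'$ and $b$ holomorphic near $M'$, both vanishing on $M'$, using a bounded linear splitting operator for the Cartan pair of neighbourhoods of $K'$ and $M'$ in $X'$; the nonlinear error is then absorbed by a Newton-type iteration in the spirit of the gluing lemma \cite[Proposition 5.9.2]{Forstneric2017E} and its variants for injective holomorphic maps. The uniform smallness of $c$ secured by shrinking $B$ guarantees both the convergence of the iteration and the final $\epsilon$-estimate over $K$.
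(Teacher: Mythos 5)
Your opening moves are reasonable and genuinely different from the paper's route: the paper never constructs a global tubular biholomorphism $\Phi_0$ near all of $M$ at once; it extends $F$ stepwise along $M$. One correction to your first step, though: Grauert's Oka principle lets you choose the holomorphic bundle isomorphism $\wt\Theta$ so that it \emph{approximates} $dF$ uniformly on the compact $\Oscr(M)$-convex set $K\cap M$, but not so that it \emph{equals} $dF$ on a neighbourhood of $K\cap M$ --- interpolation is available only on closed complex subvarieties, and a neighbourhood is not one. This particular point is harmless: approximation suffices to make the transition map $\theta=\Phi_0\circ F^{-1}$ uniformly $\Cscr^1$-small on a thin tube about the compact interface, which is all your later argument needs.

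The genuine gap is in the gluing step. Your set $B$ is a tube around the \emph{noncompact} manifold $M$, so $(A,B)$ is not a Cartan pair in the sense required by the splitting lemmas you invoke: \cite[Proposition 5.9.2]{Forstneric2017E}, \cite[Theorem 9.7.1]{Forstneric2017E}, and \cite[Theorem 4.1]{Forstneric2003AM} are all stated for compact pairs, and their proofs rest on bounded linear splitting operators (bounded $\dibar$-solution operators) together with a Newton-type iteration whose convergence requires uniform sup-norm estimates over the entire pieces. In your setup the correction $v$ must be holomorphic and injective on a neighbourhood of \emph{all} of $M'$, fix $M'$ pointwise, and satisfy $v^{-1}\circ u=\theta$ near the compact interface; by the identity principle $v$ cannot be taken to be the identity away from the interface, so it is a global object on a noncompact domain, and neither its existence nor its injectivity follows from the compact theory. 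Solvability of the linearized Cousin problem on a Stein neighbourhood of $K'\cup M'$ (Cartan's Theorem B) does not rescue the argument, because it comes with no bounds, and bounds on the noncompact piece are exactly what the iteration needs. Gluing with uniform bounds on noncompact Cartan pairs is a known hard point --- it required special techniques and special hypotheses in \cite{Forstneric2019MMJ} --- and it is precisely why the actual proof in \cite{Forstneric2022JMAA} is an induction: $M$ is exhausted by compact strongly pseudoconvex domains $M_1\subset M_2\subset\cdots$, at each step one glues over a \emph{compact} Cartan pair using a new splitting lemma for biholomorphic maps close to the identity \emph{with interpolation on a complex submanifold} \cite[Theorem 3.7]{Forstneric2022JMAA} (this interpolation feature, which keeps the map fixed on $M$ and, together with control of the normal jet via $\Theta$, prevents branching along $M$, is itself a new ingredient not contained in the classical splitting lemmas), with an additional procedure at the finitely many steps where the topology of $M_i$ changes. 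As written, your single gluing step does not go through; repairing it essentially forces you back to the paper's inductive scheme or to proving a new noncompact splitting lemma.
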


The hypothesis in the theorem is illustrated by the following diagram:
\[
	\xymatrix{   \nu_{M,X} \ar[d] \ar[r]^{\Theta}  & \nu_{M',X'} \ar[d]  \\
				   M          \ar[r]^{F}      & M'
	}			   
\]
Note that an isomorphism $\Theta: \nu_{M,X}\to \nu_{M',X'}$ in Theorem \ref{th:DG} exists 
if $\dim X\ge \left[\frac{3\dim M+1}{2}\right]$ and the restricted tangent bundles
$TX|_M$ and $TX'|_{M'}$ are isomorphic over the biholomorphic map 
$F:M\to M'$ (see \cite[Corollary 2.3]{Forstneric2022JMAA}).

Theorem \ref{th:DG} is proved by an inductive procedure commonly used in Oka theory.
An ambient biholomorphism $\Phi$ is obtained by stepwise extending the given map
$F$, changing it only slightly on a neighbourhood of $K$ at each step but keeping it
fixed on $M$, to injective holomorphic maps $F_i$ on neighbourhoods of $K\cup M_i$, where 
$M_1\subset M_2\subset \cdots \subset \bigcup_{i=1}^\infty M_i=M$ is an exhaustion of $M$ by compact
strongly pseudoconvex domains. The initial set $K_1$ is chosen such that $K\cap M\subset K_1\subset U$,
where $U$ is a neighbourhood of $K$ in $X$ on which $\Phi$ is defined.
Every step of the induction uses the gluing lemma with interpolation on a Cartan pair, 
combined with another procedure to take care of the occasional changes of topology of the sets $M_i$.  
In the approximation and gluing procedures we pay close attention to
the normal jet of the map  to ensure that no branch points occur on $M$.
To this end, we use the information provided by the isomorphism $\Theta$ over $F$ 
between the normal bundles of $M$ and $M'$. 

An important technical ingredient in the proof of Theorem \ref{th:DG} is a new version 
of the splitting lemma for biholomorphic maps close to the identity on a Cartan pair 
(cf.\ \cite[Theorem 4.1]{Forstneric2003AM} and \cite[Theorem 9.7.1]{Forstneric2017E}) 
with added interpolation on a complex submanifold; see \cite[Theorem 3.7]{Forstneric2022JMAA}. 
This result may be of independent interest. The original splitting lemma \cite[Theorem 4.1]{Forstneric2003AM}
was generalized to the parameteric case, with continuous dependence on both the domain and the map,
by L.\ Simon \cite{Simon2019JGA} and A.\ Lewandowski \cite{Lewandowski2021JGA}.

Theorem \ref{th:DG} along with \cite[Theorem 15]{ForstnericRitter2014}, which gives 
proper holomorphic embeddings $M\hra\C^n$ with geometric control of the image where $M$ is a Stein
manifold and $n\ge 2\dim M+1$, gives the following result on the existence of Euclidean neighbourhoods 
of certain admissible sets in complex manifolds (see \cite[Theorem 1.1]{Forstneric2022JMAA}). 

%
%   MAIN THEOREM
%
\begin{theorem}\label{th:Euclidean}
Assume that $S=K\cup M$ is an admissible set in a complex manifold $X$ such that $n=\dim X \ge 2\dim M+1$ 
and $TX|_M$ is a trivial bundle. Let $\Omega_0\subset X$ be an open neighbourhood of $K$ and 
$\Phi_0:\Omega_0\stackrel{\cong}{\longrightarrow} \Phi_0(\Omega_0) \subset\C^n$ % with $n=\dim X$ 
be a biholomorphic map such that $\Phi_0(K)$ is polynomially convex in $\C^n$. 
Given $\epsilon>0$ there exist a Stein neighbourhood $\Omega \subset X$ 
of $S$ and a biholomorphic map $\Phi:\Omega \stackrel{\cong}{\longrightarrow} \Phi(\Omega) \subset \C^n$ 
such that $\Phi(M)$ is a closed complex submanifold of $\C^n$ and $\sup_{x\in K}|\Phi(x)-\Phi_0(x)| <\epsilon$.

If $\dim X=2\dim M$ then $\Phi$ can in addition be chosen an immersion which is proper on $M$
and satisfies $\Phi(\Omega\setminus K)\subset \C^n\setminus \Phi(K)$.

If $S$ is closed in $X$ and $K$ is $\Oscr(X)$-convex, there is a holomorphic map $\Phi:X\to \C^n$
which satisfies the above conditions on a neighbourhood $\Omega$ of $S$ and is univalent over $\Phi(K)$:
$
	\Phi(X\setminus K) \subset \C^n\setminus \Phi(K).
$
\end{theorem}

For the last statement see \cite[Theorem 5.3]{Forstneric2022JMAA}. 
An analogous result holds if we replace $\C^n$ by an 
arbitrary Stein manifold with the density property (see  \cite[Theorem 5.2]{Forstneric2022JMAA}).

%%%%%%%%%%%%%%%%%%%%%%%%%%%
%
%	DEGENERATION OF C^n IN STEIN FIBRATIONS
%
%%%%%%%%%%%%%%%%%%%%%%%%%%%
\section{Degeneration of $\C^n$ in Stein fibrations}\label{sec:degeneration}
It is known that, in a holomorphic family of complex manifolds, the set of Oka manifolds is not closed 
in general. In particular, compact complex surfaces that are Oka can degenerate to a non-Oka surface;  
see \cite[Corollary 5]{ForstnericLarusson2014IMRN} or \cite[Corollary 7.3.3]{Forstneric2017E}. 

Since Euclidean spaces are the most basic examples of Oka manifolds, 
it is of interest to understand whether they can degenerate to a non-Oka manifold in a Stein fibration. 
A related question is whether a Stein fibration with fibres $\C^n$ is necessarily locally trivial.
For $n=1$, the answer is negative for the first question and positive for the second one.
Indeed, it was proved by Toshio Nishino \cite{Nishino1969} in 1969 
that if $X$ is a Stein manifold of dimension $m+1$ and $\pi:X\to \D^m$ is 
a holomorphic submersion onto a polydisc such that every fibre $X_z=\pi^{-1}(z)$ 
$(z\in\D^m)$ is biholomorphic to $\C$, then $X$ is fibrewise biholomorphic to $\D^m\times \C$. 
This was extended by H.\ Yamaguchi \cite{Yamaguchi1976} (1976) to the case when the fibre is a 
connected Riemann surface different from $\D$ and $\D^*=\D\setminus \{0\}$. 
(Note that if the fibres of a holomorphic submersion are compact and biholomorphic to each other,
then the submersion is a fibre bundle according to Fischer and Grauert \cite{FischerGrauert1965}.)
It follows from their results % of Nishino and Yamaguchi 
that if $X$ is Stein and $\pi:X\to \D^m$ is a holomorphic submersion
such that every fibre $X_z=\pi^{-1}(z)$ for $z\in \D^m\setminus \{0\}$ is biholomorphic to $\C$, 
then the central fibre $X_0$ is also biholomorphic to $\C$. 

Recently, Takeo Ohsawa generalized Nishino's theorem to the case when $X$ is a complete 
K\"ahler manifold and the fibres of the submersion $X\to\D^m$ equal $\C$ \cite[Theorem 0.1]{Ohsawa2020MRL}, 
or when the fibres are $\CP^n\setminus\{\mathrm{point}\}$ and $X$ is $n$-convex
\cite[Theorem 0.2]{Ohsawa2020MRL}. (Note that a $1$-convex manifold is a Stein manifold.) 
Ohsawa asked the following question \cite[Q3]{Ohsawa2020MRL}; 
I wish to thank Yuta Kusakabe for having brought this to my attention. 

\smallskip
{\em Let $X$ be a complete K\"ahler manifold and $\pi:X\to \D$ be a holomorphic submersion onto the disc 
such that the fibre $X_t=\pi^{-1}(t)$ is biholomorphic to $\C^n$ $(n>1)$ for every $t\in \D^*=\D\setminus\{0\}$. 
Does it follow that $X_0=\pi^{-1}(0)$ is also biholomorphic to $\C^n$?}
\smallskip

We give a counterexample to Ohsawa's question with $X$ a Stein manifold. 
(Every Stein manifold embeds properly holomorphically into a Euclidean space, 
so it is complete K\"ahler.) We state the result for $n=2$, but the same proof gives examples for any $n\ge 2$.
% For example, one can multiply our $X$ by $\C^{n-2}$ and extend the projection $X\to\C$ in a trivial way.

%
%  DEGENERATING FIBRE
%
\begin{theorem}\label{th:limitfibre1}
For every $k\in\N$ there is a Stein threefold $X$ 
and a holomorphic submersion $\pi:X\to\C$ % such that $\pi:\pi^{-1}(\C^*) \to \C^*$ 
which is a trivial holomorphic fibre bundle with fibre $\C^2$ over $\C^*$, while the limit fibre $X_0=\pi^{-1}(0)$ 
is biholomorphic to the disjoint union of $k$ copies of $\D\times \C$.
\end{theorem}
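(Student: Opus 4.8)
The plan is to realize $X$ as the total space of a holomorphically varying family of domains $\Omega_t\subset\C^2$, $t\in\C$, by setting $X=\{(t,p)\in\C\times\C^2:p\in\Omega_t\}$ with $\pi(t,p)=t$. I would arrange that $\Omega_t$ is a Fatou--Bieberbach domain, hence biholomorphic to $\C^2$, for every $t\neq 0$, while the limit domain $\Omega_0$ is biholomorphic to $\D\times\C$ (and, for general $k$, to the disjoint union of $k$ such products). For such a family the projection $\pi$ is visibly a submersion, and the fibrewise linearizing biholomorphisms $\psi_t\colon\Omega_t\to\C^2$, depending holomorphically on $t\in\C^*$, give a trivialization $(t,p)\mapsto(t,\psi_t(p))$ exhibiting $\pi\colon\pi^{-1}(\C^*)\to\C^*$ as the trivial bundle with fibre $\C^2$. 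The entire content of the theorem is thereby concentrated in two points: producing the correct degeneration of the family, and proving that the total space $X$ is Stein.

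To generate the $\Omega_t$ I would \emph{not} try to take $\Omega_t=\C^2$ for $t\neq 0$ and shrink it only at $t=0$. Keeping the full plane forces the fibrewise trivializations to degenerate as $t\to 0$, since otherwise they would produce a biholomorphism $\C^2\to\D\times\C$, which cannot exist because $\D\times\C$ carries nonconstant bounded holomorphic functions and $\C^2$ does not. Instead I would obtain the $\Omega_t$ as (non-autonomous) basins of attraction of a family of holomorphic automorphisms of $\C^2$ depending holomorphically on $t$, using the parametric Anders\'en--Lempert machinery already invoked by Forn\ae ss--Wold \cite{FornaessWold2016} and Forstneri\v c--Wold \cite{ForstnericWold2020MRL}. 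The automorphisms are tuned so that for $t\neq 0$ the attraction is genuinely two--dimensional, giving a Fatou--Bieberbach basin biholomorphic to $\C^2$, whereas at $t=0$ one direction becomes neutral and confines the orbits to a bounded region, so that the basin opens up to $\D\times\C$. For general $k$ I would organize this confinement around $k$ separate centres (equivalently, build in a degree-$k$ datum), so that the single connected generic fibre pinches into $k$ components, each biholomorphic to $\D\times\C$, as $t\to 0$.

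The main obstacle is the Steinness of $X$, that is, pseudoconvexity near the central fibre, and this is precisely where the naive attempt collapses. If one simply set $X=(\C^*\times\C^2)\cup(\{0\}\times\D\times\C)=\C^3\setminus E$ with $E=\{0\}\times(\C\setminus\D)\times\C$, then $E$ has real codimension two and the continuity principle applies: the analytic discs $\lambda\mapsto(\lambda,z_0,w_0)$, $|\lambda|\le r$, have boundary in $X$ for every $z_0$, but their centre lies in $X$ if and only if $|z_0|<1$, so letting $z_0$ cross the circle $|z_0|=1$ forces every holomorphic function to extend across the edge; hence this $X$ is not Stein and its fibre over $0$ is compelled to be all of $\C^2$. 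The remedy, and the crux of the argument, is to let the degeneration proceed through \emph{proper} Fatou--Bieberbach subdomains whose boundaries are real hypersurfaces, so that $X$ is bounded by a codimension-one hypersurface in $\C^3$ which closes up, as $t\to 0$, to a pseudoconvex limit onto $\{|z|<1\}$ in the central plane. Concretely I expect to produce a plurisubharmonic exhaustion of $X$ by controlling the boundaries $b\Omega_t$ uniformly in $t$. The delicate part will be to keep the family pseudoconvex while simultaneously forcing the biholomorphism type of the fibre to jump from $\C^2$ to $\D\times\C$ in the limit, and then to verify that the limit fibre is exactly $\D\times\C$ (respectively its $k$-fold disjoint copy), and not some larger, still Oka, domain.
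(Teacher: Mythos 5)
Your framework (realize $X$ as $\{(t,p): p\in\Omega_t\}$ with proper Fatou--Bieberbach fibres over $t\ne 0$) and your negative observation are both correct: the continuity-principle argument showing that $(\C^*\times\C^2)\cup(\{0\}\times\D\times\C)$ is not pseudoconvex is valid, and the paper's construction does indeed have proper Fatou--Bieberbach subdomains of $\C^2$ as its nonzero fibres. But at the point where the theorem actually has to be proved, your proposal stops. The family of ``non-autonomous basins with a neutral direction at $t=0$'' is never constructed, and the three essential claims --- that the total space is Stein, that the fibration is trivial over $\C^*$, and that the limit fibre is exactly $\D\times\C$ (respectively $k$ disjoint copies) --- are all deferred (``I expect to produce a plurisubharmonic exhaustion\dots'', ``The delicate part will be\dots''). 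The last claim is not a routine verification: basins of maps with a neutral eigenvalue are notoriously subtle (semi-attracting basins can be biholomorphic to all of $\C^2$), so no reason is given why your confinement mechanism would yield $\D\times\C$ rather than something larger.

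The missing idea, which is the whole content of the paper's proof, is to keep the Fatou--Bieberbach domain \emph{fixed} and degenerate the embedding instead. By a theorem of Globevnik there is a Fatou--Bieberbach domain $\Omega\subset\C^2$ whose closure meets the line $\C\times\{0\}$ exactly in a closed disc $\overline U$ (and, by his Corollary 1.1, in $k$ pairwise disjoint closed discs with $\Cscr^1$ boundaries, which gives the general case). Setting $\Phi(t,z_1,z_2)=(t,z_1,tz_2)$ and $X=\Phi^{-1}(\C\times\Omega)$, one gets everything at once: Steinness is free, since $X$ is the preimage of the Stein open set $\C\times\Omega$ under a holomorphic map of $\C^3$ (pull back a plurisubharmonic exhaustion); triviality over $\C^*$ is free, since $\Phi$ itself maps $\pi^{-1}(\C^*)$ biholomorphically onto $\C^*\times\Omega\cong\C^*\times\C^2$; and the central fibre is computed exactly, $X_0=U\times\C\cong\D\times\C$, because $\phi_0(z_1,z_2)=(z_1,0)$ collapses onto the very line along which the trace of $\Omega$ was prescribed. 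Thus the ``delicate part'' you flag --- uniform pseudoconvex control of the boundaries $b\Omega_t$ --- never arises; all the difficulty is absorbed into Globevnik's theorem, which your route would need to either invoke or reprove in order to get off the ground.
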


\begin{proof}
It was shown by J.\ Globevnik \cite[Theorem 1.1]{Globevnik1998} that there is a 
Fatou--Bieberbach domain $\Omega\subset\C^2$ (a proper subdomain of $\C^2$
which is biholomorphic to $\C^2$) whose closure $\overline \Omega$ intersects the complex line
$\C\times \{0\}$ in a closed disc $\overline U$, which may be chosen an arbitrarily small 
perturbation of the unit disc. 
(It was later shown by Wold in \cite{Wold2012} that $\Omega$ may be chosen such that 
the unit disc is a connected component of $\Omega\cap (\C\times \{0\})$, but
the intersection may contain other connected components.) 
Let $\Phi:\C\times\C^2\to \C\times \C^2$ be 
the map given by $\Phi(t,z)=(t,\phi_t(z))$, where $\phi_t(z_1,z_2)=(z_1,tz_2)$ for $t\in\C$. Set 
\begin{equation}\label{eq:X}
	X=\Phi^{-1}(\C\times \Omega) =\{(t,z_1,z_2)\in\C^3: (z_1,tz_2)\in \Omega\}.  
\end{equation}
Note that $X$ is Stein since it is the preimage of the Stein domain $\C\times\Omega\subset\C^3$ by 
the holomorphic map $\Phi$. Observe that $\phi_t(z_1,z_2)=(z_1,tz_2)$ is an automorphism of $\C^2$ 
if $t\ne 0$, and $\phi_0(z_1,z_2)=(z_1,0)$. Let $\pi:X\to\C$ denote the projection $\pi(t,z)=t$ and set 
\begin{equation}\label{eq:Xt}
	X_t=\pi^{-1}(t)=\left\{(z_1,z_2)\in\C^2: \phi_t(z_1,z_2)=(z_1,tz_2) \in\Omega\right\},\quad t\in\C.
\end{equation}
For $t\ne 0$ the domain $X_t=\phi_{t}^{-1}(\Omega)$ is biholomorphic to $\C^2$ and 
$\pi:X\setminus X_0\to\C^*$ is a trivial holomorphic fibre bundle with fibre $\C^2$. 
Indeed, $\Phi:X\setminus X_0\to \C^*\times \Omega\cong \C^*\times\C^2$
is a biholomorphism. On the other hand, the fibre $X_0=U\times \C$ is biholomorphic to $\D\times \C$. 

A minor modification of this example yields a limit fibre $X_0$ which is a disjoint union 
of any given finite number of copies of $\D\times\C$. One applies the same construction to 
a Fatou-Bieberbach domain $\Omega\subset \C^2$ whose closure intersects the line 
$\C\times \{0\}$ in $\bigcup_{i=1}^k \overline D_i$, where 
$\overline D_1,\ldots, \overline D_k$ are pairwise disjoint 
closed discs with $\Cscr^1$ boundaries. The existence of such $\Omega$ 
follows from \cite[Corollary 1.1]{Globevnik1998} of Globevnik.
\end{proof}

\begin{remark}
In Theorem \ref{th:limitfibre1} we can replace the base $\C$ by an arbitrary open 
Riemann surface $M$ and find a surjective holomorphic submersion $X\to M$ from a Stein threefold $X$
with generic fibre $\C^2$ which degenerates over each point in a given closed discrete subset $P$
of $M$. Indeed, it suffices to choose $\phi_t(z_1,z_2)=(z_1,h(t)z_2)$ in \eqref{eq:Xt},
where $h$ is a holomorphic function on $M$ whose zero locus equals $P$. 
Precomposing $\phi_t$ by a family of automorphisms $\psi_t:\C^2\to\C^2$
depending holomorphically on $t\in M$ one can also obtain limiting fibres over the points in 
$P$ with different number of connected components.  
\end{remark}

In the proof of Theorem \ref{th:limitfibre1} we used dilatations in a coordinate direction
of a suitably chosen Fatou--Bieberbach domain $\Omega\subset\C^2$ which intersects the complex line
$z_2=0$ but does not contain it. One may ask whether a similar phenomenon can be achieved
by using dilatations $z\mapsto tz$ for $t\in\C^*$ and $z\in\C^2$. The following result 
shows that this is not the case.

\begin{proposition}\label{prop:rescaling}
If $\Omega\subset  \C^n$ is a Fatou-Bieberbach domain containing the origin, then the domain
$
	X=\{(t,z)\in \D\times \C^n:tz\in \Omega\}
$
is Stein and the projection $\pi:X\to \D$ given by $\pi(t,z)=t$ is a trivial fibre bundle with fibre $\C^n$. 
\end{proposition}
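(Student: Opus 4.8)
The plan is to write down an explicit fibrewise biholomorphism $\Phi\colon X\to\D\times\C^n$ covering the identity on the base $\D$; once this is in hand the triviality of $\pi$ is immediate, and Steinness follows at once because $X\cong\D\times\C^n$ and $\D\times\C^n$ is Stein. (Alternatively, Steinness can be read off directly, exactly as in the proof of Theorem~\ref{th:limitfibre1}: writing $M(t,z)=(t,tz)$ one has $X=M^{-1}(\D\times\Omega)$, and the preimage of the Stein domain $\D\times\Omega\subset\C^{n+1}$ under the holomorphic map $M$ is Stein.)

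To build $\Phi$, first fix a biholomorphism $\Theta\colon\C^n\to\Omega$. Since $0\in\Omega$, after composing $\Theta$ with an affine automorphism of $\C^n$ I may assume $\Theta(0)=0$ and $d\Theta(0)=\Id$; then $\Theta^{-1}\colon\Omega\to\C^n$ also fixes $0$ with differential $\Id$ there. The guiding idea is to ``divide the rescaled coordinate by $t$''. Consider the holomorphic map $G(t,z)=\Theta^{-1}(tz)$, which is well defined precisely on $X$, since $(t,z)\in X\Leftrightarrow tz\in\Omega$. Because $G$ vanishes on the smooth hypersurface $\{t=0\}\cap X=\{0\}\times\C^n$, it is divisible by $t$: there is a unique holomorphic $g\colon X\to\C^n$ with $G(t,z)=t\,g(t,z)$, and $g(0,z)=\partial_t|_{t=0}\Theta^{-1}(tz)=z$. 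I then set $\Phi(t,z)=(t,g(t,z))$.

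Next I would produce the inverse in the same fashion: $H(t,w)=\Theta(tw)$ is holomorphic on $\D\times\C^n$ and vanishes on $\{t=0\}$, hence $H=t\,h$ for a holomorphic $h$ with $h(0,w)=w$, and $\Psi(t,w)=(t,h(t,w))$ maps into $X$ since $t\,h(t,w)=\Theta(tw)\in\Omega$. On the dense open set $\{t\neq0\}$ one checks directly that $h(t,g(t,z))=\tfrac1t\Theta(\Theta^{-1}(tz))=z$ and symmetrically $g(t,h(t,w))=w$, so $\Psi\circ\Phi=\Id$ and $\Phi\circ\Psi=\Id$ off $\{t=0\}$; by continuity both identities extend across the hypersurface. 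Thus $\Phi$ is a biholomorphism with $\mathrm{pr}_\D\circ\Phi=\pi$, giving the trivial bundle with fibre $\C^n$.

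The only genuinely delicate point is the extension of the trivialization across the central fibre $\{t=0\}$: one must know that $\Theta^{-1}(tz)/t$ and $\Theta(tw)/t$ stay holomorphic \emph{and nondegenerate} as $t\to0$. This is exactly where the normalization $\Theta(0)=0,\ d\Theta(0)=\Id$ is used, ensuring $g(0,\cdot)=h(0,\cdot)=\Id$ rather than a singular or degenerate limit; the divisibility-by-$t$ argument then handles the holomorphy cleanly.
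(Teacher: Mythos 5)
Your proof is correct, and it is built on the same underlying construction as the paper's: conjugate the dilations $z\mapsto tz$ by a Fatou--Bieberbach parametrization of $\Omega$ normalized to fix the origin with identity derivative. The difference lies in how holomorphy and nondegeneracy of the trivialization across the central fibre are established. The paper works with the automorphisms $\theta_t(z)=g(tg^{-1}(z))$ of $\Omega$ and proves, by a dynamical argument using the semigroup law $\theta_{st}=\theta_s\circ\theta_t$ and the attraction of $\theta_s$ toward the origin (to push an arbitrary compact subset of $\Omega$ into a small ball where the Taylor estimate applies), that $t^{-1}\theta_t\to g^{-1}$ uniformly on compacts as $t\to 0$; the trivialization $\D\times\Omega\to X$ is then holomorphic across $t=0$ because of this locally uniform convergence. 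You bypass all convergence estimates: since $\Theta^{-1}(tz)$ and $\Theta(tw)$ vanish on the hypersurface $\{t=0\}$, they are divisible by $t$ with holomorphic quotients $g,h$ equal to the identity at $t=0$, and the two resulting fibre-preserving maps are mutually inverse off $\{t=0\}$, hence everywhere by continuity. Your route is more elementary (Riemann removable singularities in place of dynamics) and yields biholomorphy immediately, since the inverse is explicit; the paper's argument buys the extra dynamical information that the central fibre map is the limit $\lim_{t\to 0}t^{-1}\theta_t=g^{-1}$. Your Steinness argument --- $X$ is the preimage of the Stein domain $\D\times\Omega$ under $(t,z)\mapsto(t,tz)$ --- is exactly the one the paper uses in the proof of Theorem \ref{th:limitfibre1}, and the alternative of reading Steinness off from $X\cong\D\times\C^n$ is what Proposition \ref{prop:rescaling} implicitly relies on. A minor remark: the normalization $d\Theta(0)=\Id$ is convenient but not strictly needed for nondegeneracy; without it one gets $g(0,z)=d\Theta(0)^{-1}z$, which is still a linear automorphism of $\C^n$, so the trivialization argument goes through verbatim.
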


\begin{proof}
Pick a biholomorphism  $g:\C^n\to\Omega$ with $g(0)=0$.
Let $g(z)=Az+O(|z|^2)$ near $z=0$. Replacing $g$ by $g\circ A^{-1}$ we may assume that 
$g(z)=z+O(|z|^2)$. For $t\in\C^*$ let $\theta_t\in\Aut(\Omega)$ be 
obtained by conjugating the map $z\mapsto tz$ by $g$:
\begin{equation}\label{eq:theta}
	\theta_t(z)=g(tg^{-1}(z)),\quad z\in\Omega. 
\end{equation}
Note that $\theta_{st}=\theta_s\circ \theta_t$, and $\theta_t$ is globally attracting to the origin if $|t|<1$. 
The map $t^{-1}\theta_t$ is a biholomorphism of $\Omega$ onto the fibre $X_t=t^{-1}\Omega$ of $X$ 
over $t\in\C^*$. We claim that
\[
	\lim_{t\to 0} t^{-1}\theta_t = g^{-1}:\Omega\to\C^n
\]
holds uniformly on compacts in $\Omega$, so we get a holomorphic trivialization 
$\D\times\Omega \stackrel{\cong}{\longrightarrow} X$.
Indeed, near $z=0$ we have that $g(z)=z+O(|z|^2)$ and hence 
\[
	t^{-1}\theta_t(z) = t^{-1} g(tg^{-1}(z)) = g^{-1}(z)+ O(|t| \cdotp |g^{-1}(z)|^2).
\]
This shows that $t^{-1}\theta_t$ converges to $g^{-1}$ uniformly on a ball $0\in B\subset\Omega$
as $t\to 0$. Globally on $\Omega$ the same holds since for any compact set $K\subset \Omega$ 
we can choose $s\in\C^*$ close to $0$ such that $\theta_s(K) \subset B$. 
Fix such $s$. For any $z\in K$ we then have that
\[
	t^{-1}\theta_t(z) = s^{-1}(t/s)^{-1}\theta_{t/s}(\theta_s(z)) 
	\,\stackrel{t\to 0}{\longrightarrow}\,
	s^{-1}g^{-1}(\theta_s(z)) = g^{-1}(z),
\]
where the last equality holds by \eqref{eq:theta}. 
\end{proof}

% We conclude by mentioning several open questions on this subject. 

\begin{problem}
Let $\pi:X\to\D$ be a Stein submersion which is a holomorphic fibre bundle with fibre
$\C^n$ $(n>1)$ over $\D^*$. What are the possible limit fibres $X_0 = \pi^{-1}(0)$?
\end{problem}

It seems that Nishino's problem for fibres $\C^n$, $n>1$, is still open:

\begin{problem}
Assume that $X$ is a Stein manifold and $\pi:X\to\D$ is a holomorphic submersion such that 
every fibre $X_t=\pi^{-1}(t)$ is isomorphic to $\C^n$ for some $n>1$. 
\begin{enumerate}[\rm (a)] 
\item Is $\pi:X\to\D$ necessarily locally trivial?
\item Assuming that $\pi:X\to\D$ is locally trivial over $\D^*$, is it also locally trivial at $0\in\D$?
% the punctured disc $\D^*$, is it also locally trivial at the origin $0\in\D$?
%\item What is the answer to these questions if $X$ and the submersion $X\to\C$ are algebraic?
\end{enumerate}
\end{problem}

As a specific example related to the proof of Theorem \ref{th:limitfibre1}, 
we ask the following question.

\begin{problem}\label{prob:FB}
Let $\Omega$ be a Fatou--Bieberbach domain in $\C^2$ containing the line
$\C\times \{0\}$. Define $X\subset \C^3$ by \eqref{eq:X}. Clearly, $X$ is Stein,
all fibres of the projection $\pi:X\to\C$, $\pi(t,z)=t$, are biholomorphic to $\C^2$, and 
$\pi^{-1}(\C^*)\to\C^*$ is a trivial bundle. Is $\pi$ locally trivial over $0$?
\end{problem}

%
%
%	METRIC PROPERTIES
%
%
\section{Oka manifolds, Campana specialness, and metric properties}\label{sec:future}
In this section we describe some open problems regarding the relationship between Oka
manifolds, specialness in the sense of Campana, and curvature properties of K\"ahler metrics.

\smallskip
\noindent{\bf Campana special manifolds and Oka manifolds.}  
Special manifolds play an important role in Campana's structure theory of compact K\"ahler manifolds, 
developed in \cite{Campana2004AIF,Campana2004AIF-2}. The definition of specialness, which is 
a type of holomorphic flexibility property, is quite technical; we refer to the cited papers or to 
\cite[Definition 2.1]{CampanaWinkelmann2015}. A connected compact
K\"ahler manifold is special if and only if it does not admit any dominant rational map onto an orbifold of
general type \cite{Campana2004AIF,Campana2011}. If $Y$ is special 
then no unramified cover of $Y$ admits a dominant meromorphic map onto a 
positive dimensional manifold of general type. Compact K\"ahler manifolds which are rationally
connected or have Kodaira dimension zero are special \cite{Campana2004AIF}.
By Kobayashi and Ochiai \cite{KobayashiOchiai1975}, 
the existence of a dominant holomorphic map $\C^n\to Y$ 
to a connected compact complex manifold $Y$ 
implies that $Y$ is not of general type. By an extension of their argument, 
Campana proved that such a manifold is special \cite[Corollary 8.11]{Campana2004AIF}.
In particular, every compact Oka manifold is special in view of 
\cite[Theorem 1.1]{Forstneric2017Indam}.

Let us recall the following notion which was already considered by Gromov \cite{Gromov1989}.

%
%   BOP
%
\begin{definition}
A complex manifold $Y$ satisfies the basic Oka principle (BOP) if every continuous map $X\to Y$ 
from a Stein manifold $X$ is homotopic to a holomorphic map. 
\end{definition}

Note that a complex manifold satisfying BOP need not be an Oka manifold.
In particular, every topologically contractible manifold satisfies BOP since
every map is homotopic to a constant map, but many such manifolds (e.g., bounded convex domains in $\C^n$)
are not Oka.  This trivial obstruction does not arise in the class of compact projective manifolds.
The following result is due to Campana and Winkelmann \cite{CampanaWinkelmann2015} (2015). 

%
%   Campana and Winkelmann, 2005
%
\begin{theorem}\label{th:KW2015}
If $Y$ is a compact projective manifold satisfying BOP, then $Y$ is special and every 
holomorphic map $Y\to Z$ to a Brody hyperbolic K\"ahler manifold $Z$ is constant. 
\end{theorem}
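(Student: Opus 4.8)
The plan is to reduce everything to a single analytic statement and then read off both conclusions. That statement is that $Y$ is \emph{dominable by $\C^n$}, where $n=\dim Y$: there is a holomorphic map $F\colon\C^n\to Y$ whose differential has rank $n$ at some point $w$. Granting this, $F$ is a dominant holomorphic map from $\C^n$ to the connected compact manifold $Y$ of dimension $n$, so $Y$ is special by Campana's criterion \cite[Corollary 8.11]{Campana2004AIF}. For the second assertion, let $h\colon Y\to Z$ be holomorphic with $Z$ Brody hyperbolic. The composite $h\circ F\colon\C^n\to Z$ restricts on each complex line through the origin to an entire curve $\C\to Z$, which is constant since $Z$ admits no nonconstant holomorphic map from $\C$; as a map that is constant on every line through the origin is constant, $h\circ F$ is constant. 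Because $dF_w$ has maximal rank, $F$ maps a neighbourhood of $w$ onto an open subset of $Y$, on which $h$ is constant, and the identity principle on the connected manifold $Y$ forces $h$ to be globally constant.

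Thus the theorem rests entirely on the Key Lemma: a compact projective manifold with {\rm BOP} is dominable by $\C^n$. I expect this to be the main obstacle, and compactness must be used decisively. Indeed a bounded convex domain $\Omega\subset\C^n$ satisfies {\rm BOP} trivially, being contractible, yet is Kobayashi hyperbolic and admits no nonconstant bounded map $\C^n\to\Omega$ by Liouville's theorem, so it is not dominable; hence {\rm BOP} alone can never yield dominability. The obstruction is structural: dominability asks for a point of maximal rank, an open but homotopy-noninvariant condition, whereas {\rm BOP} speaks only about homotopy classes of maps from Stein sources, and such a source of complex dimension $n$ has the homotopy type of a cell complex of real dimension at most $n$, so no degree or fundamental-class argument on the domain can detect, let alone force, domination. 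The construction of $F$ must therefore be genuinely analytic and must draw on the positivity furnished by the projective (K\"ahler) structure of $Y$.

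My plan for the Key Lemma is to argue by contraposition through Campana's structure theory. Suppose $Y$ is not dominable; I would first show, using the nondegeneracy of the Kobayashi--Eisenman intrinsic measure together with Campana's core map, that $Y$ then admits a positive-dimensional quotient of general type, equivalently a \emph{Bogomolov sheaf} --- a rank-one subsheaf $\Lscr\subset\Omega^p_Y$ with Kodaira dimension $\kappa(\Lscr)=p\ge1$. The second and harder step is to turn this positive holomorphic object into an obstruction to {\rm BOP}: I would exhibit a Stein manifold $X$ and a continuous map $X\to Y$ whose homotopy class contains no holomorphic representative, the point being that any holomorphic $g\colon X\to Y$ pulls $\Lscr$ back to a symmetric differential so constraining that $g$ must factor, up to the associated general-type fibration, through a lower-dimensional base and hence cannot realise the prescribed class. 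Making this incompatibility precise --- quantifying the rigidity forced by $\kappa(\Lscr)=p$ via the measure-hyperbolicity of general-type orbifold bases in the sense of Kobayashi and Ochiai \cite{KobayashiOchiai1975} --- is the crux of the whole argument.

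Finally I would record two routine points. The hypothesis ``projective'' enters only through the existence of a K\"ahler class and the applicability of Campana's structure theory, both available in the compact K\"ahler category, so the result extends there. And the line-restriction step in the second assertion uses only that $Z$ is Brody hyperbolic; the K\"ahler hypothesis on $Z$ serves merely to place the target within reach of the structure theory, while the global conclusion on $Y$ comes from the identity principle alone.
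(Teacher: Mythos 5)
First, a framing remark: the survey does not prove Theorem \ref{th:KW2015} at all — it quotes it from Campana and Winkelmann \cite{CampanaWinkelmann2015} — so your attempt has to be measured against that paper's argument. Within your proposal, the conditional steps are sound: if $Y$ is dominable by $\C^n$, then $Y$ is special by \cite[Corollary 8.11]{Campana2004AIF}, and your line-restriction plus identity-principle argument correctly yields constancy of maps to Brody hyperbolic targets. The fatal problem is that everything rests on your Key Lemma (BOP implies dominability), which you do not prove, and which is not a reduction but an escalation: it is strictly \emph{stronger} than the theorem. Dominability implies specialness, but the converse is a well-known open problem — it is open, for instance, whether every K3 surface (a special manifold) is dominable by $\C^2$ — and Campana and Winkelmann prove specialness only, not dominability. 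Had BOP been known to imply dominability, both conclusions of Theorem \ref{th:KW2015} would follow at once by exactly your two short paragraphs, and the theorem would not be stated (or proved) the way it is.

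Moreover, the plan you sketch for the Key Lemma fails at its first step. You propose to show that a non-dominable $Y$ carries a Bogomolov sheaf, i.e.\ is non-special; contrapositively, that every special projective manifold is dominable by $\C^n$. That is precisely the open problem just mentioned, and it does not follow from the Kobayashi--Eisenman measure: non-dominability is not known to force nondegeneracy of that measure anywhere (implications of this kind are Green--Griffiths--Lang-type conjectures, not theorems). Your second step — that a Bogomolov sheaf obstructs BOP — is the actual mathematical content of \cite{CampanaWinkelmann2015}: their argument goes directly from the failure of specialness, via a fibration with positive-dimensional orbifold base of general type, to a homotopy class of maps from a Stein source with no holomorphic representative, using Campana's orbifold extension of the Kobayashi--Ochiai theorem \cite{KobayashiOchiai1975}; dominability never enters, precisely because it cannot be obtained. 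You label this step ``the crux'' and give no argument for it, so the proposal proves neither conclusion. Finally, your closing claim that the result extends routinely to compact K\"ahler $Y$ contradicts the literature: as the survey notes immediately after the theorem, the K\"ahler case is exactly what Campana and Winkelmann left as a conjecture (compare Problem \ref{prob:special}).
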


Campana and Winkelmann conjectured that their result holds for every 
compact K\"ahler manifold $Y$. It is not known whether the converse to Theorem \ref{th:KW2015} holds:

%
%   PROBLEM 4: SPECIALNESS AND OKA
%
\begin{problem}
\label{prob:special}
Does every special compact projective manifold enjoy BOP? Is it Oka? %every such manifold Oka?
\end{problem}

As pointed out by Campana and Winkelmann in \cite{CampanaWinkelmann2015},
the answer is negative for some quasiprojective special manifolds.
In light of Campana's results in \cite{Campana2004AIF,Campana2004AIF-2}, an affirmative answer 
to Problem \ref{prob:special} would imply that every projective manifold which is dominable, rationally connected, or 
has Kodaira dimension zero is an Oka manifold. 
Campana also conjectured that a complex manifold $Y$ is special if and only if it is $\C$-connected 
if and only if its Kobayashi pseudometric vanishes identically. These questions seem to remain open. 

%
%   METRIC POSITIVITY AND OKA MANIFOLDS
%
\smallskip
\noindent{\bf Metric positivity and Oka manifolds.}  
The definitions of Kobayashi hyperbolicity and of the Oka property only depend on  
the complex structure of the underlying complex manifold, and they do not involve any auxiliary 
structures such as hermitian or K\"ahler metrics. 
Nevertheless, it has been known since 1938, when Ahlfors \cite{Ahlfors1938} 
proved his generalization the Schwarz--Pick lemma, 
that hyperbolicity has a tight relationship with metric negativity. 
%
%
%
\begin{comment}
The Ahlfors lemma was generalized by S.-T.\ Yau \cite{Yau1978AJM} in 1978, 
leading to the theorem that every compact Hermitian manifold with 
negative holomorphic sectional curvature is hyperbolic \cite[Theorem 3.7.1]{Kobayashi1998}. 
%
\end{comment}
%
The observation that a compact hermitian manifold with negative holomorphic sectional
curvature is Kobayashi hyperbolic is due to Grauert and Reckziegel 
\cite{GrauertReckziegel1965}; see also Wu \cite[p.\ 217]{Wu1967},
Kobayashi \cite[p.\ 61]{Kobayashi1970}, and Greene and Wu \cite[p.\ 85]{GreeneWu1979}. 
More generally, it was proved by Greene and Wu \cite{GreeneWu1979} in 1979 
that a not necessarily complete hermitian manifold whose 
holomorphic sectional curvature is bounded above by $-c/(1 + r^2)$, 
where $c > 0$ and $r = \dist(p,\cdotp)$ is the distance 
from a fixed point $p$ in the manifold, is Kobayashi hyperbolic. 
Moreover, if a hermitian metric with this property is complete then the 
manifold is complete Kobayashi hyperbolic, and this result is close to sharp 
(see \cite[p.\ 85]{GreeneWu1979} or \cite{Seshadri2006}). It follows that no
such manifold is Oka. 

Weaker rigidity properties than Kobayashi hyperbolicity, 
such as volume hyperbolicity, are also obstructions to a manifold being Oka. 
In particular, a manifold which satisfies some form of the Schwarz lemma 
for holomorphic maps from higher dimensional balls is not Oka. 
Results in this direction were obtained by many authors; 
see in particular Chern \cite{Chern1968}, Lu \cite{Lu1968},
Royden \cite{Royden1980}, and S.-T.\ Yau \cite{Yau1975,Yau1978AJM}, among others. 
A more complete discussion of the history of the 
Schwarz lemma and its relationship to negativity of hermitian metrics 
can be found in the recent survey by Broder \cite{Broder2022}. 
See also the article by Osserman \cite{Osserman1999}
relating the Ahlfors--Schwarz--Pick lemma to comparison principles in differential geometry.

In a related direction, Kobayashi and Ochiai \cite{KobayashiOchiai1975} proved 
in 1975 that a compact complex manifold of general Kodaira type is not dominable by 
Euclidean spaces, hence it is not Oka. 
More recently, Wu and Yau \cite{WuYau2016IM,WuYau2016CAG} (2016) and 
Diverio and Trapani \cite{DiverioTrapani2019} (2019) proved that a compact connected complex manifold $Y$, which admits a K\"ahler metric whose holomorphic sectional curvature is 
everywhere nonpositive and is strictly negative at some point, has positive canonical bundle $K_Y$.
(See also Tosatti and Yang \cite{TosattiYang2017} and Nomura \cite{Nomura2018}.)
Hence, such a manifold $Y$ is projective of general type, and therefore it does not admit any 
dominating holomorphic map $\C^n\to Y$ by Kobayashi and Ochiai \cite{KobayashiOchiai1975}.

In light of these results, which broadly speaking suggest that negativity properties of hermitian 
metrics imply rigidity properties of holomorphic maps into the given manifold,
one may wonder whether there is a relationship between the Oka property of a complex 
manifold and positivity of complete hermitian or K\"ahler metrics on it. 
Evidence for this comes from 
%
%  FRANKEL CONJECTURE AND MOK'S THEOREM
%
the Frankel Conjecture, solved affirmatively by Mori \cite{Mori1979} (1979) and 
Siu and Yau \cite{SiuYau1980} (1980), saying that a compact K\"ahler manifold with positive 
holomorphic bisectional curvature is biholomorphic to a complex projective space, 
and hence is Oka. (Mori's theorem holds under the weaker assumption that the manifold 
has ample tangent bundle.) This was generalized by Mok in 1988 whose main result 
\cite[Main Theorem]{Mok1988} implies the following.

%
%   NONNEGATIVE BISECTIONAL CURVATURE IMPLIES OKA
%
\begin{theorem}\label{th:Mok}
Every compact K\"ahler manifold with nonnegative holomorphic bisectional curvature is an Oka manifold.
\end{theorem}

Mok's result says that for a compact K\"ahler manifold $(Y,g)$ of nonnegative holomorphic 
bisectional curvature, its metric universal cover $(\wt Y,\tilde g)$ is isometrically biholomorphic to
\[
	\wt Y = \C^k \times \CP^{n_1}\times\cdots\times\CP^{n_l} \times M_{1}\times\cdots\times M_p 
\] 
where $\C^k$ is endowed with the flat metric, each projective space in the above decomposition 
is endowed with a K\"ahler metric with nonnegative holomorphic bisectional curvature, 
and each $M_j$ is a compact hermitian symmetric space with its canonical complex structure and K\"ahler metric. 
Recall that a product of Oka manifolds is Oka \cite[Theorem 5.6.5]{Forstneric2017E}. 
Since $\C^n$ and $\CP^n$ are Oka manifolds, Theorem \ref{th:Mok} follows from the following observation. 

%
%   EVERY COMPACT HERMITIAN SYMMETRIC SPACE IS OKA
%
\begin{proposition}\label{prop:CHSS}
Every compact hermitian symmetric space is a complex homogeneous manifold, 
and hence an Oka manifold.
\end{proposition} 

\begin{proof}
Let $M$ be a compact hermitian symmetric space. The identity component of the isometry 
group of $M$ acts transitively by holomorphic automorphisms of $M$ 
(see Zheng \cite[Sect.\ 8.5]{Zheng2000}). 
By a theorem of Bochner and Montgomery \cite{BochnerMontgomery1947-2}, 
the holomorphic automorphism group $G$ of a compact 
complex manifold $M$ is a complex Lie group, and if the action is transitive 
then $M$ is a complex homogeneous manifold biholomorphic to $G/H$ where $H$ is the 
isotropy group of a point in $G$. By Grauert \cite{Grauert1958MA}, every complex homogeneous
manifold is an Oka manifold (see also \cite[Proposition 5.6.1]{Forstneric2017E}). 
\end{proof}

On the other hand, a noncompact hermitian symmetric space is biholomorphic to a bound\-ed domain
in a complex Euclidean space, so it is not Oka. The simplest example is the disc.
We refer to \cite[Sect.\ 8.5]{Zheng2000} for more information.

In contrast to Theorem \ref{th:Mok} which pertains to nonnegativity of holomorphic 
{\em bisectional} curvature, the relationship between positivity of holomorphic 
{\em sectional} curvature and the Oka property remains poorly understood. 
We pose the following problems.

%
%   PROBLEM 5: POSITIVITY AND OKA
%
\begin{problem} \label{prob:semipositive}
\begin{enumerate}[\rm (a)] 
\item Is every compact (or complete) K\"{a}hler manifold with (semi-) positive holomorphic sectional curvature an 
Oka manifold?
\item Assuming that the tangent bundle of a compact K\"{a}hler manifold $Y$ is numerically 
effective (nef), is $Y$ an Oka manifold?
\end{enumerate}
\end{problem}

If the holomorphic sectional curvature of a compact K\"{a}hler manifold is positive then, 
by Yau's conjecture solved by X.\ Yang \cite{YangX2018} in 2018, the manifold is rationally connected.
In both cases of Problem \ref{prob:semipositive} for compact K\"{a}hler manifolds, the results 
of Matsumura \cite[Theorem 1.1]{Matsumura2020} and Demailly et al.\ \cite{DemaillyPeternellSchneider1994}
imply that the manifold admits a finite \'etale cover which is the total space of a holomorphic fibre 
bundle over an Oka manifold with compact rationally connected fibre enjoying the corresponding semipositivity.
In view of Theorem \ref{th:updown} this reduces the compact case of Problem \ref{prob:semipositive} 
to rationally connected manifolds.

An affirmative answer to the Campana--Peternell conjecture \cite[Conjecture 11.1]{CampanaPeternell1991} 
would solve Problem \ref{prob:semipositive} (b) affirmatively.  
By \cite[Theorems 3.1 and 10.1]{CampanaPeternell1991} and 
%the known results on Oka manifolds (see especially 
Theorem \ref{th:updown} this holds true for projective manifolds of dimension at most three. 
A summary of possible cases can be found on \cite[p.\ 170]{CampanaPeternell1991}. 
This result is worthwhile recording.

\begin{theorem}
If $Y$ is a compact projective manifold of dimension at most three whose tangent bundle is nef, then $Y$ is an Oka manifold.
\end{theorem}

\smallskip
\noindent{\bf Calabi--Yau manifolds and Oka manifolds.} 
It would be of interest to know the position of Oka manifolds in the class of Calabi--Yau manifolds,
one of the most intensively studied classes of complex manifolds.

A Calabi--Yau manifold is sometimes defined 
as a compact K\"ahler manifold $Y$ with holomorphically trivial canonical bundle $K_Y$.
This implies that the first integral Chern class $c_1(Y)$ vanishes and the Kodaira
dimension of $Y$ equals zero. The converse is not true, the simplest examples being hyperelliptic surfaces
(finite quotients of complex 2-tori). 

A weaker definition defining a bigger class of manifolds, which is more standard one among complex differential geometers, 
is that a Calabi--Yau manifold is a compact K\"ahler manifold whose first real Chern class vanishes. 
As an example, Enriques surfaces fit into this more general definition of the Calabi--Yau class but not into
the former one. A fundamental result in the field is Yau's solution \cite{Yau1978CPAM} (1978) of the Calabi conjecture, 
which says that a compact K\"ahler manifold with vanishing first real Chern class has a 
K\"ahler metric in the same class with vanishing Ricci curvature. 
(The class of a K\"ahler metric is the cohomology class of its fundamental $(1,1)$-form.) 
Calabi \cite{Calabi1957} showed back in 1957 that such a metric, if it exists, is unique. 
Yau's result justifies the second definition of Calabi--Yau manifolds.
Besides their intrinsic interest in complex geometry, Calabi--Yau threefolds  are important in superstring theory 
as shapes that satisfy the requirements  for the six extra spatial dimensions. 
For all these reasons, it would be of interest to understand the following:

%
%   PROBLEM: WHICH CY ARE OKA?
%
\begin{problem}\label{prob:CY}
Which Calabi--Yau manifolds of dimension $n\ge 2$ are Oka? What if any are the implications
of the Oka property of a Calabi--Yau threefold to superstring theory? 
\end{problem}

The only Calabi--Yau manifolds of dimension one are tori, which are Oka.
The Ricci-flat metric on a torus is actually flat. 
Among compact K\"ahler surfaces, K3 surfaces furnish the only simply connected 
Calabi--Yau manifolds. They arise as quartic hypersurfaces in $\CP^3$
defined by homogeneous polynomials in four variables. An example is the quartic
\[
	\bigl\{[z_0:z_1:z_2:z_3] \in \CP^3: z_0^4+z_1^4+z_2^4+z_3^4=0 \bigr\}.
\]
Other examples of Calabi--Yau surfaces arise as elliptic fibrations, as quotients of abelian surfaces, 
or as complete intersections.  Enriques surfaces and hyperelliptic surfaces have first Chern class 
that vanishes as a real cohomology class (so Yau's theorem on the existence of a Ricci-flat metric applies)  
but it does not vanish as an integral cohomology class.
For the class of compact complex surfaces with vanishing Kodaira dimension 
it is known that hyperelliptic surfaces, Kodaira surfaces, and tori are Oka,
but it is unknown whether any or all K3 surfaces or Enriques surfaces are Oka
(see \cite[Section 7.3]{Forstneric2017E}).

More generally, for every $n\in\N$ the zero set in the homogeneous coordinates on $\CP^{n+1}$ 
of a nonsingular homogeneous polynomial of degree $n+2$ in $n + 2$ variables is a compact 
Calabi--Yau $n$-fold. The case $n = 1$ gives elliptic curves while $n=2$ gives K3 surfaces. 

%
%   NON-KAEHLER Calabi--Yau MANIFOLDS
%
We only discussed Calabi--Yau manifolds within the class of compact K\"ahler manifolds.
Recently there has been considerable interest in non-K\"ahler Calabi--Yau manifolds; 
see Tosatti \cite{Tosatti2015}. The first part of Problem \ref{prob:CY} is also of interest 
in this bigger class. 

%
%
%   ACKNOWLEDGMENTS
%
%
%\subsection*{Acknowledgements}

\noindent{\bf Acknowledgements.}
Research was supported by the European Union (ERC Advanced grant HPDR, 101053085) 
and grants P1-0291, J1-3005, and N1-0237 from ARRS, Republic of Slovenia. 
I wish to thank Rafael Andrist, Kyle Broder, Yuta Kusakabe, Frank Kutzschebauch, 
Finnur L\'arusson, Stefan Nemirovski, Takeo Ohsawa, Edgar Lee Stout, J\"org Winkelmann, 
and Erlend F.\ Wold for helpful discussions and remarks. 

%
%
%	APPENDIX: OKA'S CRITERION FOR HOLOMORPHIC CONVEXITY
%
%
\appendix

\section{Oka's criterion for holomorphic convexity and applications}

In this appendix we collect some consequences of the following criterion for 
holomorphic convexity of a compact set in a Stein manifold, due to  
Oka  \cite{Oka1937}, which are used in the paper.
(See also \cite[Theorem 2.1.3]{Stout2007} for $X=\C^n$ and \cite[Lemma 5.3.4 ]{Stout2007}
for the general case.) %We give a much shorter proof.

%
%   OKA'S CRITERION
%
\begin{theorem}\label{th:PC}
Let $X$ be a Stein manifold, $K$ be a compact subset of $X$, 
and $f:[0,1]\times X\to \C$ be a continuous function such that $f_t:=f(t,\cdotp):X\to \C$ is 
holomorphic for every $t\in [0,1]$, $f$ has no zeros on $[0,1]\times K$, and $f_1$ has no 
zeros on the holomorphic hull $\wh K_{\Oscr(X)}$ of $K$. Then none of the hypersurfaces 
$V_t=\{x\in X:f_t(x)=0\}$ $(t\in [0,1])$ intersect $\wh K_{\Oscr(X)}$. 
\end{theorem}

\begin{proof}
If $g$ is a holomorphic function on a neighbourhood of $\wh K_{\Oscr(X)}$ then, in view of the 
Oka--Weil theorem, we have that $\max\{|g(x)|:x\in K\}=\max\{|g(x)|:x\in \wh K_{\Oscr(X)}\}$. 
If the statement of the theorem is false, there is a biggest number $t_0\in [0,1)$ 
such that $V_{t_0}$ intersects $\wh K_{\Oscr(X)}$. As $t\in (t_0,1]$ decreases to $t_0$, 
the norm of the function $1/f_t \in \Oscr(\wh K_{\Oscr(X)})$ on $\wh K_{\Oscr(X)}$ 
increases to $+\infty$ while it remains bounded on $K$, a contradiction.
\end{proof}

The following is an obvious corollary to Theorem \ref{th:PC}. % this is how the result is often stated.

%
%	COROLLARY PC
%
\begin{corollary} \label{cor:PC}
If $X$ is a Stein manifold, $K$ is a compact set in $X$, and 
$V_t=\{f_t=0\}$ with $f_t\in \Oscr(X)$ for $t\in [0,1)$ is a continuous path of principal complex hypersurfaces which avoid $K$ and diverge to infinity in $X$ as $t\nearrow 1$, 
then $V_t  \cap  \wh K_{\Oscr(X)}=\varnothing$ for all $t\in[0,1)$.
\end{corollary}

We now apply Theorem \ref{th:PC} to hypersurfaces in projective spaces. 
Let $[z_0:z_1:\cdots:z_n]$ be homogeneous coordinates on $\CP^n$. 
Denote by $\mathscr{V}_k(\CP^n)$ the space of complex hypersurfaces of degree $k$
in $\CP^n$, possibly with positive integral multiplicities (i.e., effective chains of hypersurfaces). 
By Chow's theorem (see \cite[p.\ 74]{Chirka1989}) 
every $V\in \mathscr{V}_k(\CP^n)$ is of the form 
\[
	V=V(P) = \{[z_0:\cdots:z_n]: P(z_0,\ldots,z_n)=0\}
\]
where $P$ is a nonzero homogeneous polynomial of degree $k$ in $n+1$ variables. 
The complement $\CP^n\setminus V$ is an affine manifold, hence a Stein manifold. 
(See the argument in the proof of Theorem \ref{th:K}.)
Denote by ${\mathscr H}(k,n)\cong \C^{N+1}$ with $N+1={{n+k}\choose {k}}$ the complex 
vector space of all homogeneous complex polynomials in $n+1$ variables. The projection 
\[
	\C^{N+1}\setminus \{0\}\cong \mathscr H(k,n)\setminus \{0\} 
	\to \mathscr{V}_k(\CP^n)\cong \CP^N,
	\quad P\mapsto V(P)
\] 
is a fibre bundle with fibre $\C^*$, and hence any path in $\mathscr{V}_k(\CP^n)$
lifts to a path in $\mathscr H(k,n)\setminus \{0\}$. 
In other words, a path of degree $k$ hypersurfaces in $\CP^n$ is defined by a path 
of homogeneous polynomials of degree $k$ on $\C^{n+1}$.

%
%	COROLLARY PC1
%
\begin{corollary}\label{cor:PC1}
Let $V_t\in \mathscr{V}_k(\CP^n)$ $(t\in [0,1])$ be a path of hypersurfaces of degree $k$ 
and set $X=\CP^n\setminus V_1$. If $K$ is a compact set in $\CP^n$ such that 
$K\cap V_t =\varnothing$ for all $t\in [0,1]$, then $\wh K_{\Oscr(X)} \cap V_t=\varnothing$
for all $t\in [0,1]$.
\end{corollary}

\begin{proof}
By what was said above, we have 
$V_t=\{f_t=0\}$ for a path $\{f_t\}_{t\in [0,1]}\subset {\mathscr H}(k,n)\setminus \{0\}$.
The functions $F_t=f_t/f_1 : X \to\C$ for $t\in [0,1)$ are well-defined, holomorphic, 
continuous in $t$, and nonvanishing on $K$. As $t\to 1$, the affine hypersurfaces 
$\{F_t=0\}=V_t\setminus V_1 \subset X$ diverge to infinity in $X$, 
so the conclusion follows from Corollary \ref{cor:PC}.
\end{proof}

%
%	COROLLARY PC2
%
\begin{corollary}\label{cor:PC2}
If $B$ is a nonempty connected open set in $\mathscr{V}_k(\CP^n)$ %$(k\ge1,\ n\ge 2)$ 
and $\Omega=\Omega(B)\subset\CP^n$ is the union of all $V \in B$ 
(considered as hypersurfaces in $\CP^n$), 
then for any $V\in B$ the compact set $L=\CP^n\setminus \Omega$ is holomorphically convex 
in the Stein domain $\CP^n\setminus V$ . 
\end{corollary}

\begin{proof}
Fix $V\in B$. As $B$ is connected, given $z\in \Omega\setminus V$  
there is a path $\{V_t\}_{t\in [0,1]} \subset B$ with $z\in V_0$ and $V_1=V$. 
By Corollary \ref{cor:PC1}, $z$ does not belong to the hull of $L$ in 
$\CP^n\setminus V$. 
\end{proof}

%
%	COROLLARY PC3
%
\begin{corollary}\label{cor:PC3}
Let $V_t\in \mathscr{V}_k(\CP^n)$ $(t\in [0,1])$ be a path of hypersurfaces of degree $k$ in $\CP^n$. If
$K\subset \CP^n$ is a compact set such that $K\cap V_t =\varnothing$ for all $t\in [0,1]$ and $K$ is
holomorphically convex in $\CP^n\setminus V_1$, then $K$ is holomorphically convex in 
$\CP^n\setminus V_t$ for every $t\in [0,1]$.
\end{corollary}

\begin{proof}
Set $X_t=\CP^n\setminus V_t$ for $t\in[0,1]$. By thickening the path $\{V_t\}_{t\in [0,1]}$ into an 
open connected domain $B\subset \mathscr V_k(\CP^n)$, we obtain a domain $\Omega=\Omega(B)\subset \CP^n$ 
as in Corollary \ref{cor:PC2} such that $K\cap \Omega=\varnothing$. 
That corollary implies that the compact set $L=\CP^n\setminus \Omega$ is $\Oscr(X_t)$-convex   
for every $t\in[0,1]$. Note that $K\subset L$. Since $K$ is assumed to be $\Oscr(X_1)$-convex, 
it is also $\Oscr(L)$-convex, and hence $\Oscr(X_t)$-convex for every $t\in[0,1]$. 
\end{proof}

%%%%%%%%%%
%%%%%%%%%%
%%%%%%%%%%
%%%%%%%%%%   THE BIBLIOGRAPHY
%%%%%%%%%%
%%%%%%%%%%

%{\bibliographystyle{abbrv} \bibliography{references}} \begin{comment}

%\end{comment}

\end{document}